\definecolor{BleuTresFonce}{rgb}{0.215, 0.215, 0.36}
\definecolor{EgyptianBlue}{rgb}{0.06, 0.2, 0.65}
\numberwithin{equation}{subsection}
\newtheorem*{conjecture*}{Conjecture}
\newtheorem{conjecture}[equation]{Conjecture}
\newtheorem{theorem}[equation]{Theorem}
\newtheorem*{theorem*}{Theorem}
\newtheorem{defi}[equation]{Definition}
\newtheorem{lemma}[equation]{Lemma}
\newtheorem{prop}[equation]{Proposition}
\newtheorem{prop-def}[equation]{Proposition-Definition}
\newtheorem{sled}[equation]{Corollary}
\newtheorem{corollary}[equation]{Corollary}
\newtheorem{remark}[equation]{Remark}
\theoremstyle{definition}
\newtheorem{example}[equation]{Example}
\newtheorem*{notation*}{Notations}
\newtheorem*{example*}{Example}
\newtheorem*{claim*}{Claim}
\newcommand{\Gr}{\operatorname{\Gamma}}
\newcommand{\parti}{\operatorname{\Pi_{\mathsf{gr}}}}
\newcommand{\Path}{\operatorname{\mathsf{P}}}
\newcommand{\K}{\operatorname{\mathsf{K}}}
\newcommand{\Cyc}{\operatorname{\mathsf{C}}}
\newcommand{\St}{\operatorname{\mathsf{St}}}
\DeclareMathOperator{\Ver}{Vert}
\DeclareMathOperator{\Edge}{Edge}
\DeclareMathOperator{\Leav}{Leav}
\DeclareMathOperator{\In}{In}
\newcommand{\Aut}{\operatorname{\mathsf{Aut}}}
\DeclareMathOperator{\Id}{Id}
\DeclareMathOperator{\Hom}{Hom}
\newcommand{\GrCol}{\operatorname{\mathsf{GrCol}}}
\newcommand{\C}{\operatorname{\mathcal{C}}}
\newcommand{\M}{\operatorname{\mathcal{M}}}
\newcommand{\bM}{\operatorname{\overline{\mathcal{M}}}}
\newcommand{\Pro}{\operatorname{\mathbb{P}}}
\DeclareMathOperator{\PGL}{PGL}
\DeclareMathOperator{\Aff}{Aff}
\DeclareMathOperator{\Conf}{Conf}
\DeclareMathOperator{\NConf}{NConf}
\newcommand{\D}{\operatorname{\mathcal{D}}}
\newcommand{\Disc}{\operatorname{\mathbb{D}}}
\newcommand{\La}{\operatorname{\mathcal{L}}}
\newcommand{\G}{\operatorname{\mathcal{G}}}
\newcommand{\B}{\operatorname{\mathcal{B}}}
\newcommand{\Cobar}{\operatorname{\mathsf{\Omega}}}
\newcommand{\cokoszul}{\operatorname{\text{!`}}}
\newcommand{\Orb}{\operatorname{\mathcal{O}}}
\newcommand{\Q}{\operatorname{\mathcal{Q}}}
\newcommand{\I}{\operatorname{\mathcal{I}}}
\newcommand{\E}{\operatorname{\mathcal{E}}}
\newcommand{\R}{\operatorname{\mathcal{R}}}
\newcommand{\T}{\operatorname{\mathbb{T}}}
\newcommand{\Pop}{\operatorname{\mathcal{P}}}
\newcommand\cyrillic[1]{{\fontencoding{OT2}\fontfamily{wncyr}\selectfont #1}}
\newcommand\mathcyr[1]{\text{\cyrillic{#1}}}
\newcommand\Sha{\textnormal{\mathcyr{Sh}}}
\newcommand{\forget}{\operatorname{\mathrm{f}}}
\newcommand{\LT}{\operatorname{\mathsf{LT}}}
\newcommand{\grpermlex}{\operatorname{\mathsf{graphpermlex}}}
\DeclareMathOperator{\ind}{ind}
\DeclareMathOperator{\res}{res}
\newcommand{\Com}{\operatorname{\mathsf{gcCom}}}
\newcommand{\Susp}{\operatorname{\mathcal{S}}}
\newcommand{\Lie}{\operatorname{\mathsf{gcLie}}}
\newcommand{\Ass}{\operatorname{\mathsf{gcAss}}}
\newcommand{\Pois}{\operatorname{\mathsf{gcPois}}}
\newcommand{\Gerst}{\operatorname{\mathsf{gcGerst}}}
\newcommand{\Grav}{\operatorname{\mathsf{gcGrav}}}
\newcommand{\Hyper}{\operatorname{\mathsf{gcHyper}}}
\newcommand{\Tree}{\operatorname{\mathsf{Tree}}}
\newcommand{\FM}{\operatorname{\mathsf{FM}}}
\newcommand{\eP}{\EuScript{P}}
\newcommand{\eQ}{\EuScript{Q}}
\newcommand{\beM}{\overline{\EuScript{M}}}
\newcommand{\eM}{\EuScript{M}}
\newcommand{\MPOp}{\mathbf{\mathsf{MSop}}}
\newcommand{\bQ}{{\mathbb Q}}
\newcommand{\bZ}{{\mathbb Z}}
\newcommand{\cQ}{{\mathcal{Q}}}
\newcommand{\calF}{{\mathcal{F}}}
\newcommand{\op}{{\mathsf{op}}}
\title{Hilbert series for contractads and  \\
	modular compactifications}
\author[A.\,Khoroshkin]{Anton Khoroshkin}
\address{Anton Khoroshkin: \newline
	Department of Mathematics, University of Haifa, Mount Carmel, 3103301, Haifa, Israel
}
\email{khoroshkin@gmail.com}
\author[D.\,Lyskov]{Denis Lyskov}
\address{Laboratory of Algebraic Geometry, National Research University Higher School of Economics, 6 Usacheva street, Moscow 119048, Russia}
\email{ddl2001@yandex.ru}
\begin{document}
	\begin{abstract}
		Contractads are operadic-type algebraic structures well-suited for describing configuration spaces indexed by a simple connected graph $\Gamma$. Specifically, these configuration spaces are defined as $\mathrm{Conf}_{\Gamma}(X):=X^{|V(\Gamma)|}\setminus \cup_{(ij)\in E(\Gamma)} \{x_i=x_j\}$. In this paper, we explore functional equations for the Hilbert series of Koszul dual contractads and provide explicit Hilbert series for fundamental contractads such as the commutative, Lie, associative and the little disks contractads.
		
		Additionally, we focus on a particular contractad derived from the wonderful compactifications of $\mathrm{Conf}_{\Gamma}(\mathbb{k})$, for $\mathbb{k}=\mathbb{R},\mathbb{C}$. First, we demonstrate that for complete multipartite graphs, the associated wonderful compactifications coincide with the modular compactifications introduced by Smyth. Second, we establish that the homology of the complex points and the homology of the real locus of the wonderful contractad are both quadratic and Koszul contractads. We offer a detailed description of generators and relations, extending the concepts of the Hypercommutative operad and cacti operads, respectively.
		Furthermore, using the functional equations for the Hilbert series, we describe the corresponding Hilbert series for the homology of modular compactifications. 
		
		%We hope this work will advertise the concept of contractads and their applications in algebraic topology, algebraic geometry, and combinatorics.
	\end{abstract}
	\maketitle
	\setcounter{section}{-1}
	
	\section{Introduction}
	\subsection{Motivation}
	The language of operads became very popular at the end of the 20th century and found numerous applications in different areas of mathematics. Nowadays, there are several excellent textbooks accessible for students \cite{loday2012algebraic,markl2002operads}, which confirms that operads have earned an important place in mathematical research. The notion of an operad was initiated by topologists in the theory of iterated loop spaces~\cite{may1972geometry,boardman1968homotopy,stasheff1963homotopy}. 
	Soon after, it was realized that the language of operads is quite varied, allowing us to gain essential insights into certain categories if we interpret them as categories of algebras/modules over a given operad $\mathcal{P}$. In particular, a well-defined (dg)-algebraic model of an operad $\mathcal{P}$ can provide substantial information about the homotopy category of $\mathcal{P}$-algebras. Moreover, it has been discovered that some operads have descriptions that are as interesting as the descriptions of the categories of algebras over them. Notable examples include the little disks operad and the operad of the Deligne-Mumford compactifications of the moduli spaces of rational curves~\cite{getzler1995operads,etingof2010cohomology}.
	
	Recently, it was noticed that there are various operadic-type algebraic structures where the category of algebras may not be very meaningful, yet the underlying algebraic structure can reveal much about the corresponding space of operations. Examples of such structures include \emph{reconnectads} (\cite{dotsenko2024reconnectads}), $\EuScript{LBS}$-\emph{operads} (\cite{coron2022matroids}), and \emph{contractads} (discovered by the second author in~\cite{lyskov2023contractads}). Famous algebraic methods known for algebras and operads, such as Koszul duality, Gröbner basis theory, and other enumerative techniques, can be easily extended to these newer algebraic structures.
	
	One of the basic applications of Koszul duality theory for algebras and operads is the functional equation for generating series of dimensions. For example, if \(A = \bigoplus_{n \geq 0} A_n\) is a quadratic Koszul algebra. Then the Hilbert series for \(A\) and its Koszul dual algebra \(A^!\) satisfy the following equation:
	\[
	H_A(t) \cdot H_{A^!}(-t) = 1, \text{ where } 
	H_A(t) := \sum_{n \geq 0} \dim A_n t^n, \
	H_{A^!}(t) := \sum_{n \geq 0} \dim A^{!}_n t^n.
	\]
	Respectively, if \(\mathcal{P} = \bigcup_{n \geq 1} \mathcal{P}(n)\) is a quadratic Koszul operad generated by binary operations, then the corresponding Hilbert series for $\mathcal{P}$ and its Koszul dual $\mathcal{P}^{!}$ satisfy the following functional equation:
	\begin{equation}
		\label{eq:koszul:rel:operad}
		\chi_{\mathcal{P}}(-t) \circ \chi_{\mathcal{P}^!}(-t) = t, \text{ where } 
		\chi_{\mathcal{P}}(t):=\sum_{n\geq 1} \dim\mathcal{P}(n)\frac{t^n}{n!}, \ \chi_{\mathcal{P}^{!}}(t):=\sum_{n\geq 1} \dim\mathcal{P}^{!}(n)\frac{t^n}{n!},
	\end{equation}
	
	In particular, the first known description of the Hilbert series for the Deligne-Mumford compactification \(\beM_{0, n+1}\) was obtained by Getzler using a straightforward generalization of the functional equation~\eqref{eq:koszul:rel:operad}.
	
	The goal of this paper is twofold. First, we demonstrate the types of generating series and functional equations one can obtain for contractads, illustrating with examples such as the Commutative, Lie, and little disks contractads. 
	Second, we examine in detail the contractad that generalizes \(\beM_{0, n+1}\). We show that for multipartite graphs, the corresponding smooth algebraic varieties are isomorphic to the one discovered by Smyth (\cite{smyth2009towards}) under the name \emph{modular compactification}. We show that the homology of the complex points and real locus of this contractad are Koszul contractads. As one of the applications, we provide an algebraic description of the homology contractads and their Koszul duals, resulting in a reasonably compact description of the desired generating series.
	
	\subsection{Functional equation for generating series}
	With each simple connected graph\footnote{By simple graph, we mean a graph without multiple edges and without loops.} $\Gamma$ and a given subset of vertices $G$ of $\Gamma$ (such that the induced subgraph $\Gamma|_{G}$ is connected), we can associate a quotient simple connected graph $\Gamma/G$. This is achieved by contracting all vertices of $G$ into a single vertex and removing any resulting loops and multiple edges. This operation is associative and leads to the definition of a contractad, as discovered in~\cite{lyskov2023contractads}.
	
	Roughly speaking, a contractad $\mathcal{P}$ in a monoidal category $\mathcal{C}$ assigns to each simple connected graph $\Gamma$ an object $\mathcal{P}(\Gamma)$ and to each connected subset of vertices a composition rule:
	$$
	\circ^{\Gr}_G: \mathcal{P}(\Gamma/G) \otimes \mathcal{P}(\Gamma|_{G}) \rightarrow \mathcal{P}(\Gamma).
	$$
	This rule satisfies appropriate associativity and symmetry conditions. More generally, for each partition of vertices $I = G_1 \sqcup G_2 \sqcup \ldots \sqcup G_k$ such that each subset $G_i$ is connected, we have a composition map:
	\begin{equation}
		\label{eq::composition}
		\gamma^{\Gamma}: (\mathcal{P} \circ \mathcal{P})(\Gamma) := \bigoplus_{I \vdash \Gamma} \mathcal{P}(\Gamma/I)\otimes\bigotimes_{G \in I} \mathcal{P}(\Gamma|_{G}) \rightarrow \mathcal{P}(\Gamma).
	\end{equation}
	
	As with all operadic-type structures, many interesting examples of contractads can be presented by generators and relations. 
	We consider several basic examples, generalizing the known examples of operads of commutative, associative, Lie, Poisson algebras, the little discs operads and the Gravity and Hypercommutative operads. We show that in all these examples one can find a quadratic Gr\"obner basis, what follows that these contractads are Koszul. 
	Recall that a contractad is Koszul whenever its Koszul complex is acyclic. As a graded vector space, the Koszul complex is a composition $\mathcal{P}^{\cokoszul}\circ\mathcal{P}$. Thus, to derive a functional equation for the Hilbert series of $\mathcal{P}(\Gamma)$, one needs to interpret the composition "$\circ$" used in the Koszul complex~\eqref{eq::composition}.
	
	Unfortunately, the set of connected simple graphs is a very complicated combinatorial object, and it is unclear how to collect the numbers $\dim \mathcal{P}(\Gamma)$ into a single generating series. 
	The easeast solution is to consider \underline{\emph{graphic functions}} that assign a number to each simple connected graph
	(Definition~\ref{def::graphic::func}).
	Composition~\eqref{eq::composition} of graphic collections
	defines an associative product of corresponding graphic functions (introduced by Schmitt in~\cite{schmitt1994incidence}):
	\begin{equation}
		\label{eq::contr::comp}
		(\phi * \psi)(\Gamma) := \sum_{I \vdash \Gamma} \phi(\Gamma/I) \prod_{G \in I} \psi(\Gamma|_{G}).
	\end{equation}
	For a quadratic Koszul contractad $\mathcal{P}$ generated by binary operations, the functional equation will be:
	\begin{equation}
		\label{eq::func::eq::contr}
		\chi(\mathcal{P}) * \chi_{-1}(\mathcal{P}^{!}) = \chi_{-1}(\mathcal{P}^{!}) * \chi(\mathcal{P}) = \varepsilon,
	\end{equation}
	where $\chi_{-1}(\mathcal{P}^{!})(\Gamma) = (-1)^{|V_{\Gamma}| - 1} \dim \mathcal{P}^{!}(\Gamma)$ and $\varepsilon(\Gamma)$ is nonzero only for a graph with one vertex.
	Concequently, the graphic function $\chi(\mathcal{P})$ associated with a Koszul contractad $\mathcal{P}$ uniquely defines the graphic function $\chi(\mathcal{P}^{!})$ for the Koszul dual contractad.
	However, functional equation~\eqref{eq::func::eq::contr} is given as a summation over subgraphs or quotient graphs and it is not clear for us how to use this formula in general. We suggest a partial solution to this problem through the following observation. The \emph{contractad composition}~\eqref{eq::contr::comp} has a meaningful interpretation if we restrict our interest to certain particular families of graphs. These families include paths, cycles, and complete multipartite graphs.
	For example, to a graphic function $\psi$ with values in $\Bbbk$, we assign its Young generating function $F_{\mathsf{Y}}(\psi) \in \Bbbk[[z]] \otimes \Lambda_{\mathbb{Q}}$, by the rule:
	\begin{gather*}
		F_{\mathsf{Y}}(\psi)(z) = \sum_{l(\lambda) \geq 2} \psi(\K_{\lambda}) \frac{m_{\lambda}}{\lambda!} + \sum_{n \geq 1} \sum_{|\lambda| \geq 0} \psi(\K_{(1^n) \cup \lambda}) \frac{z^n}{n!} \frac{m_{\lambda}}{\lambda!}
	\end{gather*}
	Here, $\K_\lambda$ denotes a complete multipartite graph with partition denoted by $\lambda$; $m_\lambda = \mathsf{Sym}(x^{\lambda})$ is an additive basis in the ring of symmetric functions called a monomial basis and $\lambda!:=\lambda_1!\lambda_2!\ldots$.
	
	The following theorem is one of the main observations of this article:
	\begin{theorem*}[Theorem~\ref{thm::Ser::MS}]
		For graphic functions $\psi, \varphi$ the Young symmetric function of their composition is the composition of these functions with respect to the variable $z$:
		$$		F_{\mathsf{Y}}(\psi * \varphi)(z) = F_{\mathsf{Y}}(\psi)(F_{\mathsf{Y}}(\varphi)(z)).
		$$
	\end{theorem*}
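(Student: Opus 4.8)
The plan is to prove the identity coefficient by coefficient. The monomials $\frac{z^{n}}{n!}\frac{m_{\lambda}}{\lambda!}$ attached to the connected complete multipartite graphs $\K_{\mu}$ (with $\mu=(1^{n})\cup\lambda$) are linearly independent in $\Bbbk[[z]]\otimes\Lambda_{\mathbb{Q}}$, so it is enough to fix such a graph $\K_{\mu}$ and compare the coefficient of its monomial on the two sides. On the left it equals $(\psi*\varphi)(\K_{\mu})=\sum_{I\vdash\K_{\mu}}\psi(\K_{\mu}/I)\prod_{G\in I}\varphi(\K_{\mu}|_{G})$, a sum over partitions $I$ of $V(\K_{\mu})$ into subsets each inducing a connected subgraph. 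First I would record the graph-theoretic facts that make this sum tractable: a subset $G\subseteq V(\K_{\mu})$ is connected precisely when $|G|=1$ or $G$ meets at least two blocks of $\mu$; for such $G$ the induced subgraph $\K_{\mu}|_{G}$ is again complete multipartite, with block partition $\{G\cap B:B\in\mu,\ G\cap B\neq\emptyset\}$; and for a connected partition $I$ the quotient $\K_{\mu}/I$ is complete multipartite, its blocks being one block of size $\#\{v\in B:\{v\}\in I\}$ for each block $B$ of $\mu$ still carrying a singleton of $I$, together with one block of size $1$ for every part of $I$ meeting at least two blocks of $\mu$. In particular $(\psi*\varphi)(\K_{\mu})$ involves only the values of $\psi,\varphi$ on complete multipartite graphs, so the statement is well posed.

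The core of the argument is to read both sides as generating series over the same combinatorial objects. I would first set up the standard compositional-formula dictionary for $\Bbbk[[z]]\otimes\Lambda_{\mathbb{Q}}$: an element is the generating series of a species of set partitions in which singleton blocks are tracked by the exponential variable $z$ and larger blocks are tracked, with pairwise distinct colours, by the monomial symmetric functions; the product of two such series is the superposition of the two decorated partitions (colours of equal value fuse, adding their sizes), and substitution into $z$ is the partitional composition — partition the ground set, put an inner structure on each block and an outer structure on the set of blocks. Under this dictionary $F_{\mathsf{Y}}(\psi)$ is exactly the series of complete multipartite structures weighted by $\psi$ of the associated graph, and $F_{\mathsf{Y}}(\psi)\big(F_{\mathsf{Y}}(\varphi)(z)\big)$ is the series of two-level structures whose outer layer carries a $\psi$-weight and whose inner blocks carry $\varphi$-weights. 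It then remains to produce a weight-preserving bijection between the connected partitions $I$ of $V(\K_{\mu})$ (with their decorations) and those two-level structures that reassemble to $\K_{\mu}$; the structural description of $\K_{\mu}/I$ recorded above is what identifies the outer layer with $\K_{\mu}/I$ and the inner blocks with the graphs $\K_{\mu}|_{G}$. Summing over $I$ then yields the required coefficient identity, and hence the theorem.

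The step I expect to be the genuine obstacle is this final matching, because of the way the exponential variable $z$ and the monomial-symmetric-function variables interact. A part $G\in I$ of size one contributes the scalar $\varphi(\K_{(1)})$ and occupies a single vertex inside one block of $\mu$; a part spanning several blocks contributes $\varphi$ of a genuine multi-block complete multipartite graph and is re-absorbed into a size-one block of $\K_{\mu}/I$; and the singletons surviving inside a fixed block of $\mu$ are amalgamated into one block of $\K_{\mu}/I$. One must check that the colour-fusion rule of multiplication in $\Lambda_{\mathbb{Q}}$ reproduces exactly the "amalgamate the surviving singletons, block by block" operation, that substitution into $z$ reproduces the insertion of the multi-block inner structures and of the lone surviving singletons, and — most delicately — that the normalizing constants $1/n!$, $1/\lambda!$ and the orders of the relevant graph automorphism groups cancel on the nose. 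A robust way to organize this is to first dispatch the two degenerate regimes, namely the analogue for the family $\Path_{\bullet}$ of paths (ordinary composition of one-variable power series) and the restriction to graphs with no singleton block (composition carried entirely by the $\Lambda_{\mathbb{Q}}$-part), and then to show that a general complete multipartite graph decomposes into these two mechanisms and that the two bookkeepings are compatible.
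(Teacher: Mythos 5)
Your route is genuinely different from the paper's. The paper does not attack the coefficient identity directly: it packages the values of a graphic function on complete multipartite graphs into a \emph{pointed multi-symmetric collection} via the forgetful functor $\Psi$ (Proposition~\ref{prp::multipart}), observes that such collections are a special kind of coloured operad with infinitely many colours in which all components of the generating endomorphism except the $z$-coordinate are identities, and then quotes the standard fact that generating series of coloured collections compose. That manoeuvre outsources exactly the bookkeeping you defer to the end of your plan. The paper explicitly states that it believes a direct combinatorial proof of the kind you propose exists, so your strategy is legitimate and, if completed, arguably more self-contained; what the paper's approach buys is that the delicate cancellation of $1/n!$, $1/\lambda!$ and automorphism factors never has to be checked by hand.

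There are, however, two real problems with the proposal as written. First, the entire mathematical content of the theorem is the weight-preserving bijection together with the normalization check, and you announce it rather than perform it; as it stands this is a plan, not a proof. Second, and more concretely, the identity is \emph{false} for general graphic functions: one must assume $\varphi$ is connected, i.e.\ $\varphi(\Path_1)=1$ (the body of the paper, Theorem~\ref{thm::ser::multipartie}, imposes this; the introduction's phrasing drops it). To see the failure, extract the coefficient of $m_{(1)}\,z$ (the graph $\K_{(1,1)}$ realised with one vertex in the $z$-slot and one in the $\lambda$-slot): the left-hand side gives $\psi(\Path_1)\varphi(\K_2)+\psi(\K_2)\varphi(\Path_1)^2$, while the right-hand side gives $\psi(\Path_1)\varphi(\K_2)+\psi(\K_2)\varphi(\Path_1)$, because composition only substitutes into the variable $z$ and leaves the $x_i$ untouched. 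This pinpoints the flaw in your dictionary: you treat every singleton part of $I$ uniformly as "contributing the scalar $\varphi(\K_{(1)})$", but on the right-hand side only the singletons living in the $z$-tracked blocks pass through $F_{\mathsf{Y}}(\varphi)$; the singletons surviving inside the $m_\lambda$-tracked blocks acquire no factor at all. Your bijection can only close up once $\varphi(\Path_1)=1$ makes this asymmetry invisible, and any correct write-up has to isolate and use that hypothesis explicitly (it is the combinatorial shadow of the paper's condition $F^{\underline{i}}(\eQ)=x_i$ for $i>0$).
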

	
	This theorem allows us to find precise formulas for the Commutative (\(\Com\)), Lie (\(\Lie\)), and Gerstenhaber (\(\Gerst\)) contractads. Similar to the case of operads, $\Gerst$ is a contractad of the homology of the little disks contractad. In particular, its space of operations is given by the homology of generalized configuration spaces:
	$$
	\Gerst(\Gamma) \simeq H_{\bullet}(\Conf_{\Gamma}(\mathbb{C}); \mathbb{Q}), \text{ where } \Conf_{\Gamma}(X) := X^{|V(\Gamma)|} \setminus \bigcup_{(ij) \in \Gamma} \{x_i = x_j\}.
	$$
	The Hilbert series of the homology of generalized configuration spaces of points on a plane is known to coincide with the chromatic polynomial (see, e.g.,~\cite[\S2.4]{orlik2013arrangements}). Therefore, as a corollary, we obtain the following equality of the generating series of chromatic polynomials for complete multipartite graphs:
	$$
	\sum_{|\lambda| \geq 0} \chi_{\K_{\lambda}}(q) \frac{m_{\lambda}}{\lambda!} = \left(1 + \sum_{n \geq 1} \frac{p_n}{n!}\right)^q.
	$$
	Here, $p_n = \sum x_i^n$ denotes the Newton power sum (see Corollary~\ref{cor::yungchrom} for details).

	\subsection{Wonderful contractad and Modular compactifications}
	In their seminal work~\cite{de1995wonderful}, De Concini and Procesi introduced the concept of a \emph{wonderful compactification} for any hyperplane arrangement. They demonstrated that the wonderful compactification of the braid arrangement is isomorphic to the compactified moduli space $\bM_{0,n+1}$, which represents stable rational curves with $n+1$ marked points.
	
	E. Rains later discovered~\cite{rains2010homology} that these wonderful compactifications exhibit functorial properties with respect to arrangements, thereby defining an operadic-type structure on the union of these spaces. More recently, it was shown in~\cite[\S 2.6]{lyskov2023contractads} that the wonderful compactifications of the complement of \emph{graphical arrangements} $\Bbbk^n \setminus \cup_{(ij)\in\Gamma} \{x_i = x_j\}$  form a contractad $\bM$ in the category of algebraic varieties over $\Bbbk$, termed \emph{the wonderful contractad}. 
	
	In this paper, we focus on the case of complete multipartite graphs $\K_{\lambda}$ on $n$ vertices. We demonstrate that the corresponding wonderful compactification $\bM(\K_{\lambda})$ is isomorphic to specific blowdowns of the Deligne-Mumford compactification $\beM_{0,\K_{\lambda}}$, where all components in a degenerate curve are collapsed if there is no edge in the graph $\K_{\lambda}$ connecting two vertices from this component. These smooth manifolds were initially introduced by Smyth (\cite{smyth2009towards}) under the term \emph{modular compactification}, with the smooth cases classified in~\cite{moon2018birational}.
	
	We further show that the corresponding homology contractads $H_{\bullet}(\bM_{\mathbb{k}})$ for $\Bbbk=\mathbb{C}$, $\mathbb{R}$ are Koszul and provide substantial enumerative information about them. Specifically, we prove the following results for the complex case generalizing the work of Getzler~\cite{getzler1995operads} on Hypercommutative and Gravity operads:
	
	\begin{theorem*}[Theorem~\ref{thm::homologywond_hyper}, Theorem~\ref{thm:gravpres},  Theorem~\ref{thm::hilbert_complex_modular}]\hfill\break
		\begin{itemize}
			\item The homology contractad $H_{\bullet}(\bM_{\mathbb{C}})$ is a quadratic Koszul contractad $\Hyper$, generated by symmetric elements $\nu_{\Gr} \in \Hyper(\Gr)$ of degree $2(|V_\Gamma|-2)$, satisfying the following relations: for each connected graph $\Gr$ and each pair of edges $e,e'\in E_{\Gr}$, we have
			$$
			\sum_{G\colon e\subset G} \nu_{\Gr/G}\circ^{\Gr}_G \nu_{\Gr|_G}=\sum_{G\colon e'\subset G} \nu_{\Gr/G}\circ^{\Gr}_G \nu_{\Gr|_G}.
			$$
			\item The Koszul dual contractad of $\Hyper$, called gravity ($\Grav$), coincides with $H_{\bullet+1}(\M_{\mathbb{C}})$ and  is  generated by symmetric generators $\lambda_{\Gr}$ of degree 1 in each component $\Gr$, satisfying the relations:
			\[
			\forall \Gr \text{ we have }
			\begin{cases}
				\sum_{e \in E_{\Gr}} \lambda_{\Gr/e} \circ_e \lambda_{\Gr|_e}=0;
				\\
				\sum_{e \in E_{\Gr|_G}} \lambda_{\Gr/e} \circ_e \lambda_{\Gr|_e}=\lambda_{\Gr/G} \circ^{\Gr}_G \lambda_{\Gr|_G} \\ \qquad \text{ for all tubes $G\subset \Gr$  with at least 3 vertices.}
			\end{cases}
			\]
			
			\item The generating series for Betti numbers of Modular compactifications $\beM_{0,\K_{\lambda}}(\mathbb{C})$
			\begin{multline*}
				F_{\mathsf{Y}}(\beM(\mathbb{C}))=\sum_{l(\lambda)\geq 2} \left[\sum^{|\lambda|-2}_{i=0}\dim H^{2i}(\beM_{0,\K_{\lambda}}(\mathbb{C}))q^i\right]\frac{m_{\lambda}}{\lambda!}+
				\\
				+ \sum_{n\geq 1, |\lambda|\geq 0} \left[\sum^{|\lambda|+n-2}_{i=0}\dim H^{2i}(\beM_{0,\K_{(1^n)\cup \lambda}}(\mathbb{C}))q^i\right]\frac{m_{\lambda}}{\lambda!}\frac{z^n}{n!}, 
			\end{multline*} is functional inverse (with respect to the variable $z$) of the following function
			$$
			G(z)=\frac{q}{q-1}z-\frac{1}{q(q-1)}\left[\left(1+z+\sum_{n\geq 1}\frac{p_n}{n!}\right)^q-1-\sum_{n\geq 1}\frac{p_nq^n}{n!}\right].
			$$
		\end{itemize}
	\end{theorem*}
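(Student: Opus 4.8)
The three items are proved in the order displayed: the first two are the algebro-geometric core, and the third is a formal consequence of the functional equation \eqref{eq::func::eq::contr} together with Theorem~\ref{thm::Ser::MS}. The main inputs are the description of the cohomology of wonderful models of graphical arrangements (De Concini--Procesi, Rains), the identification $\bM(\K_\lambda)\cong\beM_{0,\K_\lambda}$ with Smyth's modular compactification established earlier in the paper, the contractad Gröbner basis machinery with the monomial order $\grpermlex$, and Corollary~\ref{cor::yungchrom}.

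\textbf{The Hypercommutative contractad.} Introduce the abstract quadratic contractad $\Hyper$ with the displayed presentation (one symmetric generator $\nu_\Gr$ of degree $2(|V_\Gr|-2)$ per connected graph; the quadratic Keel-type relations indexed by pairs of edges). First I would prove $\Hyper$ is Koszul by exhibiting a quadratic Gröbner basis with respect to $\grpermlex$: the relation attached to $(e,e')$ has as leading term the composition along the $\grpermlex$-minimal tube through one of the edges, and one checks that all S-polynomials reduce to zero, so that the $\grpermlex$-normal monomials of $\Hyper(\Gr)$ are the nested compositions in which each contracted tube contains a preferred edge --- a family with an explicit combinatorial count --- whence Koszulity follows from the Gröbner criterion for contractads. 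Next, define the contractad map $f\colon\Hyper\to H_\bullet(\bM_{\mathbb C})$ by $\nu_\Gr\mapsto[\bM(\Gr)]$; then $\nu_{\Gr/G}\circ^\Gr_G\nu_{\Gr|_G}$ goes to the boundary divisor class $[D_G]$, and $\sum_{G\supset e}[D_G]=\sum_{G\supset e'}[D_G]$ is the usual Keel relation for the modular compactification (obtained by pulling back the unique relation in $H_\bullet(\mathbb P^1)$ along a suitable forgetful contractad morphism, or by a direct intersection computation on $\bM(\Gr)$), so $f$ is well defined. Surjectivity of $f$ is standard: the cohomology ring of a wonderful model is generated in degree two by boundary divisors, hence every homology class is a combination of classes of closed boundary strata, and those are exactly images of iterated compositions of the $\nu_\Gr$. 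The remaining --- and hardest --- point is injectivity of $f$, which amounts to the equality of Poincaré polynomials: the De Concini--Procesi/Yuzvinsky formula for $\sum_i\dim H^{2i}(\bM(\Gr))q^i$ of the wonderful model of the graphical arrangement must coincide with the generating function of $\grpermlex$-normal forms of $\Hyper(\Gr)$. This is where the bulk of the work lies (it extends Keel's and Getzler's result for $\beM_{0,n+1}$); I would prove it by matching both sides against the recursive decomposition of $\bM(\Gr)$ into locally closed strata $\prod\M_{\mathbb C}(-)$, which mirrors the cofree structure underlying $\Hyper^\cokoszul\circ\Hyper$.

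\textbf{The gravity contractad.} Put $\Grav:=\Hyper^!$. Its presentation is obtained mechanically: the generators $\lambda_\Gr$ are the (suspended) duals of the $\nu_\Gr$, of degree $1$, and the quadratic relations of $\Grav$ in a given two-vertex graph-tree component are the orthogonal complement of the $\Hyper$-relations there under the canonical pairing between $\nu$- and $\lambda$-decorated compositions; carrying out this finite linear-algebra computation graph by graph yields precisely the two displayed families (the cyclic relations and the tube relations). To identify $\Grav$ with $H_{\bullet+1}(\M_{\mathbb C})$ I would run Getzler's argument: the normal-crossings compactification $\M_{\mathbb C}(\Gr)\hookrightarrow\bM_{\mathbb C}(\Gr)$ carries a weight (Deligne) spectral sequence whose first page is assembled from the cohomology of intersections of boundary divisors --- which by the previous item are tensor products of copies of $H_\bullet(\bM_{\mathbb C}(-))=\Hyper(-)$ --- with differential an alternating sum of Gysin maps; this first page is exactly the (shifted, dualized) Koszul complex $\Hyper^\cokoszul\circ\Hyper$, acyclic by the Koszulity established above, so the spectral sequence degenerates and $H^\bullet(\M_{\mathbb C}(\Gr))\cong\Hyper^\cokoszul(\Gr)$, i.e.\ $H_{\bullet+1}(\M_{\mathbb C})\cong\Grav$ as contractads, the degree shift being the operadic suspension intrinsic to Koszul duality. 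As a check one compares with the Orlik--Solomon presentation of $H^\bullet(\Conf_\Gr(\mathbb C))$ and the $\Aff(\mathbb C)\simeq S^1$-bundle $\Conf_\Gr(\mathbb C)\to\M_{\mathbb C}(\Gr)$.

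\textbf{The generating series.} Since $\Hyper$ is Koszul, acyclicity of its Koszul complex gives the graph-graded, $q$-graded refinement of \eqref{eq::func::eq::contr}: $\chi(\Hyper)\ast\chi_{-1}(\Grav)=\chi_{-1}(\Grav)\ast\chi(\Hyper)=\varepsilon$, where $\chi(\Hyper)(\Gr)=\sum_i\dim H^{2i}(\bM(\Gr))\,q^i$ and $\chi_{-1}(\Grav)(\Gr)$ is the corresponding sign-twisted Poincaré series of $\M_{\mathbb C}(\Gr)$. Applying Theorem~\ref{thm::Ser::MS} and $F_{\mathsf{Y}}(\varepsilon)(z)=z$ turns this into $F_{\mathsf{Y}}(\chi(\Hyper))\bigl(F_{\mathsf{Y}}(\chi_{-1}(\Grav))(z)\bigr)=z$, so $F_{\mathsf{Y}}(\chi(\Hyper))$ and $F_{\mathsf{Y}}(\chi_{-1}(\Grav))$ are functional inverses in $z$. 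By the first item, $F_{\mathsf{Y}}(\chi(\Hyper))=F_{\mathsf{Y}}(\beM(\mathbb C))$, literally by the definition of $F_{\mathsf{Y}}$. It remains to identify $F_{\mathsf{Y}}(\chi_{-1}(\Grav))$ with $G(z)$: from $P(\Conf_\Gr(\mathbb C),u)=(-u)^{|V_\Gr|}\chi_\Gr(-1/u)$ and $P(\Conf_\Gr(\mathbb C),u)=(1+u)\,P(\M_{\mathbb C}(\Gr),u)$ one expresses the Poincaré series of $\M_{\mathbb C}(\K_\lambda)$ through the chromatic polynomials $\chi_{\K_\lambda}$, whose Young generating function is $\bigl(1+\sum_{n\geq1}p_n/n!\bigr)^q$ by Corollary~\ref{cor::yungchrom}; substituting and keeping track of the sign $(-1)^{|V|-1}$, the factor $(1+u)$, the degree shift, and the variable $z$ recording the singleton blocks, the expression collapses to $G(z)=\frac{q}{q-1}z-\frac{1}{q(q-1)}\bigl[\bigl(1+z+\sum_{n\geq1}p_n/n!\bigr)^q-1-\sum_{n\geq1}p_nq^n/n!\bigr]$. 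This last reduction is a finite but somewhat delicate bookkeeping computation; everything else in this item is formal. The decisive obstacle in the whole argument remains the Poincaré-polynomial identity proving injectivity of $f$ in the first item, together with the spectral sequence degeneration in the second.
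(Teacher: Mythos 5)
Your architecture is recognizably the same as the paper's (quadratic presentations, Koszulity, the Deligne weight spectral sequence, and the Koszul functional equation pushed through the Young generating series), but there is a genuine gap at the central step, and you have located it yourself: the injectivity of the map $f\colon\Hyper\to H_{\bullet}(\bM_{\mathbb{C}})$, $\nu_{\Gr}\mapsto[\bM_{\mathbb{C}}(\Gr)]$. You propose to close it by proving equality of the full Poincar\'e polynomials of $\bM_{\mathbb{C}}(\Gr)$ and of the $\grpermlex$-normal forms of $\Hyper(\Gr)$, "by matching both sides against the recursive decomposition into strata". As stated this is not an argument: the stratum count expresses $P(\bM_{\mathbb{C}}(\Gr))$ through the Betti numbers of the open parts $\M_{\mathbb{C}}(-)$, which is precisely the content of your second item, which you in turn derive from the first --- so the sketch is circular unless you produce an independent combinatorial proof of the identity, which you do not. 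The paper avoids this entirely by reversing the logic: it first proves, with no reference to any presentation, that $H_{\bullet}(\bM_{\mathbb{C}})$ is \emph{quadratic} and Koszul. The input is the Deligne spectral sequence of the normal-crossings pair $(\bM_{\mathbb{C}}(\Gr),\partial\bM_{\mathbb{C}}(\Gr))$, whose first page is the bar construction of $H_{\bullet}(\bM_{\mathbb{C}})$ (note: the bar construction, not the Koszul complex $\Hyper^{\cokoszul}\circ\Hyper$ as you write --- a slip, though not a fatal one), together with the purity of weight $2k$ on $H^{k}(\M_{\mathbb{C}}(\Gr))$, which comes for free from the Orlik--Solomon description by logarithmic forms. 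Purity forces the $E_2$ page onto one diagonal, i.e.\ the bar cohomology sits in syzygy degree $0$; this is Corollary~\ref{cor::koszul::quadratic}. Once quadraticity of the geometric side is known, injectivity of $f$ reduces to matching the weight-two components only, and that is a one-line count: $\dim\Hyper^{(2)}(\Gr)=|Tube_{\geq2}(\Gr)|-|E_{\Gr}|+1$ equals the number of tubes with at least three vertices, which is $\dim H^{2}(\bM_{\mathbb{C}}(\Gr))$ in the $h$-class basis. Your route also makes the degeneration of the spectral sequence depend on the Koszulity of the abstract $\Hyper$ applied to the (not yet identified) first page; in the paper the degeneration is forced by Hodge theory alone.

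A secondary difference worth flagging: the paper does not obtain $\Grav$ as $\Hyper^{!}$ by "mechanical orthogonality". It constructs $\Grav$ first, as $\ker\Delta\subset\Gerst$ for the circle action on the little disks contractad, proves the displayed presentation is complete via the dimension-halving identity $\dim\Grav(\Gr)=\tfrac12\dim\Gerst(\Gr)$ (acyclicity of $(\Gerst,\Delta)$) combined with a $2$-weighted $\grpermlex$ Gr\"obner basis, and only then defines $\Hyper:=\Susp^{-1}\Grav^{!}$. This is not merely a matter of taste: the $\Gerst$-side computation is what supplies the quadratic Gr\"obner basis you assert for $\Hyper$ without verifying S-polynomials, and it is also what feeds the chromatic-polynomial identity $\chi_{-q}(\Gerst)=\mathbb{1}*(\mathbb{1}_q\cdot\mu)$ used to identify $G(z)$ in the third item. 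Your third item is otherwise sound and matches the paper's formal derivation, but it only becomes a proof once the first two items are actually established.
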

	
	Additionally, we describe the real locus contractad $\bM_{\mathbb{R}}$ generalizing the corresponding models for operads discovered in~\cite{etingof2010cohomology,khoroshkin2019real}:
	
	\begin{theorem*}[Theorem~\ref{thm::cell_structure}, Theorem~\ref{thm::homology_real_locus}, Theorem~\ref{thm::hilbert_real_modular}]\hfill\break
		\begin{itemize}
			\item There exists a natural cell decomposition of the topological contractad $\bM_{\mathbb{R}}$ that is compatible with the contractad structure. The corresponding dg-contractad $C^{\mathrm{cell}}_{\bullet}(\bM_{\mathbb{R}})$ of cell complexes with rational coefficients is isomorphic to the cobar complex of the cocontractad $[\Ass^*]^{\tau}$.
			Here $[\Ass]^{\tau}$ is the space of coinvariants of the associative contractad for the action of the involution $\tau$ that reverses the order of how one reads associative words.
			\item The homology contractad $H_{\bullet}(\bM_{\mathbb{R}})$ is a quadratic Koszul contractad generated by binary and ternary operations, with the following isomorphism at the level of graphical collections:
			$$H_{\bullet}(\bM_{\mathbb{R}};\mathbb{Q}) \simeq \Com\circ (\Com_{\mathrm{odd}})^!.$$
			Here $\Com_{\mathrm{odd}}$ is a subcontractad of $\Com$ spanned by graphs with odd number of vertices.
			\item The generating series for (rational) Betti numbers of Modular compactifications $\beM_{0,\K_{\lambda}}(\mathbb{R})$ have a concise presentation
			$$
			F_{\mathsf{Y}}(\beM(\mathbb{R}))
			= \left[\sqrt{q}(z+\mathsf{SINH}_q)+\sqrt{q(z+\mathsf{SINH}_q)^2+1}\right]^{\frac{1}{\sqrt{q}}}-1-\sum_{n\geq 1}\frac{p_n}{n!},
			$$
			where  $\mathsf{SINH}_q=\sum_{n\geq 0}\frac{p_{2n+1}q^n}{(2n+1)!}$.
		\end{itemize}
	\end{theorem*}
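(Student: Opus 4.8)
The three items are proved successively: the first is the geometric input, the second its algebraic consequence, and the third is generating-function bookkeeping.

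\textbf{(i) Cell decomposition and the cobar complex.} The plan is to transport the De~Concini--Procesi chart description of the real points of a wonderful model to the graphical setting, generalizing the cellular models of~\cite{etingof2010cohomology,khoroshkin2019real}. For a maximal nested set $\mathcal N$ of tubes (connected induced subgraphs) of $\Gamma$, the space $\bM_{\mathbb R}(\Gamma)$ carries an affine chart in which the boundary is a coordinate normal-crossing divisor; intersecting these charts with the loci of fixed signs of the blow-up coordinates subdivides $\bM_{\mathbb R}(\Gamma)$ into cells. I would index a cell by a nested set $\mathcal N$ of tubes of $\Gamma$ together with, for each $G\in\mathcal N\cup\{\Gamma\}$, a linear order of the vertices of the local graph $\Gamma_G$ (obtained from $\Gamma|_G$ by contracting the maximal proper subtubes of $G$ contained in $\mathcal N$), taken up to reversal --- equivalently, a basis element of $[\Ass^*]^\tau(\Gamma_G)$. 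Such data is exactly a contraction tree over $\Gamma$ with vertices decorated by reversal-classes of associative words, that is, a basis element of $\Cobar([\Ass^*]^\tau)(\Gamma)$; a cell whose tree has $j$ vertices has dimension $|V_\Gamma|-1-j$. One then checks that (a) this is a regular CW decomposition with face order given by refinement of nested sets and compatibility of orders; (b) the cellular boundary, computed on the boundary charts, is the cobar differential of $[\Ass^*]^\tau$ once the natural Koszul signs of splitting a tree vertex are inserted --- this sign is essential, since already for $\bM_{\mathbb R}(K_3)=\overline M_{0,4}(\mathbb R)=S^1$, whose cellular chain complex is $\mathbb Q^3\to\mathbb Q^3$, the unsigned incidence matrix is invertible while the signed one has the correct one-dimensional kernel and cokernel; (c) the composition maps $\circ^{\Gamma}_G$ carry cells to cells, matching the operadic decomposition of contraction trees. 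This gives an isomorphism $C^{\mathrm{cell}}_\bullet(\bM_{\mathbb R})\cong\Cobar([\Ass^*]^\tau)$ of dg-contractads. I expect (b) --- making the chart combinatorics and the attaching maps explicit and pinning down all signs --- to be the main obstacle.

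\textbf{(ii) The homology contractad.} Granting (i), $H_\bullet(\bM_{\mathbb R})=H_\bullet\bigl(\Cobar([\Ass^*]^\tau)\bigr)$. I would first exhibit $[\Ass^*]^\tau$ as the Koszul dual cocontractad $\mathcal Q^{\cokoszul}$ of an explicit quadratic contractad $\mathcal Q$: compute the cogenerators of $[\Ass^*]^\tau$ (the edge class, and the primitive classes on three-vertex graphs) and its coquadratic corelations (on graphs with at most five vertices), and dualize to a presentation of $\mathcal Q$ by binary and ternary generators. Then prove $\mathcal Q$, equivalently $\mathcal Q^!$, Koszul by exhibiting a quadratic Gröbner basis of its relations for the monomial order on contractad monomials used earlier in the paper; Koszulity gives $H_\bullet(\Cobar(\mathcal Q^{\cokoszul}))=\mathcal Q$, so $H_\bullet(\bM_{\mathbb R})\cong\mathcal Q$, with the homological degree of a class equal to $|V_\Gamma|-1-j$ for a representative contraction tree with $j$ vertices. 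Finally I would match $\mathcal Q$ with $\Com\circ(\Com_{\mathrm{odd}})^!$ as graphical collections: the normal monomials of $\mathcal Q$ furnished by the Gröbner basis are in bijection with pairs $(I,\text{decoration})$, where $I$ is a partition of $V_\Gamma$ into tubes of odd size, each block decorated by a normal monomial of $(\Com_{\mathrm{odd}})^!$ and the contracted graph carrying the unique $\Com$-class; alternatively one matches Hilbert series via the functional equation below and lifts, all contractads involved being Koszul. This is the graphical analogue of Getzler's treatment of the gravity and hypercommutative operads; the bulk of the work is the Gröbner basis and the combinatorial bijection.

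\textbf{(iii) The generating series.} By the earlier identification $\bM_{\mathbb R}(\K_\lambda)\cong\beM_{0,\K_\lambda}(\mathbb R)$, the series $F_{\mathsf Y}(\beM(\mathbb R))$ is the graded Young generating function of the graphical collection $H_\bullet(\bM_{\mathbb R})$, so by (ii) and the weighted form of Theorem~\ref{thm::Ser::MS},
\[
F_{\mathsf Y}(\beM(\mathbb R))(z;q)=F_{\mathsf Y}(\Com)\bigl(F_{\mathsf Y}((\Com_{\mathrm{odd}})^!)(z;q)\bigr).
\]
A direct summation, using $\sum_\lambda m_\lambda/\lambda!=e^{p_1}$ and separating total-degree parities, gives
\[
F_{\mathsf Y}(\Com)(w)=e^{\,w+p_1}-1-\sum_{n\ge 1}\frac{p_n}{n!},\qquad
F_{\mathsf Y}(\Com_{\mathrm{odd}})(z)=\sinh(z+p_1)-\mathsf{SINH}_1 .
\]
Since $(\Com_{\mathrm{odd}})^!$ is concentrated, on a graph with $2k+1$ vertices, in Koszul weight $k$, the functional equation~\eqref{eq::func::eq::contr} (in its graded form, with the signs and $q$-powers dictated by the weight-to-degree dictionary of (ii)) identifies $F_{\mathsf Y}((\Com_{\mathrm{odd}})^!)(z;q)$ with the $z$-compositional inverse of $F_{\mathsf Y}(\Com_{\mathrm{odd}})$ followed by the substitution $x_i\mapsto\sqrt q\,x_i$, $z\mapsto\sqrt q\,z$ and rescaling by $q^{-1/2}$ --- this is where $\sqrt q$ and $\mathsf{SINH}_q$ enter. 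Solving $\sinh(g+p_1)=z+\mathsf{SINH}_1$ and applying the substitution yields
\[
F_{\mathsf Y}((\Com_{\mathrm{odd}})^!)(z;q)=\frac{1}{\sqrt q}\,\mathrm{arcsinh}\bigl(\sqrt q\,(z+\mathsf{SINH}_q)\bigr)-p_1 .
\]
Substituting this into $F_{\mathsf Y}(\Com)$, the $-p_1$ cancels the $+p_1$, and using $\sqrt q\,u+\sqrt{q u^2+1}=\exp\bigl(\mathrm{arcsinh}(\sqrt q\,u)\bigr)$ with $u=z+\mathsf{SINH}_q$ gives the stated closed form. The one delicate point is fixing the $q$-grading convention in the functional equation (equivalently, committing to the Koszul-sign version of the Poincaré-polynomial-valued graphic function), after which the computation is routine.
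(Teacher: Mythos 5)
Your items (i) and (iii) follow essentially the paper's route. For (i), the cells of $\bM_{\mathbb{R}}(\Gr)$ are exactly the connected components of the strata $\M_{\mathbb{R}}((T))$, each $\M_{\mathbb{R}}(\Gr')$ being a disjoint union of open balls indexed by the reversal-coinvariants of $\Ass(\Gr')$; the paper obtains this from $\Conf_{\Gr'}(\mathbb{R})/\Aff_1(\mathbb{R})$ being a union of contractible cones rather than from De Concini--Procesi charts, but the indexing, the dimension count $|V_{\Gr}|-1-j$, and the incidence combinatorics agree with yours. For (iii), your computation of $F_{\mathsf{Y}}(\Com)$, of $F_{\mathsf{Y}}(\Com_{\mathrm{odd}})$ with the $\sqrt{q}$ rescaling, and of the compositional inverse via $\mathrm{arcsinh}$ is the paper's computation.

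The genuine gap is in (ii). You propose to exhibit $[\Ass^{\tau}]^*$ as $\mathcal{Q}^{\cokoszul}$ for an explicit quadratic contractad $\mathcal{Q}$ and then invoke Koszulity to get $H_\bullet(\Cobar(\mathcal{Q}^{\cokoszul}))=\mathcal{Q}$. This cannot work: $\Ass^{\tau}$ is not quadratic. Its presentation on the binary generator $b$ and the ternary generators $\mu$ has \emph{inhomogeneous} relations --- relation~\eqref{eq::Ass::inv::identity} equates a quadratic expression in $\mu$ with cubic and quartic correction terms --- so $[\Ass^{\tau}]^*$ is not cogenerated with coquadratic corelations and is not of the form $\mathcal{Q}^{\cokoszul}$. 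Worse, if your identification held, the Koszul projection $\Cobar(\mathcal{Q}^{\cokoszul})\to\mathcal{Q}$ would be a quasi-isomorphism of dg contractads, i.e.\ $C^{\mathrm{cell}}_{\bullet}(\bM_{\mathbb{R}})\simeq H_{\bullet}(\bM_{\mathbb{R}})$, which is formality --- and the paper proves in Theorem~\ref{thm::homology_real_locus} that $\bM_{\mathbb{R}}$ is \emph{not} formal. What the argument actually needs is one level of indirection: the filtration $\calF$ of $\Ass$ by the number of Jordan generators $m=\nu+\nu^{\op}$ is preserved by $\tau$, and the associated graded $\mathrm{gr}_{\calF}\Ass^{\tau}\cong\Pois^{\tau}$ \emph{is} quadratic and Koszul (a distributive law between $\Com_{\mathrm{odd}}$ and $\Lie$, established via the quadratic Gr\"obner basis for $\Com_{\mathrm{odd}}$ --- this is where your Gr\"obner-basis work belongs). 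The induced filtration on $\Cobar([\Ass^{\tau}]^*)$ gives a spectral sequence whose first page is the homology of $\Cobar([\Pois^{\tau}]^*)$, namely $\Pois_{\mathrm{odd}}$, concentrated in a single homological degree in each weight; it therefore degenerates, yielding $H_{\bullet}(\bM_{\mathbb{R}})\cong\Pois_{\mathrm{odd}}\cong\Com\circ\Lie_{\mathrm{odd}}=\Com\circ(\Com_{\mathrm{odd}})^!$ while leaving the non-formality intact.
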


	\subsection{Structure of the paper}
	The paper is organized as follows.
	
	In \S\ref{sec::contractads}, we revisit the definitions of contractads (\S\ref{sec:sub::contractads}), Koszul duality (\S\ref{sec::Koszul::contractads}), and Gr\"obner basis theory (\S\ref{sec::Grobner::contractad}) for contractads. We illustrate the general theory with standard examples such as the commutative $\Com$ and Lie $\Lie$ contractads (\S\ref{sec:sub::com::Lie}), the little disks contractad $\D_d$ (\S\ref{sec:disks}), and the associative $\Ass$ and Poisson $\Pois$ contractads (\S\ref{sec:Ass}). We conclude this section by focusing on complete multipartite graphs and explaining their relationship to certain types of colored operads (\S\ref{sec::multipartite}).
	
	In \S\ref{sec::hilbertseries}, we prove the functional equations for the Hilbert series of contractads. We apply these results to the aforementioned examples and derive interesting expressions for the generating series of chromatic polynomials (\S\ref{sec::chromatic}).
	
	In \S\ref{sec::wonderful_modular}, we explore various algebraic and geometric descriptions of the wonderful contractad. In \S\ref{sec::H::wonderful::C}, we describe the cohomology of the contractad $\bM_{\mathbb{C}}$. First, we describe the cohomology rings compatible with the contractad structure (\S\ref{sec::H::MC::ring}). Next, we discuss generalizations of the gravity and hypercommutative operads (\S\ref{sec::gravity}). We then prove that the homology contractad $H_{\bullet}(\bM_{\mathbb{C}})$ is Koszul and coincides with the Hypercommutative contractad $\Hyper$ (\S\ref{sec::H::wonderful}). We also provide a detailed description of the normal monomials of this contractad in~\S\ref{sec::hycom::monomials}. In \S\ref{sec::H::wonder::R}, we address the real locus of the wonderful contractad, describing its cell structure (\S\ref{sec::cell::M::R}), homology contractad (\S\ref{sec::realhomology}), and the corresponding Hilbert series (\S\ref{sec::M::real::Hilb}).

	\section*{Acknowledgments}
	We would like to thank  V.\,Dotsenko, D.\,Piontkovski and A.\,Vainstein for stimulating discussions. The research of the first author was partially supported by The Laszlo N. Tauber Family Foundation. Results of Section 3 were obtained with funding from the HSE Basic Research Program. Results of Section 4 were obtained under support of the grant RSF 24-21-00341 of Russian Science Foundation.

	\tableofcontents

	\section{Contractads}
	\label{sec::contractads}
	In this section, we give a quick introduction to the theory of contractads introduced by the second author~\cite{lyskov2023contractads}. Informally a contractad is a graphical generalisation of operads, whose set of operations are indexed by connected graphs and composition rules are numbered by contractions of connected subgraphs, which explains the nature of terminology. We discuss several definitions of contractads, their presentations in terms of generators and relations,  consider several examples of contractads, and describe Koszul theory and Gr\"obner bases for contractads . In the end, we explore the relationship between contractads, complete multipartite graphs, and coloured operads with outputs of the fixed colour and inputs with an unbounded number of colours.
	\subsection{Contractads}
	\label{sec:sub::contractads}
	In this subsection, we briefly recall several definitions of contractads. For more details, see~\cite[\S 1]{lyskov2023contractads}.
	\subsubsection{Monoidal and Infinitesimal definition of contractads}
	
	Let us call a \textit{graph} to be a finite simple undirected  $\Gr=(V_{\Gr},E_{\Gr})$, where $V_{\Gr}$ represents a set of vertices, and $E_{\Gr}$ represents a set of edges. 
	Simple means that $\Gr$ does not contain loops and double edges.
	Let us explore some particular examples of graphs:
	\begin{itemize}
		\label{typesofgraphs}
		\item the path graph $\Path_n$ on the vertex set $\{1,\cdots, n\}$ with edges $\{(i,i+1)| 1\leq i \leq n-1 \}$,    \item the complete graph $\K_n$ on the vertex set $\{1,\cdots, n\}$ and the edges $\{(i,j)|i\neq j\}$,
		\item the cycle graph $\Cyc_n$ on the vertex set $\{1,\cdots, n\}$ with edges $\{(i,i+1)| 1 \leq i \leq n-1\}\cup \{(n,1)\}$,
		\item the stellar graph $\St_n$ on the vertex set $\{0,1,\cdots, n\}$ with edges $\{(0,i)| 1\leq i \leq n \}$. The vertex $"0"$ adjacent to all vertices is called the "core". 
	\end{itemize}
	Consider the \textit{groupoid of connected graphs} $\mathsf{CGr}$ whose objects are non-empty connected simple graphs and whose morphisms are isomorphisms of graphs and consider a symmetric monoidal category 
	$\C=(\C, \otimes, 1_{\C})$. The main examples we have in mind correspond to the case when $\C$ is the category of topological spaces $(\mathsf{Top},\times)$ or the category of differential graded vector spaces $(\mathsf{dgVect}, \otimes)$ (with Koszul signs rule).
	\begin{defi}
		A graphical collection with values in $\C$ is a contravariant functor $\mathsf{CGr}^{\mathrm{op}}\rightarrow \C$. All graphical collections with values in $\C$ with natural transformations form a category $\GrCol_{\C}$.
	\end{defi}
	Informally, a graphical collection $\Orb$ is a sequence of objects $\{\Orb(\Gr)\}$ indexed by graphs.  For functorial reasons, for each component $\Orb(\Gr)$, there is a right action of the graph automorphisms group $\Aut(\Gr)$. Let us recall the notion of tubes and constructions of induced and contracted graphs.
	\begin{defi}
		\begin{enumerate}
			\item For a graph $\Gr$ and a subset of vertices $S$,  the \textit{induced subgraph} is the graph $\Gr|_S$ with vertex set $S$ and edges coming from the original graph.
			\item A tube of a graph $\Gr$ is a non-empty subset $G$ of vertices such that the induced subgraph $\Gr|_G$ is connected.  If the tube consists of one vertex, we call it trivial.
		\end{enumerate}
	\end{defi}
	
	\begin{defi}
		\begin{enumerate}
			\item A \textit{partition of a graph} $\Gr$ is a partition of the vertex set whose blocks are tubes. We denote by $\parti(\Gr)$ the set of partitions of the graph $\Gr$. We use the notation $I\vdash \Gr$ for partitions.
			\item For a partition $I$ of graph $\Gr$, the contracted graph, denoted $\Gr/I$, is the graph obtained from $\Gr$  by contracting each block of $I$ to a single vertex. Explicitly, vertices of $\Gr/I$ are partition blocks and edges are pairs $\{G\},\{H\}$ of blocks such that their union $G\cup H$ is a tube of $\Gr$.
			\item Given a tube $G\subset V_{\Gr}$, we denote by $\Gr/G$ the contracted graph obtained from $\Gr$ by collapsing $G$ to a single vertex. Explicitly, $\Gr/G$ is the contracted graph associated with a partition $I=\{G\}\cup \{\{v\}|v\not\in G\}$. 
		\end{enumerate}
	\end{defi}
	
	\begin{figure}[ht]
		\centering
		\begin{gather*}
			\vcenter{\hbox{\begin{tikzpicture}[scale=0.7]
						\fill (0,0) circle (2pt);
						\node at (0,0.6) {1};
						\fill (1,0) circle (2pt);
						\node at (1,0.6) {2};
						\fill (2,0) circle (2pt);
						\node at (2,0.6) {3};
						\fill (3,0) circle (2pt);
						\node at (3,0.6) {4};
						\fill (4,0) circle (2pt);
						\node at (4,0.6) {5};
						\draw (0,0)--(1,0)--(2,0)--(3,0)--(4,0);
						\draw[dashed, rounded corners=5pt] (-0.25,-0.25) rectangle ++(1.5,0.5);
						\draw[dashed, rounded corners=5pt] (2.75,-0.25) rectangle ++(1.5,0.5);
						\draw[dashed] (2,0) circle (7pt);
			\end{tikzpicture}}}
			\quad
			\longrightarrow
			\quad
			\vcenter{\hbox{\begin{tikzpicture}[scale=0.7]
						\fill (0,0) circle (2pt);
						\node at (0,0.5) {\{1,2\}};
						\fill (1.5,0) circle (2pt);
						\node at (1.5,0.5) {\{3\}};
						\fill (3,0) circle (2pt);
						\node at (3,0.5) {\{4,5\}};
						\draw (0,0)--(1.5,0)--(3,0);    
			\end{tikzpicture}}}
			\\
			\\
			\vcenter{\hbox{\begin{tikzpicture}[scale=0.7]
						\fill (0,0) circle (2pt);
						\node at (-0.4,-0.3) {1};
						\fill (2,0) circle (2pt);
						\node at (2.4,-0.3) {4};
						\fill (0,2) circle (2pt);
						\node at (-0.4,2.3) {2};
						\fill (1,1) circle (2pt);
						\node at (1,0.5) {5};
						\fill (2,2) circle (2pt);
						\node at (2.4,2.3) {3};
						\draw (0,0)--(2,0)--(2,2)--(0,2)--cycle;
						\draw (0,0)--(1,1)--(2,2);
						\draw (2,0)--(1,1)--(0,2);
						\draw[dashed] (0,0) circle (7pt);
						\draw[dashed] (1,1) circle (7pt);
						\draw[dashed] (2,0) circle (7pt);
						\draw[dashed, rounded corners=5pt] (-0.25,1.75) rectangle ++(2.5,0.5);
			\end{tikzpicture}}}
			\quad
			\longrightarrow
			\vcenter{\hbox{\begin{tikzpicture}[scale=0.75]
						\fill (0,0) circle (2pt);
						\node at (-0.4,-0.3) {\{1\}};
						\fill (1,1) circle (2pt);
						\node at (1,0.5) {\{5\}};
						\fill (1,2) circle (2pt);
						\node at (1,2.4) {\{2,3\}};
						\fill (2,0) circle (2pt);
						\node at (2.4,-0.3) {\{4\}};
						\draw (0,0)--(2,0)--(1,2)-- cycle;
						\draw (0,0)--(1,1)--(1,2);
						\draw (2,0)--(1,1);
			\end{tikzpicture}}}
			\quad
			\quad
			\quad
			\vcenter{\hbox{\begin{tikzpicture}[scale=0.7]
						\fill (-0.63,1.075)  circle (2pt);
						\node at (-0.9,1.4) {1};
						\fill (0.63,1.075)  circle (2pt);
						\draw[dashed] (0.63,1.075) circle  (7pt);
						\node at (0.9,1.4) {2};
						\fill (-1.22,0) circle (2pt);
						\node at (-1.6,0) {6};
						\fill (1.22,0) circle (2pt);
						\node at (1.6,0) {3};
						\draw[dashed] (1.22,0) circle  (7pt);
						\fill (-0.63,-1.075)  circle (2pt);
						\node at (-0.9,-1.4) {5};
						\fill (0.63,-1.075)  circle (2pt);
						\node at (0.9,-1.4) {4};
						\draw[dashed] (0.63,-1.075) circle  (7pt);
						\draw (-1.22,0)--(-0.63,1.075)--(0.63,1.075)--(1.22,0)--(0.63,-1.075)--(-0.63,-1.075)--cycle;
						\draw[dashed] (-1.6,0)[rounded corners=15pt]--(-0.4,1.7)[rounded corners=15pt]--(-0.4,-1.7)[rounded corners=12pt]--cycle;
			\end{tikzpicture}}}
			\quad\longrightarrow\quad
			\vcenter{\hbox{\begin{tikzpicture}[scale=0.7]
						\fill (0.63,1.075)  circle (2pt);
						\node at (0.9,1.5) {\{2\}};
						\fill (-1.22,0) circle (2pt);
						\node at (-2.2,0) {\{1,5,6\}};
						\fill (1.22,0) circle (2pt);
						\node at (1.75,0) {\{3\}};
						\fill (0.63,-1.075)  circle (2pt);
						\node at (0.9,-1.5) {\{4\}};
						\draw (-1.22,0)--(0.63,1.075)--(1.22,0)--(0.63,-1.075)--cycle;
			\end{tikzpicture}}}
		\end{gather*}
		\caption{Examples of partitions of graphs and associated contractions.}
		\label{contrpic}
	\end{figure}
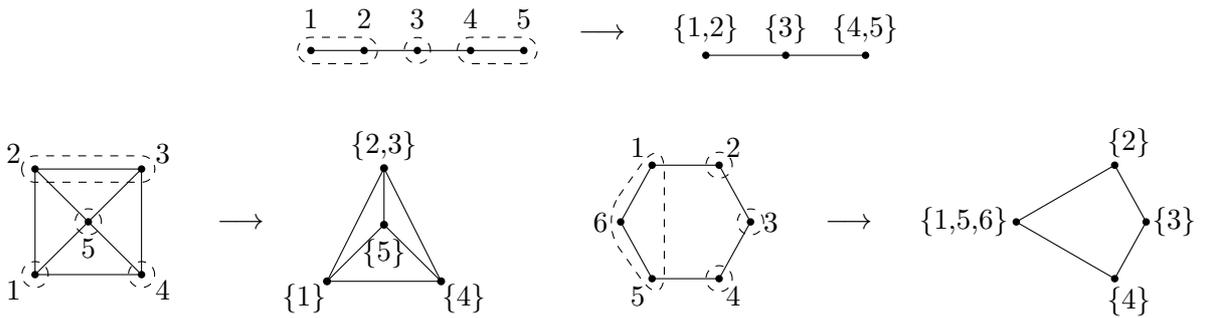
	The category of graphical collections admits a monoidal structure, called the contraction product, and denoted by $\circ$, which is defined by the following formula:
	\begin{equation}
		\label{eq::contract::product}
		(\Pop \circ \Q)(\Gr) := \bigoplus_{I \vdash \Gr} \Pop(\Gr/I) \otimes \bigotimes_{G \in I} \Q(\Gr|_G),
	\end{equation}  
	where the sum ranges over all partitions of $\Gr$. This operation is associative, and the graphical collection $\mathbb{1}$ defined as
	\[
	\mathbb{1}(\Gr):= \begin{cases}
		1_{\C}, \text{ for } \Gr \cong \Path_1,
		\\
		0, \text{ otherwise}.
	\end{cases}
	\] is the unit for this operation.
	\begin{defi}[Monoidal definition of contractads]\label{def:monoidal}
		A contractad is a monoid in the monoidal category of graphical collections equipped with the contraction product $\circ$.
	\end{defi}
	By definition, a contractad is a triple $(\Pop,\gamma,\eta)$, where $\Pop$ is a graphical collection, $\gamma$ is a product map $\gamma\colon \Pop\circ\Pop\to \Pop$, that is a collection of maps
	\[
	\gamma_I^{\Gr}\colon \Pop(\Gr/I)\otimes \bigotimes_{G \in I} \Pop(\Gr|_G)\to \Pop(\Gr),
	\] ranging over all graphs and their partitions, and $\eta$ is a unit $\eta\colon 1_{\C}\to \Pop(\Path_1)$. If $\Pop$ is a set/topological/linear contractad, we let $\Id$ be the unique element in $\Pop(\Path_1)$, arising from the unit map.
	
	In a dual way, we define a cocontractad as a comonoid in the category of graphical collections. In other words, it is a triple $(\Q,\triangle,\epsilon)$, where $\triangle\colon \Q\to\Q\circ\Q$ is a coproduct and $\epsilon\colon \Q(\Path_1)\to 1_{\C}$ is a counit.
	
	An equivalent way to present contractads is via \textit{infinitesimal compositions} that are analogues of infinitesimal compositions $\circ_i\colon \Orb(n)\otimes\Orb(m)\to\Orb(n+m-1)$ for operads. Recall that, for a tube $G$ of a graph $\Gr$, the contracted graph $\Gr/G$ is obtained from $\Gr$ by collapsing $G$ to a single vertex. When $\Pop$ is a contractad, for each pair of a graph $\Gr$ and tube $G$, there exists a map $\circ^{\Gr}_G\colon \Pop(\Gr/G)\otimes \Pop(\Gr|_G)\to \Pop(\Gr)$, called the infinitesimal composition, defined by
	\[
	\Pop(\Gr/G)\otimes\Pop(\Gr|_G) \cong \Pop(\Gr/G)\otimes\Pop(\Gr|_G)\otimes \bigotimes_{v \not\in G} 1_{\C} \overset{\Id \otimes u^{\otimes}}{\hookrightarrow} \Pop(\Gr/G)\otimes\Pop(\Gr|_G)\otimes \bigotimes_{v \not\in G}\Pop(\Gr|_{\{v\}}) \overset{\gamma}{\rightarrow} \Pop(\Gr).
	\]
	Conversely, from infinitesimal compositions, one can recover all the structure maps of a contractad. 
	\\
	\subsubsection{Combinatorial description of contractads}
	Similarly to operads, contractads can be described in terms of decorated rooted trees. A \textit{rooted tree} is a connected directed tree $T$ in which each vertex has at least one input edge and exactly one output edge. This tree should have exactly one external outgoing edge, output. The endpoint of this edge is called the \textit{root}. The endpoints of incoming external edges that are not vertices are called \textit{leaves}. A tree with a single vertex is called a \textit{corolla}. For a rooted tree $T$ and edge $e$, let $T_e$ be the subtree  with the root at $e$ and $T^e$ be the subtree obtained from $T$ by cutting $T_e$.
	
	\begin{defi}
		For a connected graph $\Gr$, a $\Gr$-\textit{admissible} rooted tree is a rooted tree $T$ with leaves labeled by the vertex set $V_{\Gr}$ of the given graph such that, for each edge $e$ of the tree, the leaves of subtree $T_e$ form a tube of $\Gr$.
	\end{defi} 
	
	\begin{figure}[ht] 
		\centering
		\[
		\vcenter{\hbox{\begin{tikzpicture}[scale=0.6]
					\fill (0,0) circle (2pt);
					\fill (0,1.5) circle (2pt);
					\fill (1.5,0) circle (2pt);
					\fill (1.5,1.5) circle (2pt);
					\draw (0,0)--(1.5,0)--(1.5,1.5)--(0,1.5)-- cycle;
					\node at (-0.25,1.75) {$1$};
					\node at (1.75,1.75) {$2$};
					\node at (1.75,-0.25) {$3$};
					\node at (-0.25,-0.25) {$4$};
		\end{tikzpicture}}}
		\qquad
		\vcenter{\hbox{\begin{tikzpicture}[scale=0.75]
					\draw (0,0)--(0,1);
					\draw (0,1)--(1,2);
					\draw (0,1)--(-1,2);
					\draw (1,2)--(1.75,2.75);
					\draw (1,2)--(0.25,2.75);
					\draw (-1,2)--(-1.75,2.75);
					\draw (-1,2)--(-0.25,2.75);
					\node at (1.75,3.1) {$4$};
					\node at (0.25,3.1) {$3$};
					\node at (-1.75,3.1) {$1$};
					\node at (-0.25,3.1) {$2$};
		\end{tikzpicture}}}
		\qquad 
		\vcenter{\hbox{\begin{tikzpicture}[scale=0.75]
					\draw (0,0)--(0,1);
					\draw (0,1)--(1,2);
					\draw (0,1)--(-1,2);
					\draw (0,1)--(0,2);
					\draw (0,2)--(0.75,2.75);
					\draw (0,2)--(-0.75,2.75);
					\node at (-1,2.25) {$1$};
					\node at (-0.75,3.1) {$2$};
					\node at (0.75,3.1) {$3$};
					\node at (1,2.25) {$4$};
		\end{tikzpicture}}}
		\qquad 
		\vcenter{\hbox{\begin{tikzpicture}[scale=0.75]
					\draw (0,0)--(0,1);
					\draw (0,1)--(1,2);
					\draw (0,1)--(-1,2);
					\draw (-1,2)--(-1.75,2.75);
					\draw (-1,2)--(-1,2.75);
					\draw (-1,2)--(-0.25,2.75);
					\node at (-1.75,3.1) {$3$};
					\node at (-1,3.1) {$4$};
					\node at (-0.25,3.1) {$1$};
					\node at (1,2.25) {$2$};
		\end{tikzpicture}}}
		\qquad 
		\vcenter{\hbox{\begin{tikzpicture}[scale=0.75]
					\draw[thick] (-1.2,3)--(1.2,0);
					\draw[thick] (-1.2,0)--(1.2,3);
					\draw (0,0)--(0,1);
					\draw (0,1)--(1,2);
					\draw (0,1)--(-1,2);
					\draw (0,1)--(0,2);
					\draw (0,2)--(0.75,2.75);
					\draw (0,2)--(-0.75,2.75);
					\node at (-1,2.25) {$1$};
					\node at (-0.75,3.1) {$2$};
					\node at (0.75,3.1) {$4$};
					\node at (1,2.25) {$3$};
		\end{tikzpicture}}}
		\]
		\caption{Graph $\Cyc_4$ (on the left side) and examples of $\Cyc_4$-admissible trees. The first three are $\Cyc_4$-admissible, but the fourth is not, since leaves 2,4 do not form a tube.}
		\label{roottrees}
	\end{figure}
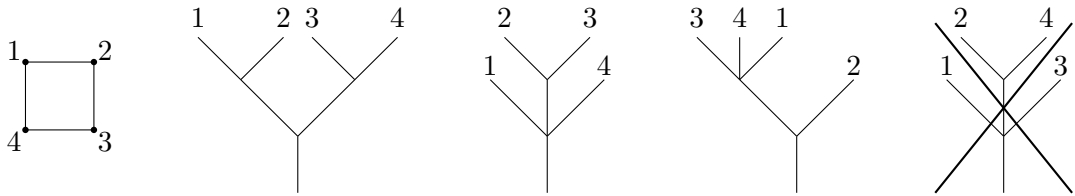
	
	We denote by $\Tree(\Gr)$ the set of all $\Gr$-admissible rooted trees. Note that the corolla with leaves labeled by the vertex set is always $\Gr$-admissible. Let us describe explicitly admissible trees for particular types of graphs.
	\begin{itemize}
		\label{grroottrees}
		\item For  paths, $\Path_n$-admissible trees are those that can be embedded in the plane such that leaves $[n]=\{1,2,3,...,n\}$ are arranged in increasing order. Indeed, this follows from the  fact  that tubes of $\Path_n$ are ordered intervals of $[n]$.
		\item For  cycles, $\Cyc_n$-admissible trees are those that can be embedded in the plane such that leaves are arranged in cyclic order, as in Figure~\ref{roottrees}.
		\item For complete graphs, $\K_n$-admissible trees are ordinary rooted trees since each vertex subset of a complete graph is a tube.
	\end{itemize}  
	Note that subtrees of admissible trees are also admissible. Indeed, for each edge $e$ of $\Gr$-admissible tree $T$, the subtree $T_e$ is a $\Gr|_{L_e}$-admissible, where $L_e$ is the set of leaves of $T_e$, and the subtree $T^e$ is a $\Gr/L_e$-admissible. For a partition $I$ of the graph $\Gr$, we define the grafting map
	\[
	\Tree(\Gr/I)\times\prod_{G \in I} \Tree(\Gr|_G)\to \Tree(\Gr),
	\] which joins roots of $\Gr|_G$-admissible trees to corresponding leaves of $\Gr/I$-admissible trees, as in Figure~\ref{substitution}.
	
	\begin{figure}[ht]
		\centering
		\[
		\vcenter{\hbox{\begin{tikzpicture}[scale=0.7]
					%vertices
					\fill (-0.63,1.075)  circle (2pt);
					\node at (-0.9,1.4) {1};
					\fill (0.63,1.075)  circle (2pt);
					\node at (0.9,1.4) {2};
					\fill (1.22,0) circle (2pt);
					\node at (1.6,0) {3};
					\fill (0.63,-1.075)  circle (2pt);
					\node at (0.9,-1.4) {4};
					\fill (-0.63,-1.075)  circle (2pt);
					\node at (-0.9,-1.4) {5};
					\fill (-1.22,0) circle (2pt);
					\node at (-1.6,0) {6};
					%partition
					\draw (-1.22,0)--(-0.63,1.075)--(0.63,1.075)--(1.22,0)--(0.63,-1.075)--(-0.63,-1.075)--cycle;
					\draw[dashed] (-1.6,0)[rounded corners=15pt]--(-0.4,1.7)[rounded corners=15pt]--(-0.4,-1.7)[rounded corners=12pt]--cycle;
					\draw[dashed] (1.6,0)[rounded corners=15pt]--(0.4,1.7)[rounded corners=15pt]--(0.4,-1.7)[rounded corners=12pt]--cycle;
		\end{tikzpicture}}}
		\quad\quad
		\left(\vcenter{\hbox{\begin{tikzpicture}[scale=0.6]
					\draw (0,0)--(0,1);
					\draw (0,1)--(1,2);
					\draw (0,1)--(-1,2);
					\node at (-1.2,2.3) {\small $\{1,5,6\}$};
					\node at (1,2.3) {\small $\{2,3,4\}$};
		\end{tikzpicture}}};
		\vcenter{\hbox{\begin{tikzpicture}[scale=0.6]
					\draw (0,0)--(0,1);
					\draw (0,1)--(1,2);
					\draw (0,1)--(-1,2);
					\draw (-1,2)--(-1.75,2.75);
					\draw (-1,2)--(-0.25,2.75);
					\node at (-1.75,3.1) {$1$};
					\node at (-0.25,3.1) {$6$};
					\node at (1,2.3) {$5$};
		\end{tikzpicture}}},
		\vcenter{\hbox{\begin{tikzpicture}[scale=0.6]
					\draw (0,0)--(0,1);
					\draw (0,1)--(1,2);
					\draw (0,1)--(-1,2);
					\draw (1,2)--(1.75,2.75);
					\draw (1,2)--(0.25,2.75);
					\node at (-1,2.3) {$2$};
					\node at (0.25,3.1) {$3$};
					\node at (1.75,3.1) {$4$};
		\end{tikzpicture}}}\right)
		\quad \longrightarrow
		\vcenter{\hbox{\begin{tikzpicture}[scale=0.6]
					%tree
					\draw (0,0)--(0,1);
					\draw (0,1)--(1,2);
					\draw (0,1)--(-1,2);
					\draw (1,2)--(1.75,2.75);
					\draw (1,2)--(0.25,2.75);
					\draw (1.75,2.75)--(2.5,3.5);
					\draw (1.75,2.75)--(1,3.5);
					\draw (-1,2)--(-1.75,2.75);
					\draw (-1,2)--(-0.25,2.75);
					\draw (-1.75,2.75)--(-2.5,3.5);
					\draw (-1.75,2.75)--(-1,3.5);
					%leaves
					\node at (-2.5,3.85) {\small$1$};
					\node at (-1,3.85) {\small$6$};
					\node at (-0.25,3.1) {\small$5$};
					\node at (0.25,3.1) {\small$2$};
					\node at (1,3.85) {\small$3$};
					\node at (2.5,3.85) {\small$4$};
					
		\end{tikzpicture}}}
		\]
		\caption{Example of substitution.}
		\label{substitution}
	\end{figure}
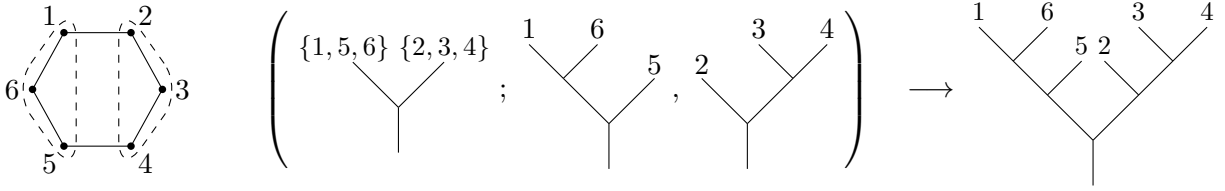
	
	For a $\Gr$-admissible tree $T$ and a vertex $v\in \Ver(T)$, we define the \textit{input graph} $\In(v)$ as follows: its vertices are input edges $e_1,\cdots,e_k$ of $v$ and two vertices $e_i,e_j$ are adjacent if the corresponding union of leaf sets $L_{e_i}\cup L_{e_j}$ forms a tube in $\Gr$, where $L_{e_i}$ is the set of leaves of subtree $T_{e_i}$.
	
	The grafting of trees allows us to give an explicit construction of free contractads. The free contractad generated by graphical collection $\E$ is the contractad $\T(\E)$, with components 
	\[
	\T(\E)(\Gr)=\bigoplus_{T \in \Tree(\Gr)} \E((T)), \text{ where } \E((T)):=\bigotimes_{v \in \Ver(T)} \E(\In(v)).
	\] It is helpful to think about an element of $\T(\E)(\Gr)$ as a sum of $\Gr$-admissible rooted tree where each vertex $v$ is decorated by an element of $\E(\In(v))$. The contractad structure $\T(\E)\circ\T(\E)\to\T(\E)$ is given by graftings of decorated rooted trees. 
	
	%The admissible rooted tree monad $\T$ is a monad $(\T,\alpha,\iota)$, where $\T$ is the functor $\T\colon \GrCol \to \GrCol$ defined by the rule
	%\[
	%\T(\Pop)(\Gr)=\bigoplus_{T \in %\Tree(\Gr)} \bigotimes_{v \in %\Ver(T)} \Pop(\In(v)).
	%\]
	%It is helpful to think about an element of $\T(\Pop)(\Gr)$ as a sum of $\Gr$-admissible rooted tree where each vertex $v$ is decorated by an element of $\Pop(\In(v))$ . $\alpha$ is a monad product $\alpha\colon \T\circ\T\Rightarrow \T$ induced by the substitution of rooted trees, and $\iota$ is a unit transformation $\iota\colon \Id\Rightarrow \T$ given by the inclusion $\Pop\rightarrow \T(\Pop)$ which corresponds to the corolla in each component. 
	
	%\begin{defi}[Monadic definition of contractads]\label{def:monadic}
	%A contractad is an algebra over the admissible tree monad. In other words, it is a graphical collection equipped with a structure map
	%\begin{equation*}
	%\T(\Pop) \to \Pop,
	%\end{equation*}
	%compatible with the monad structure on $\mathbb{T}$.
	%\end{defi}
	
	\begin{defi}
		The contractad presented by generators $\E$ and relations $\R \subset \T(\E)$ is the quotient contractad
		\[
		\T(\E)/\langle \R \rangle,
		\] where $\langle \R \rangle$ is the minimal ideal containing the subcollection $\R$. 
	\end{defi}
	\begin{notation*}
		\hfill\break
		\begin{itemize}
			\item[(i)]In practice we shall use 3 different notations while discussing presentations of contractads: tree notations, infinitesimal notations and algebraic notations. As an example, consider the infinitesimal product $\circ^{\Path_3}_{\{2,3\}}\colon \Pop(\Path_{\{1,\{2,3\}\}})\otimes \Pop(\Path_{\{2,3\}})\to \Pop(\Path_3)$ and elements $\alpha,\beta \in \Pop(\Path_2)$. The resulting element $\circ^{\Path_3}_{\{2,3\}}(\alpha,\beta)$ can be written in the 3 following ways
			\[
			\text{Tree Notation: } \vcenter{\hbox{\begin{tikzpicture}[
						scale=0.6,
						vert/.style={circle,  draw=black!30!black, thick, minimum size=1mm},
						leaf/.style={rectangle, thick, minimum size=1mm},
						edge/.style={-,black!30!black, thick},
						]
						\node[vert] (1) at (0,1) {\footnotesize$a$};
						\node[leaf] (l1) at (-0.75,2) {\footnotesize$1$};
						\node[vert] (2) at (0.75,2) {\footnotesize$b$};
						\node[leaf] (l2) at (1.5,3) {\footnotesize$3$};
						\node[leaf] (3) at (0,3) {\footnotesize$2$};
						\draw[edge] (0,0)--(1);
						\draw[edge] (1)--(2)--(3);
						\draw[edge] (1)--(l1);
						\draw[edge] (2)--(l2);
			\end{tikzpicture}}},\quad \text{Infinitesimal Notation: } a\circ^{\Path_3}_{\{2,3\}} b,\quad\text{Algebraic Notation: } a(x_1,b(x_2,x_3))
			\] When a contractad is generated in component $\Path_2$, we shall use the algebraic notation to give naive parallels with classical (ns)operads.
			\item[(ii)] We shall say that an element $\alpha\in \Pop(\Gr)$ is symmetric (resp. antisymmetric) if it is invariant (sign invariant) with respect to graph automorphism group $\alpha^\sigma=\alpha$ (resp. $\alpha^{\sigma}=\mathrm{sgn}(\sigma)\alpha$).
			\item[(iii)] If we have a relation in a contractad, we can produce new ones by graph automorphisms. For example, let $m$ be a symmetric element in component $\Path_2$ of a contractad satisfying the relation $m(x_1,m(x_2,x_3))-m(m(x_1,x_2),x_3)=0$ in $\K_3$. The action of transposition $(12)$ on this relation results in the following relation
			\[
			(m(x_1,m(x_2,x_3))-m(m(x_1,x_2),x_3))^{(12)}=m(m(x_1,x_3),x_2)-m(m(x_1,x_2),x_3)=0
			\]So, when we describe a presentation of contractads, we shall mention only a minimal set of generators/relations, omitting those that appear as a result of graph automorphisms.
		\end{itemize}
	\end{notation*}
	\subsection{Examples}
	\label{sec:sub::examples::contractads}
	In this subsection, we list several examples of contractads.  Most of them can be viewed as graphical counterparts of existing operads. 
	\subsubsection{Commutative and Lie contractads}
	\label{sec:sub::com::Lie}
	The simplest example of a contractad is the commutative contractad $\Com$ in the category of sets. This contractad has the components $\Com(\Gr)=\{*\}$ with the obvious infinitesimal compositions $\circ^{\Gr}_G\colon \Com(\Gr/G)\times \Com(\Gr|_G)\to\Com(\Gr)$ of the form $(*,*)\mapsto*$. Note that this contractad is a terminal object in the category of set-contractads.
	Let us recall basic facts about this contractad
	\begin{prop}
		\label{compres}
		The contractad $\Com$ is generated by a symmetric generator $m$ in component $\Path_2$, satisfying the relations
		\begin{gather}
			\text{In }\Path_3:\quad m(m(x_1,x_2),x_3))=m(x_1,m(x_2,x_3)),
			\\
			\text{In }\K_3:\quad m(m(x_1,x_2),x_3))=m(x_1,m(x_2,x_3)).
		\end{gather}
	\end{prop}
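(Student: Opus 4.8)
The plan is to show that the presented contractad is isomorphic to $\Com$. Write $\mathcal{Q}:=\T(\E)/\langle\R\rangle$ for the contractad in the statement, where $\E$ is the graphical collection concentrated in $\Path_2$ on a single symmetric generator $m$, and $\R\subset\T(\E)$ is spanned by the two associativity elements in $\Path_3$ and in $\K_3$. Since $\Com(\Gr)$ is a one-point set for every connected $\Gr$, the two sides of each element of $\R$ take the same value in $\Com$, so by freeness of $\T(\E)$ there is a (unique) morphism of contractads $\pi\colon\mathcal{Q}\to\Com$ sending $m\mapsto *$. It then remains to prove that each $\pi_{\Gr}\colon\mathcal{Q}(\Gr)\to\Com(\Gr)$ is a bijection, i.e.\ that $\mathcal{Q}(\Gr)$ is a single point.

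Surjectivity of $\pi_{\Gr}$ amounts to $\mathcal{Q}(\Gr)\ne\emptyset$, and since $\mathcal{Q}$ is generated in $\Path_2$ this reduces to the fact that every connected graph admits at least one binary admissible tree. I would prove this by induction on $|V_{\Gr}|$: pick an edge $(ij)$, observe that $\{i,j\}$ is a tube and $\Gr/\{i,j\}$ is connected with one vertex fewer, take a binary admissible tree for $\Gr/\{i,j\}$, and graft the $\Path_2$-corolla labelled by $i,j$ onto the leaf corresponding to the contracted vertex.

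For injectivity I would induct on $n=|V_{\Gr}|$, the cases $n\le 2$ being immediate. Assume $\mathcal{Q}(\Delta)$ is a single point for every connected $\Delta$ with fewer than $n$ vertices, and write $\mathbf{1}_\Delta$ for its element. Any binary admissible tree for $\Gr$ has a root whose two incoming subtrees have leaf sets forming a tube-partition $\{A,A^c\}$ of $V_{\Gr}$ into two blocks; applying the induction hypothesis to the connected graphs $\Gr|_A$ and $\Gr|_{A^c}$, the class of the tree in $\mathcal{Q}(\Gr)$ equals $e_{\{A,A^c\}}:=m\bigl(\mathbf{1}_{\Gr|_A},\mathbf{1}_{\Gr|_{A^c}}\bigr)$, depending only on the partition. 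The core of the argument is a ``stripping move'': if $a\in A$ is a non-cut vertex both of $\Gr|_A$ and of $\Gr$, then, using the induction hypothesis to write $\mathbf{1}_{\Gr|_A}=m(x_a,\mathbf{1}_{\Gr|_{A\setminus a}})$, symmetry of $m$, and one associativity relation applied to the triple of contracted vertices $\{a\}$, $A\setminus a$, $A^c$ — which form $\K_3$ if $a$ has a neighbour in $A^c$ and form $\Path_3$ with middle vertex $A\setminus a$ otherwise, the point where the hypothesis that $a$ is non-cut in $\Gr$ enters, guaranteeing that $V_{\Gr}\setminus a$ is a tube — one obtains $e_{\{A,A^c\}}=e_{\{\{a\},\,V_{\Gr}\setminus a\}}$. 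Such an $a$ exists whenever $|A|\ge 2$: the connected graph $\Gr|_A$ has at least two non-cut vertices, and a non-cut vertex of $\Gr|_A$ can fail to be non-cut in $\Gr$ only if it is the unique vertex of $A$ with a neighbour in $A^c$, so at most one of them can fail. Hence every $e_{\{A,A^c\}}$ with a block of size $\ge 2$ equals $e_{\{\{a\},V_{\Gr}\setminus a\}}$ for a suitable non-cut vertex $a$ of $\Gr$.

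The remaining step, which I expect to be the genuine difficulty, is to show that all the elements $e_{\{\{a\},V_{\Gr}\setminus a\}}$, as $a$ ranges over non-cut vertices of $\Gr$, coincide; this is a connectivity statement for the ``reduction graph'' on the set of non-cut vertices, in which $a$ is linked to $a'$ whenever $a'$ is non-cut both in $\Gr\setminus a$ and in $\Gr$, and I would settle it by iterating the stripping move on the block $V_{\Gr}\setminus a$ (which has at least two vertices once $n\ge 3$) together with a short case analysis on the degree of $a$. A cleaner alternative, consistent with the strategy announced in the introduction, is to bypass this bookkeeping entirely: fix a monomial order on admissible tree monomials, verify by Buchberger's criterion that the two associativity relations already constitute a quadratic Gr\"obner basis of $\langle\R\rangle$, and check that each connected graph has exactly one normal monomial; this delivers the presentation and, simultaneously, the Koszulness exploited later in the paper.
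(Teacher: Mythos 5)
Your overall strategy is sound, and the first two thirds of the direct argument check out: the morphism $\pi\colon\mathcal{Q}\to\Com$, the surjectivity via the recursion $m_{\Gr}=m_{\Gr/e}\circ^{\Gr}_e m$, and the stripping move itself (including the verification that the three-block quotient on $\{a\}$, $A\setminus a$, $A^c$ is $\K_3$ or $\Path_3$ with middle block $A\setminus a$, and the counting argument producing a vertex $a$ that is non-cut in both $\Gr|_A$ and $\Gr$) are all correct. The genuine gap is exactly where you flag it. Your moves only identify $e_{\{\{a\},V_{\Gr}\setminus a\}}$ with $e_{\{\{c\},V_{\Gr}\setminus c\}}$ when $\Gr\setminus\{a,c\}$ is connected, so what you must prove is that the auxiliary graph on non-cut vertices, with $a\sim c$ iff $\Gr\setminus\{a,c\}$ is connected, is itself connected. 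This cannot be done one edge at a time: already for $\Cyc_n$ every non-adjacent pair is a $2$-cut, so the auxiliary graph is just $\Cyc_n$ and one must chain through intermediate vertices; for a general $2$-connected graph (where every vertex is non-cut and the non-edges of the auxiliary graph are exactly the $2$-cuts) establishing connectivity requires a real argument — say, reducing to the $2$-connected case via the block decomposition (two non-cut vertices in different blocks are always joined, since deleting one cannot turn the other into a cut vertex) and then an ear-decomposition induction. ``A short case analysis on the degree of $a$'' will not deliver this, and until it is supplied the injectivity of $\pi_{\Gr}$ is not established.

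Your ``cleaner alternative'' is in fact the paper's own route: Proposition~\ref{compres} is stated without proof and rests on the quadratic Gr\"obner basis for $\Com^{\forget}$ recalled in Example~\ref{ex::grobner_for_classical_contractads} (reverse $\grpermlex$ order, with the unique normal monomial $m_{\Gr}$ the left comb obtained by repeatedly adjoining the minimal vertex adjacent to the current tube). That approach trades the global connectivity lemma for a local confluence check on critical pairs supported on three- and four-vertex quotients — essentially the same $\Path_3$/$\K_3$ case analysis you already carried out for the stripping move — and yields Koszulness of $\Com$ as a byproduct, which the paper uses later.
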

	
	From the presentation above, we see that this contractad is a graphical analogue of the commutative operad $\mathsf{Com}$. There is also an analogue of the Lie operad.
	\begin{defi} The Lie contractad $\Lie$ is the contractad generated by an anti-symmetric generator $b$ in component $\Path_2$, satisfying the relations
		\begin{align}
			\text{In }\Path_3:\quad & b(b(x_1,x_2),x_3))=b(x_1,b(x_2,x_3)),
			\\
			\text{In }\K_3:\quad &
			b(b(x_1,x_2),x_3)+b(b(x_3,x_1),x_2)+b(b(x_2,x_3),x_1)=0.
		\end{align}
	\end{defi}
	\begin{figure}[ht]
		\centering
		\caption{Relations in $\Lie$ in tree notations}
		\begin{gather*}
			\vcenter{\hbox{\begin{tikzpicture}[scale=0.6]
						\fill (0,0) circle (2pt);
						\node at (0,0.5) {\footnotesize$1$};
						\fill (1,0) circle (2pt);
						\node at (1,0.5) {\footnotesize$2$};
						\fill (2,0) circle (2pt);
						\node at (2,0.5) {\footnotesize$3$};
						\draw (0,0)--(1,0)--(2,0);    
			\end{tikzpicture}}}
			\quad
			\vcenter{\hbox{\begin{tikzpicture}[
						scale=0.7,
						vert/.style={circle,  draw=black!30!black, thick, minimum size=1mm},
						leaf/.style={circle, thick, minimum size=1mm},
						edge/.style={-,black!30!black, thick},
						]
						\node[leaf] (l1) at (-1.5,3) {$1$};
						\node[leaf] (l2) at (0,3) {$2$};
						\node[leaf] (l3) at (0.75,2) {$3$};
						\node[vert] (1) at (0,1) {$b$};
						\node[vert] (2) at (-0.75,2) {$b$};
						\draw[edge] (0,0)--(1);
						\draw[edge] (1)--(2);
						\draw[edge] (2)--(l1);
						\draw[edge] (2)--(l2);
						\draw[edge] (1)--(l3);
			\end{tikzpicture}}}
			\quad
			=
			\quad
			\vcenter{\hbox{\begin{tikzpicture}[
						scale=0.7,
						vert/.style={circle,  draw=black!30!black, thick, minimum size=1mm},
						leaf/.style={circle, thick, minimum size=1mm},
						edge/.style={-,black!30!black, thick},
						]
						\node[leaf] (l1) at (-0.75,2) {$1$};
						\node[leaf] (l2) at (0,3) {$2$};
						\node[leaf] (l3) at (1.5,3) {$3$};
						\node[vert] (1) at (0,1) {$b$};
						\node[vert] (2) at (0.75,2) {$b$};
						\draw[edge] (0,0)--(1);
						\draw[edge] (1)--(2);
						\draw[edge] (1)--(l1);
						\draw[edge] (2)--(l2);
						\draw[edge] (2)--(l3);
			\end{tikzpicture}}}
			\\
			\vcenter{\hbox{\begin{tikzpicture}[scale=0.7]
						\fill (-0.5,0) circle (2pt);
						\node at (-0.7,-0.2) {\footnotesize$1$};
						\fill (0.5,0) circle (2pt);
						\node at (0.7,-0.2) {\footnotesize$3$};
						\fill (0,0.86) circle (2pt);
						\node at (0,1.14) {\footnotesize$2$};
						\draw (-0.5,0)--(0,0.86)--(0.5,0)--cycle;    
			\end{tikzpicture}}}
			\quad
			\vcenter{\hbox{\begin{tikzpicture}[
						scale=0.7,
						vert/.style={circle,  draw=black!30!black, thick, minimum size=1mm},
						leaf/.style={circle, thick, minimum size=1mm},
						edge/.style={-,black!30!black, thick},
						]
						\node[leaf] (l1) at (-1.5,3) {$1$};
						\node[leaf] (l2) at (0,3) {$2$};
						\node[leaf] (l3) at (0.75,2) {$3$};
						\node[vert] (1) at (0,1) {$b$};
						\node[vert] (2) at (-0.75,2) {$b$};
						\draw[edge] (0,0)--(1);
						\draw[edge] (1)--(2);
						\draw[edge] (2)--(l1);
						\draw[edge] (2)--(l2);
						\draw[edge] (1)--(l3);
			\end{tikzpicture}}}
			\quad
			+
			\quad
			\vcenter{\hbox{\begin{tikzpicture}[
						scale=0.7,
						vert/.style={circle,  draw=black!30!black, thick, minimum size=1mm},
						leaf/.style={circle, thick, minimum size=1mm},
						edge/.style={-,black!30!black, thick},
						]
						\node[leaf] (l1) at (-1.5,3) {$3$};
						\node[leaf] (l2) at (0,3) {$1$};
						\node[leaf] (l3) at (0.75,2) {$2$};
						\node[vert] (1) at (0,1) {$b$};
						\node[vert] (2) at (-0.75,2) {$b$};
						\draw[edge] (0,0)--(1);
						\draw[edge] (1)--(2);
						\draw[edge] (2)--(l1);
						\draw[edge] (2)--(l2);
						\draw[edge] (1)--(l3);
			\end{tikzpicture}}}
			\quad
			+
			\quad
			\vcenter{\hbox{\begin{tikzpicture}[
						scale=0.7,
						vert/.style={circle,  draw=black!30!black, thick, minimum size=1mm},
						leaf/.style={circle, thick, minimum size=1mm},
						edge/.style={-,black!30!black, thick},
						]
						\node[leaf] (l1) at (-1.5,3) {$2$};
						\node[leaf] (l2) at (0,3) {$3$};
						\node[leaf] (l3) at (0.75,2) {$1$};
						\node[vert] (1) at (0,1) {$b$};
						\node[vert] (2) at (-0.75,2) {$b$};
						\draw[edge] (0,0)--(1);
						\draw[edge] (1)--(2);
						\draw[edge] (2)--(l1);
						\draw[edge] (2)--(l2);
						\draw[edge] (1)--(l3);
			\end{tikzpicture}}}=0
		\end{gather*}
	\end{figure}

	\subsubsection{Suspension of contractads}~\label{sec::susp} When we deal with homologically graded contractads, it is important to make a correct definition of "suspension". Similarly to operads, we define the suspension of contractads as follows. The Suspension contractad, denoted $\Susp$, is the contractad with components $\Susp(\Gr)=\Hom(\mathsf{k}s^{\otimes V_{\Gr}},\mathsf{k}s)$, where $\mathsf{k}s$ is a linear span of a homological degree 1 element $s$, with the contractad product given by the composition of functions,
	\begin{gather*}
		\gamma^{\Gr}_I\colon \Susp(\Gr/I)\otimes\bigotimes_{G\in I} \Susp(\Gr|_G) \to \Susp(\Gr)
		\\
		\gamma^{\Gr}_I(g;f_1,f_2,...,f_k)=g\circ(f_1\otimes f_2\otimes...\otimes f_k).
	\end{gather*} Note that each component $\Susp(\Gr)$ is a one-dimensional component concentrated in degree $(1-n)$ with $\Aut(\Gr)$-action by signs $(s^n\to s)^{\sigma}=(-1)^{|\sigma|}(s^n\to s)$ arising from Koszul sign rules. In general, for an integer $n$, we define the $n$-th suspension contractad $\Susp^{n}$ by replacing $s$ with $s^n$ in the definition. In particular, for $n=0$, we get the commutative contractad. For a contractad $\Pop$, we define its suspension $\Susp\Pop$ as the Hadamard product $\Susp\Pop:=\Susp\underset{\mathrm{H}}{\otimes}\Pop$ (take the usual tensor product componentwise) with an obvious contractad structure.
	\subsubsection{Little disks and Graphic configuration spaces}\label{sec:disks}
	
	The little $d$-disks contractad $\D_d$ is a topological contractad, generalising the little $d$-disks operad. Each component of this contractad $\D_d(\Gr)$ consists of configurations of $d$-dimensional disks in the unit disk labeled by the vertex set of a graph, such that interiors of disks corresponding to adjacent vertices do not intersect. So, an element of $\D_d(\Gr)$ is completely determined by a family of rectilinear\footnote{We only allow dilations and translations} embeddings $\{f_v\colon \mathbb{D}^d\to \mathbb{D}^d, v\in V_{\Gr}\}$ satisfying the edge-non-intersecting condition. The contractad structure is given by insertion of a disc in an interior disc, as in Figure~\ref{fig:disccomp}.
	
	\begin{figure}[ht]
		\centering
		\[
		\vcenter{\hbox{\begin{tikzpicture}[scale=0.6]
					\fill (0,0) circle (2pt);
					\fill (0,1.5) circle (2pt);
					\fill (1.5,0) circle (2pt);
					\fill (1.5,1.5) circle (2pt);
					\draw (0,0)--(1.5,0)--(1.5,1.5)--(0,1.5)-- cycle;
					\node at (-0.25,1.75) {$1$};
					\node at (1.75,1.75) {$2$};
					\node at (1.75,-0.25) {$3$};
					\node at (-0.25,-0.25) {$4$};
		\end{tikzpicture}}}
		\quad
		\quad
		\vcenter{\hbox{\begin{tikzpicture}
					\draw[thick] (0,0) circle [radius=35pt];
					\draw[thick] (-0.5,0.4) circle [radius=12pt];
					\node at (-0.5,0.4) {1};
					\draw[thick] (-0.25,-0.75) circle [radius=9pt];
					\node at (-0.25,-0.75) {2};
					\draw[thick] (0.5,-0.3) circle [radius=10pt];
					\node at (0.5,-0.3) {3};
					\draw[thick] (0.6,0.5) circle [radius=10pt];
					\node at (0.6,0.5) {4};
		\end{tikzpicture}}}
		\quad
		\vcenter{\hbox{\begin{tikzpicture}
					\draw[thick] (0,0) circle [radius=35pt];
					\draw[thick] (-0.4,0.4) circle [radius=13pt];
					\node at (-0.4,0.4) {1};
					\draw[thick] (-0.3,-0.6) circle [radius=10pt];
					\node at (-0.3,-0.6) {2};
					\draw[thick] (0.3,0.5) circle [radius=10pt];
					\node at (0.3,0.5) {3};
					\draw[thick] (0.7,-0.3) circle [radius=8pt];
					\node at (0.7,-0.3) {4};    
		\end{tikzpicture}}}
		\quad
		\vcenter{\hbox{\begin{tikzpicture}
					\draw[thick] (0,0) circle [radius=35pt];
					\draw[thick] (0.3,0.5) circle [radius=16pt];
					\node at (-0.1,0.5) {3};
					\draw[thick] (-0.1,-0.8) circle [radius=9pt];
					\node at (-0.1,-0.8) {2};
					\draw[thick] (-0.5,-0.4) circle [radius=10pt];
					\node at (-0.5,-0.4) {4};
					\draw[thick] (0.3,0.5) circle [radius=8pt];
					\node at (0.3,0.5) {1};
		\end{tikzpicture}}}\]
		\caption{Example of configurations in $\D_2(\Cyc_4)$.}
	\end{figure}
	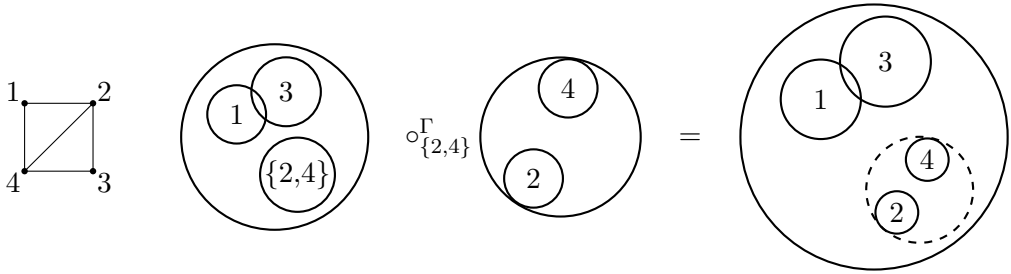
\begin{figure}[ht]
		\centering
		\[
		\vcenter{\hbox{\begin{tikzpicture}[scale=0.6]
					\fill (0,0) circle (2pt);
					\fill (0,1.5) circle (2pt);
					\fill (1.5,0) circle (2pt);
					\fill (1.5,1.5) circle (2pt);
					\draw (0,0)--(1.5,0)--(1.5,1.5)--(0,1.5)-- cycle;
					\draw (0,0)--(1.5,1.5);
					\node at (-0.25,1.75) {$1$};
					\node at (1.75,1.75) {$2$};
					\node at (1.75,-0.25) {$3$};
					\node at (-0.25,-0.25) {$4$};
		\end{tikzpicture}}}
		\quad
		\quad
		\vcenter{\hbox{\begin{tikzpicture}
					\draw[thick] (0,0) circle [radius=35pt];
					\draw[thick] (-0.5,0.3) circle [radius=11pt];
					\node at (-0.5,0.3) {1};
					\draw[thick] (0.3,-0.5) circle [radius=14pt];
					\node at (0.3,-0.5) {\{2,4\}};
					\draw[thick] (0.15,0.6) circle [radius=13pt];
					\node at (0.15,0.6) {3};
		\end{tikzpicture}}}
		\quad
		\circ^{\Gr}_{\{2,4\}}
		\vcenter{\hbox{\begin{tikzpicture}
					\draw[thick] (0,0) circle [radius=30pt];
					\draw[thick] (-0.355,-0.55) circle [radius=11pt];
					\node at (-0.355,-0.55) {2};
					\draw[thick] (0.1,0.645) circle [radius=11pt];
					\node at (0.1,0.645) {4};    
		\end{tikzpicture}}}
		\quad
		=
		\quad
		\vcenter{\hbox{\begin{tikzpicture}
					\draw[thick] (0,0) circle [radius=50pt];
					\draw[thick] (-0.7,0.5) circle [radius=15pt];
					\node at (-0.7,0.5) {1};
					\draw[dashed, thick] (0.6,-0.7) circle [radius=20pt];
					\draw[thick] (0.7,-0.3) circle [radius=8pt];
					\node at (0.7,-0.3) {4};
					\draw[thick] (0.3,-1) circle [radius=8pt];
					\node at (0.3,-1) {2};
					\draw[thick] (0.15,1) circle [radius=17pt];
					\node at (0.15,1) {3};
		\end{tikzpicture}}}
		\]
		\caption{Disks composition in $\D_2$}
		\label{fig:disccomp}
	\end{figure}
	
	There is a connection between little disks contractad and, so-called, \textit{graphic configuration spaces}. For a graph $\Gr$, the graphic configuration space of the real $d$-dimensional space $\mathbb{R}^d$ is the topological space, denoted $\Conf_{\Gr}(\mathbb{R}^d)$, consisting of functions $f\colon V_{\Gr}\to \mathbb{R}^d$ such that images of adjacent vertices do not coincide. There is the map from the component $\D_d(\Gr)$ of little disks contractad to the graphic configuration space, 
	\[
	r\colon \D_d(\Gr)\to \Conf_{\Gr}(\mathbb{R}^d), \quad (r_v\mathbb{D}^d+x_v)_{v\in V_{\Gr}}\mapsto (x_v)_{v\in V_{\Gr}},
	\] that maps configurations of disks to configurations of their centers. These maps are component wise homotopy equivalences.
	\subsubsection{Gerstenhaber contractad}\label{sec::gerst}
	If we replace each component of the contractad $\D_d$ with its homology groups, we obtain a
	contractad $H_{\bullet}(\D_d)$ in the category of $\mathbb{Z}$-modules. In the case $d=2$, The resulting contractad is called the Gerstenhaber contractad and denoted $\Gerst$.
	Let us recall some results about this contractad.
	\begin{theorem}\cite[Th.~5.2.1]{lyskov2023contractads}\label{thm:gerstpres}
		\begin{itemize}
			\item[(i)] The Gerstenhaber contractad is generated by two symmetric generators $m$ and $b$ in component $\mathsf{P}_2$ of degree $0$ and $1$, respectively, satisfying the relations
			\begin{equation}
				\label{eq::gerst::relations}
				\begin{array}{c}
					\text{In }\Path_3:\quad 
					\begin{cases}
						m(m(x_1,x_2),x_3) = m(x_1,m(x_2,x_3)),
						\\
						b(b(x_1,x_2),x_3)+b(x_1,b(x_2,x_3)) = 0,
						\\
						b(m(x_1,x_2),x_3) = m(x_1,b(x_2,x_3))
					\end{cases},   
					\\
					\text{In }\K_3:\quad 
					\begin{cases}
						m(m(x_1,x_2),x_3) = m(x_1,m(x_2,x_3))
						\\
						b(b(x_1,x_2),x_3)+b(b(x_3,x_1),x_2)+b(b(x_2,x_3),x_1) = 0
						\\
						b(m(x_1,x_2),x_3) = m(x_1,b(x_2,x_3)) + m(b(x_1,x_3),x_2)
					\end{cases}
				\end{array}
			\end{equation}
			\item[(ii)] The underlying graphical collection of $\Gerst$ is isomorphic to the product
			\[
			\Gerst\cong \Com\circ \Susp^{-1}\Lie,
			\] where $\Susp^{-1}\Lie=\Susp^{-1}\underset{\mathrm{H}}{\otimes}\Lie$ is a desuspuension.
		\end{itemize}
	\end{theorem}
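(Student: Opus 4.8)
The plan is to establish parts (i) and (ii) together, by a dimension-sandwich argument. Let $\Pop$ be the contractad defined by the generators and relations listed in (i), i.e.\ generated by symmetric $m,b\in\Pop(\Path_2)$ of degrees $0$ and $1$ subject to the given $\Path_3$- and $\K_3$-relations. First I would check that these relations hold in $\Gerst$, which produces a surjection of contractads $\pi\colon\Pop\twoheadrightarrow\Gerst$; next I would bound $\dim\Pop(\Gr)$ from above by counting suitable normal monomials and identifying them with a basis of $(\Com\circ\Susp^{-1}\Lie)(\Gr)$; finally I would match the two extreme dimensions using the homology of graphic configuration spaces. Equality of the outer terms then forces $\pi$ to be an isomorphism, which gives (i), and forces the normal-monomial map to be a bijection onto a basis of $\Com\circ\Susp^{-1}\Lie$, which gives (ii).

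For the first step, identify $m$ with the degree-$0$ fundamental class and $b$ with the degree-$1$ loop class of $\Gerst(\Path_2)=\Gerst(\K_2)=H_\bullet(\Conf_{\K_2}(\mathbb{C}))\simeq H_\bullet(\mathbb{C}^*)$; both are fixed by $\Aut(\K_2)=\mathbb{Z}/2$ because the transposition acts on $\Conf_{\K_2}(\mathbb{C})$ by a self-map homotopic to the identity, so they are symmetric. Since the contractad structure of $\Gerst=H_\bullet(\D_2)$ is induced by the disk-insertion maps of the underlying spaces, the $\K_3$-relations are exactly the usual Gerstenhaber algebra identities — associativity and commutativity of $m$, graded Jacobi for $b$, Leibniz — inside $H_\bullet(\Conf_3(\mathbb{C}))=\Gerst(\K_3)$, and these are standard. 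The $\Path_3$-relations are obtained from the $\K_3$-ones by deleting every summand indexed by the pair $\{1,3\}$, which is not a tube of $\Path_3$ and so does not occur in $(\Gerst\circ\Gerst)(\Path_3)$; one then checks on homology classes that what remains still holds (for instance $b(m(x_1,x_2),x_3)$ and $m(x_1,b(x_2,x_3))$ both represent the cluster $\{x_1,x_2\}$ winding once around $x_3$). Surjectivity of $\pi$ follows from the Orlik--Solomon presentation of $H^\bullet(\Conf_\Gr(\mathbb{C}))$~\cite{orlik2013arrangements}: this ring is generated in cohomological degrees $0$ and $1$, and the degree-$1$ class attached to an edge $(ij)\in E_\Gr$ equals $\alpha\circ^\Gr_{\{i,j\}}b$ for an appropriate degree-$0$ element $\alpha$ built from iterated $m$-products.

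For the upper bound I would orient the relations into a terminating rewriting system on $\Gr$-admissible trees decorated by $m$ and $b$: rewrite $m(m(-,-),-)$ to $m(-,m(-,-))$, rewrite the left $b$-comb $b(b(-,-),-)$ using the $\Path_3$- and $\K_3$-relations, and use the two Leibniz rules to push every $m$-decorated vertex below every $b$-decorated vertex. The normal forms are then exactly the trees consisting of a normal $\Com$-monomial at the bottom with normal $\Susp^{-1}\Lie$-monomials grafted above, hence are in canonical degree-preserving bijection with a basis of $(\Com\circ\Susp^{-1}\Lie)(\Gr)$, giving $\dim\Pop(\Gr)\le\dim(\Com\circ\Susp^{-1}\Lie)(\Gr)$. (Verifying in addition that all critical pairs reduce to zero upgrades the relations to a quadratic Gr\"obner basis, which is where the Koszulness promised in the introduction comes from, but is not needed for the bound.) For the dimension match I would use $\Gerst(\Gr)\simeq H_\bullet(\Conf_\Gr(\mathbb{C});\mathbb{Q})$: evaluating the Poincar\'e polynomial of this hyperplane-arrangement complement at $1$ gives $\dim\Gerst(\Gr)=(-1)^{|V_\Gr|}\chi_\Gr(-1)$, with $\chi_\Gr$ the chromatic polynomial. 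On the other hand, picking a no-broken-circuit basis of the Orlik--Solomon algebra and sorting each NBC-forest by the partition $I\vdash\Gr$ cut out by the vertex sets of its connected components (all tubes), one obtains $\dim\Gerst(\Gr)=\sum_{I\vdash\Gr}\prod_{G\in I}\dim\Lie(\Gr|_G)=\dim(\Com\circ\Susp^{-1}\Lie)(\Gr)$, using that NBC spanning trees of $\Gr|_G$ are counted by $\dim\Lie(\Gr|_G)$ (a statement about the $\Lie$ contractad coming from its own quadratic Gr\"obner basis, refining $\dim\Lie(\K_n)=(n-1)!$). Sandwiching, $\dim(\Com\circ\Susp^{-1}\Lie)(\Gr)=\dim\Gerst(\Gr)\le\dim\Pop(\Gr)\le\dim(\Com\circ\Susp^{-1}\Lie)(\Gr)$ forces equality throughout, which finishes both parts.

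The hard part will be the confluence analysis of the rewriting step: one must enumerate the critical overlaps — an $m$-associativity meeting a Leibniz rule, two Leibniz rules in a row, the $b$-comb relation meeting a Leibniz rule, and the $\Path_3$- versus the $\K_3$-form of Leibniz occurring inside a graph whose three relevant vertices support a non-edge and an edge at once — and check that each reduces to zero, throughout keeping track of which tubes each rule may legitimately be applied in and of the Koszul signs produced by the odd generator $b$ and by the desuspension in $\Susp^{-1}\Lie$. The one genuinely non-algebraic ingredient, which I do not see how to avoid, is the computation of the Betti numbers of $\Conf_\Gr(\mathbb{C})$ via Orlik--Solomon theory together with the combinatorial identity that rewrites their total as a sum over partitions of $\Gr$ into tubes.
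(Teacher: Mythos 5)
The paper does not actually prove Theorem~\ref{thm:gerstpres} here; it recalls it from \cite{lyskov2023contractads}, and the route indicated there (see Example~\ref{ex::grobner_for_classical_contractads}) is to exhibit the quadratic relations as a Gr\"obner basis for the shuffle contractad $\Gerst^{\forget}$ with respect to a quantum version of the $\grpermlex$-order, so that the normal monomials $(m_{\Gr/I};b^{(I_1)},\dots,b^{(I_k)})$ give both the presentation and the decomposition $\Gerst\cong\Com\circ\Susp^{-1}\Lie$ at once. Your sandwich uses the same normal monomials, but only as a spanning set of the abstract quadratic contractad $\Pop$, and replaces the confluence check by the independent count $\dim\Gerst(\Gr)=(-1)^{|V_{\Gr}|}\chi_{\Gr}(-1)=\sum_{I\vdash\Gr}\prod_{G\in I}\dim\Lie(\Gr|_G)$ coming from Orlik--Solomon theory and NBC forests; this is a legitimate trade, and the input you need, $\dim\Lie(\Gr|_G)=|\mu_{\parti(\Gr|_G)}(\hat0,\hat1)|$, is indeed available from the Gr\"obner basis for $\Lie$ alone. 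What each approach buys: yours avoids the critical-pair analysis entirely (so the ``hard part'' you announce is in fact dispensable for this theorem), while the Gr\"obner route additionally delivers Koszulness, which the paper uses later. The one step you should not leave at the level of a remark is surjectivity of $\Pop\to\Gerst$: the fact that $H^{\bullet}(\Conf_{\Gr}(\mathbb{C}))$ is generated in degree one by the $\omega_e$ does not formally imply that the homology contractad is generated by $m$ and $b$, since these live on opposite sides of a duality for a non-compact space. The standard repair is to pair the realized composites indexed by normal monomials against the NBC monomial basis of the Orlik--Solomon algebra and observe that the pairing matrix is triangular with nonzero diagonal --- but note that this argument already yields linear independence, at which point the dimension sandwich becomes redundant. (A cosmetic point: $m(x_1,b(x_2,x_3))$ is the cycle where $x_2$ winds around $x_3$ with $x_1$ placed apart, which is homologous to, but not equal to, the cluster $\{x_1,x_2\}$ winding around $x_3$.)
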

	The homology contractads $H_{\bullet}(\D_d)$ for $d\geq 2$ are almost the same and differ only by the homological degree of the generator $b$ ($\deg(b)=d-1$).
	
	\subsubsection{Associative and Poisson contractads}\label{sec:Ass}
	As will be explained in the forthcoming paper of the second author~\cite{lyskov2024Hamilton} there are several different contractads that generalize the operad of associative algebras $\mathsf{Ass}$. 
	In this paper, we are interested in the one coming from the homology of the little balls operad.
	
	In contrast with the case $d\geq 2$, the components of little intervals contractad $\D_1(\Gr)\simeq \Conf_{\Gr}(\mathbb{R})$ are the disjoint union of contractible spaces. Indeed, the graphic configuration space $\Conf_{\Gr}(\mathbb{R})$ is the complement to a real hyperplane arrangement, so it is a disjoint union of convex hulls, one for each connected component. In particular, the homology of this contractad is concentrated in zero degree.
	
	The corresponding homology contractad $H_{\bullet}(\D_1)=H_{0}(\D_1)$ is called the associative contractad and denoted $\Ass$. Combinatorially, the component $\Ass(\Gr)$ is a linear span of vertex orderings $(v_1,v_2,\cdots,v_n)$ subject to the equivalence $(\cdots,v_i,v_{i+1},\cdots)\sim (\cdots,v_{i+1},v_{i},\cdots)$ for non-adjacent $v_i,v_{i+1}$. Indeed, the ordering of interval labeling from left to right in a configuration in $\D_1(\Gr)$ defines a vertex-ordering up to the equivalence described before (the equivalence coming from the condition that disks labeled by non-adjacent vertices could intersect) and this vertex-ordering depends only on connected components. 
	
	Let us state the presentation of this contractad. The space $\D_1(\Path_2)$ consists of two connected components $\{z_1>z_2\}$, $\{z_1<z_2\}$, where $z_1,z_2$ are centers of labeled 1-disks. Let $\nu \in H_0(\D_2(\Path_2))$ be a class of the point belonging to the first connected component. The action of transposition $(12)\in \Aut(\Path_2)$ on this generator results in the remaining generator $\nu^{\op}$.
	
	\begin{theorem}~\cite[Th.~5.1.1]{lyskov2023contractads}
		The associative contractad $\Ass$ is generated by an element $\nu$ and its opposite $\nu^{\op}:=\nu^{(12)}$ in the component $\Path_2$, satisfying the relations
		\begin{gather*}
			\text{In }\Path_3:\quad 
			\begin{cases}
				\nu(\nu(x_1,x_2),x_3) = \nu(x_1,\nu(x_2,x_3)), \\ 
				\nu^{\op}(\nu^{\op}(x_1,x_2),x_3) = \nu^{\op}(x_1,\nu^{\op}(x_2,x_3)),  
				\\
				\nu(\nu^{\op}(x_1,x_2),x_3) = \nu^{\op}(x_1,\nu(x_2,x_3)), \\
				\nu^{\op}(\nu(x_1,x_2),x_3) = \nu(x_1,\nu^{\op}(x_2,x_3)). 
			\end{cases} 
			\\
			\text{In }\K_3:\quad 
			\nu(\nu(x_1,x_2),x_3) = \nu(x_1,\nu(x_2,x_3)) \ \& \text{ permutations of this relation. }    
		\end{gather*}
	\end{theorem}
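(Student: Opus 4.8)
The plan is to prove the theorem by identifying $\Ass$ with the contractad $\widetilde{\Ass}:=\T(\E)/\langle\R\rangle$ presented by the generators $\nu,\nu^{\op}$ and the listed relations. I would first construct a natural surjection $\widetilde{\Ass}\twoheadrightarrow\Ass$ and then show it is an isomorphism by matching Hilbert series via a quadratic Gr\"obner basis.

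\emph{The comparison morphism.} Send $\nu\in\widetilde{\Ass}(\Path_2)$ to the class of the ordering $(1,2)\in\Ass(\Path_2)$; since $\Ass(\Path_2)$ is spanned by $(1,2)$ and $(2,1)$, which are swapped by the generator of $\Aut(\Path_2)$, this forces $\nu^{\op}=\nu^{(12)}\mapsto(2,1)$. To see that this extends to a morphism of contractads one only has to check the listed relations in the components $\Ass(\Path_3)$ and $\Ass(\K_3)$, which is a direct computation in the combinatorial model: each side of each relation is an iterated infinitesimal composition of the classes $(1,2),(2,1)$, which one evaluates to a single vertex-ordering. In $\K_3$ all orderings are pairwise inequivalent and the two sides of each $S_3$-translate of plain associativity both evaluate to the word $(\sigma 1,\sigma 2,\sigma 3)$; in $\Path_3$ the two ``mixed'' relations, e.g. $\nu(\nu^{\op}(x_1,x_2),x_3)=\nu^{\op}(x_1,\nu(x_2,x_3))$, evaluate to the orderings $(2,1,3)$ and $(2,3,1)$, which coincide in $\Ass(\Path_3)$ precisely because $1$ and $3$ are non-adjacent --- this is also exactly why that relation must be dropped in $\K_3$ and replaced there by the full $S_3$-orbit of associativity. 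Surjectivity of $\widetilde{\Ass}\to\Ass$ is the assertion that $\Ass$ is generated in component $\Path_2$, i.e. that every equivalence class of vertex-orderings of a connected graph is an iterated infinitesimal composition of arity-two classes; this follows by induction on the number of vertices, since within every equivalence class one can find a representative that splits as a concatenation of two orderings of connected induced subgraphs.

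\emph{Gr\"obner basis and the dimension count.} For the reverse inequality $\dim\widetilde{\Ass}(\Gr)\le\dim\Ass(\Gr)$ I would invoke the Gr\"obner basis theory for contractads of~\S\ref{sec::Grobner::contractad}. Fix the monomial order $\grpermlex$ on the tree-monomials of $\T(\E)$; with respect to it the eight quadratic relations in $\Path_3$ (the four written ones together with their $\Aut(\Path_3)$-images, whose span is again four-dimensional) and the six relations forming the $S_3$-orbit of associativity in $\K_3$ each acquire a well-defined leading monomial. The crucial claim is that these quadratic elements already constitute a Gr\"obner basis. This is the technical core of the proof: one must verify that every S-polynomial reduces to zero, and since the contractad is generated in $\Path_2$ such overlaps live only in the four-leaf components indexed by the connected graphs on four vertices ($\Path_4$, $\St_3$, $\Cyc_4$, $\K_4$ and the remaining ones), and one settles each overlap by rewriting --- this is the exact analogue of the single ``pentagon'' S-polynomial that exhibits associativity as a Gr\"obner basis of the classical operad, complicated only by the bookkeeping of the two generators $\nu,\nu^{\op}$ and of the graph automorphisms. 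Given the Gr\"obner basis, the normal monomials of $\widetilde{\Ass}(\Gr)$ are the $\Gr$-admissible decorated combs avoiding the leading patterns, and one produces an explicit bijection between these normal monomials and the equivalence classes of vertex-orderings of $\Gr$. Hence $\dim\widetilde{\Ass}(\Gr)$ equals the number of such classes, which is $\dim\Ass(\Gr)$, so the surjection $\widetilde{\Ass}\to\Ass$ is an isomorphism; since the Gr\"obner basis is quadratic, the contractad is moreover Koszul.

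\emph{Main obstacle.} The genuinely non-formal steps are, first, setting up the monomial order so that all the S-polynomial reductions in the four-leaf components actually close up --- delicate because the relations mix $\nu$ and $\nu^{\op}$ and must be stable under the automorphisms of $\Path_4$, $\Cyc_4$ and $\K_4$ --- and, second, the explicit bijection between the resulting normal monomials and equivalence classes of orderings, which requires a clean combinatorial description of which combs are $\Gr$-admissible for a general connected graph $\Gr$. The well-definedness and surjectivity of the comparison morphism, as well as Koszulness once the quadratic Gr\"obner basis is known, are routine.
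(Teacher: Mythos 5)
Your two-step architecture (build the comparison morphism from the presented contractad onto $\Ass$ by checking the relations on vertex-orderings, then bound $\dim$ from above by a quadratic Gr\"obner basis and match normal monomials against equivalence classes of orderings) is sound, and your verification of the mixed relations in $\Path_3$ versus their failure in $\K_3$ is exactly the right computation. However, this is not the route taken in the source: there (and as recapped in Proposition~\ref{prop::Ass::Pois} and Example~\ref{ex::grobner_for_classical_contractads}), one first changes generators to $m=\nu+\nu^{\op}$ and $b=\nu-\nu^{\op}$, filters $\Ass$ by the number of occurrences of $m$, identifies the associated graded contractad with $\Pois$, and then exhibits a quadratic Gr\"obner basis for the relations~\eqref{eq::jordanlie_pres}--\eqref{eq::Ass::rel} with respect to a \emph{modified} (``quantum'') version of the $\grpermlex$ order. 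The payoff of that route is that the normal monomials are visibly of the form $(m_{\Gr/I};b^{(I_1)},\ldots,b^{(I_k)})$, which simultaneously gives the decomposition $\Ass\cong\Com\circ\Lie$ of graphical collections and an immediate count of normal monomials; your route would have to reprove this count by hand via a bijection between decorated combs and ordering classes.

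The genuine gap is that the technical core of your argument --- that the quadratic relations in the $\nu,\nu^{\op}$ presentation form a Gr\"obner basis for the plain $\grpermlex$ order --- is asserted rather than carried out, and there is a concrete reason to be suspicious of it: the relations live in \emph{two different} three-vertex components ($\Path_3$ and $\K_3$), so for a four-vertex graph $\Gr$ the set of leading terms governing a given tree monomial depends on which induced subgraphs and contractions of $\Gr$ are paths versus triangles, and the overlaps do not reduce to a single pentagon computation. The fact that the source resorts to a deformed order even after passing to the $(m,b)$ generators is evidence that the naive order on $\nu,\nu^{\op}$ need not be confluent. Without either completing those S-polynomial reductions for all connected graphs on four vertices (and proving the comb/ordering bijection for general $\Gr$), or falling back on the $\Com\circ\Lie$ decomposition, the dimension inequality $\dim\widetilde{\Ass}(\Gr)\le\dim\Ass(\Gr)$ --- and hence the whole theorem --- remains unproved.
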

	\begin{example}
		The "strange" relation $\nu^{\op}(\nu(x_1,x_2),x_3) = \nu(x_1,\nu^{\op}(x_2,x_3))$  in $\Path_3$ illustrates the fact that two configurations below are in the same connected components since vertices $1$ and $3$ are non-adjacent.
		\begin{gather*}
			\vcenter{\hbox{\begin{tikzpicture}[
						scale=0.5,
						]
						\draw (0,0)--(6,0);
						\node (bl) at (0,0) {[};
						\node (i1) at (1.5,0.5) {$3$};
						\node (i1l) at (1,0) {[};
						\node (i1r) at (2,0) {]};
						\node (i2) at (4,0.5) {$\{1,2\}$};
						\node (i2l) at (3,0) {[};
						\node (i2r) at (5,0) {]};
						\node (br) at (6,0) {]};
			\end{tikzpicture}}} \circ^{\Path_3}_{\{1,2\}}
			\vcenter{\hbox{\begin{tikzpicture}[
						scale=0.5,
						]
						\draw (0,0)--(5,0);
						\node (bl) at (0,0) {[};
						\node (i1) at (1.5,0.5) {$1$};
						\node (i1l) at (1,0) {[};
						\node (i1r) at (2,0) {]};
						\node (i2) at (3.5,0.5) {$2$};
						\node (i2l) at (3,0) {[};
						\node (i2r) at (4,0) {]};
						\node (br) at (5,0) {]};
			\end{tikzpicture}}}=
			\vcenter{\hbox{\begin{tikzpicture}[
						scale=0.5,
						]
						\draw (0,0)--(7,0);
						\node (bl) at (0,0) {[};
						\node (i1) at (1.5,0.5) {$3$};
						\node (i1l) at (1,0) {[};
						\node (i1r) at (2,0) {]};
						\node (i2) at (3.5,0.5) {$1$};
						\node (i2l) at (3,0) {[};
						\node (i2r) at (4,0) {]};
						\node (i3) at (5.5,0.5) {$2$};
						\node (i3l) at (5,0) {[};
						\node (i3r) at (6,0) {]};
						\node (br) at (7,0) {]};
			\end{tikzpicture}}}
			\\
			\vcenter{\hbox{\begin{tikzpicture}[
						scale=0.5,
						]
						\draw (0,0)--(6,0);
						\node (bl) at (0,0) {[};
						\node (i1) at (1.5,0.5) {$1$};
						\node (i1l) at (1,0) {[};
						\node (i1r) at (2,0) {]};
						\node (i2) at (4,0.5) {$\{2,3\}$};
						\node (i2l) at (3,0) {[};
						\node (i2r) at (5,0) {]};
						\node (br) at (6,0) {]};
			\end{tikzpicture}}} \circ^{\Path_3}_{\{2,3\}}
			\vcenter{\hbox{\begin{tikzpicture}[
						scale=0.5,
						]
						\draw (0,0)--(5,0);
						\node (bl) at (0,0) {[};
						\node (i1) at (1.5,0.5) {$3$};
						\node (i1l) at (1,0) {[};
						\node (i1r) at (2,0) {]};
						\node (i2) at (3.5,0.5) {$2$};
						\node (i2l) at (3,0) {[};
						\node (i2r) at (4,0) {]};
						\node (br) at (5,0) {]};
			\end{tikzpicture}}}=
			\vcenter{\hbox{\begin{tikzpicture}[
						scale=0.5,
						]
						\draw (0,0)--(7,0);
						\node (bl) at (0,0) {[};
						\node (i1) at (1.5,0.5) {$1$};
						\node (i1l) at (1,0) {[};
						\node (i1r) at (2,0) {]};
						\node (i2) at (3.5,0.5) {$3$};
						\node (i2l) at (3,0) {[};
						\node (i2r) at (4,0) {]};
						\node (i3) at (5.5,0.5) {$2$};
						\node (i3l) at (5,0) {[};
						\node (i3r) at (6,0) {]};
						\node (br) at (7,0) {]};
			\end{tikzpicture}}}
		\end{gather*} Equivalently, in terms of vertex-orderings, we have $$\nu^{\op}(\nu(x_1,x_2),x_3)= (3,1,2)\sim (1,3,2)=\nu(x_1,\nu^{\op}(x_2,x_3)).$$
	\end{example}
	To make a more straightforward comparison of $\Ass$ and $\Gerst$ we may use generators $m:=\nu+\nu^{\op}$ and $b:=\nu-\nu^{\op}$ to be the "odd" and "even" part of $\nu$. In the operad case, these generators correspond to Jordan and Lie brackets, respectively. By direct computations, we see that the relations in new generators have the following form
	\begin{gather}\label{eq::jordanlie_pres}
		\text{In }\Path_3:\quad 
		\begin{cases}
			m(m(x_1,x_2),x_3) = m(x_1,m(x_2,x_3)), \\
			b(b(x_1,x_2),x_3) + b(x_1,b(x_2,x_3))=0, \\
			m(b(x_1,x_2),x_3) = b(x_1,m(x_2,x_3))
		\end{cases}, 
		\\
		\label{eq::Ass::rel}
		\text{In }\K_3:\quad 
		\begin{cases}
			m(m(x_1,x_2),x_3) - m(x_1,m(x_2,x_3)) = b(b(x_1,x_3),x_2) \\
			b(m(x_1,x_2),x_3) = m(x_1,b(x_2,x_3))+m(b(x_1,x_3),x_2),\\
			b(b(x_1,x_2),x_3) + b(b(x_2,x_3),x_1)+b(b(x_3,x_1),x_2)=0.
		\end{cases}
	\end{gather}
	The map $b\mapsto b:=\nu-\nu^{op}$ defines an embedding of the contractads $\Lie\hookrightarrow\Ass$ generalizing the one known for operads. We can assign a filtration $\calF$ of the contractad $\Ass$ by the number of the operations $m$, which is compatible with the contractad structure.
	\begin{prop}
		\label{prop::Ass::Pois}
		The associated graded contractad with respect to the filtration $\calF$  is isomorphic to the Poisson contractad:
		\[
		\mathrm{gr}_{\calF} \Ass \cong \Pois.
		\]
	\end{prop}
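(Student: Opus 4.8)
The plan is to read the associated graded directly off the Jordan--Lie presentation \eqref{eq::jordanlie_pres}--\eqref{eq::Ass::rel} of $\Ass$, mimicking the operadic proof that $\mathrm{gr}\,\mathsf{Ass}\cong\mathsf{Pois}$. Recall that $\calF$ counts occurrences of the generator $m$, so that in $\mathrm{gr}_{\calF}\Ass$ the class of $m$ has filtration degree $1$ and that of $b$ has filtration degree $0$. First I would inspect each defining relation: all three relations of $\Ass$ in $\Path_3$, together with the derivation relation $b(m(x_1,x_2),x_3)=m(x_1,b(x_2,x_3))+m(b(x_1,x_3),x_2)$ and the Jacobi identity for $b$ in $\K_3$, are already $\calF$-homogeneous, hence descend verbatim. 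The only non-homogeneous relation is the $\K_3$ associator identity $m(m(x_1,x_2),x_3)-m(x_1,m(x_2,x_3))=b(b(x_1,x_3),x_2)$; it lies in filtration degree $2$ and its top component is precisely the associativity of $m$ in $\K_3$. Thus the leading parts of the relations of $\Ass$ are exactly the defining relations of the Poisson contractad, and since $\mathrm{gr}\langle\R\rangle\supseteq\langle\text{leading parts of }\R\rangle$ in general, this produces a \emph{surjective} morphism of contractads $\pi\colon\Pois\twoheadrightarrow\mathrm{gr}_{\calF}\Ass$.

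It remains to show $\pi$ is injective. As $\calF$ is an exhaustive filtration of the finite-dimensional spaces $\Ass(\Gr)$ we have $\dim\mathrm{gr}_{\calF}\Ass(\Gr)=\dim\Ass(\Gr)$ for every connected graph $\Gr$, so injectivity of $\pi$ is equivalent to the numerical equality $\dim\Pois(\Gr)=\dim\Ass(\Gr)$. I would deduce this from Gröbner bases. On the $\Ass$ side, the combinatorial model of $\Ass(\Gr)$ as vertex orderings modulo transpositions of consecutive non-adjacent vertices (recalled in \S\ref{sec:Ass}) already exhibits a monomial basis, hence computes $\dim\Ass(\Gr)$. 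On the $\Pois$ side, a finite diamond/$S$-polynomial check over $\Path_3$ and $\K_3$ shows that the Poisson relations (``$m$ associative, $b$ satisfies Jacobi, $b$ is a derivation of $m$'') form a quadratic Gröbner basis; its normal monomials are the ``$m$-above-$b$'' admissible trees, and counting them identifies the underlying graphical collection of $\Pois$ with $\Com\circ\Lie$. Matching the two normal-monomial sets --- equivalently, the identity $\dim\Ass(\Gr)=\dim(\Com\circ\Lie)(\Gr)$ for all $\Gr$ --- gives $\dim\Pois(\Gr)=\dim\Ass(\Gr)$.

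Putting the pieces together: $\pi\colon\Pois\to\mathrm{gr}_{\calF}\Ass$ is a surjection of graphical collections which in each component is a surjection between finite-dimensional vector spaces of equal dimension, hence bijective; being a contractad morphism, it is an isomorphism of contractads, which is the assertion $\mathrm{gr}_{\calF}\Ass\cong\Pois$.

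I expect the dimension count to be the main obstacle, i.e. ruling out any \emph{extra} relations in $\mathrm{gr}_{\calF}\Ass$ beyond the Poisson ones. Concretely, one must upgrade the formal rewriting --- use the derivation relations to push every $m$-vertex above the $b$-vertices, and observe that the lower-order correction $b(b(x_1,x_3),x_2)$ of the $\K_3$ associator dies in the associated graded --- into the statement that the resulting ``$m$-over-$b$'' trees form an honest \emph{basis} of $\mathrm{gr}_{\calF}\Ass(\Gr)$ and not merely a spanning set. This is precisely where the Gröbner basis of $\Pois$ (giving the upper bound on the dimension) must meet the explicit vertex-ordering model of $\Ass$ (giving the lower bound); the rewriting itself is routine, whereas establishing minimality of the normal-monomial set is the delicate point.
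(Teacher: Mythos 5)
Your proposal is correct and follows essentially the same route as the paper: the paper's proof is exactly the observation that the relations \eqref{eq::jordanlie_pres}--\eqref{eq::Ass::rel} differ from the Poisson (i.e.\ degree-zero Gerstenhaber) relations only in the lower-order term $b(b(x_1,x_3),x_2)$ of the $\K_3$ associator. The injectivity/dimension step you spell out is left implicit in the paper, where it is supplied by the quadratic Gr\"obner basis for $\Ass$ in the $(m,b)$ presentation recalled in Example~\ref{ex::grobner_for_classical_contractads}, giving $\Ass\cong\Com\circ\Lie$ as graphical collections.
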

	\begin{proof}
		Poisson contractad 
		$\Pois$ is generated by symmetric binary operation $m\in\Path_2$ and skewsymmetric binary operation $b\in\Path_2$ of homological degree $0$ yielding Relations~\eqref{eq::gerst::relations} of the Gerstenhaber contractad.
		These relations differ from~\eqref{eq::Ass::rel} in one term which tells that the Jordan multiplication is associative modulo terms with Lie brackets.
	\end{proof}
	
	\begin{remark}\label{rem::associahedron}
		It is worth mentioning, that the little disks operads $\D_d$ admit a nice topological realization given by Fulton-MacPherson compactifications of the configuration spaces $\mathsf{FM}_d$ (see e.g.~\cite{lambrechts2014formality} for detailed accurate description). In the case $d=1$ the corresponding topological spaces are homeomorphic to the disjoint union of the famous Stasheff polytopes~\cite{stasheff1963homotopy} whose faces are labelled by planar trees. There exists a straightforward generalization of the Fulton-MacPherson compactifications to the world of contractad.
		The compactification comes with natural stratification that is compatible with the contractad structure that we delay to a future paper. In particular, for the case, $d=1$, we expect that the corresponding semialgebraic manifolds $\FM_1(\Gamma)$ are homeomorphic to the disjoint union of convex polytope generalising Stasheff polytopes.
	\end{remark}

	\subsection{Koszul duality for contractads}
	\label{sec::Koszul::contractads}
	We shall briefly recall a Bar-Cobar adjunction for contractads and Koszul duality theory for quadratic contractads. For details, see~\cite[\S 3]{lyskov2023contractads}. Let us mention, that most constructions are completely analoguesly to ones in the (ns)operadic case.

	For a pair of graphical collections $\Pop,\Q$, we define its \textit{infinitesimal product} by the rule
	\[
	(\Pop\circ'\Q)(\Gr)=\bigoplus_{G}\Pop(\Gr/G)\otimes\Q(\Gr|_G),
	\] where the sum ranges over all tubes.

	\subsubsection{Bar-Cobar construction} Given an augmented dg contractad $(\Pop,d_{\Pop})$, there is a quasi-free dg contractad $\mathsf{B}\Pop$, called Bar construction. This is the free contractad generated by the suspension of the augmentation ideal $s\overline{\Pop}$, $\overline{\Pop}=\ker(\epsilon\colon \Pop\to \mathbb{1})$,
	\[
	\mathsf{B}\Pop=(\T^c(s\overline{\Pop}),d),
	\] with the differential $d=d_1+d_2$, where $d_1$ and $d_2$ are unique coderivations of the free cocontractad induced from the differential $d_{\Pop}$ and contractad structure respectively
	\begin{gather*}
		\T^c(s\overline{\Pop})\twoheadrightarrow s\overline{\Pop}\overset{d_{\Pop}}{\rightarrow} s\overline{\Pop}\quad \rightsquigarrow \quad d_1\colon \T^c(s\overline{\Pop})\to \T^c(s\overline{\Pop})
		\\
		\T^c(s\overline{\Pop})\twoheadrightarrow \T^{(2)}(s\overline{\Pop})\cong s\overline{\Pop}\circ's\overline{\Pop}\overset{\gamma'}{\longrightarrow} s\overline{\Pop} \quad \rightsquigarrow \quad d_2\colon \T^c(s\overline{\Pop})\to \T^c(s\overline{\Pop})
	\end{gather*}  
	When a contractad $\Pop$ is a weight-graded contractad, the Bar construction $\mathsf{B}\Pop$ admits an additional cohomology grading given by \textit{syzygy degrees}:  $\omega(s\alpha_1\otimes s\alpha_2\otimes\cdots\otimes s\alpha_k)=\omega(\alpha_1)+\cdots+\omega(\alpha_k)-k$, where $\omega(\alpha_i)$ are weights of elements from $\Pop$.
	
	In a similar way, for a dg coaugmented cocontractad $\Q$ we define a quasi-free dg cocontrcatad $\mathsf{\Omega}\Q=(\T(s^{-1}\overline{\Q}),d_1+d_2)$, called Cobar construction. The bar and cobar constructions form an adjoint pair
	\[
	\Hom_{\mathsf{dgCon}}(\mathsf{\Omega}\Q,\Pop)\cong\Hom_{\mathsf{dgCon}}(\Q,\mathsf{B}\Pop).
	\] Furthermore, the corresponding (co)unit
	\[
	\mathsf{\Omega}\mathsf{B}\Pop\overset{\simeq}{\longrightarrow} \Pop, \quad  \Q\overset{\simeq}{\longrightarrow}\mathsf{B}\mathsf{\Omega}\Q
	\] is a quasi-isomorphism of (co)contractads.

	\subsubsection{Koszul contractads} Let $\Pop=\Pop(\E,\R)$ be a quadratic contractad with generator $\E$ and relations $\R\subset \E\circ'\E$. The Koszul dual cocontractad $\Pop^{\cokoszul}$ is a cocontractad $\Pop^{\cokoszul}=\Q(s\E,s^2\R)$ with a cogenerators $s\E$ and corelations $s^2\R$. The Koszul dual contractad $\Pop^!$ is defined by the formula
	\begin{equation}
		\label{eq::Koszul::dual}    
		\Pop^!:=\Susp^{-1}(\Pop^{\cokoszul})^*,
	\end{equation} where $\Susp^{-1}$ is a desuspension (see \S\ref{sec::susp})
	When the space of generators $\E$ is finite-dimensional component wise, $\Pop^!$ admits the following quadratic presentation
	\[
	\Pop^!=\Pop(s^{-1}\Susp^{-1}\E^*,\R^{\bot})
	\] 
	where the space of relation $\R^{\bot}$ is the orthogonal with respect to the following pairing 
	$$\langle-,-\rangle\colon (\E\circ'\E)\otimes (s^{-1}\Susp^{-1}\E^*\circ' s^{-1}\Susp^{-1}\E^*)\rightarrow \mathsf{k}.$$ 
	Note that, the homological shift is hidden in the pairing due to appropriate suspensions.
	\begin{remark}
		The suspension in the dualization~\eqref{eq::Koszul::dual} was initially suggested by Ginzburg and Kapranov in~\cite{ginzburg1994koszul}. 
		This choice of suspensions is good enough for operads generated by binary operations. Indeed, if $\Pop(\E,\R)$ is a quadratic operad that does not have extra homological grading ($\E$ is an ordinary vector space), the generators $s^{-1}\Susp^{-1}\E^*$ of the Koszul dual operad $\Pop^{!}$ also belongs to an ordinary vector space and all homological shifts are canceled.
		
		While working with operads/contractads whose generators are of different degrees as for a gravity operad/contractad the more comfortable choice of suspensions is different.
		These phenomena were first noticed by Getzler in~\cite{getzler1994two} and later on clarified in~\cite{loday2012algebraic} whose notations we follow in this exposition.
	\end{remark}
	
	\begin{example*} For the commutative contractad $\Com$, its Koszul dual $\Com^!\cong \Lie$ is the Lie contractad. The contractad $\Gerst$ is self-dual up to suspension $\Gerst^!\cong \Susp\Gerst$.
	\end{example*}
	
	Similarly to contractads, there is so-called the \textit{Koszul complex} $\mathcal{K}(\Pop):=(\Pop^{\cokoszul}\circ \Pop,d_{\kappa})$, where the differential  has the form
	\[
	d_{\kappa}\colon \Pop^{\cokoszul}\circ\Pop\overset{\triangle'}{\rightarrow} (\Pop^{\cokoszul}\circ' s\E)\circ \Pop\overset{s^{-1}}{\rightarrow} (\Pop^{\cokoszul}\circ'\E) \circ\Pop\overset{\gamma}{\rightarrow} \Pop^{\cokoszul}\circ \Pop.
	\] Also, there are Koszul inclusion and projection $\Pop^{\cokoszul}\to\mathsf{B}^{0}\Pop$, $\mathsf{\Omega}_{0}\Pop^{\cokoszul}\to \Pop$ defined analogously to the operadic case.
	\begin{defi}
		A quadratic contractad $\Pop$ is Koszul if the Koszul complex $\mathcal{K}(\Pop)$ is acyclic (in the unital sense). Equivalently, the Koszul projection/inclusion is a quasi-isomorphism of (co)contractads.
	\end{defi}
	\begin{prop}\cite{lyskov2023contractads}
		The contractads $\Com$, $\Lie$, $\Gerst$, $\Ass$ and $\Pois$ are Koszul.
	\end{prop}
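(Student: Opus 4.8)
The plan is to establish Koszulness for each of the five contractads $\Com$, $\Lie$, $\Gerst$, $\Ass$, $\Pois$ by exhibiting a quadratic Gröbner basis for each, and then to invoke the general principle — recalled in \S\ref{sec::Grobner::contractad} — that a contractad admitting a quadratic Gröbner basis is Koszul. This mirrors the strategy used for operads, where the existence of a quadratic Gröbner basis (equivalently, a PBW basis) implies Koszulness; the contractad version of this implication should be stated or cited from \cite{lyskov2023contractads} before the proposition, so I may assume it. Thus the real content is a case-by-case verification, and for several of the contractads the work is already done in earlier results of the excerpt, so those cases reduce to citations.

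First I would fix, for each contractad, a \emph{graphpermlex}-type monomial order on the set of decorated $\Gr$-admissible trees (using the ordering $\grpermlex$ alluded to in the preamble), so that the leading term of each defining relation is well-defined. For $\Com$: the presentation in Proposition~\ref{compres} has the single associativity-type relation in $\Path_3$ and in $\K_3$; choosing the order so that the right-combed tree $m(x_1,m(x_2,x_3))$ is the leading monomial, one checks that all $S$-polynomials (coming from overlaps in $\Path_4$, $\K_4$, and the intermediate graphs on four vertices) reduce to zero — these overlaps are precisely the classical pentagon-type overlaps of $\mathsf{Com}$, now indexed by four-vertex connected graphs, and the reductions are the same associativity manipulations. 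Hence $\{$associativity$\}$ is already a quadratic Gröbner basis and $\Com$ is Koszul. For $\Lie$: one uses the anti-symmetrized generator $b$ and the Jacobi relation in $\K_3$ together with the $\Path_3$-associativity; the leading-term analysis is the graphical analogue of the Lyndon/Hall-basis argument for the Lie operad, and the normal monomials are "graphical left-combed trees" with Lyndon-type leaf labelings. For $\Gerst$: by Theorem~\ref{thm:gerstpres}(ii) the underlying graphical collection is $\Com\circ\Susp^{-1}\Lie$, which already identifies the dimensions with those of the expected PBW basis; combined with the explicit relations~\eqref{eq::gerst::relations} one checks the distributivity/Leibniz overlaps reduce, exactly as in the operadic Gerstenhaber case. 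For $\Ass$: use the presentation in generators $\nu,\nu^{\op}$; here $\K_n$-admissible trees are ordinary rooted trees, so the $\K_n$-part of the relations is literally the associative operad and contributes the usual rewriting to left-combed words, while the $\Path_3$ relations (including the "strange" relation) identify adjacent non-adjacent letters — one checks the resulting rewriting system on vertex-orderings is confluent, which is essentially the statement that the equivalence classes of orderings modulo commuting non-adjacent letters are computed by a normal form. For $\Pois$: by Proposition~\ref{prop::Ass::Pois}, $\mathrm{gr}_{\calF}\Ass\cong\Pois$, and a quadratic Gröbner basis for $\Ass$ degenerates (the leading terms are unchanged under passing to the associated graded, since the filtration is by the number of $m$'s and the correction terms in~\eqref{eq::Ass::rel} have strictly fewer $m$'s) to a quadratic Gröbner basis for $\Pois$; alternatively one cites the operadic fact that $\Pois$ has a PBW basis $\Com\circ\Lie$ and runs the same overlap check.

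A cleaner packaging — which I would actually adopt to avoid five separate confluence computations — is: (1) prove the lemma "if $\Pop$ has a quadratic Gröbner basis then so does any $\Susp^n\Pop$ and any associated graded with respect to a weight-compatible filtration" and "Koszulness passes to these"; (2) verify a quadratic Gröbner basis only for $\Ass$ and for $\Lie$ from scratch; (3) deduce $\Pois=\mathrm{gr}_{\calF}\Ass$, $\Com$ as the weight-zero/degeneration piece or directly, $\Gerst$ via its known PBW decomposition $\Com\circ\Susp^{-1}\Lie$ together with $\Susp^{-1}\Lie$ Koszul. The main obstacle is step (2) for $\Ass$: one must check confluence of the rewriting system on vertex-orderings over \emph{arbitrary} connected four-vertex graphs, where the subtlety is the interaction between the genuinely-associative $\K$-type overlaps and the "commute non-adjacent letters" $\Path$-type overlaps — i.e. that no new obstruction appears on graphs like $\Path_4$ or the "paw" graph. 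I expect this to go through because the equivalence relation defining $\Ass(\Gr)$ (orderings up to swapping adjacent non-adjacent vertices) manifestly has a normal form obtained by a deterministic bubble-sort respecting the edge relation, and the Gröbner argument is just the formalization that this normal form is computed associatively. All remaining verifications — the $S$-polynomial reductions for $\Com$ and $\Lie$, and the degeneration argument for $\Pois$ — are routine once the monomial order is fixed, so I would state them as such and refer to \cite{lyskov2023contractads} for the mechanics.
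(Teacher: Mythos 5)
Your proposal is correct and follows essentially the same route as the paper: the paper's proof simply cites \cite{lyskov2023contractads} for the fact that the stated quadratic relations form a Gr\"obner basis (with the relevant $\grpermlex$-type orders recorded in Example~\ref{ex::grobner_for_classical_contractads}) and invokes the implication that a quadratic Gr\"obner basis yields Koszulness. Your ``cleaner packaging'' via distributive laws and the associated graded of $\Ass$ is also exactly what the paper uses for $\Gerst$ and $\Pois$ in Example~\ref{ex::distib::law}, so the two arguments coincide in substance.
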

	\begin{proof}
		It is shown in~\cite{lyskov2023contractads} that for all these contractads the quadratic relations we presented assemble a Gr\"obner basis of relations. We briefly recall the description of Gr\"obner bases for contractads in \S\ref{sec::Grobner::contractad} and refer the reader for more details to~\cite{lyskov2023contractads}.
	\end{proof}
	Let us record the following Corollary, which will be useful later.
	\begin{corollary}
		\label{cor::koszul::quadratic}
		Let $\Pop$ be a weight graded contractad. If the cohomology of the bar construction $\mathsf{B}^{\bullet}\Pop$ is concentrated in syzygy degree $0$, then the contractad $\Pop$ has a quadratic presentation and is Koszul.
	\end{corollary}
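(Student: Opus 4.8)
The plan is to run the standard Koszul-duality argument, in its contractad form. Recall that since $\Pop$ is weight graded, the bar construction $\mathsf{B}\Pop=(\T^c(s\overline{\Pop}),d)$ carries, besides the weight grading (additive under the cocomposition and preserved by $d$), the syzygy grading: in weight $w$ the complex $(\mathsf{B}\Pop)^{(w)}$ lives in syzygy degrees $0,1,\dots,w-1$, the differential $d$ raises the syzygy degree by $1$, and the syzygy-$0$ part is $\mathsf{B}^0\Pop=\T^c(s\Pop^{(1)})$, the cofree cocontractad on the weight-$1$ component. By hypothesis $H^{\bullet}(\mathsf{B}\Pop)$ sits in syzygy degree $0$; since nothing maps in from syzygy degree $-1$, this means $H^{\bullet}(\mathsf{B}\Pop)=\mathcal{C}$, where $\mathcal{C}:=\ker\bigl(d\colon\mathsf{B}^0\Pop\to\mathsf{B}^1\Pop\bigr)$, and $H^{s}(\mathsf{B}\Pop)=0$ for $s>0$.

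First I would establish quadraticity. Put $\E:=\Pop^{(1)}$, $\R:=\ker(\gamma'\colon\E\circ'\E\to\Pop^{(2)})$, and let $q\Pop:=\Pop(\E,\R)$ be the associated quadratic contractad. The homological-degree-$1$ and degree-$2$ parts of $H^{\bullet}(\mathsf{B}\Pop)$ compute the indecomposables and the relations of $\Pop$, exactly as for operads; e.g.\ $H^{w-1}(\mathsf{B}\Pop)^{(w)}=\overline{\Pop}^{(w)}\big/\mathrm{im}\bigl(\bigoplus_{a+b=w}\overline{\Pop}^{(a)}\circ'\overline{\Pop}^{(b)}\bigr)$ is the space of weight-$w$ indecomposables. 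Since all of $H^{\bullet}(\mathsf{B}\Pop)$ is in syzygy degree $0$, these spaces are concentrated in weights $1$ and $2$, so $\Pop$ is generated in weight $1$ with relations generated in weight $2$; hence $\Pop\cong q\Pop$ is quadratic.

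Next, Koszulity. Now $\Pop^{\cokoszul}=\cQ(s\E,s^2\R)$ is defined, and the Koszul inclusion exhibits it as a sub-cocontractad of $\mathsf{B}\Pop$ concentrated in syzygy degree $0$; as a chain map out of an object with zero differential its image lies in $\mathcal{C}=\ker d$, so $\Pop^{\cokoszul}\subseteq\mathcal{C}$. Conversely, $\mathcal{C}$ is itself a sub-cocontractad of $\T^c(s\E)$: the cocomposition is homogeneous of syzygy degree $0$ while $d$ strictly raises it, which (over a field) forces $\ker d$ to be closed under the cocomposition. This sub-cocontractad has full cogenerators $s\E$ and weight-$2$ component exactly $s^2\R$; since $\cQ(s\E,s^2\R)$ is by definition the maximal sub-cocontractad of $\T^c(s\E)$ with this weight-$2$ component, we get $\mathcal{C}\subseteq\Pop^{\cokoszul}$, hence $\mathcal{C}=\Pop^{\cokoszul}$. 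Therefore $H^{\bullet}(\mathsf{B}\Pop)=\Pop^{\cokoszul}$ placed in syzygy degree $0$, so the Koszul inclusion $\Pop^{\cokoszul}\hookrightarrow\mathsf{B}\Pop$ induces an isomorphism on cohomology, i.e.\ is a quasi-isomorphism. By the characterization of Koszul contractads recalled above, $\Pop$ is Koszul.

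The step I expect to need the most care is the claim that $\mathcal{C}=\ker(d\colon\mathsf{B}^0\Pop\to\mathsf{B}^1\Pop)$ is closed under the cocomposition of $\mathsf{B}\Pop$ — this is precisely what makes the maximality of $\cQ(s\E,s^2\R)$ applicable. It is the transcription to $\Gr$-admissible trees of the corresponding operadic fact: the cocomposition splits an admissible tree into subtrees without changing syzygy degree, $d$ contracts a single internal edge and raises it, and one uses the elementary identity $(U'\otimes V)\cap(U\otimes V')=U'\otimes V'$ slot by slot. Everything else is a formal consequence of the Bar–Cobar adjunction and of the structure of quadratic (co)contractads recalled earlier; one should also note that in the applications $\Pop$ has trivial internal differential, so no additional spectral-sequence bookkeeping is needed.
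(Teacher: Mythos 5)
Your argument is correct. The paper records this corollary without proof (it is the contractad transcription of the standard operadic statement from Ginzburg--Kapranov/Loday--Vallette), and your proof is exactly the intended one: read off generators and relations from $H^{w-1}$ and $H^{w-2}$ of $\mathsf{B}\Pop$ in weight $w$ to get quadraticity, then identify $H^0(\mathsf{B}\Pop)=\ker\bigl(d\colon\mathsf{B}^0\Pop\to\mathsf{B}^1\Pop\bigr)$ with $\Pop^{\cokoszul}$ so that the Koszul inclusion is a quasi-isomorphism. The one step you rightly flag — that $\ker d$ is a sub-cocontractad of $\T^c(s\E)$ — does go through as you sketch: the coderivation property gives $(d\circ\mathrm{id})\Delta x=(\mathrm{id}\circ d)\Delta x=0$ separately because the two terms land in distinct syzygy-bidegree summands of $\mathsf{B}\Pop\circ\mathsf{B}\Pop$, and the identity $(U'\otimes V)\cap(U\otimes V')=U'\otimes V'$ applied summand by summand over the partitions in the contraction product finishes it.
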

	
	\subsubsection{Distributive law} It is possible to adapt the formalism of distributive laws~\cite{markl1996distributive} to the case of contractads.
	
	\begin{defi}[distributive law]
		Let $\Pop$ and $\Q$ be two contractads. We say that a morphism of graphical collections
		\[
		\Lambda\colon \Q\circ\Pop\to\Pop\circ\Q
		\] defines a distributive law between $\Pop$ and $\Q$ if the composite 
		\[
		\Pop\circ\Q\circ\Pop\circ\Q\stackrel{\Id\circ\Lambda\circ\Id}{\longrightarrow}  
		\Pop\circ\Pop\circ\Q\circ\Q
		\stackrel{\gamma_{\Pop}\circ\mu_{\Q}}{\longrightarrow}\Pop\circ\Q
		\]
		defines a contractad structure on the graphical collection $\Pop\circ\Q$.
	\end{defi} For a pair of quadratic contractads $\Pop=\Pop(\E,\R)$ and $\Q=\Pop(\mathcal{V},\mathcal{H})$ and a morphism of graphical collections $\lambda\colon \mathcal{V}\circ'\E\to \E\circ'\mathcal{V}$, called a \textit{rewriting rule}, we could define a new contractad $\Pop\vee_{\lambda}\Q$ with generators $\E\oplus \mathcal{V}$ and relations $\R\oplus\mathcal{H}\oplus \langle \alpha-\gamma(\alpha)|\alpha\in \E\circ'\mathcal{V}\rangle$. Note that its Koszul dual contractad is also a contractad obtained from the rewriting rule $\lambda^!$
	\begin{gather*}
		(\Pop\vee_{\lambda}\Q)^!\cong \Q^!\vee_{\lambda^!}\Pop^!,
		\\
		\lambda^!:=s^{-2}\Susp^{-1}\gamma^*\colon s^{-1}\Susp^{-1}\E^*\circ' s^{-1}\Susp^{-1}\mathcal{V}^*\to s^{-1}\Susp^{-1}\mathcal{V}^*\circ' s^{-1}\Susp^{-1}\E^*.   
	\end{gather*}
	\begin{prop}\label{prop::distributive_law} We have
		\begin{itemize}
			\item[(i)] For every choice of rewriting rule $\lambda$ there is a surjective morphism $\Pop\circ\Q\twoheadrightarrow \Pop\vee_{\lambda}\Q$. If that map is an isomorphism, the composite
			\[
			\Q\circ\Pop\hookrightarrow (\Pop\vee_{\lambda}\Q)\circ(\Pop\vee_{\lambda}\Q)\to \Pop\vee_{\lambda}\Q\cong \Pop\circ\Q,
			\] is a distributive law.
			\item[(ii)] If $\gamma$ defines a distributive law, then its Koszul dual $\gamma^!$ also defines it. 
			\item[(iii)] If both $\Pop$ and $\Q$ are Koszul and $\lambda$ defines a distributive law, then $\Pop\vee_{\lambda}\Q$ is also Koszul.
		\end{itemize}
	\end{prop}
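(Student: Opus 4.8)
The statement to prove is Proposition~\ref{prop::distributive_law}, the three claims about distributive laws for contractads: (i) there is always a surjection $\Pop\circ\Q\twoheadrightarrow \Pop\vee_{\lambda}\Q$ and when it is an iso one gets a genuine distributive law; (ii) Koszul duality preserves the property of defining a distributive law; (iii) if $\Pop$, $\Q$ are Koszul and $\lambda$ gives a distributive law, so does $\Pop\vee_{\lambda}\Q$. My plan is to follow the proof for operads in~\cite{loday2012algebraic}, checking at each step that the arguments are formal enough to transfer to the monoidal category $(\GrCol_{\C},\circ)$ of graphical collections.

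Let me sketch the approach concretely.

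\begin{proof}[Proof proposal]
The strategy throughout is that $(\GrCol_{\C},\circ,\mathbb{1})$ is a monoidal category and all three statements are formal consequences of working with monoids in it; we mimic the operadic arguments from~\cite{markl1996distributive,loday2012algebraic}, the only subtlety being that $\circ$ is not left-linear, so one must be careful that every map invoked genuinely exists at the level of graphical collections.

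For (i): The generators of $\Pop\vee_{\lambda}\Q$ are $\E\oplus\mathcal{V}$, and the relations include, besides $\R$ and $\mathcal{H}$, the rewriting relations forcing every element of $\E\circ'\mathcal{V}$ to be rewritten into $\E\circ'\mathcal{V}$ via... actually into $\mathcal V\circ'\E$ via $\lambda$ — that is, the ideal generated by $\alpha-\gamma(\alpha)$ for $\alpha\in \mathcal V\circ'\E$ (I will use the convention fixed in the definition just above). Consequently, in $\Pop\vee_{\lambda}\Q$ every decorated tree can be rewritten so that all $\Q$-decorated vertices lie above all $\Pop$-decorated vertices, which exactly says the canonical map $\Pop\circ\Q\to \Pop\vee_{\lambda}\Q$ is onto; this is a straightforward induction on the number of vertices of an admissible tree, using that subtrees of admissible trees are admissible so the rewriting is well-defined locally. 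When this surjection is an isomorphism, one transports the monoid structure of $\Pop\vee_\lambda\Q$ across it to get a contractad structure on $\Pop\circ\Q$ whose restriction to $\Pop\circ\mathbb{1}$ and $\mathbb{1}\circ\Q$ recovers $\Pop$ and $\Q$; then the composite $\Q\circ\Pop\hookrightarrow (\Pop\vee_\lambda\Q)^{\circ 2}\xrightarrow{\gamma}\Pop\vee_\lambda\Q\cong\Pop\circ\Q$ satisfies the pentagon-type axioms of a distributive law because they are encoded in the associativity of $\gamma$ for $\Pop\vee_\lambda\Q$. This is precisely Markl's argument, and nothing graph-specific intervenes.

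For (ii): Here I would use the Koszul-dual presentation $(\Pop\vee_{\lambda}\Q)^!\cong \Q^!\vee_{\lambda^!}\Pop^!$ recorded just before the proposition, together with (i) applied to both $\Pop\vee_\lambda\Q$ and $\Q^!\vee_{\lambda^!}\Pop^!$. One must argue that the canonical surjection $\Pop\circ\Q\twoheadrightarrow\Pop\vee_\lambda\Q$ is an isomorphism if and only if $\Q^!\circ\Pop^!\twoheadrightarrow\Q^!\vee_{\lambda^!}\Pop^!$ is; this follows by comparing Hilbert-type data, i.e.\ by a dimension count component-by-component using that Koszul duality of quadratic contractads preserves the dimensions of the weight pieces built out of $\E\circ'\mathcal V$ versus $\mathcal V\circ'\E$ — the rewriting rule $\lambda^!$ is defined exactly so that $\R\oplus\mathcal H\oplus\langle\text{rewriting}\rangle$ and its orthogonal have complementary dimensions in each admissible-tree component. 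So the iso-ness of the canonical map is a self-dual condition, and (ii) follows from (i).

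For (iii): This is the place I expect the real work. Assuming $\Pop=\Pop(\E,\R)$ and $\Q=\Pop(\mathcal V,\mathcal H)$ are Koszul and that $\lambda$ defines a distributive law, so that $\Pop\vee_\lambda\Q\cong\Pop\circ\Q$ as graphical collections, I want to show the Koszul complex $\mathcal K(\Pop\vee_\lambda\Q)$ is acyclic. The clean way is to produce a quasi-isomorphism relating it to $\mathcal K(\Pop)\circ\mathcal K(\Q)$ (suitably interpreted), i.e.\ to show the Koszul dual cocontractad $(\Pop\vee_\lambda\Q)^{\cokoszul}$ is, as a graphical collection, $\Pop^{\cokoszul}\circ\Q^{\cokoszul}$, and that under this identification the Koszul differential $d_\kappa$ splits as a twisted sum of the two Koszul differentials plus a cross term coming from $\lambda$; then a spectral-sequence / filtration argument (filter by the $\Pop^{\cokoszul}$-weight) collapses it, using acyclicity of $\mathcal K(\Q)$ on the associated graded and of $\mathcal K(\Pop)$ on the next page. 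Alternatively — and this may be cleaner for contractads — one invokes Corollary~\ref{cor::koszul::quadratic}: it suffices to show the bar cohomology $\mathsf B^\bullet(\Pop\vee_\lambda\Q)$ is concentrated in syzygy degree $0$; using the distributive-law decomposition $\mathsf B(\Pop\vee_\lambda\Q)\simeq \mathsf B\Pop\circ\mathsf B\Q$ up to the twisting, and the Künneth-type behaviour of $\circ$ with respect to syzygy degree (which holds because $\circ$ preserves quasi-isomorphisms between collections that are, component-wise, complexes of projectives over $\mathsf k$), one concludes from Koszulity of $\Pop$ and $\Q$ that the total syzygy degree is $0$. The hard part will be setting up this twisted-tensor/filtration bookkeeping carefully in the contractad setting — in particular making precise that the contraction product $\circ$ interacts well enough with the syzygy (cohomological) grading and with taking homology, since $\circ$ is only defined via sums over partitions $I\vdash\Gr$ and is not exact in the first variable in general; one resolves this exactly as in~\cite[Ch.~8]{loday2012algebraic}, working over a field so that everything in sight is a complex of vector spaces and the relevant Künneth statements are automatic.
\end{proof}
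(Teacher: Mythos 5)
The paper does not actually prove this proposition: it is stated without proof, as a direct transport of the operadic formalism of distributive laws from~\cite{markl1996distributive} and~\cite{loday2012algebraic} to the monoidal category of graphical collections. So the only meaningful comparison is against the standard operadic arguments that the paper implicitly invokes. Measured against those, your sketches of (i) and (iii) are sound in outline: the rewriting induction for surjectivity of $\Pop\circ\Q\twoheadrightarrow\Pop\vee_{\lambda}\Q$ (and your correction of the paper's typo --- the rewriting relations live on $\mathcal V\circ'\E$, not $\E\circ'\mathcal V$) is the right argument, and for (iii) the filtration of the Koszul complex (or bar construction) by the $\Pop$-weight, collapsing via Koszulity of $\Q$ and then of $\Pop$, is exactly how the operadic proof goes; over a field the K\"unneth issues you flag are harmless, and nothing graph-specific intervenes since $\circ$ is monoidal and free contractads are tree-described.

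The genuine gap is in your proof of (ii). The ``component-by-component dimension count'' you propose only works in weight $3$, where the relations live and where orthogonality of $\R\oplus\mathcal H\oplus\langle\alpha-\lambda(\alpha)\rangle$ against its annihilator gives complementary dimensions inside the degree-$3$ part of the free contractad. In weights $\geq 4$ the dimensions of $\Pop\vee_{\lambda}\Q$ and of $\Q^{!}\vee_{\lambda^{!}}\Pop^{!}$ are \emph{not} determined by the quadratic data alone, so ``iso-ness of the canonical map is a self-dual condition'' does not follow from a dimension count. The standard way to obtain (ii) is through the diamond lemma for distributive laws: one shows that injectivity of $\Pop\circ\Q\to\Pop\vee_{\lambda}\Q$ in weight $3$ (which \emph{is} a self-dual condition, by the weight-$3$ orthogonality count) already forces the map to be an isomorphism in all weights, and simultaneously yields Koszulity of $\Pop\vee_{\lambda}\Q$ and the dual statement --- but that argument consumes the Koszulity of $\Pop$ and $\Q$ as input, whereas item (ii) as stated carries no such hypothesis. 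You should either restrict (ii) to the Koszul regime and derive it together with (iii) from the diamond lemma, or supply a genuinely different argument for the higher-weight comparison; as written, the step does not close.
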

	\begin{example}
		\label{ex::distib::law}
		For example, Relations~\eqref{eq::gerst::relations} of the  contractad $\Gerst$ defines a rewriting rule for the contractads $\Com$ and $\Susp^{-1}\Lie$. The decomposition $\Gerst\cong \Com\circ\Susp^{-1}\Lie$ explains why the rewriting rules produces a distributive law. In particular, thanks to $\Com$ and $\Lie$ are Koszul contractads, so $\Gerst$ is also Koszul. 
		We have the same distributive law for the Poisson contractad $\Pois$, so $\Pois$ is also Koszul.   
	\end{example}
	
	\subsection{Gr\"obner bases}\label{sec::Grobner::contractad}
	
	The efficient tool for construction of monomial basis in algebras or operads is a technique of Gr\"obner bases. We shall briefly recall the constructions of Gr\"obner basis in the case of contractads, for more details see~\cite[\S 4]{lyskov2023contractads}. Let us note to the reader that most of the constructions are completely analogous to those in the operadic case~ \cite{dotsenko2010grobner}.
	
	Similarly to operads~\cite{dotsenko2010grobner}, to introduce monomial orders for contractads, we need to consider a "shuffle" version of contractads that ignores actions of graph-automorphisms. The right way is to replace graphs with vertex-ordered ones. We define a \textit{non-symmetric graphical collection} as a contravariant functor $\Pop\colon \mathsf{OCGr}^{\mathrm{op}}\to \C$, where  $\mathsf{OCGr}$ is the category of vertex-ordered connected simple graphs with order-preserving isomorphisms. We define a \textit{shuffle contractad} as a monoid in the category of ns graphical collections equipped with the \textit{Shuffle contraction product}:
	\begin{equation*}
		(\Pop\circ_{\Sha}\Q)(\Gr,<)=\bigoplus_{I \in \parti(\Gr)} \Pop(\Gr/I,<^{\ind})\otimes\bigotimes_{\substack{I=\{G_1,G_2,...,G_k\}\\ \min G_1<\cdots<\min G_k}} \Q(\Gr|_{G_i},<_{\res}),
	\end{equation*} where $<_{\res}$ is the restriction of $<$ on the induced subgraph $\Gr|_G$, and $<^{\ind}$ is the order on the contracted graph $\Gr/I$ defined by the rule $ \{G_i\}<^{\ind} \{G_j\}:= \min_{v\in G_i} v<\min_{v\in G_j} v$. 
	
	The free shuffle contractads $\T_{\Sha}(\E)$ are defined analogously to usual contractads. Monomials in free shuffle contractads are called shufle monomials/trees. Due to the ordering of vertices in graphs, each shuffle tree admits a canonical planar embedding, \textit{shuffle presentation}.
	
	A monomial order on the free shuffle contractad is a total order on the set of shuffle monomials that is compatible with the contractad structure (contractad compositions are monotone functions). Given an ideal $\I\subset \T_{\Sha}(\E)$ in the free shuffle contractad with monomial order, its \textit{Gr\"obner basis} is a collection of elements $\G=\{f_i\}\subset \I$ such that shuffle monomials that are not divisible by leading terms $\LT(f_i)$ of elements from $\G$ form a basis of the quotient contractad $\T_{\Sha}(\E)/\I$.
	
	Given a usual contractad $\Pop$, we define its shuffle version $\Pop^{\forget}$ by the rule $\Pop^{\forget}(\Gr,<):=\Pop(\Gr)$. Let us note that this correspondence preserves presentations of contractads $\T(\E)/(\R)^{\forget}=\T_{\Sha}(\E^{\forget})/(\R^{\forget})$. We say that a quadratic contractad $\Pop=\Pop(\E,\R)$ admits a quadratic Gr\"obner basis if the shuffle ideal $(\R^{\forget})\subset \T(\E^{\forget})$ admits a Gr\"obner basis $\G$ with $\G\subset \R^{\forget}$. Analogously to the operad case, a contractad with quadratic Gr\"obner basis is Koszul~\cite[Th.~4.3.1]{lyskov2023contractads}.
	
	\begin{example}\label{ex::grobner_for_classical_contractads}
		We consider the $\grpermlex$-order on shuffle monomials, that is defined in the same way as for operads~\cite[\S 3.2.1]{dotsenko2010grobner}. \begin{itemize}
			\item 
			The shuffle contractad $\Lie^{\forget}$ has a quadratic Gr\"obner basis with respect to $\grpermlex$-order~\cite[Cor 4.3.1]{lyskov2023contractads}. The leading terms of relations $\R^{\forget}_{\Lie}$ with respect to this order have the form
			\[\label{leadtermlie}
			\vcenter{\hbox{\begin{tikzpicture}[scale=0.5]
						\fill (0,0) circle (2pt);
						\node at (0,0.5) {\footnotesize$1$};
						\fill (1,0) circle (2pt);
						\node at (1,0.5) {\footnotesize$2$};
						\fill (2,0) circle (2pt);
						\node at (2,0.5) {\footnotesize$3$};
						\draw (0,0)--(1,0)--(2,0);    
			\end{tikzpicture}}}
			\vcenter{\hbox{\begin{tikzpicture}[
						scale=0.6,
						vert/.style={inner sep=3pt, circle,  draw, thick},
						leaf/.style={inner sep=2pt,rectangle},
						edge/.style={-,black!30!black, thick},
						]
						\node[vert] (1) at (0,1) {\footnotesize$b$};
						\node[leaf] (l1) at (0.75,2) {\footnotesize$3$};
						\node[vert] (2) at (-0.75,2) {\footnotesize$b$};
						\node[leaf] (l2) at (0,3) {\footnotesize$2$};
						\node[leaf] (3) at (-1.5,3) {\footnotesize$1$};
						\draw[edge] (0,0)--(1);
						\draw[edge] (1)--(2)--(3);
						\draw[edge] (1)--(l1);
						\draw[edge] (2)--(l2);
			\end{tikzpicture}}}
			\vcenter{\hbox{\begin{tikzpicture}[scale=0.5]
						\fill (0,0) circle (2pt);
						\node at (0,0.5) {\footnotesize$1$};
						\fill (1,0) circle (2pt);
						\node at (1,0.5) {\footnotesize$3$};
						\fill (2,0) circle (2pt);
						\node at (2,0.5) {\footnotesize$2$};
						\draw (0,0)--(1,0)--(2,0);    
			\end{tikzpicture}}}
			\vcenter{\hbox{\begin{tikzpicture}[
						scale=0.6,
						vert/.style={inner sep=3pt, circle,  draw, thick},
						leaf/.style={inner sep=2pt,rectangle},
						edge/.style={-,black!30!black, thick},
						]
						\node[vert] (1) at (0,1) {\footnotesize$b$};
						\node[leaf] (l1) at (0.75,2) {\footnotesize$2$};
						\node[vert] (2) at (-0.75,2) {\footnotesize$b$};
						\node[leaf] (l2) at (0,3) {\footnotesize$3$};
						\node[leaf] (3) at (-1.5,3) {\footnotesize$1$};
						\draw[edge] (0,0)--(1);
						\draw[edge] (1)--(2)--(3);
						\draw[edge] (1)--(l1);
						\draw[edge] (2)--(l2);
			\end{tikzpicture}}}
			\vcenter{\hbox{\begin{tikzpicture}[scale=0.5]
						\fill (0,0) circle (2pt);
						\node at (0,0.5) {\footnotesize$2$};
						\fill (1,0) circle (2pt);
						\node at (1,0.5) {\footnotesize$1$};
						\fill (2,0) circle (2pt);
						\node at (2,0.5) {\footnotesize$3$};
						\draw (0,0)--(1,0)--(2,0);    
			\end{tikzpicture}}}
			\vcenter{\hbox{\begin{tikzpicture}[
						scale=0.6,
						vert/.style={inner sep=3pt, circle,  draw, thick},
						leaf/.style={inner sep=2pt,rectangle},
						edge/.style={-,black!30!black, thick},
						]
						\node[vert] (1) at (0,1) {\footnotesize$b$};
						\node[leaf] (l1) at (0.75,2) {\footnotesize$3$};
						\node[vert] (2) at (-0.75,2) {\footnotesize$b$};
						\node[leaf] (l2) at (0,3) {\footnotesize$2$};
						\node[leaf] (3) at (-1.5,3) {\footnotesize$1$};
						\draw[edge] (0,0)--(1);
						\draw[edge] (1)--(2)--(3);
						\draw[edge] (1)--(l1);
						\draw[edge] (2)--(l2);
			\end{tikzpicture}}}
			\vcenter{\hbox{\begin{tikzpicture}[scale=0.5]
						\fill (-0.5,0) circle (2pt);
						\node at (-0.7,-0.2) {\footnotesize$1$};
						\fill (0.5,0) circle (2pt);
						\node at (0.7,-0.2) {\footnotesize$3$};
						\fill (0,0.86) circle (2pt);
						\node at (0,1.2) {\footnotesize$2$};
						\draw (-0.5,0)--(0,0.86)--(0.5,0)--cycle;    
			\end{tikzpicture}}}
			\vcenter{\hbox{\begin{tikzpicture}[
						scale=0.6,
						vert/.style={inner sep=3pt, circle,  draw, thick},
						leaf/.style={inner sep=2pt,rectangle},
						edge/.style={-,black!30!black, thick},
						]
						\node[vert] (1) at (0,1) {\footnotesize$b$};
						\node[leaf] (l1) at (0.75,2) {\footnotesize$3$};
						\node[vert] (2) at (-0.75,2) {\footnotesize$b$};
						\node[leaf] (l2) at (0,3) {\footnotesize$2$};
						\node[leaf] (3) at (-1.5,3) {\footnotesize$1$};
						\draw[edge] (0,0)--(1);
						\draw[edge] (1)--(2)--(3);
						\draw[edge] (1)--(l1);
						\draw[edge] (2)--(l2);
			\end{tikzpicture}}}
			\]
			So, a tree monomial $T$ is normal if, for each  subtree of the form $\vcenter{\hbox{\begin{tikzpicture}[
						scale=0.5,
						vert/.style={inner sep=2pt, circle,draw, thick},
						leaf/.style={inner sep=2pt, rectangle, thick},
						edge/.style={-,black!30!black, thick},
						]
						\node[vert] (1) at (0,1) {\scriptsize$b$};
						\node[leaf] (l1) at (0.75,2) {\scriptsize$L_3$};
						\node[vert] (2) at (-0.75,2) {\scriptsize$b$};
						\node[leaf] (l2) at (0,3) {\scriptsize$L_2$};
						\node[leaf] (3) at (-1.5,3) {\scriptsize$L_1$};
						\node[leaf] (dd) at (0.75,0) {\scriptsize$\cdots$};
						\draw[edge] (dd)--(1);
						\draw[edge] (1)--(2)--(3);
						\draw[edge] (1)--(l1);
						\draw[edge] (2)--(l2);
			\end{tikzpicture}}}$, the union $L_1 \cup L_3$ is a tube and $\min L_2 >\min L_3$.
			
			\item   Dually, the shuffle contractad $\Com^{\forget}$ admits a quadratic Gr\"obner basis with respect to reverse $\grpermlex$-order~\cite[Prop 4.3.2]{lyskov2023contractads}. For an ordered graph $\Gr$, the unique normal monomial, denoted $m_{\Gr}$, has the form
			\[
			\vcenter{\hbox{\begin{tikzpicture}[
						scale=0.6,
						vert/.style={inner sep=2pt, circle,  draw, thick},
						leaf/.style={inner sep=2pt,rectangle},
						edge/.style={-,black!30!black, thick},
						]
						\node[vert] (k) at (0.75,0) {\footnotesize$m$};
						\node[leaf] (lk) at (1.5,1) {\footnotesize$v_k$};
						\node[vert] (1) at (0,1) {\footnotesize$m$};
						\node[leaf] (l1) at (0.75,2) {\footnotesize$v_{k-1}$};
						\node[leaf] (2) at (-0.75,2) {\footnotesize$\cdots$};
						\node[leaf] (l2) at (0,3) {\footnotesize$\cdots$};
						\node[vert] (3) at (-1.5,3) {\footnotesize$m$};
						\node[leaf] (l3) at (-0.75,4) {\footnotesize$v_2$};
						\node[leaf] (l4) at (-2.25,4) {\footnotesize$v_1$};
						\draw[edge] (0.75,-1)--(k);
						\draw[edge] (k)--(1)--(2)--(3);
						\draw[edge] (k)--(lk);
						\draw[edge] (1)--(l1);
						\draw[edge] (2)--(l2);
						\draw[edge] (3)--(l3);
						\draw[edge] (3)--(l4);
			\end{tikzpicture}}}
			\] where $v_1$ is the minimal vertex of $\Gr$, $v_2$ is the minimal vertex adjacent to $\{v_1\}$, $v_3$ is the minimal vertex adjacent to tube $\{v_1,v_2\}$, and so on. Inductively, this monomial is defined by the rule $m_{\Gr}=m_{\Gr/e}\circ^{\Gr}_e m$, where $e$ is an edge consisting of minimal vertex and its minimal adjacent vertex with respect to vertex ordering. 
			
			\item Also, it was shown in~\cite[Thm 5.2.1]{lyskov2023contractads}, that the shuffle contractads $\Gerst$ and $\Ass$ (in presentation~\eqref{eq::jordanlie_pres}) admits a quadratic Gr\"obner basis with respect to "quantum" version of $\grpermlex$-order, such that the normal monomials are of the form 
			\[
			\vcenter{\hbox{\begin{tikzpicture}[
						scale=0.6,
						vert/.style={inner sep=1pt, circle,  draw, thick},
						leaf/.style={inner sep=2pt,rectangle},
						edge/.style={-,black!30!black, thick},
						]
						\node[vert] (1) at (0,1) {\scriptsize$m_{\scalebox{0.7}{$\Gr/I$}}$};
						\node[vert] (l) at (-1.5,2) {\scriptsize$b^{\scalebox{0.7}{$(I_1)$}}$};
						\node[leaf] (ll) at (-2,3) {\space};
						\node[leaf] (lm) at (-1.5,3.1) {\footnotesize$\cdots$};
						\node[leaf] (lr) at (-1,3) {\space};
						\node[leaf] (mid) at (0,2.2) {\footnotesize$\cdots$};
						\node[vert] (r) at (1.5,2) {\scriptsize$b^{\scalebox{0.7}{$(I_k)$}}$};
						\node[leaf] (rl) at (1,3) {\space};
						\node[leaf] (rm) at (1.5,3.1) {\footnotesize$\cdots$};
						\node[leaf] (rr) at (2,3) {\space};
						\draw[edge] (0,0)--(1);
						\draw[edge] (1)--(l);
						\draw[edge] (l)--(lm);
						\draw[edge] (l)--(ll);
						\draw[edge] (l)--(lr);
						\draw[edge] (1)--(mid);
						\draw[edge] (1)--(r);
						\draw[edge] (r)--(rm);
						\draw[edge] (r)--(rl);
						\draw[edge] (r)--(rr);
			\end{tikzpicture}}} \quad \quad (m_{\Gr/I};b^{(I_1)},b^{(I_2)},\cdots,b^{(I_k)}),
			\]where $b^{(I_i)}$ are normal $\Lie$-monomials. In particular, this monomial basis  implies the decompostions $\Gerst\cong \Com\circ\Susp^{-1}\Lie,\quad \Ass\cong \Com\circ\Lie$.
		\end{itemize}
	\end{example}
	\subsection{Complete Multipartite Graphs and Colored operads}\label{sec::multipartite}
	Recall that a \textit{complete multipartite graph} is a graph whose vertex set admits a partition, called \textit{independent}, such that each two vertices from different blocks are adjacent and vertices in one block are not adjacent. Let us note that, by the definition, such a partition is unique. Moreover, a complete multipartite graph is determined up to isomorphism by the sizes of the corresponding partition blocks. So, we shall use the following notation for complete multipartite graphs.
	
	\begin{defi}\label{def:younggraph}
		For a partition $\lambda=(\lambda_1\geq \lambda_2\geq\cdots,\geq\lambda_k)$, we let $\K_{\lambda}$ be the complete multipartite graph with independent partition with blocks of sizes $\lambda_1,\lambda_2,\cdots,\lambda_k$.
	\end{defi}
	\begin{figure}[ht]
		\caption{List of complete multipartite graphs on 4 vertices.}
		\[
		\vcenter{\hbox{\begin{tikzpicture}[scale=0.6]
					\fill (0,0) circle (2pt);
					\fill (0,1.5) circle (2pt);
					\fill (1.5,0) circle (2pt);
					\fill (1.5,1.5) circle (2pt);
					\node at (0.75,-0.6) {$\K_{(4)}$};
		\end{tikzpicture}}}
		\quad
		\vcenter{\hbox{\begin{tikzpicture}[scale=0.6]
					\fill (0,0) circle (2pt);
					\fill (-1,1.5) circle (2pt);
					\fill (0,1.5) circle (2pt);
					\fill (1,1.5) circle (2pt);
					\draw (0,0)--(-1,1.5);
					\draw (0,0)--(0,1.5);
					\draw (0,0)--(1,1.5);
					\node at (0,-0.6) {$\K_{(3,1)}\cong \St_3$};
		\end{tikzpicture}}}
		\quad
		\vcenter{\hbox{\begin{tikzpicture}[scale=0.6]
					\fill (0,0) circle (2pt);
					\fill (0,1.5) circle (2pt);
					\fill (1.5,0) circle (2pt);
					\fill (1.5,1.5) circle (2pt);
					\draw (0,0)--(1.5,0)--(1.5,1.5)--(0,1.5)-- cycle;
					\node at (0.75,-0.6) {$\K_{(2^2)}\cong \Cyc_4$};
		\end{tikzpicture}}}
		\quad
		\vcenter{\hbox{\begin{tikzpicture}[scale=0.6]
					\fill (0,0) circle (2pt);
					\fill (0,1.5) circle (2pt);
					\fill (1.5,0) circle (2pt);
					\fill (1.5,1.5) circle (2pt);
					\draw (0,0)--(1.5,0)--(1.5,1.5)--(0,1.5)-- cycle;
					\draw (0,0)--(1.5,1.5);
					\node at (0.75,-0.6) {$\K_{(2,1^2)}$};
		\end{tikzpicture}}}
		\quad
		\vcenter{\hbox{\begin{tikzpicture}[scale=0.6]
					\fill (0,0) circle (2pt);
					\fill (0,1.5) circle (2pt);
					\fill (1.5,0) circle (2pt);
					\fill (1.5,1.5) circle (2pt);
					\draw (0,0)--(1.5,0)--(1.5,1.5)--(0,1.5)-- cycle;
					\draw (0,0)--(1.5,1.5);
					\draw (1.5,0)--(0,1.5);
					\node at (0.75,-0.6) {$\K_{(1^4)}\cong \K_4$};
		\end{tikzpicture}}}
		\]
	\end{figure}
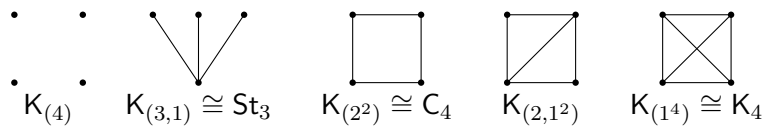
	Note that, for each partition $\lambda$ of length at least $2$, the graph $\K_{\lambda}$ is connected. Also, a subset of vertices $G\subset \K_{\lambda}$ is a tube if it is a singleton or contains a pair of vertices from different blocks of independent partition.
	\begin{lemma}
		\label{lem::multipartite_contraction}
		If $\Gamma$ is a complete multipartite graph and $G$ is a tube in $\Gamma$, then both $\Gamma|_{G}$ and $\Gamma/G$ are complete multipartite graphs.    
	\end{lemma}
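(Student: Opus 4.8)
The plan is to track what the two operations do to the \emph{independent partition} of $\Gamma$. Write $V_{\Gamma} = B_1 \sqcup \cdots \sqcup B_k$ for this (unique) partition, so that two vertices are adjacent in $\Gamma$ exactly when they lie in distinct blocks. For the induced subgraph $\Gamma|_G$ I would simply check that the nonempty members of $\{B_1 \cap G, \dots, B_k \cap G\}$ form an independent partition of it: adjacency in $\Gamma|_G$ is the restriction of adjacency in $\Gamma$, so each $B_i \cap G$ is an independent set and any two vertices lying in different such sets are joined by an edge. No case distinction is needed here, and $\Gamma|_G$ is complete multipartite.

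For the contraction, the one non-formal ingredient I will use is the observation that a tube $G$ with $|G| \geq 2$ necessarily meets at least two of the blocks $B_i$ --- otherwise $\Gamma|_G$ would be edgeless on $\geq 2$ vertices, hence disconnected, contradicting that $G$ is a tube. Granting this, I would argue as follows. If $G$ is trivial, then $\Gamma/G \cong \Gamma$ and there is nothing to prove; if $G = V_{\Gamma}$, then $\Gamma/G$ is the one-vertex graph $\K_{(1)} = \Path_1$, which is complete multipartite. Otherwise $G$ meets blocks $B_i \neq B_j$; let $*$ denote the vertex of $\Gamma/G$ obtained by collapsing $G$. I then verify two things. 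First, $*$ is adjacent to every $w \in V_{\Gamma} \setminus G$: writing $w \in B_{\ell}$, one of $B_i, B_j$ differs from $B_{\ell}$, so $G$ contains a neighbour of $w$, which makes $G \cup \{w\}$ a tube and hence $(*,w)$ an edge of $\Gamma/G$. Second, for $w, w' \in V_{\Gamma} \setminus G$ the pair $(w,w')$ is an edge of $\Gamma/G$ precisely when $\{w,w'\}$ is a tube of $\Gamma$, i.e. when $w,w'$ lie in distinct blocks, since contracting $G$ creates no new edge between old vertices. Consequently $\{*\}$ together with the nonempty members of $\{B_1 \setminus G, \dots, B_k \setminus G\}$ is an independent partition of $\Gamma/G$, so $\Gamma/G$ is complete multipartite.

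I expect essentially no serious obstacle: the whole argument is a direct unwinding of the definitions of $\Gamma|_G$, $\Gamma/G$ and of the independent partition. The only step that is not purely formal is the remark that a non-singleton tube must meet two independent blocks, and the only bookkeeping point is to treat the degenerate tubes ($|G| = 1$ and $G = V_{\Gamma}$) separately, so that the statement is read with the one-vertex graph $\K_{(1)}$ counted among the complete multipartite graphs.
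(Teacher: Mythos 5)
Your proof is correct and follows essentially the same route as the paper's: both arguments track the independent partition through the two operations, identifying the blocks of $\Gamma|_G$ with the nonempty intersections $B_i\cap G$ and the blocks of $\Gamma/G$ with the sets $B_i\setminus G$ together with the new apex vertex as a singleton block. Your version just spells out more explicitly why the apex is adjacent to everything (a non-singleton tube meets two blocks), which the paper leaves as a direct inspection.
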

	\begin{proof}
		If $\Gamma = \K_{(1^m,k_1,\ldots,k_i)}$ is the complete multipartite graph on $m+k_1+\ldots+k_i$ vertices that are divided into $m+i$ groups with first $m$ groups consisting from $1$ element. Let $G$ be the tube consisting of exactly $r$ vertices belonging to the subset of first $m$ vertices, $s_1$ vertices belonging to $m+1$-st group of $k_1$ vertices, $s_2$ from $k_2$ and so on.
		Then the induced subgraph $\K_{(1^m;k_1,\ldots,k_i)}|_{G}$ is the complete multipartite graph $\K_{(1^r;s_1,\ldots,s_i)}$, respectively the contracted graph $\K_{(1^m;k_1,\ldots,k_i)}/{G}$ coincides with the complete multipartite graph $\K_{(1^{m-r+1};k_1-s_1,\ldots,k_i-s_i)}$ where the tube $G$ corresponds to the new apex.
	\end{proof}
	The following lemma shows how complete multipartite graphs stand out from the rest graphs.
	\begin{lemma}
		\label{lemma:yungcone}
		A graph $\Gr$ is a complete multipartite if and only if, for each non-trivial tube $G$, the vertex $\{G\}$ in the contracted graph $\Gr/G$ is adjacent to all vertices.
	\end{lemma}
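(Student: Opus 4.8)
The plan is to first reformulate "complete multipartite" in a form that interacts well with tubes and contractions, namely via the standard characterization: a graph $\Gr$ is complete multipartite if and only if the relation $\sim$ on $V_{\Gr}$ given by ``$u\sim v$ whenever $u=v$ or $uv\notin E_{\Gr}$'' is an equivalence relation; when it is, its classes are precisely the blocks of the independent partition recalled above. This relation is always reflexive and symmetric, so the only content is transitivity, and transitivity fails exactly when there are three distinct vertices $u,v,w$ with $uv\in E_{\Gr}$ and $uw,vw\notin E_{\Gr}$ (an induced ``edge plus isolated vertex''). I would record this equivalence -- a short check in both directions straight from the definition of $\K_{\lambda}$ -- before turning to the lemma itself; note that Lemma~\ref{lem::multipartite_contraction} is not needed here.

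For the ``only if'' direction, assume $\Gr=\K_{\lambda}$ and let $G$ be a non-trivial tube, so $|G|\geq 2$ and $\Gr|_G$ is connected; hence $\Gr|_G$ contains an edge $uv$, and since edges of a complete multipartite graph join distinct blocks, $u$ and $v$ lie in different blocks. Now take any vertex $w\notin G$. Its block differs from that of $u$ or from that of $v$, so $w$ is adjacent in $\Gr$ to at least one of $u,v$; therefore $\Gr|_{G\cup\{w\}}$ is connected, i.e.\ $G\cup\{w\}$ is a tube of $\Gr$. By the definition of the contracted graph, this says exactly that $\{G\}$ and $\{w\}$ are joined by an edge in $\Gr/G$. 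As $w$ was arbitrary, $\{G\}$ is adjacent to every other vertex of $\Gr/G$.

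For the converse I would argue by contrapositive: if $\Gr$ is not complete multipartite, then by the reformulation there are distinct $u,v,w$ with $uv\in E_{\Gr}$ and $uw,vw\notin E_{\Gr}$. Take $G=\{u,v\}$, which is a non-trivial tube since $uv$ is an edge. Then in $\Gr|_{G\cup\{w\}}$ the vertex $w$ is isolated, so $G\cup\{w\}$ is \emph{not} a tube, and hence $\{G\}$ is not adjacent to $\{w\}$ in $\Gr/G$; thus this non-trivial tube $G$ violates the stated condition. Equivalently, if the condition holds for every non-trivial tube then $\Gr$ is complete multipartite.

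The argument is short; essentially all the content sits in the equivalence-relation reformulation, and I expect no genuine obstacle beyond two points of bookkeeping: keeping straight that in $\Gr/G$ the apex $\{G\}$ is adjacent to $\{w\}$ precisely when $G\cup\{w\}$ is a tube of $\Gr$, and the degenerate case in which $\Gr$ has no non-trivial tube at all (an edgeless graph), where the displayed condition is vacuously true and $\Gr=\K_{(n)}$ is indeed complete multipartite, consistent with the claim.
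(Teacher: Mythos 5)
Your proof is correct and follows essentially the same route as the paper: the backward direction is the identical argument via the equivalence relation $u\sim v\iff u=v$ or $uv\notin E_{\Gr}$, with the failure of transitivity witnessed by contracting the edge $\{u,v\}$; the forward direction, which the paper delegates to the proof of Lemma~\ref{lem::multipartite_contraction}, you rederive directly, but with the same underlying observation that contracting a tube of a complete multipartite graph produces an apex. No gaps.
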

	\begin{proof}
		$(\Rightarrow)$: Follows from the proof of Lemma~\ref{lem::multipartite_contraction}
		
		$(\Leftarrow)$: Let $\Gr$ be a graph with the condition above. Define the relation on the vertex set by the rule: $v\sim w$ if $v=w$ or $v$ and $w$ are not adjacent. It suffices to check that this relation is an equivalence relation. Indeed, in this case, the graph $\Gr$ is the complete multipartite graph associated with the partition into equivalence classes. This relation is reflexive and symmetric by the definition. Let us examine transitivity. Suppose we have a triple of vertices such that $v\sim w$, $w\sim u$, but $v\not\sim u$. We get a contradiction since the contracted vertex $\{v,u\}$ in $\Gr/\{v,u\}$ is not adjacent to $w$.
	\end{proof}
	
	\subsubsection{From contractad on multipartite graphs to coloured operads}
	
	Colored operads are operads whose inputs and outputs are colored by a given set of colors, such that compositions should respect the colours.
	This is a very general object and, in particular,
	it is known that any Feynman category~\cite{ward2017feynman} admits a description as algebras over an appropriate colored operad. 
	One may look at contractades as coloured operads with an infinite collection of colours, indexed by graphs. This description does not give anything new in the general case and seems very complicated. 
	However, we suggest the description of contractads on multipartite graphs based on coloured operads and \emph{multi-symmetric} collections and show how one can use it for the computations of the Hilbert series.
	\begin{defi}
		\label{def::P_M_Collection}
		\begin{itemize}
			\item
			\emph{A pointed partition} of a set $S$ is a partition such that one of the subsets in a partition is marked (\emph{pointed}). (Note that we allow to point an empty subset.)
			\\
			By $\Pi_{*}$ we denote the corresponding groupoid, whose objects are pointed partitions of finite sets and morphisms are isomorphisms of sets that preserve partitions and the marked subsets.
			\item 
			\emph{A pointed multi-symmetric collection} in $\C$ is a contravariant functor  $\Pi_{*}\to \C$.
		\end{itemize}
	\end{defi}
	For example, in~\ref{eq::part::example} examples of two pointed partitions of the set on $5$ elements, where we underline the pointed subset 
	\begin{equation}
		\label{eq::part::example}
		\{\underline{\{1,2\}},\{3,5\},\{4\}\}, \ \{\underline{\emptyset},\{1,3,4\},\{2,5\}\} \ \in \Pi_{*}(5).
	\end{equation}
	In the second example, the cardinality of a pointed subset is $0$ and we underline the empty subset.
	
	A class of isomorphic pointed partitions is uniquely defined by the cardinalities of its subsets, therefore, a multi-symmetric collection $\eP$ is a collection of objects $\eP(m;\lambda)$ where $m\in\bZ_{\geq 0}$ is the cardinality of the pointed subset and $\lambda$ is the Young diagram of the cardinalities of the remaining subsets in a partition.
	
	With each pointed partition 
	$J_*:=\underline{J_0}\sqcup J_1\sqcup \ldots\sqcup J_m \in \Pi_{*}(T)$ and a given subset $S\subset T$ of cardinality greater than $1$ we associate an \emph{induced} pointed partition $S\cap J_{*}$ of $S$ given by the intersection:
	$$
	J_{*}\cap S:= \underline{{J_{0}}\cap S}\sqcup (J_1\cap S)\sqcup \ldots \sqcup (J_k\cap S).
	$$
	and the contracted pointed partition $J_{*}/S$ of $T\setminus S\sqcup \{*\}$:
	$$
	J_{*}/S:= \underline{\{*\}\sqcup J_0/(S\cap J_0)} \sqcup J_1/(S\cap J_1) \sqcup \ldots \sqcup J_k/(S\cap J_k)
	$$
	If the subset $S$ consists of one element we say that $J_{*}/S$ coincide with $S$.
	Note that if $S_1$ and $S_2$ are two subsets of $T$ with empty intersection ($S_1\cap S_2 = \emptyset$) then the consecutive contractions coincide, that is $(J_{*}/S_1)/S_2 = (J_{*}/S_2)/S_1$. 
	The iterated contractions can be summarised in the following operation.
	To each (\emph{non-pointed}) partition $S_{\bullet}:=S_1\sqcup\ldots\sqcup S_k$ of the set $T$ and a pointed partition $J_{*}:=\underline{J_0}\sqcup J_1\sqcup\ldots\sqcup J_l$ of the same set $T$ we assign the contracted pointed partition $J_*/S_{\bullet}:=\underline{(J/S)_{0}}\sqcup (J/S)_{1}\sqcup\ldots\sqcup (J/S)_{m}$ of the set with $k$ elements (whose elements are the indexing set of the partition $S_{\bullet}$) by the following rule: 
	$$
	\begin{array}{c}
		j\geq 1 \ (J/S)_{j}:= \{s \colon \ |S_s|=1\ \& \ S_s\subset J_{j}\}, \\
		\underline{(J/S)_{0}} := \{s \colon \ |S_s|=1\ \& \ I_s\subset \underline{J_{0}}\} \cup \{s \colon |S_s|>1 \}
	\end{array}
	$$
	In other words, 
	%the elements of the contracted partition $J_*/S_{\bullet}$ are indexed by the subsets of the partition of $I$ and
	if the cardinality of the subset $S_s$ is greater than one then all elements in $S_s$ are collapsed to an extra element of the pointed subset $(J/S)_0$, and if $S_s$ consists of one element then it remains to belong to the same subset as it was in the pointed partition $J_*$.
	For example:
	$$
	\begin{array}{c}
		J_*:=\underline{\{1,3,7\}}\sqcup\{4,5,6\}\sqcup\{2,8,9,10\}, \\
		S_{\bullet}:=\{1,4,5\}\sqcup\{2,3,8\}\sqcup \{6\} \sqcup\{7\}\sqcup\{9\}\sqcup\{10\}, \\
		J_*/S_{\bullet}:= \underline{\left\{\{1,4,5\},\{2,3,8\},\{\stackrel{\phantom{1}}7\}\right\}}\sqcup 
		\left\{\{\stackrel{\phantom{1}}6\}\right\} \sqcup
		\left\{\{\stackrel{\phantom{1}}9\},\{10\}\right\}.
	\end{array}
	$$
	%We tried to underline that the elements of the pointed partition $J_{*}/I$ are in one-to-one correspondence with the partitions of $I$.
	
	The contraction of partitions defines a monoidal structure in the category of pointed multi-symmetric collections, which we also denote by $\circ$ and call \emph{the contraction product}:
	\begin{equation}
		\label{eq::multisym:prod}
		\eP\circ\eQ( J_*) := \bigoplus_{I\in \Pi_{|J_*|}} \eP(J_*/I_{\bullet})\otimes \bigotimes_{I_s\in I_{\bullet}}\eQ(J_*\cap I_s)
	\end{equation}
	
	\begin{defi}
		\emph{A pointed multi-symmetric operad} is a monoid in the category of pointed multi-symmetric collections equipped with the contraction product $\circ$ defined in~\eqref{eq::multisym:prod}.
	\end{defi}
	
	\begin{prop}
		\label{prp::multipart}
		The forgetful functor $\Psi$ from the category of graphical collections to the category of multi-symmetric collections:
		\begin{multline*}
			\Psi(\eP)(m;\lambda) =\Psi(\eP)(\{\underline{\{1,\ldots,m\}},\{m+1,\ldots,m+\lambda_1\},\ldots,\{m+\lambda_1+\ldots+\lambda_{r-1}+1,\ldots,m+\lambda_1+\ldots+\lambda_r\}\}) 
			\\
			:= \eP(\K_{1^m;\lambda_1,\ldots,\lambda_r}).  
		\end{multline*}
		defined by complete multipartite graphs $\K_{1^m;\lambda_1,\ldots,\lambda_r}$ is compatible with compositions:
		$$
		\Psi(\eP\circ \eQ) = \Psi(\eP)\circ\Psi(\eQ).
		$$
		In particular, $\Psi$ maps a contractad to a multi-symmetric operad.
	\end{prop}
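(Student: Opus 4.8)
The plan is to fix a pointed partition $J_{*}=\underline{J_0}\sqcup J_1\sqcup\cdots\sqcup J_r$, identify it with the complete multipartite graph $\Gr:=\K_{1^{|J_0|};|J_1|,\ldots,|J_r|}$ (so that the independent partition of $\Gr$ has as blocks the $|J_0|$ singletons of $J_0$ together with $J_1,\ldots,J_r$), and then compare the two composition products on $J_{*}$ summand by summand. The routine points come first: an isomorphism of pointed partitions preserves the marked block and the unordered multiset of the other blocks, hence induces an isomorphism of the associated graphs, so $\Psi$ is a well-defined functor to the category of pointed multi-symmetric collections and it carries the monoidal unit to the monoidal unit (both being supported on the one-vertex graph, resp. the one-element pointed partitions). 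The real content is to produce a \emph{natural} isomorphism $\Psi(\eP)\circ\Psi(\eQ)\xrightarrow{\sim}\Psi(\eP\circ\eQ)$ compatible with the associativity constraints and units; granting this, $\Psi$ is strong monoidal, so it sends monoids to monoids, which yields the last assertion of the proposition.

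First I would match the indexing sets of the two direct sums. On the graphical side, $\Psi(\eP\circ\eQ)(J_{*})=(\eP\circ\eQ)(\Gr)$ is indexed by partitions $I\vdash\Gr$ of the vertex set into tubes, with summand $\eP(\Gr/I)\otimes\bigotimes_{G\in I}\eQ(\Gr|_G)$; on the multi-symmetric side, $(\Psi(\eP)\circ\Psi(\eQ))(J_{*})$ is indexed by \emph{all} set partitions $I$ of the vertex set, with summand $\Psi(\eP)(J_{*}/I_{\bullet})\otimes\bigotimes_{I_s}\Psi(\eQ)(J_{*}\cap I_s)$. Using the characterisation of tubes of $\K_\lambda$ recorded before Lemma~\ref{lem::multipartite_contraction} — a subset is a tube iff it is a singleton or meets two distinct blocks of the independent partition — a set partition fails to be a partition into tubes precisely when it has a block $I_s$ with $|I_s|\ge 2$ contained in a single non-marked block $J_i$ (this never happens inside $J_0$, which is a clique in $\Gr$). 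For such $I$ the induced pointed partition $J_{*}\cap I_s$ has shape $(0;(|I_s|))$, so the factor $\Psi(\eQ)(J_{*}\cap I_s)=\eQ(\K_{(|I_s|)})$ is $0$, since $\K_{(|I_s|)}$ is edgeless, hence disconnected, and graphical collections are supported on connected graphs. Thus these summands drop out, and both direct sums run effectively over the partitions of $\Gr$ into tubes.

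For each partition $I\vdash\Gr$ into tubes I would then identify the two summands using Lemma~\ref{lem::multipartite_contraction}, which guarantees that $\Gr|_G$ and $\Gr/I$ are again complete multipartite. For $\Gr|_G$ this is direct: if $G$ has $a$ vertices in $J_0$ and $b_i$ vertices in each $J_i$, then $\Gr|_G\cong\K_{1^{a};\mu}$ with $\mu$ the partition formed by the nonzero $b_i$, which is exactly the graph attached by $\Psi$ to the induced pointed partition $J_{*}\cap G=\underline{J_0\cap G}\sqcup(J_1\cap G)\sqcup\cdots$; hence $\eQ(\Gr|_G)=\Psi(\eQ)(J_{*}\cap G)$. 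For $\Gr/I$ the bookkeeping is more delicate: two blocks $G,G'$ are non-adjacent in $\Gr/I$ iff $G\cup G'$ is not a tube, which forces both to be singletons lying in one common non-marked $J_i$, whereas every non-singleton block and every singleton contained in $J_0$ is adjacent to all other blocks, i.e. becomes an apex of $\Gr/I$ (compare Lemma~\ref{lemma:yungcone}). Reading off the independent partition of $\Gr/I$ one then finds, for each $i\in\{1,\ldots,r\}$, a block of size $\#\{a\in J_i:\{a\}\in I\}$, together with $\#\{a\in J_0:\{a\}\in I\}+\#\{s:|I_s|>1\}$ apex singletons — and this is exactly the shape of the contracted pointed partition $J_{*}/I_{\bullet}$ from the definition preceding the proposition. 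Hence $\eP(\Gr/I)=\Psi(\eP)(J_{*}/I_{\bullet})$, and the two summands coincide.

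Finally I would verify that these identifications are natural for graph- and pointed-partition automorphisms and that they intertwine the grafting/structure maps assembling the two products, so the induced isomorphism of products respects associativity and units; then $\Psi$ is strong monoidal and the proposition follows. I expect the main obstacle to be the third step: keeping exact track, inside $\Gr/I$, of which blocks of $I$ are absorbed into the marked (apex) part and which survive inside the non-marked blocks, and of the edge cases — singleton blocks, an empty marked subset, the disconnected shapes discarded in the second step — so that the combinatorics of $J_{*}/I_{\bullet}$ matches the independent partition of $\K_{1^{m};\lambda}/I$ exactly.
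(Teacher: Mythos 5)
Your proposal is correct and follows essentially the same route as the paper, whose proof is a one-line appeal to ``direct inspection of the combinatorics of the contraction of multipartite graphs'' together with Lemma~\ref{lem::multipartite_contraction} (contracting a tube produces a new apex). Your write-up simply makes that inspection explicit --- in particular the observation that non-tube blocks contribute zero summands because they yield disconnected graphs $\K_{(k)}$, and the matching of the independent partition of $\K_{1^m;\lambda}/I$ with $J_{*}/I_{\bullet}$ --- which is a faithful expansion of the intended argument.
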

	\begin{proof}
		The statement follows from the direct inspection of the combinatorics of the contraction of multipartite graphs.
		From Corollary~\ref{lem::multipartite_contraction} we know that contracting a tube in a multipartite graph produces a new apex vertex.
	\end{proof}
	
	We defined the notion of a pointed multi-symmetric operad for the future use of generating series, however, the subsequent Remark~\ref{rem::multisym} should explain why this notion might be more familiar to the wide audience.
	\begin{remark}
		\label{rem::multisym}
		The category of pointed multi-symmetric operads in $\C$ is isomorphic to the subcategory of coloured operads whose colours
		are indexed by natural numbers 
		$\{\underline{0},\underline{1},\underline{2},\ldots\}$ and such that operations and compositions satisfy the following properties:
		\begin{itemize}
			\item
			The set of operations
			$\eP\binom{\underline{i}}{m;k_1,k_2\ldots}$   
			with $m$ inputs of colour $\underline{0}$, $k_1$ inputs of colour $\underline{1}$, $k_2$  inputs of colour $\underline{2}$ and $k_j$ inputs of colour $\underline{j}$ and outgoing color $\underline{i}$ is empty if either $i\neq 0$, or all except one $k_i$ with $i\geq 1$ equals zero\footnote{There is one exception: we keep the identity element in each colour  $\eP\left(\begin{smallmatrix} \underline{i} \\ 0;0,\ldots,1_{i},0,\ldots
				\end{smallmatrix}\right) = 1_{\C}$ 
				to be an operation in $\eP$.}:
			$$
			\forall i\geq 1 \ 
			\eP\left(\begin{smallmatrix}
				\underline{i} \\
				m;k_1,k_2,\ldots
			\end{smallmatrix}
			\right) = 0_{\C}.
			$$
			\item 
			The sets of operations are "multi-symmetric" for renumbering of the colours with positive indices:
			\begin{equation}
				\label{eq::sym::oper}
				\forall \sigma\in S_{\infty} \ 
				\eP\left(\begin{smallmatrix}
					\underline{0} \\
					m;k_1,k_2,\ldots
				\end{smallmatrix}\right)= 
				\eP\left(\begin{smallmatrix}
					\underline{0} \\
					m;k_{\sigma(1)},k_{\sigma(2)},\ldots
				\end{smallmatrix}\right) 
			\end{equation}
			\item 
			The operadic compositions are compatible with the symmetries~\eqref{eq::sym::oper} coming from the renumbering of colors indexed by positive numbers.
		\end{itemize}
		If, in addition, the multi-symmetric operad $\eP$ coincides with the image of a connected contractad $\Pop$, then we have vanishing the following operations:
		$$	\eP\left(\begin{smallmatrix}
			\underline{0} \\
			0;0,\ldots,0,k_i,0,\ldots
		\end{smallmatrix}
		\right) = \Pop(\K_{k_i}) =0
		$$
		because the graph $\K_{k_i}$ is a disconnected union of $k_i$ vertices.
	\end{remark}
	Remark~\ref{rem::multisym} explains, that the category of pointed multi-symmetric operads admits all standard constructions that we know for coloured operads. In particular, we do not need to explain the meaning of the following notions, which are well known for (coloured) operads:
	\begin{itemize}
		\item 
		A pointed multi-symmetric operad described in terms of generators and relations, in particular, a quadratic operad;
		\item 
		Bar construction of a pointed multi-symmetric operad and
		CoBar construction of a pointed multi-symmetric cooperad; 
		\item 
		Quadratic dual multi-symmetric operad and the Koszul duality for multi-symmetric operads.
	\end{itemize}
	\begin{corollary}
		The functor $\Psi$ from connected\footnote{Connectedness means that there are no unary operations except the identity} graphical collections to pointed multi-symmetric collections preserves operadic-type structures.
		In particular,
		\begin{itemize}
			\item $\Psi$ defines a functor from contractads to pointed multi-symmetric operads;
			\item $\Psi$ maps the free contractad generated by a graphical collection $\Upsilon$, such that $\Upsilon(\K_1)=0$ to the free pointed multisymmetric operad generated by multi-symmetric collection $\Psi(\Upsilon)$. 
			\item If the contractad $\eP$ was defined by generators $\Upsilon$ and relations $\mathcal{R}$ then the multi-symmetric operad is generated by $\Psi(\Upsilon)$ subject to relations $\Psi(\mathcal{R})$.
			\item If the quadratic contractad $\eP$ is Koszul, then the quadratic pointed multi-symmetric operad $\Psi(\eP)$ is also Koszul.
		\end{itemize}
	\end{corollary}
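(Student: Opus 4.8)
The plan is to deduce everything formally from Proposition~\ref{prp::multipart}, which says that $\Psi$ is strong monoidal for the two contraction products, $\Psi(\eP\circ\eQ)=\Psi(\eP)\circ\Psi(\eQ)$, and carries the unit $\mathbb{1}$ (supported on $\K_1=\Path_1$) to the collection of colour-identities $\underline 0,\underline 1,\ldots$. The only other structural feature I need is that $\Psi$ is \emph{evaluation at complete multipartite graphs}, $\Psi(\eP)(m;\lambda)=\eP(\K_{1^m;\lambda})$: being a precomposition functor, it preserves all limits and colimits that are formed componentwise, and it reflects any property that is checked componentwise, such as exactness of a sequence or acyclicity of a complex. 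With this in hand the first bullet is immediate, since a strong monoidal functor sends monoids to monoids and monoid morphisms to monoid morphisms; hence $\Psi$ restricts to a functor from contractads to pointed multi-symmetric operads.

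For the free case I would use the tree expansion $\T(\Upsilon)(\Gr)=\bigoplus_{T\in\Tree(\Gr)}\bigotimes_{v\in\Ver(T)}\Upsilon(\In(v))$, which is a finite direct sum in each component because $\Upsilon(\K_1)=0$. Specialising $\Gr=\K_{1^m;\lambda}$ and applying Lemma~\ref{lem::multipartite_contraction} inductively along the tree, every input graph $\In(v)$ occurring in a $\K_{1^m;\lambda}$-admissible tree is again complete multipartite, so every tensor factor already lies in the image of $\Psi$; conversely, the tube condition ``the leaves of $T_e$ form a tube'' becomes, under the dictionary of Remark~\ref{rem::multisym}, precisely the admissibility of the colouring (no internal vertex may have all of its ultimate inputs inside a single positive-sized block, i.e.\ of a single colour $\underline j$ with $j\ge1$). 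Thus the set of $\K_{1^m;\lambda}$-admissible trees with their multipartite vertex-labels is exactly the indexing set of the free pointed multi-symmetric operad on $\Psi(\Upsilon)$, and the grafting maps correspond on both sides, giving $\Psi(\T(\Upsilon))\cong\T(\Psi(\Upsilon))$ as multi-symmetric operads. I expect this identification to be the main obstacle, as it is the one step with genuine combinatorial content; it is, however, nothing more than the phenomenon already exploited in Proposition~\ref{prp::multipart} — contracting a tube in a multipartite graph always produces a new apex, so iterating along a tree reproduces exactly the ``one pointed colour plus a multiset of auxiliary colours'' pattern.

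The third bullet is then formal. For a presentation $\eP=\T(\Upsilon)/\langle\R\rangle$, the ideal $\langle\R\rangle$ is generated from $\R$ by iterated contractad compositions, and the quotient is a componentwise cokernel; moreover, by iterating Lemma~\ref{lem::multipartite_contraction} again, at a multipartite graph $\langle\R\rangle$ only involves $\R$ and compositions taking place among multipartite graphs. Since $\Psi$ preserves compositions, free objects and componentwise cokernels, $\Psi(\eP)=\T(\Psi\Upsilon)/\langle\Psi\R\rangle$, i.e.\ $\Psi(\eP)$ is presented by $\Psi(\Upsilon)$ subject to $\Psi(\R)$. In the quadratic case, applying the same reasoning to the co-presentation $\eP^{\cokoszul}=\Q(s\Upsilon,s^2\R)$, and using that $\Psi$ carries $\Susp$ to the corresponding suspension on the multi-symmetric side, gives $\Psi(\eP^{\cokoszul})=\Psi(\eP)^{\cokoszul}$, hence also $\Psi(\eP^!)=\Psi(\eP)^!$ when the components of $\eP$ are finite-dimensional.

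Finally, for Koszulness I would apply $\Psi$ to the Koszul complex $\mathcal{K}(\eP)=(\eP^{\cokoszul}\circ\eP,d_\kappa)$. Strong monoidality gives $\Psi(\eP^{\cokoszul}\circ\eP)=\Psi(\eP)^{\cokoszul}\circ\Psi(\eP)$, and the differential $d_\kappa$ is assembled from the (co)multiplication maps, all of which $\Psi$ intertwines; hence $\Psi(\mathcal{K}(\eP))=\mathcal{K}(\Psi(\eP))$ as complexes. Every component of $\mathcal{K}(\Psi(\eP))$ equals a component of $\mathcal{K}(\eP)$ — the one sitting over a complete multipartite graph — so acyclicity of $\mathcal{K}(\eP)$ forces acyclicity of $\mathcal{K}(\Psi(\eP))$, and $\Psi(\eP)$ is Koszul. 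A small point to keep in mind throughout is that the connectedness hypothesis $\Upsilon(\K_1)=0$ is exactly what makes the tree sums finite, keeps $\Psi(\Upsilon)$ from acquiring spurious unary generators, and makes $\Psi$ send the contractad unit to the collection of colour-identities.
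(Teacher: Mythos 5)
Your proposal is correct and follows essentially the same route as the paper, whose entire proof is the one-sentence observation that $\Psi$ is exact and intertwines the two contraction products; you have simply unpacked that observation (strong monoidality for monoids, the tree expansion plus Lemma~\ref{lem::multipartite_contraction} for free objects and presentations, and applying $\Psi$ to the Koszul complex for Koszulness). The extra combinatorial detail you supply about $\K_{1^m;\lambda}$-admissible trees is consistent with, and implicit in, the paper's argument.
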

	\begin{proof}
		All statements follow from the observation that $\Psi$ is an exact functor that maps the composition of graphical collections to the composition of the corresponding pointed multi-symmetric collections.
	\end{proof}

	\section{Hilbert series}\label{sec::hilbertseries}
	In this section, we discuss the restrictions on Hilbert series of contractad. As is often the case with operadic-like structures, one can use Koszul duality to calculate the dimensions and Hilbert series of Koszul contractads.  Unfortunately, we typically can not say something concrete for all graphs, because they are too many and we do not know a good generating series for them. But we can give some concrete conclusions for different families of graphs.
	
	\subsection{Graphic functions}
	\label{sec::grafic::func}
	\begin{defi}
		\label{def::graphic::func}
		\emph{A graphic function} is a function on the groupoid of connected graphs $f\colon \mathsf{CGr}\to \mathsf{k}$ with values in a commutative ring $\mathsf{k}$ which is constant on isomorphism classes.
	\end{defi} 
	Roughly speaking, a \emph{graphic function} is a function that assigns to each graph a number (an element of a ring).
	The graphic functions (that we are mostly interested in) are functions $\chi(\Pop)$ given by dimensions of a graphical collection coming from the set of operations of a given contractad $\Pop$:
	$$
	\chi(\Pop)(\Gamma):= \dim \Pop(\Gamma).
	$$
	As always with questions of this type, the additional grading may help for computations. In other words, whenever $\Pop$ admits an additional grading compatible with the contractad structure, the corresponding graphic function
	\[
	\chi_q(\Orb)(\Gr)=\sum_i \dim \Orb_i(\Gr)q^i.
	\]
	will take values in the ring of polynomials (formal series in extra variable $q$).
	\begin{prop-def}
		The contraction product~\eqref{eq::contract::product} of graphical collections defines an associative (linear on the left argument) product of graphic functions:
		\[
		(\phi * \psi)(\Gr):= \sum_{I \vdash \Gr} \phi(\Gr/I)\prod_{G\in I}\psi(\Gr|_G), 
		\]
		such that for any two graphical collections $\Pop$ and $\Orb$
		\[
		\chi(\Pop \circ \Orb) = \chi(\Pop) * \chi(\Orb),
		\]
		and the graphic function 
		$$\varepsilon:=\left( \varepsilon(\Gr)\stackrel{\phantom{1}}=\delta_{\Gr, \Path_1} \right)$$ 
		is the unit for this product.
	\end{prop-def}
	\begin{proof}
		The associative product $*$ was introduced by Schmitt in~\cite{schmitt1994incidence}. The definition and all mentioned properties follow from the observation that $\circ$ defines a monoidal structure on graphical collections with collection $\mathbb{1}$ as a unit for this operation.    
	\end{proof}
	
	The product above of graphic function is well-behaved for Koszul dual contractads, meaning that up to some sign shifts, the graphic functions of Koszul dual contractads are inverse to each other. 
	One of the simplest ways to state the correct signs can be formulated using the weighted graphic function.
	Indeed, a quadratic contractad $\Pop=\Pop(\E,\R)$ admits a natural weight-grading with respect to the set of generators $\E$, where $\E$ is of weight 1. In this case, we denote by $\chi^{\mathrm{w}}_q(\Pop)$ the Hilbert series of $\Pop$ for the weight grading. For example, if the set of generators were binary ($\E= \E(\Path_2)$), then $\Pop(\Gamma)$ is concentrated in the weight space of degree $|V(\Gamma)|-1$.
	
	\begin{example}
		Consider the commutative contractad $\Com$. Since it is one-dimensional in each component and generated in the component $\Path_2$, we have 
		\[
		\chi(\Com)=1, \quad \chi_{q}(\Com)=q^{|V_{\Gr}|-1}.
		\] We shall use notations $\mathbb{1}_{q}:=\chi^{\mathrm{w}}_{q}(\Com)$ and $\mathbb{1}:=\mathbb{1}_{1}=\chi(\Com)$  for these graphic functions. Let us record some useful properties of this function that will be useful later:
		\begin{gather}
			\mathbb{1}\cdot f=f,
			\\
			\mathbb{1}_x\cdot\mathbb{1}_y=\mathbb{1}_{xy},
			\\
			\mathbb{1}_q\cdot(f*g)=(\mathbb{1}_q\cdot f)*(\mathbb{1}_q\cdot g),\label{eq::techformula1}
			\\
			f*(q\cdot g)=q\cdot((\mathbb{1}_q\cdot f)*g),\label{eq::techformula2}
		\end{gather} where $f\cdot g$ is a componentwise multiplication, $(f\cdot g)(\Gr):=f(\Gr)g(\Gr)$.
		The first two identities are obvious. The third identity is checked component-wise
		\begin{multline*}
			[\mathbb{1}_q\cdot(f*g)](\Gr)=q^{|V_{\Gr}|-1}\sum_{I\vdash \Gr}f(\Gr/I)\prod_{G\in I}g(\Gr|_G)
			=\\
			=\sum_{I\vdash \Gr}q^{|I|-1}f(\Gr/I)\prod_{G\in I}(q^{|G|-1}g(\Gr|_G))=[(\mathbb{1}_q\cdot f)*(\mathbb{1}_q\cdot g)](\Gr).
		\end{multline*} 
		The last one is checked in a similar way
		\[
		[f*(q\cdot g)](\Gr)=\sum_{I\vdash \Gr}f(\Gr/I)\prod_{G\in I}qg(\Gr|_G)=q\sum_{I\vdash \Gr}q^{|I|-1}f(\Gr/I)\prod_{G\in I}g(\Gr|_G)=[q\cdot((\mathbb{1}_q\cdot f)*g)](\Gr).
		\]
	\end{example}
	
	\begin{theorem}\label{thm::hilbertkoszul}
		Let $\Pop=\Pop(\E,\R)$ be a quadratic contractad that is weight-graded with respect to generators. If $\Pop$ is Koszul, we have
		\[
		\chi^{\mathrm{w}}_{-q}(\Pop^!)*\chi^{\mathrm{w}}_{q}(\Pop)=\chi(\mathbb{1})=\varepsilon.
		\]
		%Where $\Pop^!=F(\s\E;\s^2\R^{\perp})$ is the quadratic dual contractad.
		In particular, if $\Pop$ is generated by binary operations ($\E=\E(\Path_2)$), we have 
		\[
		\chi_{-1}(\Pop^{!})*\chi(\Pop) = \varepsilon,
		\] where $\chi_{-1}(\Pop^!)(\Gr)=(-1)^{|V_{\Gr}|-1}\dim\Pop^!(\Gr)$.
	\end{theorem}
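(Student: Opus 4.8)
The plan is to read off the identity from the acyclicity of the Koszul complex $\mathcal{K}(\Pop)=(\Pop^{\cokoszul}\circ\Pop,d_{\kappa})$ through an Euler characteristic computation carried out in two gradings at once. On the space
\[
\mathcal{K}(\Pop)(\Gr)=\bigoplus_{I\vdash\Gr}\Pop^{\cokoszul}(\Gr/I)\otimes\bigotimes_{G\in I}\Pop(\Gr|_G)
\]
I would record the \emph{total weight} $w$ (the sum of the generator-weights of all tensor factors) and the \emph{syzygy degree} $s$ (the generator-weight of the $\Pop^{\cokoszul}$-factor alone). Since $\Pop$ is quadratic with finite-dimensional space of generators, each slice $\mathcal{K}(\Pop)(\Gr)_{w}$ is finite-dimensional. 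Reading the definition $d_{\kappa}=\gamma\circ s^{-1}\circ\triangle'$: the map $\triangle'$ peels a single cogenerator $s\E$ (weight $1$) off the $\Pop^{\cokoszul}$-factor, which is then desuspended and grafted onto the $\Pop$-factors by $\gamma$; consequently $d_{\kappa}$ preserves $w$ and drops $s$ by one, so for each fixed $w$ the slice $\mathcal{K}(\Pop)(\Gr)_{w}$ is a genuine complex graded by $s$.

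Because $\Pop$ is Koszul, $\mathcal{K}(\Pop)$ is acyclic in the unital sense, i.e.\ its homology is $\mathbb{1}$: the complex $\mathcal{K}(\Pop)(\Gr)$ is acyclic whenever $\Gr$ has at least two vertices, while its total-weight-zero part is $\mathsf{k}$ for $\Gr=\Path_1$ and $0$ otherwise. Computing the Euler characteristic of each $\mathcal{K}(\Pop)(\Gr)_{w}$ in the grading $s$ and tracking the total weight with a formal variable $q$ --- so that a summand with $\Pop^{\cokoszul}(\Gr/I)$ in weight $s$ and the $\Pop(\Gr|_G)$ in weights $w_G$ contributes $(-1)^{s}q^{\,s+\sum_G w_G}=(-q)^{s}\prod_G q^{w_G}$ --- yields, for every connected $\Gr$,
\[
\sum_{I\vdash\Gr}\chi^{\mathrm{w}}_{-q}(\Pop^{\cokoszul})(\Gr/I)\prod_{G\in I}\chi^{\mathrm{w}}_{q}(\Pop)(\Gr|_G)=\delta_{\Gr,\Path_1}=\varepsilon(\Gr),
\]
which, by the definition of the product $*$, says precisely $\chi^{\mathrm{w}}_{-q}(\Pop^{\cokoszul})*\chi^{\mathrm{w}}_{q}(\Pop)=\varepsilon$. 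To conclude the first claim, recall $\Pop^{!}=\Susp^{-1}(\Pop^{\cokoszul})^{*}$, and that both linear duality and the Hadamard product with the suspension collection (one-dimensional in each component) preserve the generator-weight grading; hence $\dim\Pop^{!}(\Gr)_{w}=\dim\Pop^{\cokoszul}(\Gr)_{w}$ for all $\Gr$ and $w$, so $\chi^{\mathrm{w}}_{-q}(\Pop^{!})=\chi^{\mathrm{w}}_{-q}(\Pop^{\cokoszul})$ and the stated equation follows.

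For the binary case $\E=\E(\Path_2)$, the Koszul dual $\Pop^{!}$ is again binary-generated, so $\Pop(\Gr)$ and $\Pop^{!}(\Gr)$ are concentrated in weight $|V_{\Gr}|-1$; thus $\chi^{\mathrm{w}}_{q}(\Pop)=\mathbb{1}_{q}\cdot\chi(\Pop)$ and $\chi^{\mathrm{w}}_{-q}(\Pop^{!})=\mathbb{1}_{q}\cdot\chi_{-1}(\Pop^{!})$. Substituting into the identity just proved and applying \eqref{eq::techformula1} gives $\mathbb{1}_{q}\cdot\bigl(\chi_{-1}(\Pop^{!})*\chi(\Pop)\bigr)=\varepsilon$. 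Finally $\mathbb{1}_{q}\cdot h=\varepsilon$ forces $h=\varepsilon$: evaluating on $\Path_1$ gives $h(\Path_1)=1$, and evaluating on any $\Gr\neq\Path_1$ gives $q^{|V_{\Gr}|-1}h(\Gr)=0$, hence $h(\Gr)=0$. This yields $\chi_{-1}(\Pop^{!})*\chi(\Pop)=\varepsilon$.

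The one delicate point is purely bookkeeping: checking that $d_{\kappa}$ is homogeneous of the stated bidegrees (so the Euler characteristic may be computed slice by slice), that each total-weight slice is finite-dimensional (immediate from quadraticity), and --- the easiest place to slip --- that the generator-weight grading transported from $\Pop^{\cokoszul}$ to $\Pop^{!}$ is not accidentally reversed, so that the minus sign genuinely lands on the $\Pop^{!}$-factor as $-q$ rather than on the $\Pop$-factor.
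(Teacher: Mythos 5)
Your proposal is correct and follows essentially the same route as the paper: acyclicity of the Koszul complex $\mathcal{K}(\Pop)=\Pop^{\cokoszul}\circ\Pop$, decomposition into finite-dimensional weight slices, and an Euler-characteristic count in the syzygy degree yielding $\chi^{\mathrm{w}}_{-q}(\Pop^{\cokoszul})*\chi^{\mathrm{w}}_{q}(\Pop)=\varepsilon$, combined with $\chi^{\mathrm{w}}_{q}(\Pop^{!})=\chi^{\mathrm{w}}_{q}(\Pop^{\cokoszul})$. Your explicit bidegree bookkeeping and the derivation of the binary specialization via $\mathbb{1}_q\cdot(-)$ are just spelled-out versions of steps the paper leaves implicit.
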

	\begin{proof}
		The proof repeats the one known for operads, see e.g.~\cite{loday2012algebraic}.
		The contractad $\Pop$ is Koszul if the Koszul complex $\mathcal{K}(\Pop):=(\Pop^{\cokoszul}\circ \Pop,d_{\kappa})$ is acyclic. 
		
		From the definition of $\Pop^!$ out of $\Pop^{\cokoszul}$, it follows that these two weight graded graphical collections have the same Hilbert functions: $\chi_q(\Pop^!)=\chi_{q}(\Pop^{\cokoszul})$. Note that the Koszul complex $\mathcal{K}(\Pop)=\Pop^{\text{!`}}\circ^{\kappa}\Pop$ decomposes into the direct sum of complexes with respect to weight gradings
		\[
		\mathcal{K}(\Pop)=\bigoplus_{r\geq 0} \mathcal{K}(\Pop)^{(r)},
		\] The Euler characteristic of each component is given by the formula $\chi(\mathcal{K}(\Pop)^{(r)})=[q^r](\chi_{-q}(\Pop^{\cokoszul})*\chi_{q}(\Pop))$. Since $\Pop$ is Koszul, we have the quasi-isomorphism $\mathcal{K}(\Pop)\overset{\cong}{\rightarrow}\mathbb{1}$, so we obtain the desired identity
		\[
		\chi_{-q}(\Pop^!)*\chi_{q}(\Pop)=\sum_{r\geq 0}q^r\chi(\mathcal{K}(\Pop)^{(r)})=\chi(\mathbb{1})=\epsilon.
		\]
	\end{proof}
	\begin{example}
		Recall that the contractad $\Com$ is Koszul, with Koszul dual $\Lie$. So, by Theorem~\ref{thm::hilbertkoszul}, we have
		\[
		\chi^{\mathrm{w}}_{-q}(\Lie)*\chi^{\mathrm{w}}_{q}(\Com)=\epsilon.
		\] If we specialise $q=1$, we get the following recurrence
		\[
		\sum_{I\vdash \Gr}(-1)^{|I|-1}\dim \Lie(\Gr/I)=\begin{cases}
			1,\text{if }\Gr=\Path_1
			\\
			0,\text{ otherwise}.
		\end{cases}.
		\] It was shown in~\cite[Corrolary 3.2.1]{lyskov2023contractads}, that the recurrence above implies the formula $\dim \Lie(\Gr)=(-1)^{|V_{\Gr}|-1}\mu_{\parti(\Gr)}(\hat{0},\hat{1})$, where $\mu_{\parti(\Gr)}$ stand for the M\"obius function of the poset of graph-partitions (for refinement order). We shall use notation $\mu(\Gr)=\mu_{\parti(\Gr)}(\hat{0},\hat{1})$, so that $\mathbb{1}*\mu=\mu*\mathbb{1}=\epsilon$ and $\chi^{\mathrm{w}}_{-q}(\Lie)=\mathbb{1}_q\cdot\mu$.
	\end{example}
	\subsection{Hilbert series for special families of graphs}
	\label{sec::Hilb::ser}
	\subsubsection{One-parameter families}
	\begin{defi}
		For a graphic function $f$, we define series
		\begin{gather*}
			F_{\Path}(f)(t)=\sum_{n\geq 1} f(\Path_n)t^n
			\\
			F_{\Cyc}(f)(t)=\sum_{n\geq 1} f(\Cyc_n)\frac{t^n}{n}
			\\
			F_{\K}(f)(t)=\sum_{n\geq 1} f(\K_n)\frac{t^n}{n!}
			\\
			F_{\St}(f)(t)=\sum_{n\geq 0} f(\St_n)\frac{t^n}{n!}
		\end{gather*}
	\end{defi}
	We have
	\begin{prop}[Composition rules]\label{prop::one_parameter_comp}
		For a pair of graphic functions $f,g$, we have
		\begin{itemize}
			\item[(i)] $F_{\Path}(f*g)(t)=F_{\Path}(f)(F_{\Path}(g)(t))$
			\item[(ii)] $F_{\K}(f*g)(t)=F_{\K}(f)(F_{\K}(g)(t))$
			\item[(iii)] $F_{\Cyc}(f*g)(t)=F_{\Cyc}(f)(F_{\Path}(g)(t))-f(\Path_1)F_{\Path}(g)(t)+f(\Path_1)F_{\Cyc}(g)(t)$
		\end{itemize} Additionally, if g is connected, $g(\Path_1)=1$, we have
		\begin{itemize}
			\item[(iv)] $F_{\St}(f*g)(t)=F_{\St}(f)(t)\cdot F_{\St}(g)(t)$
		\end{itemize}
	\end{prop}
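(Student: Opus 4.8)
The plan is to deduce all four identities directly from the explicit combinatorics of tube-partitions of the graphs $\Path_n$, $\Cyc_n$, $\K_n$, $\St_n$, since each of $F_{\Path}, F_{\Cyc}, F_{\K}, F_{\St}$ is just a repackaging of the convolution $(f*g)(\Gr)=\sum_{I\vdash\Gr}f(\Gr/I)\prod_{G\in I}g(\Gr|_G)$ restricted to one family at a time. So the first step, for each family, is to list the tube-partitions $I\vdash\Gr$ and identify the isomorphism types of the contracted graph $\Gr/I$ and of the blocks $\Gr|_G$; the stated composition formula is then read off by matching the resulting sum with a coefficient in the appropriate composite/product of power series.

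For paths this is easiest: a tube of $\Path_n$ is an interval, so $I\vdash\Path_n$ is exactly a composition $n=n_1+\dots+n_k$, with $\Path_n/I\cong\Path_k$ and $\Path_n|_{G_j}\cong\Path_{n_j}$; hence $(f*g)(\Path_n)=\sum_{n_1+\dots+n_k=n}f(\Path_k)\prod_j g(\Path_{n_j})$ is precisely the $t^n$-coefficient of the ordinary composite $F_{\Path}(f)(F_{\Path}(g)(t))$, which is (i). For complete graphs every subset is a tube, so $I\vdash\K_n$ is an honest set partition of $[n]$ with $\K_n/I\cong\K_{|I|}$ and $\K_n|_G\cong\K_{|G|}$, and $(f*g)(\K_n)=\sum_{\pi}f(\K_{|\pi|})\prod_{B\in\pi}g(\K_{|B|})$ is the classical exponential (compositional) formula, giving (ii). For stars the key point is that every nontrivial tube contains the core, so a partition of $\St_n$ amounts to choosing the set $S$ of leaves placed in the core block (the rest being singletons); writing $s=|S|$ one gets $\St_n/I\cong\St_{n-s}$, the core block $\cong\St_s$, and each of the $n-s$ singleton blocks $\cong\Path_1$. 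Thus $(f*g)(\St_n)=\sum_{s=0}^n\binom{n}{s}g(\Path_1)^{\,n-s}f(\St_{n-s})g(\St_s)$, and under the hypothesis $g(\Path_1)=1$ this is exactly the binomial convolution computing the $t^n/n!$-coefficient of $F_{\St}(f)(t)\cdot F_{\St}(g)(t)$, which is (iv).

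I expect (iii) to be the only real obstacle, and I would treat it last. The tube-partitions of $\Cyc_n$ split into two types: the trivial partition $\{[n]\}$, whose unique block induces $\Cyc_n$ again and therefore contributes $f(\Path_1)g(\Cyc_n)$, i.e. $f(\Path_1)F_{\Cyc}(g)(t)$ after the $t^n/n$ weighting; and partitions into $k\ge 2$ proper arcs, for which $\Cyc_n/I\cong\Cyc_k$ (with the conventions $\Cyc_1\cong\Path_1$, $\Cyc_2\cong\Path_2$) and each block induces a path $\Path_{n_j}$. A $k$-arc partition is the same as a choice of $k$ of the $n$ gaps between cyclically consecutive vertices, and the heart of the argument is a cycle-lemma type bijection: a pair (arc partition of $\Cyc_n$ into $k$ arcs, one distinguished chosen gap) corresponds to a pair (starting vertex $s\in[n]$, ordered composition $(n_1,\dots,n_k)$ of $n$), obtained by cutting the cycle at the distinguished gap and reading the arc sizes clockwise. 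As this bijection preserves the weight $\prod_j g(\Path_{n_j})$, it yields $k\cdot\sum_{k\text{-arc}}\prod_j g(\Path_{n_j})=n\cdot\sum_{n_1+\dots+n_k=n}\prod_j g(\Path_{n_j})$, equivalently $\tfrac1n\sum_{k\text{-arc}}\prod_j g(\Path_{n_j})=\tfrac1k\,[t^n]\big(F_{\Path}(g)(t)\big)^k$. Summing over $n\ge1$ and $k\ge2$ then gives $\sum_{k\ge2}\tfrac{f(\Cyc_k)}{k}\big(F_{\Path}(g)(t)\big)^k=F_{\Cyc}(f)(F_{\Path}(g)(t))-f(\Path_1)F_{\Path}(g)(t)$, where the subtracted term is exactly the spurious $k=1$ summand of the cyclic series (which would wrongly use $\Path_n$ in place of $\Cyc_n$); adding back the trivial-partition contribution $f(\Path_1)F_{\Cyc}(g)(t)$ produces the formula in (iii). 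The delicate bookkeeping to get right in the write-up is precisely this separation of the two \emph{degenerate} $k=1$ contributions, which is what the two correction terms $-f(\Path_1)F_{\Path}(g)+f(\Path_1)F_{\Cyc}(g)$ encode.
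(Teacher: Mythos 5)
Your proof is correct and follows essentially the same route as the paper: identify the tube-partitions of each family (interval compositions for $\Path_n$, set partitions for $\K_n$, arc partitions for $\Cyc_n$, a single core block plus singletons for $\St_n$) and match the convolution with the corresponding series operation. The only cosmetic difference is in (iii), where you justify the multiplicity $n/k$ relating $k$-arc partitions of $\Cyc_n$ to compositions of $n$ by a cut-point bijection, whereas the paper obtains the same factor by counting $\mathbb{Z}_n$-orbits; both are the same double count, and your separation of the two degenerate $k=1$ contributions reproduces the paper's correction terms exactly.
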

	\begin{proof}
		(i) Note that the family of paths $\{\Path_n\}_{n\geq 1}$ is closed under induced and contracted graphs. Moreover, the partition set of $\Path_n$ coincides with the set $\Pi_{\mathsf{int}}([n])$ of interval partitions of $[n]$. Hence the product $(f*g)(\Path_n)$ is given by the formula
		\[
		(f*g)(\Path_n)=\sum_{I\in\Pi_{\mathsf{int}}([n])}f(\Path_{|I|})\prod_{G\in I}g(\Path_{|G|})
		\] So, we conclude the desired identity
		\begin{multline*}
			F_{\Path}(f)(F_{\Path}(g)(t))=f(\Path_1)(g(\Path_1)t+g(\Path_2)t^2+\cdots)+f(\Path_2)(g(\Path_1)t+g(\Path_2)t^2+\cdots)^2+\cdots=
			\\=\sum_{n\geq 1} \left[\sum_{I\in \Pi_{\mathsf{int}}([n])} f(\Path_{|I|})\prod_{G\in I}g(\Path_{|G|})\right]t^n=\sum_{n\geq 1} \left[\sum_{I\vdash\Path_n} f(\Path_{|I|})\prod_{G\in I}g(\Path_{|G|})\right]t^n
			=\\
			=\sum_{n\geq 1} (f*g)(\Path_n)t^n=F_{\Path}(f*g)(t).
		\end{multline*}
		(ii) For complete graph $\K_n$, the corresponding partition set $\parti(\K_n)$ coincides with the usual partition set $\Pi([n])$, hence
		\[
		F_{\K}(f)(F_{\K}(g)(t))=f(\K_1)F_{\K}(g)+f(\K_2)\frac{F_{\K}(g)^2}{2!}+\cdots=\sum_{n\geq 1}(\sum_{I\in \Pi([n])} f(\K_{|I|})\prod_{G\in I}g(\K_{|G|}))\frac{t^n}{n!}=F_{\K}(f*g)(t).
		\]
		(iii)  For cycle graphs, the product $(f*g)(\Cyc_n)$ is given by the formula
		
		\[(f*g)(\Cyc_n)=f(\Path_1)g(\Cyc_n)+\sum_{I \vdash\Cyc_n, I\neq \{[n]\}} f(\Cyc_{|I|})\prod_{G\in I} g(\Path_{|G|})
		\]
		Let us rewrite the summation on the right hand side as the summation by partitions of $\Path_n$ as follows. The graph inclusion $\Path_n\subset \Cyc_n$ defines an inclusion of posets $\parti(\Path_n)\subset\parti(\Cyc_n)$. Also, the action of the cyclic group $\mathbb{Z}_n$ on $\Cyc_n$ produces the action on the partition set $\parti(\Cyc_n)$. In terms of this action, we have
		\[(f*g)(\Cyc_n)=f(\Path_1)g(\Cyc_n)+\sum_{J\vdash  \Path_n, J\neq \{[n]\}} \frac{|\mathsf{Orb}_{\mathbb{Z}_n}(J)|}{|\mathsf{Orb}_{\mathbb{Z}_n}(J)\cap\parti(\Path_n)|}f(\Cyc_{|J|})\prod_{G\in I} g(\Path_{|G|})
		\]
		Let us determine the fraction $\frac{|\mathsf{Orb}_{\mathbb{Z}_n}(J)|}{|\mathsf{Orb}_{\mathbb{Z}_n}(J)\cap\parti(\Path_n)|}$. For an edge $e\in E_{\Cyc_n}$, let $\mathsf{Orb}^{e}_{\mathbb{Z}_n}(J)$ be the subset of the orbit consisting of partitions such that the edge $e$ is out of partition blocks. In particular, we have $\mathsf{Orb}^{(1,n)}_{\mathbb{Z}_n}(J)=\mathsf{Orb}_{\mathbb{Z}_n}(J)\cap\parti(\Path_n)$. Since each partition $I$ of $\Cyc_n$ has exactly $|I|$ edges out of the partition, we have
		\[
		n|\mathsf{Orb}_{\mathbb{Z}_n}(J)\cap\parti(\Path_n)|=\sum_{e\in E_{\Cyc_n}} |\mathsf{Orb}^{e}_{\mathbb{Z}_n}(J)|=|J||\mathsf{Orb}_{\mathbb{Z}_n}(J)|.
		\] All in all, we get
		\[(f*g)(\Cyc_n)=f(\Path_1)g(\Cyc_n)+\sum_{J\vdash  \Path_n, J\neq \{[n]\}} \frac{n}{|J|}f(\Cyc_{|J|})\prod_{G\in J} g(\Path_{|G|})
		\]
		So, for a pair of graphic functions $f,g$, we have
		\begin{multline*}
			F_{\Cyc}(f*g)(t)=\sum_{n\geq 1} \left[f(\Path_1)g(\Cyc_n)-nf(\Path_1)g(\Path_n)+\sum_{J\vdash \Path_n}\frac{n}{|J|}f(\Cyc_{|J|})\prod_{G\in I} g(\Path_{|G|})\right]\frac{t^n}{n}=\\=f(\Path_1)F_{\Cyc}(g)(t)-f(\Path_1)F_{\Path}(g)(t)+\sum_{n\geq 1} \left[\sum_{J\vdash \Path_n}\frac{f(\Cyc_{|J|})}{|J|}\prod_{G\in I} g(\Path_{|G|})\right]t^n=\\=f(\Path_1)F_{\Cyc}(g)(t)-f(\Path_1)F_{\Path}(g)(t)+F_{\Cyc}(f)(F_{\Path}(g)(t)).\end{multline*}
		(iv) Tubes of the stellar graph $\St_n$ are singletons or vertex subsets containing the core $0$. So, a partition of the stellar graph is a partition of the vertex set that is made up of one non-trivial block $J\cup \{0\}$ containing core. Moreover, the resulting contracted graph $\St_{n}/_{J\cup \{0\}} \cong \St_{n-|J|}$ and induced subgraph $\St_{n}|_{J\cup\{0\}}\cong \St_{|J|}$ are also stellar graphs. Hence, for $g$ connected, the product $(f*g)(\St_n)$ is given by the formula
		\[
		(f*g)(\St_n)=\sum_{I\bigsqcup J=[n]}f(\St_{|I|})g(\St_{|J|})=\sum^n_{i=0} \binom{n}{i}f(\St_i)g(\St_{n-i})
		\] Hence
		\begin{multline*}
			F_{\St}(f)\cdot F_{\St}(g)=\left(\sum_{n\geq 0} f(\St_n)\frac{t^n}{n!}\right)\left(\sum_{m\geq 0} f(\St_m)\frac{t^m}{m!}\right)=
			\\ =
			\sum_{k\geq 0} \left[\sum^n_{i=0} \binom{n}{i}f(\St_i)g(\St_{n-i})\right]\frac{t^n}{n!}=F_{\St}(f*g).
		\end{multline*}
	\end{proof}
	\begin{example}
		We have
		\begin{gather}
			F_{\Path}(\mathbb{1})=\frac{t}{1-t} \Rightarrow F_{\Path}(\mu)=\frac{t}{1+t} \Rightarrow \mu(\Path_n)=(-1)^{n-1}
			\\
			F_{\Cyc}(\mathbb{1})=-\log(1-t)\Rightarrow F_{\Cyc}(\mu)=t+\frac{t}{1+t}-\log(1+t) \Rightarrow \mu(\Cyc_n)=(-1)^{n-1}(n-1)
			\\
			F_{\K}(\mathbb{1})=e^t-1 \Rightarrow F_{\K}(\mu)=\log(1+t) \Rightarrow \mu(\K_n)=(-1)^{n-1}(n-1)!
			\\
			F_{\St}(\mathbb{1})=e^t \Rightarrow F_{\K}(\mu)=e^{-t} \Rightarrow \mu(\St_n)=(-1)^{n}.
		\end{gather}
	\end{example}
	\subsubsection{Complete Multipartite graphs and Symmetric functions}
	Consider the ring of symmetric functions $\Lambda_{\mathbb{Q}}=\underset{\to}{\lim}\mathbb{Q}[x_1,\cdots,x_n]^{\Sigma_n}$ with rational coefficients. 
	This ring is graded and admits many famous bases indexed by Young diagrams (or partitions). 
	The first basis we want to use is called monomial symmetric functions $m_{\lambda}=\mathsf{Sym}(x^{\lambda})$ given by symmetrizations of monomials. For a Young diagram $\lambda=(\lambda_1\geq\lambda_2\geq\ldots)$, we denote by $|\lambda|$ its weight $\sum \lambda_i$, by $l(\lambda)$ its length (number of nonzero $\lambda_i$), and by $\lambda!=\lambda_1!\cdot\lambda_2!\cdots\cdot\lambda_k!$ its factorial.
	\begin{defi}
		Let $\eP$ be a pointed multi-symmetric collection (see Definition~\ref{def::P_M_Collection}), then its \emph{Young generating series} is given by the following symmetric function:
		\[
		F_{\mathsf{Y}}(\eP)(z;x_1,x_2,\ldots):= 
		\sum_{n\geq 0,\lambda} \dim\eP(n;\lambda) \frac{z^{n}}{n!}\frac{m_\lambda}{\lambda!} \in \Lambda_{\bQ}[[z]].
		\]    
	\end{defi}
	Note that the linear term of $F_{\mathsf{Y}}(\eP)$ is equal to $z$ if a pointed multi-symmetric collection is connected ($\eP(1;0)=\mathbb{1}$). 
	\begin{theorem}
		\label{thm::Ser::MS}
		The Young generating series of the composition of connected pointed multi-symmetric partitions $\eP$ and $\eQ$ coincides with the composition 
		with respect to the $z$ variable of the Young generating series of $\eP$ and $\eQ$ correspondingly:
		\[ 
		F_{\mathsf{Y}}(\eP\circ\eQ)(z;x_1,x_2,\ldots) = 
		%F_{\mathsf{Y}}(\eP)(z; x_1,\ldots)\circ_{z}(F_{\mathsf{Y}}(\eQ (z;x_1,\ldots) = 
		F_{\mathsf{Y}}(\eP)(F_{\mathsf{Y}}(\eQ)(z;x_1,x_2,\ldots)
		;x_1,x_2,\ldots).
		\]
	\end{theorem}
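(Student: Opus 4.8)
The plan is to compare the two sides coefficient by coefficient. Both $F_{\mathsf{Y}}(\eP\circ\eQ)$ and $F_{\mathsf{Y}}(\eP)(F_{\mathsf{Y}}(\eQ)(z;x);x)$ lie in $\Lambda_{\bQ}[[z]]$, and the elements $\frac{z^m}{m!}\frac{m_\lambda}{\lambda!}$, over $m\geq 0$ and partitions $\lambda$, are linearly independent, so it is enough to match the coefficient of $\frac{z^m}{m!}\frac{m_\lambda}{\lambda!}$ for every pair $(m,\lambda)$; on the left this coefficient is by definition $\dim(\eP\circ\eQ)(m;\lambda)$. First I would fix a representative pointed partition $J_*=\underline{J_0}\sqcup J_1\sqcup\dots\sqcup J_r$ of shape $(m;\lambda)$ and unfold \eqref{eq::multisym:prod}: $\dim(\eP\circ\eQ)(J_*)=\sum_I\dim\eP(J_*/I)\prod_s\dim\eQ(J_*\cap I_s)$, the sum over all set partitions $I$ of the underlying set of $J_*$. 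A singleton block $\{v\}$ of $I$ contributes the factor $\dim\eQ(1;0)=1$ if $v\in J_0$ and $\dim\eQ(0;(1))=1$ if $v\in J_j$ with $j\geq 1$ (here connectedness of $\eQ$ is used, the identity in each colour being one-dimensional), so only the blocks of size $\geq 2$ give nontrivial $\eQ$-factors; the pointed part of $J_*/I$ then has size $p+c$ and $J_*/I$ has non-pointed shape $\rho$, where $p$ is the number of $J_0$-singletons of $I$, $c$ the number of blocks of size $\geq 2$, and $\rho$ records how many $J_j$-singletons survive for each $j$.

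Next I would expand the right-hand side: substitute $w=F_{\mathsf{Y}}(\eQ)(z;x)=\sum_{(k,\sigma)}\dim\eQ(k;\sigma)\frac{z^k}{k!}\frac{m_\sigma}{\sigma!}$ into $F_{\mathsf{Y}}(\eP)(w;x)=\sum_{n,\rho}\dim\eP(n;\rho)\frac{w^n}{n!}\frac{m_\rho}{\rho!}$, expand $w^n$ as an $n$-fold sum over basic terms $(k_i,\sigma_i)$, collect powers of $z$, and deal with the resulting product of monomial symmetric functions $\frac{m_\rho}{\rho!}\prod_{i=1}^n\frac{m_{\sigma_i}}{\sigma_i!}$. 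The computational core is the classical rule for multiplying in the monomial basis: $m_{\nu^{(0)}}m_{\nu^{(1)}}\cdots m_{\nu^{(n)}}$ equals the sum, over all ways of superposing the diagrams $\nu^{(a)}$ onto a common countable set of colour slots, of $m$ at the resulting shape. Combined with the elementary identity writing $\lambda_c!$ as the product of the factorials of the colour-$c$ block sizes times the associated multinomial coefficient, this rewrites $\frac{m_\rho}{\rho!}\prod_i\frac{m_{\sigma_i}}{\sigma_i!}$ in the form $\sum_\lambda\frac{1}{\lambda!}M(\rho;\sigma_1,\dots,\sigma_n;\lambda)\,m_\lambda$, where $M$ counts the distributions of a $\lambda$-shaped labelled set among $n+1$ recipients receiving subsets of the prescribed shapes $\rho,\sigma_1,\dots,\sigma_n$; the powers of $z$ are tracked by the analogous ordinary multinomial coefficient distributing the $m$ colour-$0$ labels.

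It then remains to exhibit a bijection matching the two expansions term for term. To a set partition $I$ as above I would attach $n:=p+c$, the shape $\rho$ of the surviving $J_j$-singletons, one basic term $(m_s,\mu^{(s)})$ of $F_{\mathsf{Y}}(\eQ)$ for each block of size $\geq 2$ together with $p$ copies of the identity term $z$ of $F_{\mathsf{Y}}(\eQ)$ for the $J_0$-singletons, and the induced distribution of the labelled sets $J_0,\dots,J_r$ among $\eP$ and the blocks — which is exactly the data weighted by $M$ and by the $z$-multinomial. The factor $\frac{1}{n!}$ in $\frac{w^n}{n!}$ accounts for the $n$ inputs of the $\eP$-operation being unordered, matching the unordered collection of blocks of $I$, and the vector-space factors $\eP(J_*/I)$ and the $\eQ(J_*\cap I_s)$ are simply carried along. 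Summing over $I$ recovers $\dim(\eP\circ\eQ)(m;\lambda)$, which completes the argument.

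The step I expect to be the genuine obstacle is not the operadic/set-partition bookkeeping — formally the same as the proof that the exponential generating function of a species composes — but the verification that the \emph{passive} colours $1,2,\dots$, which (unlike the operadic colour $0$) may be \emph{split} across several blocks of $I$, are tracked correctly by multiplication of monomial symmetric functions under the $\frac{1}{\lambda!}$ normalisation; concretely, one must check that the structure constants of the $m$-basis, weighted by $\rho!$, the $\sigma_i!$ and $\lambda!$, reproduce exactly the multinomial coefficients counting distributions of labelled colour-elements. Granting this, the rest is routine; as a sanity check, specialising all $x_i$ to $0$ collapses $F_{\mathsf{Y}}$ to the exponential generating function over complete graphs and recovers Proposition~\ref{prop::one_parameter_comp}(ii).
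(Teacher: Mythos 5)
Your proposal is correct, but it follows a genuinely different route from the paper. The paper does not expand either side in the $\tfrac{z^m}{m!}\tfrac{m_\lambda}{\lambda!}$ basis at all: it treats a pointed multi-symmetric operad as a coloured operad on the colours $\{\underline{0},\underline{1},\underline{2},\dots\}$ (Remark~\ref{rem::multisym}), invokes the known fact that the generating series of a composition of coloured collections is the composition of the associated formal endomorphisms, and then observes that (a) the symmetry in the positive colours turns the multivariate exponential series $\sum \dim\eP\bigl(\begin{smallmatrix}\underline{0}\\ n;k_1,k_2,\dots\end{smallmatrix}\bigr)\tfrac{z^n}{n!}\tfrac{x_1^{k_1}}{k_1!}\cdots$ into precisely $\sum\dim\eP(n;\lambda)\tfrac{z^n}{n!}\tfrac{m_\lambda}{\lambda!}$, and (b) connectedness forces every coordinate of the endomorphism other than the $z$-coordinate to be the identity $x_i\mapsto x_i$, so the composition collapses to substitution in $z$. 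Your argument instead re-proves the coloured composition formula by hand in this setting: you match coefficients via the species-style bijection between set partitions $I$ and tuples of basic terms, with the computational core being that multiplication in the $\tfrac{m_\lambda}{\lambda!}$-normalised monomial basis counts distributions of coloured labelled sets. That core identity is exactly the statement that $\tfrac{m_\lambda}{\lambda!}$ is the multivariate exponential generating normalisation for colour-symmetric functors (note $k!=\lambda!$ for every rearrangement $k$ of $\lambda$, so $\sum_{k}\tfrac{x^k}{k!}=\tfrac{m_\lambda}{\lambda!}$), so the step you flag as the genuine obstacle does go through; one small imprecision is that the trivial factor attached to a non-pointed singleton $\{v\}$, $v\in J_j$, is the identity of colour $\underline{j}$ (output colour $\underline{j}$), not a component $\eQ(0;(1))$ with pointed output, but this does not affect the count. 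The trade-off: the paper's proof is shorter and explains conceptually why substitution happens only in $z$, at the cost of outsourcing the combinatorics to the coloured-operad literature; yours is self-contained and makes the role of the $\tfrac{m_\lambda}{\lambda!}$ normalisation explicit, essentially supplying the "direct combinatorial proof" that the authors say they believe exists but do not write down.
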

	\begin{proof}
		Let us examine more closely the typical generating series of pointed multi-symmetric operads. Recall that, for a coloured collection $\eP$ with a colour set $S$, the ordinary generating series is given by a collection of series $\{F^{\underline{s}}(\eP)(x_t) | \underline{s},\underline{t}\in S\}$ with 
		\[
		F^{\underline{s}}(x_1,x_2,\ldots):= \sum \dim\eP\left(\begin{smallmatrix}
			\underline{s} \\
			m_1,m_2,\ldots
		\end{smallmatrix}\right) \frac{x_1^{m_1}}{m_1!}\frac{x_2^{m_2}}{m_2!}\ldots
		\]
		which defines a Taylor expansion of a formal endomorphism $F(\eP)=(F^{\underline{1}},\ldots,F^{\underline{s}},\ldots)$ of the space $\mathbb{C}^{|S|}$ whose $s$'th coordinate coincides with $F^{\underline{s}}$. 
		It is well known (see e.g.~\cite{kharitonov2022grobner}) that the generating series of the composition of coloured collections is equal to the composition of the aforementioned endomorphism:
		\[
		F(\eP\circ\eQ) = F(\eP)\circ F(\eQ).
		\]
		We mentioned in Remark~\ref{rem::multisym} that pointed multi-symmetric operads coincide with the subcategory of a category of coloured operads on the infinite number of colours.
		The symmetry for inputs of colours with positive indices implies that the corresponding generating series are symmetric. To emphasize the symmetry, we denote the variable that corresponds to the colour $\underline{0}$ by $z$ and the remaining variables in the generating series by $x_i$
		\begin{multline*}
			F^{\underline{0}}(\eP)(z;x_1,x_2,\ldots) :=  \sum_{n,k_1,k_2,\ldots\geq 0} \dim\eP\left(\begin{smallmatrix}
				\underline{0} \\
				n; k_1,k_2,\ldots
			\end{smallmatrix}\right) \frac{z^n}{n!}\frac{x_1^{k_1}}{k_1!}\frac{x_2^{k_2}}{k_2!}\ldots = 
			\\
			=
			\sum_{n\geq 0,\lambda} \dim\eP(n;\lambda)\frac{z^n}{n!} \mathsf{Sym}\left(\frac{x_1^{\lambda_1}}{\lambda_1!}\frac{x_2^{\lambda_2}}{\lambda_2!}\ldots\right) =
			\sum_{n,\lambda} \dim\eP(n;\lambda)\frac{z^n}{n!} \frac{m_\lambda}{\lambda_!} =
			\\
			=
			z + \sum_{n,\lambda \colon \ n+|\lambda|\geq 2} \dim\eP(n;\lambda)\frac{z^n}{n!} \frac{m_\lambda}{\lambda!}.
		\end{multline*}
		Since we expect our multi-symmetric collection $\eP$ to be connected, meaning that $\eP$ of a one-element set is identity, the $\underline{i}$'th component of the generating series is trivial:
		$$\forall i>0 \ F^{\underline{i}}(\eP)= x_i.$$
		Therefore, the only nontrivial coordinate where one has to take the composition is the $z$-variable that corresponds to the pointed colour $\underline{0}$. 
	\end{proof}
	
	\begin{defi}
		For a graphic function $f$ with values in $\mathsf{k}$, we define its Young generating function $F_{\mathsf{Y}}(f)\in \mathsf{k}\otimes \Lambda_{\mathbb{Q}}$, by the rule
		\begin{gather*}
			F_{\mathsf{Y}}^{(0)}(f)=\sum_{l(\lambda)\geq 2} f(\K_{\lambda})\frac{m_{\lambda}}{\lambda!}; \quad
			F_{\mathsf{Y}}^{(n)}(f)=\sum_{|\lambda|\geq 0} f(\K_{(1^n)\cup\lambda})\frac{m_{\lambda}}{\lambda!}\text{, for }n\geq 1
			\\
			F_{\mathsf{Y}}(f)(z)=\sum_{n\geq 0} F_{\mathsf{Y}}^{(n)}(f)\frac{z^n}{n!}=\sum_{l(\lambda)\geq 2} f(\K_{\lambda})\frac{m_{\lambda}}{\lambda!}+\sum_{n\geq 1,|\lambda|\geq 0} f(\K_{(1^n)\cup\lambda})\frac{m_{\lambda}}{\lambda!}\frac{z^n}{n!}.
		\end{gather*}
	\end{defi}  
	
	Recall that with respect to the standard scalar product on the space of symmetric functions (for which Schur basis is orthonormal) the families of monomial and complete symmetric functions $h_\lambda:=h_{\lambda_1}h_{\lambda_2}\ldots$ are orthogonal to each other $\langle h_{\lambda},m_{\mu}\rangle=\delta_{\lambda,\mu}$. So, for each partition $\lambda$, we have
	\[
	\lambda!\langle h_{\lambda},F_{\mathsf{Y}}(f)\rangle=\sum_{n\geq 0} f(\K_{(1^n)\cup \lambda})\frac{z^n}{n!}. 
	\] In particular, for $l(\lambda)\geq 2$, we have  $f(\K_{\lambda})=\lambda!\langle h_{\lambda},F^{(0)}_{\mathsf{Y}}(f)\rangle$.

	\begin{theorem}
		\label{thm::ser::multipartie}
		For connected graphic functions $f,g$, the Young symmetric function of their composition is the composition of these functions with respect to the variable $z$:
		\begin{equation}
			\label{eq::Young::composition}
			F_{\mathsf{Y}}(f*g)(z)=F_{\mathsf{Y}}(f)(F_{\mathsf{Y}}(g)(z))
		\end{equation}    
	\end{theorem}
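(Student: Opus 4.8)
\textbf{The plan} is to recognize Theorem~\ref{thm::ser::multipartie} as the image of the multi-symmetric composition law (Theorem~\ref{thm::Ser::MS}) under the forgetful functor $\Psi$ of Proposition~\ref{prp::multipart}. First I would reduce to the case of dimension functions: once both sides of~\eqref{eq::Young::composition} are expanded in the monomial basis of $\Lambda_{\bQ}$ and in powers of $z$, each coefficient is a fixed polynomial, with integer coefficients, in finitely many of the values $f(\Gr)$, $g(\Gr)$ (with $f(\Path_1)=g(\Path_1)=1$ imposed by connectedness); such a universal identity holds over an arbitrary commutative ring $\mathsf{k}$ as soon as it holds for all $\bZ_{\geq 0}$-valued assignments. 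So it suffices to take $f=\chi(\eP)$ and $g=\chi(\eQ)$ for graphical collections of vector spaces with $\dim\eP(\Gr)=f(\Gr)$ and $\dim\eQ(\Gr)=g(\Gr)$ (for instance with trivial $\Aut(\Gr)$-action); connectedness of $f,g$ makes $\eP,\eQ$ connected graphical collections.

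\textbf{The two ingredients} are then the following. (i) Proposition~\ref{prp::multipart} gives $\Psi(\eP\circ\eQ)=\Psi(\eP)\circ\Psi(\eQ)$; passing to dimensions componentwise and using $\chi(\eP\circ\eQ)=\chi(\eP)*\chi(\eQ)=f*g$, this says precisely that the $*$-product of $f,g$, restricted to complete multipartite graphs, computes the $\circ$-product of the pointed multi-symmetric collections $\Psi(\eP),\Psi(\eQ)$ — which is consistent because, by Lemma~\ref{lem::multipartite_contraction}, every graph $\Gr/I$ and $\Gr|_G$ occurring in $(f*g)(\K_{(1^n)\cup\lambda})$ is again complete multipartite. (ii) Comparing the defining formula of $F_{\mathsf{Y}}$ for a graphic function with that of $F_{\mathsf{Y}}$ for a pointed multi-symmetric collection, and using $\dim\Psi(\eP)(m;\lambda)=\dim\eP(\K_{(1^m)\cup\lambda})=f(\K_{(1^m)\cup\lambda})$, one identifies $F_{\mathsf{Y}}(\Psi(\eP))=F_{\mathsf{Y}}(f)$ in $\Lambda_{\bQ}[[z]]$, and likewise $F_{\mathsf{Y}}(\Psi(\eQ))=F_{\mathsf{Y}}(g)$ and $F_{\mathsf{Y}}(\Psi(\eP\circ\eQ))=F_{\mathsf{Y}}(f*g)$. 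Applying Theorem~\ref{thm::Ser::MS} to the connected collections $\Psi(\eP),\Psi(\eQ)$ then gives
\[
F_{\mathsf{Y}}(f*g)=F_{\mathsf{Y}}\bigl(\Psi(\eP)\circ\Psi(\eQ)\bigr)=F_{\mathsf{Y}}(\Psi(\eP))\bigl(F_{\mathsf{Y}}(\Psi(\eQ))(z)\bigr)=F_{\mathsf{Y}}(f)\bigl(F_{\mathsf{Y}}(g)(z)\bigr),
\]
which is~\eqref{eq::Young::composition}.

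\textbf{The main obstacle} is not deep: all substantive content sits in Theorem~\ref{thm::Ser::MS} and Proposition~\ref{prp::multipart}, and what remains is bookkeeping. The one place that genuinely wants care is ingredient (ii): one must line up the indexing of the graphic-function series $F_{\mathsf{Y}}(f)$ — whose $z^0$-summand runs over partitions of length $\geq 2$ while its $z^n$-summands ($n\geq 1$) run over all partitions — with the indexing of the multi-symmetric series $F_{\mathsf{Y}}(\Psi(\eP))$ by pairs $(m;\lambda)$, in particular agreeing on how the single-vertex graph $\Path_1=\K_{(1)}$ and the disconnected graphs $\K_{(k)}$ ($k\geq 2$) are (or are not) counted on each side; this is governed by the connectedness conventions recorded in Remark~\ref{rem::multisym}. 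The remaining, purely formal, point — extending from $\bZ_{\geq 0}$-valued to $\mathsf{k}$-valued graphic functions for a general coefficient ring — is covered by the universality argument above, or equivalently by noting that the proof of Theorem~\ref{thm::Ser::MS} does not use the coefficient ring. An alternative that avoids $\Psi$ altogether is to expand~\eqref{eq::contract::product} for $\Gr=\K_{(1^n)\cup\lambda}$ directly: by Lemma~\ref{lem::multipartite_contraction} each block of a partition of such a graph is either a singleton or a large tube collapsing to a new apex, so the sum regroups precisely into the coefficient of $\frac{z^n}{n!}\frac{m_\lambda}{\lambda!}$ in $F_{\mathsf{Y}}(f)(F_{\mathsf{Y}}(g)(z))$ — but this merely re-runs the combinatorics already packaged in Theorem~\ref{thm::Ser::MS}.
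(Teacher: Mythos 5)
Your proposal is correct and follows essentially the same route as the paper: reduce to graphic functions of the form $\chi(\Pop)$, identify $F_{\mathsf{Y}}(f)$ with $F_{\mathsf{Y}}(\Psi(\Pop))$, and then invoke Proposition~\ref{prp::multipart} together with Theorem~\ref{thm::Ser::MS}. Your universal-polynomial-identity argument for passing from $\bZ_{\geq 0}$-valued to general $\mathsf{k}$-valued connected graphic functions is just a more explicit version of the paper's appeal to linearity and homogeneity, and the indexing caveat you flag about $\K_{(m)}$ is exactly the point the paper's proof also records.
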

	\begin{proof}
		Note that for a connected graphical collection $\Pop$ the Young symmetric function $F_{\mathsf{Y}}(\dim\Pop)$ of the graphic function of dimensions of $\Pop$ coincides with the Young symmetric function of the corresponding pointed multi-symmetric collection $\Psi(\Pop)$. 
		In particular, the absence of the terms $f( \mathsf{K}_{(m)} )\mathsf{Sym}\frac{x^{m}}{m!}$ is necessary because the corresponding complete multipartite graph $\mathsf{K}_{(m)}$ is not connected.
		
		Since the functor $\Psi$ preserves compositions, we end up with the desired identity of generating series:
		\begin{multline*}
			F_{\mathsf{Y}}(\dim\Pop*\dim\cQ) = F_{\mathsf{Y}}(\dim \Pop\circ\cQ) = 
			F_{\mathsf{Y}}(\Psi(\Pop\circ\cQ)) \stackrel{\text{Proposition\ref{prp::multipart}}}{=} \\
			= F_{\mathsf{Y}}(\Psi(\Pop)\circ\Psi(\cQ)) \stackrel{\text{Theorem~\ref{thm::Ser::MS}}}{=} F_{\mathsf{Y}}(\Psi(\Pop))\circ F_{\mathsf{Y}}(\Psi(\cQ)).
		\end{multline*}
		This implies that Equation~\eqref{eq::Young::composition} holds for graphic functions with non-negative integer values. Using linearity with respect to the left argument and appropriate homogeneity with respect to the second argument Equation~\eqref{eq::Young::composition} makes sense for arbitrary connected graphic functions.
	\end{proof}
	
	The proof of Theorem~\ref{thm::ser::multipartie} is the only place where we need the notion of pointed multi-symmetric operads defined in \S\ref{sec::multipartite}. We believe that this theorem might have a direct combinatorial proof, however, we think that the point of view with coloured operads should help the reader to understand the mystery of the Equation~\eqref{eq::Young::composition}.

	\begin{example}\label{ex::completemultipartite_for_com_lie}
		Let us compute the Young generating function for $\mathbb{1}=\chi(\Com)$. Let $p_n=\sum_{i\geq 1} x_i^n$ be the power symmetric function. The zero term $F_{\mathsf{Y}}^{(0)}(\mathbb{1})$ has the form
		\begin{gather*}
			F_{\mathsf{Y}}^{(0)}(\mathbb{1})=\sum_{l(\lambda)\geq 2} \frac{m_{\lambda}}{\lambda!}=\sum_{|\lambda|\geq 0} \frac{m_{\lambda}}{\lambda!}-\left(1+\sum_{n\geq 1}\frac{p_n}{n!}\right)=\sum_{n\geq 0} \frac{p_1^n}{n!}-\left(1+\sum_{n\geq 1}\frac{p_n}{n!}\right)=e^{p_1}-\left(1+\sum_{n\geq 1}\frac{p_n}{n!}\right),
		\end{gather*}
		where the second identity follows from the formula $\frac{p_1^n}{n!}=\sum_{\lambda\vdash n}\frac{m_{\lambda}}{\lambda!}$. Similarly, for higher terms, we have $F_{\mathsf{Y}}^{(n)}(\mathbb{1})=e^{p_1}$. All in all, we get
		\begin{equation}
			F_{\mathsf{Y}}(\mathbb{1})=e^{z+p_1}-\left(1+\sum_{n\geq 1}\frac{p_n}{n!}\right).
		\end{equation}
		For the graded version $\mathbb{1}_q=\chi^{\mathrm{w}}_q(\Com)$, for similar reasons, we have
		\begin{equation}\label{eq::com_multipartite}
			F_{\mathsf{Y}}(\mathbb{1}_q)=\frac{1}{q}\left[e^{q(z+p_1)}-1-\sum_{n\geq 1}\frac{p_nq^n}{n!}\right].
		\end{equation}
		For the inverse graphic function $\mu=\chi^{\mathrm{w}}_{-1}(\Lie)$, thanks to Theorem~\ref{thm::ser::multipartie}, we have
		\begin{equation}\label{eq::lie_multipartite}
			F_{\mathsf{Y}}(\mathbb{1})(F_{\mathsf{Y}}(\mu))=F_{\mathsf{Y}}(\mathbb{1}*\mu)=F_{\mathsf{Y}}(\epsilon)=z \quad \Rightarrow\quad F_{\mathsf{Y}}(\mu)=\log\left(1+z+\sum_{n\geq 1} \frac{p_n}{n!}\right)-p_1.  
		\end{equation}
	\end{example}
	\subsection{Chromatic polynomials}
	\label{sec::chromatic}
	Consider the Gerstenhaber contractad $\Gerst$ (see \S\ref{sec:disks}). Thanks to Theorem~\ref{thm:gerstpres}, the contractad $\Gerst$ is isomorphic to the product $\Com\circ\Susp^{-1}\Lie$ on the level of graphical collections. Hence its graded graphic function (with respect to homological grading) is given by the formula
	\begin{equation}\label{eq::hilbertgerst}
		\chi_{-q}(\Gerst)=\chi(\Com)*\chi^{\mathrm{w}}_{-q}(\Lie):=\mathbb{1}*(\mathbb{1}_q\cdot\mu).
	\end{equation}
	It was shown in~\cite[Prop.~5.2.1]{lyskov2023contractads}, that the Hilbert series of the components of the Gerstenhaber contractad are equal to re-normalised chromatic polynomials
	\begin{equation}\label{eq::chrom_gerst}
		\chi_{-q}(\Gerst)(\Gr)=q^{|V_{\Gr}|}\chi_{\Gr}(\frac{1}{q}),   
	\end{equation}
	where $\chi_{\Gr}(q)$ is the chromatic polynomial of a graph $\Gr$.
	
	\begin{defi}\label{def::chromgrfun}
		The Chromatic graphic function, denoted $\mathfrak{X}(q)$, is
		\[
		\mathfrak{X}(q)(\Gr):=\chi_{\Gr}(q).
		\]
	\end{defi} Let us express the Chromatic graphic function in terms of simple graphic functions.
	\begin{lemma}\label{lemma::formula_for_chrom}
		We have
		\[
		\mathfrak{X}(q)=q\cdot(\mathbb{1}_q*\mu).
		\]
	\end{lemma}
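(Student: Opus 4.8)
The plan is to identify the chromatic polynomial with a graphic function built out of the commutative contractad, using the known relation between $\Gerst$ and re-normalized chromatic polynomials from Equation~\eqref{eq::chrom_gerst}, together with the decomposition $\Gerst \cong \Com\circ\Susp^{-1}\Lie$ from Theorem~\ref{thm:gerstpres}. First I would rewrite \eqref{eq::chrom_gerst} as $\mathfrak{X}(q)(\Gr) = q^{|V_\Gr|}\cdot\left[\chi_{-1/q}(\Gerst)(\Gr)\right]$, i.e.\ $\mathfrak{X}(q) = \mathbb{1}_{q}\cdot\left(\text{specialization of }\chi_{-\bullet}(\Gerst)\text{ at }\bullet = 1/q\right)$, being careful that $\chi_{-q}(\Gerst)(\Gr)$ is a polynomial in $q$ of degree $|V_\Gr|$ so this substitution makes sense; here I use that the componentwise multiplication by $\mathbb{1}_q$ contributes the factor $q^{|V_\Gr|}$ only after accounting for the degree shift, so one must track whether the exponent is $|V_\Gr|$ or $|V_\Gr|-1$ carefully.

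Next I would combine \eqref{eq::hilbertgerst}, which says $\chi_{-q}(\Gerst) = \mathbb{1}*(\mathbb{1}_q\cdot\mu)$, with the technical identities \eqref{eq::techformula1} and \eqref{eq::techformula2} for the $*$-product. The key computation is to apply $\mathbb{1}_q\cdot(-)$ to the product $\mathbb{1}*(\mathbb{1}_q\cdot\mu)$. By \eqref{eq::techformula1}, $\mathbb{1}_q\cdot(f*g) = (\mathbb{1}_q\cdot f)*(\mathbb{1}_q\cdot g)$, so $\mathbb{1}_q\cdot\bigl(\mathbb{1}*(\mathbb{1}_{1/q}\cdot\mu)\bigr) = (\mathbb{1}_q\cdot\mathbb{1})*(\mathbb{1}_q\cdot\mathbb{1}_{1/q}\cdot\mu) = \mathbb{1}_q*\mu$, using $\mathbb{1}_q\cdot\mathbb{1} = \mathbb{1}_q$ and $\mathbb{1}_q\cdot\mathbb{1}_{1/q} = \mathbb{1}_1 = \mathbb{1}$ (the second line of the list of identities in the $\Com$ example). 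This gives $\mathbb{1}_q\cdot\chi_{-1/q}(\Gerst) = \mathbb{1}_q*\mu$. Finally, to recover the extra single factor of $q$ in the claimed formula $\mathfrak{X}(q) = q\cdot(\mathbb{1}_q*\mu)$, I would compare the degree in $q$ on both sides: the chromatic polynomial of a connected graph on $n$ vertices has top term $q^n$, while $\chi_{-1/q}(\Gerst)(\Gr)$, coming from $\Com\circ\Susp^{-1}\Lie$, produces $q^{n}$ after the substitution only up to the normalization built into \eqref{eq::chrom_gerst}; keeping track of whether the suspension $\Susp^{-1}$ contributes a global shift of $1$ in the $q$-degree (it does: $\Susp^{-1}\Lie$ shifts weights so that the total exponent becomes $|V_\Gr|$ rather than $|V_\Gr|-1$) pins down the leftover factor $q = q^{|V_\Gr|}/q^{|V_\Gr|-1}$.

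The main obstacle I anticipate is bookkeeping the interplay between the \emph{homological} grading hidden in $\chi_{-q}(\Gerst)$ (via $\Susp^{-1}$) and the \emph{weight} grading that produces $\mathbb{1}_q$: one must verify that $\chi_{-q}(\Gerst)(\Gr)$ really equals $\mathbb{1}*(\mathbb{1}_q\cdot\mu)(\Gr)$ with the correct exponent conventions so that the substitution $q\mapsto 1/q$ followed by multiplication by $\mathbb{1}_q$ yields exactly $q\cdot(\mathbb{1}_q*\mu)$ and not, say, $\mathbb{1}_q*\mu$ or $q^2\cdot(\mathbb{1}_q*\mu)$. Concretely, I would double-check on the graph $\Path_1$ (where $\mathfrak{X}(q)(\Path_1) = q$ and $(\mathbb{1}_q*\mu)(\Path_1) = \mathbb{1}_q(\Path_1)\mu(\Path_1) = 1$, so the factor $q$ is forced) and on $\Path_2$ (where $\chi_{\Path_2}(q) = q(q-1)$, while $\mathbb{1}_q*\mu$ evaluated on $\Path_2$ should give $q-1$), which both confirm the normalization and serve as a sanity check on the sign and degree conventions. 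Once the normalization is nailed down, the rest is a formal manipulation of the identities \eqref{eq::techformula1}--\eqref{eq::techformula2} together with \eqref{eq::hilbertgerst} and \eqref{eq::chrom_gerst}.
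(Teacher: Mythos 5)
Your proposal is correct and follows essentially the same route as the paper: both start from the identity $\mathfrak{X}(q)=q\cdot\mathbb{1}_q\cdot\chi_{-q^{-1}}(\Gerst)$ obtained from~\eqref{eq::chrom_gerst}, substitute $\chi_{-q}(\Gerst)=\mathbb{1}*(\mathbb{1}_q\cdot\mu)$ from~\eqref{eq::hilbertgerst}, and then apply~\eqref{eq::techformula1} together with $\mathbb{1}_q\cdot\mathbb{1}_{q^{-1}}=\mathbb{1}$ to collapse the expression to $q\cdot(\mathbb{1}_q*\mu)$. The degree bookkeeping you worry about resolves exactly as in your sanity checks ($q^{|V_\Gr|}=q\cdot\mathbb{1}_q(\Gr)$), matching the paper's one-line computation.
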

	\begin{proof}
		From equation~\eqref{eq::chrom_gerst}, we have
		\[
		\mathfrak{X}(q)(\Gr)=q^{|V_{\Gr}|}\chi_{-q^{-1}}(\Gerst)(\Gr)\Rightarrow \mathfrak{X}(q)=q\cdot\mathbb{1}_q\cdot\chi_{-q^{-1}}(\Gerst).
		\] By equation~\eqref{eq::hilbertgerst}, the right hand side is
		\[
		q\mathbb{1}_q\cdot(\mathbb{1}*(\mathbb{1}_{q^{-1}}\cdot\mu))\overset{\text{Eq}~\eqref{eq::techformula1}}{=}q(\mathbb{1}_q*(\mathbb{1}_q\cdot\mathbb{1}_{q^{-1}}\cdot\mu))=q\cdot(\mathbb{1}_q*\mu).
		\]
	\end{proof}
	As a first application, we reprove the following identity for chromatic polynomials, proved by Foissy in~\cite{foissy2016chromatic}.
	\begin{sled}[Multiplicative identity for chromatic polynomials]
		For graph $\Gr$, we have
		\begin{equation}
			\chi_{\Gr}(xy)=\sum_{I\vdash \Gr}\chi_{\Gr/I}(x)\prod_{G\in I}\chi_{\Gr|_G}(y).
		\end{equation} In terms of graphic functions, we have
		\begin{equation}
			\mathfrak{X}(xy)=\mathfrak{X}(x)*\mathfrak{X}(y).   
		\end{equation}
	\end{sled}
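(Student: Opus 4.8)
The plan is to derive the identity purely formally, without returning to graphs or to the combinatorics of proper colourings. The only inputs I need are Lemma~\ref{lemma::formula_for_chrom}, which gives $\mathfrak{X}(q)=q\cdot(\mathbb{1}_q*\mu)$, together with the four bookkeeping identities recorded in the example after Theorem~\ref{thm::hilbertkoszul} --- namely $\mathbb{1}\cdot f=f$, $\mathbb{1}_x\cdot\mathbb{1}_y=\mathbb{1}_{xy}$, Eq.~\eqref{eq::techformula1} and Eq.~\eqref{eq::techformula2} --- the associativity of $*$, its linearity in the left argument, and the fact that $\mathbb{1}*\mu=\mu*\mathbb{1}=\varepsilon$. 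Everything then reduces to a chain of substitutions. It suffices to prove the graphic-function identity $\mathfrak{X}(xy)=\mathfrak{X}(x)*\mathfrak{X}(y)$; evaluating both sides at a fixed $\Gr$ and unfolding the definition~\eqref{eq::contr::comp} of $*$ immediately yields the displayed summation $\chi_{\Gr}(xy)=\sum_{I\vdash\Gr}\chi_{\Gr/I}(x)\prod_{G\in I}\chi_{\Gr|_G}(y)$.

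Concretely, I would first expand the right-hand side via Lemma~\ref{lemma::formula_for_chrom} as $\mathfrak{X}(x)*\mathfrak{X}(y)=\big(x\cdot(\mathbb{1}_x*\mu)\big)*\big(y\cdot(\mathbb{1}_y*\mu)\big)$, pull the scalar $x$ out by left-linearity, and absorb the scalar $y$ using Eq.~\eqref{eq::techformula2}, obtaining $xy\cdot\Big(\big(\mathbb{1}_y\cdot(\mathbb{1}_x*\mu)\big)*(\mathbb{1}_y*\mu)\Big)$. Next I would rewrite $\mathbb{1}_y\cdot(\mathbb{1}_x*\mu)$ by Eq.~\eqref{eq::techformula1} and $\mathbb{1}_x\cdot\mathbb{1}_y=\mathbb{1}_{xy}$ as $\mathbb{1}_{xy}*(\mathbb{1}_y\cdot\mu)$, so that after using associativity of $*$ the bracket becomes $\mathbb{1}_{xy}*\big((\mathbb{1}_y\cdot\mu)*\mathbb{1}_y*\mu\big)$. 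Finally I would simplify the inner factor: writing $\mathbb{1}_y=\mathbb{1}_y\cdot\mathbb{1}$ and applying Eq.~\eqref{eq::techformula1} backwards gives $(\mathbb{1}_y\cdot\mu)*\mathbb{1}_y=\mathbb{1}_y\cdot(\mu*\mathbb{1})=\mathbb{1}_y\cdot\varepsilon=\varepsilon$, since $\varepsilon$ is supported on $\Path_1$ where $\mathbb{1}_y$ equals $y^{0}=1$; hence $(\mathbb{1}_y\cdot\mu)*\mathbb{1}_y*\mu=\varepsilon*\mu=\mu$, and the whole expression collapses to $xy\cdot(\mathbb{1}_{xy}*\mu)=\mathfrak{X}(xy)$ by Lemma~\ref{lemma::formula_for_chrom} once more.

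I do not expect a genuine obstacle here: the argument is a short formal computation and the theorem already appears in Foissy's work, so the content is really the translation into the $*$-calculus established in \S\ref{sec::hilbertseries}. The only place demanding care is tracking which of $\mathbb{1}_x$, $\mathbb{1}_y$, $\mathbb{1}_{xy}$ occurs at each stage and invoking Eq.~\eqref{eq::techformula1} and Eq.~\eqref{eq::techformula2} with the correct orientation --- in particular remembering that $*$ is \emph{not} symmetric in its two arguments with respect to componentwise rescaling, which is exactly why Eq.~\eqref{eq::techformula2} (rather than plain linearity) is needed to move the scalar $y$ past the product.
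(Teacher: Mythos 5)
Your computation is correct and follows essentially the same route as the paper: both start from $\mathfrak{X}(q)=q\cdot(\mathbb{1}_q*\mu)$, absorb the scalar $y$ via Eq.~\eqref{eq::techformula2}, use Eq.~\eqref{eq::techformula1} together with $\mu*\mathbb{1}=\varepsilon$ to collapse the product to $xy\cdot(\mathbb{1}_{xy}*\mu)$. The only (immaterial) difference is that you apply Eq.~\eqref{eq::techformula1} to expand the inner factor $\mathbb{1}_y\cdot(\mathbb{1}_x*\mu)$ first and then regroup by associativity, whereas the paper applies it once to the outer product $(\mathbb{1}_y\cdot(\mathbb{1}_x*\mu))*\mathbb{1}_y$ directly.
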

	\begin{proof}
		Thanks to Lemma~\ref{lemma::formula_for_chrom}, we have
		\begin{multline*}
			\mathfrak{X}(x)*\mathfrak{X}(y)=(x\cdot(\mathbb{1}_x*\mu))*(y\cdot(\mathbb{1}_y*\mu))\overset{\text{Eq}~\eqref{eq::techformula2}}{=}xy\cdot ((\mathbb{1}_{y}\cdot(\mathbb{1}_x*\mu))*(\mathbb{1}_y*\mu))=
			\\
			\overset{\text{Eq}~\eqref{eq::techformula1}}{=}xy\cdot ((\mathbb{1}_{y}\cdot(\mathbb{1}_x*\mu*\mathbb{1}))*\mu))\overset{\mathbb{1}*\mu=\epsilon}{=}xy\cdot ((\mathbb{1}_{y}\cdot\mathbb{1}_x)*\mu)=xy\cdot (\mathbb{1}_{xy}*\mu)=\mathfrak{X}(xy).
		\end{multline*}
	\end{proof}
	As another application, we compute the generating function of chromatic polynomials for complete multipartite graphs.
	\begin{sled}\label{cor::yungchrom}
		We have
		\begin{equation}\label{eq::chrommultipartitefull}
			F_{\mathsf{Y}}(\mathfrak{X}(q))(z)=\sum_{l(\lambda)\geq 2} \chi_{\K_{\lambda}}(q) \frac{m_{\lambda}}{\lambda!}+\sum_{n\geq 1, |\lambda|\geq 0} \chi_{\K_{(1^n)\cup \lambda}}(q) \frac{m_{\lambda}}{\lambda!}\cdot \frac{z^n}{n!}=\left(1+z+\sum_{n\geq 1}\frac{p_n}{n!}\right)^q-1-\sum_{n\geq 1}\frac{p_nq^n}{n!}.     
		\end{equation}
		In particular, we have\footnote{We use convention $\chi_{\K_{(0)}}(q)=1$ for the empty graph $\K_{(0)}=\varnothing$}
		\begin{equation}\label{eq::chromaticseries}
			\sum_{\lambda} \chi_{\K_{\lambda}}(q) \frac{m_{\lambda}}{\lambda!}=\left(1+\sum_{n\geq 1}\frac{p_n}{n!}\right)^q.   
		\end{equation}
	\end{sled}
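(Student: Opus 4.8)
The strategy is to apply Theorem~\ref{thm::ser::multipartie} to the factorization $\mathfrak{X}(q)=q\cdot(\mathbb{1}_q*\mu)$ from Lemma~\ref{lemma::formula_for_chrom}, converting the $*$-product of graphic functions into composition of Young generating functions in the variable $z$. First I would record the Young generating functions of the two ingredients. Example~\ref{ex::completemultipartite_for_com_lie} already gives
\[
F_{\mathsf{Y}}(\mathbb{1}_q)=\frac{1}{q}\left[e^{q(z+p_1)}-1-\sum_{n\geq 1}\frac{p_nq^n}{n!}\right],\qquad
F_{\mathsf{Y}}(\mu)=\log\!\left(1+z+\sum_{n\geq 1}\frac{p_n}{n!}\right)-p_1.
\]
Since $\mu$ is connected ($\mu(\Path_1)=1$), Theorem~\ref{thm::ser::multipartie} applies to $\mathbb{1}_q*\mu$, so $F_{\mathsf{Y}}(\mathbb{1}_q*\mu)(z)=F_{\mathsf{Y}}(\mathbb{1}_q)\bigl(F_{\mathsf{Y}}(\mu)(z)\bigr)$, meaning I substitute $z\mapsto F_{\mathsf{Y}}(\mu)(z)$ into $F_{\mathsf{Y}}(\mathbb{1}_q)$. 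Here the $z+p_1$ in $F_{\mathsf{Y}}(\mathbb{1}_q)$ becomes $F_{\mathsf{Y}}(\mu)(z)+p_1=\log(1+z+\sum_{n\geq1}p_n/n!)$, so $e^{q(z+p_1)}$ becomes exactly $(1+z+\sum_{n\geq1}p_n/n!)^q$. This yields
\[
F_{\mathsf{Y}}(\mathbb{1}_q*\mu)(z)=\frac{1}{q}\left[\left(1+z+\sum_{n\geq 1}\frac{p_n}{n!}\right)^q-1-\sum_{n\geq 1}\frac{p_nq^n}{n!}\right].
\]

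Next I need to pass from $F_{\mathsf{Y}}(\mathbb{1}_q*\mu)$ to $F_{\mathsf{Y}}(\mathfrak{X}(q))=F_{\mathsf{Y}}(q\cdot(\mathbb{1}_q*\mu))$. The subtlety is that $F_{\mathsf{Y}}$ is \emph{not} linear under the componentwise scaling $f\mapsto q\cdot f$ in the naive sense, because the definition of $F_{\mathsf{Y}}$ weights the component $f(\K_{(1^n)\cup\lambda})$ by $z^n/n!\cdot m_\lambda/\lambda!$, and multiplying the graphic function by the constant $q$ (independent of the graph) simply multiplies the whole series by $q$. So in fact $F_{\mathsf{Y}}(q\cdot g)=q\cdot F_{\mathsf{Y}}(g)$ holds trivially, since $q$ is a scalar in the coefficient ring $\mathsf{k}=\mathbb{Q}[q]$. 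Applying this gives
\[
F_{\mathsf{Y}}(\mathfrak{X}(q))(z)=q\cdot F_{\mathsf{Y}}(\mathbb{1}_q*\mu)(z)=\left(1+z+\sum_{n\geq 1}\frac{p_n}{n!}\right)^q-1-\sum_{n\geq 1}\frac{p_nq^n}{n!},
\]
which is the first displayed identity. The middle expression in~\eqref{eq::chrommultipartitefull} is just the definition of $F_{\mathsf{Y}}(\mathfrak{X}(q))$ unwound, using $\mathfrak{X}(q)(\K_\lambda)=\chi_{\K_\lambda}(q)$.

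Finally, for the second identity~\eqref{eq::chromaticseries}: the left-hand side of~\eqref{eq::chrommultipartitefull} omits the terms $\chi_{\K_{(m)}}(q)$ corresponding to totally disconnected graphs $\K_{(m)}$ (a single block of size $m$, i.e.\ $l(\lambda)=1$), as well as packaging the singleton-block contributions into the $z^n$ powers. To obtain $\sum_\lambda \chi_{\K_\lambda}(q)\,m_\lambda/\lambda!$ over \emph{all} partitions, I would set $z=0$ and add back the missing $l(\lambda)\le 1$ terms. Concretely, $\chi_{\K_{(m)}}(q)=q^m$ for the edgeless graph on $m$ vertices (with the convention $\chi_{\K_{(0)}}=1$), and $\sum_{m\geq 0}q^m m_{(m)}/m!=\sum_{m\geq 0}p_m q^m/m!=1+\sum_{m\geq1}p_mq^m/m!$; adding this to $F_{\mathsf{Y}}(\mathfrak{X}(q))(0)=(1+\sum_{n\geq1}p_n/n!)^q-1-\sum_{n\geq1}p_nq^n/n!$ cancels the correction terms and leaves $(1+\sum_{n\geq1}p_n/n!)^q$, giving~\eqref{eq::chromaticseries}. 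I expect the only genuine obstacle to be the bookkeeping in this last step — being careful about which Young diagrams $\lambda$ (those with $l(\lambda)\le 1$, and the reabsorption of singleton blocks into $z$) are present in $F_{\mathsf{Y}}(\mathfrak{X}(q))(z)$ versus in the full sum $\sum_\lambda$ — everything else is a direct substitution.
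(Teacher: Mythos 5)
Your proposal is correct and follows essentially the same route as the paper: factor $\mathfrak{X}(q)=q\cdot(\mathbb{1}_q*\mu)$ via Lemma~\ref{lemma::formula_for_chrom}, apply Theorem~\ref{thm::ser::multipartie} together with the formulas~\eqref{eq::com_multipartite} and~\eqref{eq::lie_multipartite} to compose the Young generating functions, and recover~\eqref{eq::chromaticseries} by setting $z=0$ and adding back the $l(\lambda)\le 1$ terms using $\chi_{\K_{(m)}}(q)=q^m$. Your extra care with the scalar factor $q$ and with the bookkeeping of which partitions appear is consistent with what the paper does implicitly.
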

	\begin{proof}
		Thanks to Theorem~\ref{thm::ser::multipartie} and equations~\eqref{eq::com_multipartite},\eqref{eq::lie_multipartite}, we have
		\begin{multline*}
			F_{\mathsf{Y}}(\mathfrak{X})(z)=F_{\mathsf{Y}}(q\cdot(\mathbb{1}_q*\mu))(z)=qF_{\mathsf{Y}}(\mathbb{1}_q)(F_{\mathsf{Y}}(\mu)(z))=\\ 
			=\exp\left(qp_1+q\left[\log\left(1+z+\sum_{n\geq 1}\frac{p_n}{n!}\right)-p_1\right]\right)-1-\sum_{n\geq 1}\frac{p_nq^n}{n!}=\left(1+z+\sum_{n\geq 1}\frac{p_n}{n!}\right)^q-1-\sum_{n\geq 1}\frac{p_nq^n}{n!}. 
		\end{multline*}
		Formula~\eqref{eq::chromaticseries} follows from formula~\eqref{eq::chrommultipartitefull} by substitution $z=0$ and the fact that, for disconnected complete mutipartite graphs $\K_{(n)}$, we have $\chi_{\K_{(n)}}(q)=q^n$.
	\end{proof}
	The series expansion of the right hand side of~\eqref{eq::chromaticseries} in the $p$-basis is given by
	\[
	\left(1+\sum_{n\geq 1}\frac{p_n}{n!}\right)^q=\sum_{\mu}\binom{q}{l(\mu)}\binom{l(\mu)}{m(\lambda)}\frac{p_{\mu}}{\mu!},
	\] where $\binom{l(\mu)}{m(\mu)}=\frac{l(\mu)!}{m_1(\mu)!\cdots m_n(\mu)!}$ and $m_i(\mu)=|\{j|\lambda_j=i\}|$-  cycle types. Recall that the transition matrix from the $p$-basis to the $m$-basis is lower triangular
	\[
	p_{\mu}=\sum_{\lambda \geq \mu} L_{\mu\lambda}m_{\lambda},
	\] where the sum ranges over all partitions $\lambda$ of the size $|\mu|$ that \textit{dominate} $\mu$: $\lambda_1\geq \mu_1, \lambda_1+\lambda_2\geq \mu_1+\mu_2$, and so on. Combinatorially, the coefficient $L_{\mu \lambda}$ is the number of $l(\lambda)\times l(\mu)$-matrices with entries in $\mathbb{N}$ whose column sum is $\mu$ and row sums is $\lambda$ and there is exactly one non-zero entry for each column.
	\begin{sled}
		For a partition $\lambda$, the chromatic polynomial of $\K_{\lambda}$ is given by the formula
		\begin{equation}
			\chi_{\K_{\lambda}}(q)=\lambda!\sum_{\mu\leq \lambda}\binom{q}{l(\mu)}\binom{l(\mu)}{m(\mu)}\frac{L_{\mu\lambda}}{\mu!}.   
		\end{equation}
	\end{sled}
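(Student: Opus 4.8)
The plan is to read the identity off directly from Corollary~\ref{cor::yungchrom}. The key input is Equation~\eqref{eq::chromaticseries}, which expresses the generating function of the chromatic polynomials of complete multipartite graphs as
\[
\sum_{\lambda} \chi_{\K_{\lambda}}(q)\frac{m_{\lambda}}{\lambda!} = \left(1+\sum_{n\geq 1}\frac{p_n}{n!}\right)^q .
\]
Since the monomial symmetric functions $m_{\lambda}$ form a basis of $\Lambda_{\mathbb{Q}}$ (with coefficients in $\mathbb{Q}[q]$), the coefficient of $m_{\lambda}$ on the left is exactly the quantity $\chi_{\K_{\lambda}}(q)/\lambda!$ we want; so it suffices to rewrite the right-hand side in the $m$-basis and compare coefficients.

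First I would expand the right-hand side in the power-sum basis. Writing $\bigl(1+\sum_{n\geq1}p_n/n!\bigr)^q=\sum_{k\geq0}\binom{q}{k}\bigl(\sum_{n\geq1}p_n/n!\bigr)^k$ and collecting, in each $k$-th power, the monomials $p_{n_1}\cdots p_{n_k}$ according to the underlying partition $\mu$ of length $k$ (there are $\binom{k}{m(\mu)}$ orderings producing a given $\mu$), one recovers the power-sum expansion already recorded after Corollary~\ref{cor::yungchrom}:
\[
\left(1+\sum_{n\geq1}\frac{p_n}{n!}\right)^q=\sum_{\mu}\binom{q}{l(\mu)}\binom{l(\mu)}{m(\mu)}\frac{p_{\mu}}{\mu!}.
\]
Next I would substitute the triangular change of basis $p_{\mu}=\sum_{\lambda\geq\mu}L_{\mu\lambda}m_{\lambda}$, also quoted in the text, and interchange the two summations:
\[
\sum_{\lambda}\chi_{\K_{\lambda}}(q)\frac{m_{\lambda}}{\lambda!}=\sum_{\mu}\binom{q}{l(\mu)}\binom{l(\mu)}{m(\mu)}\frac{1}{\mu!}\sum_{\lambda\geq\mu}L_{\mu\lambda}m_{\lambda}=\sum_{\lambda}\left(\sum_{\mu\leq\lambda}\binom{q}{l(\mu)}\binom{l(\mu)}{m(\mu)}\frac{L_{\mu\lambda}}{\mu!}\right)m_{\lambda},
\]
where the inner sum is now over partitions $\mu$ of $|\lambda|$ dominated by $\lambda$, hence finite.

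Finally, comparing the coefficient of $m_{\lambda}$ on both sides and multiplying through by $\lambda!$ yields $\chi_{\K_{\lambda}}(q)=\lambda!\sum_{\mu\leq\lambda}\binom{q}{l(\mu)}\binom{l(\mu)}{m(\mu)}\frac{L_{\mu\lambda}}{\mu!}$, which is the claim. I do not expect a genuine obstacle here: all the substance sits in Corollary~\ref{cor::yungchrom} (and ultimately in Theorem~\ref{thm::ser::multipartie} and the Koszulness of $\Gerst$), while this last step is pure symmetric-function bookkeeping. The only points deserving a sentence of care are the power-sum expansion displayed above and the observation that $L_{\mu\lambda}=0$ unless $\mu$ and $\lambda$ have the same size and $\mu\leq\lambda$ in dominance order, which is what makes the reindexing of the double sum legitimate.
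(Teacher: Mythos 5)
Your proposal is correct and follows exactly the route the paper intends: the two displayed identities preceding the corollary (the $p$-basis expansion of $\bigl(1+\sum_{n\geq1}p_n/n!\bigr)^q$ and the triangular change of basis $p_{\mu}=\sum_{\lambda\geq\mu}L_{\mu\lambda}m_{\lambda}$) are precisely the paper's proof, with the final coefficient comparison left implicit. Your verification of the multinomial count $\binom{l(\mu)}{m(\mu)}$ and of the finiteness of the inner sum via dominance order fills in exactly the bookkeeping the paper omits.
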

	
	\begin{example}[Chromatic symmetric functions and trees]
		In~\cite{stanley1995symmetric}, Stanley introduced \textit{chromatic symmetric function}, a generalisation of chromatic polynomials, given by 
		\[
		\mathsf{X}(\Gr)=\sum_{\kappa\colon V_{\Gr}\to \mathbb{N}} \prod_{v\in V_{\Gr}}x_{\kappa(v)},
		\] where the sum ranges over all proper vertex colorings. The chromatic symmetric function can be expanded in the power-sum symmetric functions via the following formula
		\[
		\mathsf{X}(\Gr)=\sum_{S\subset E_{\Gr}} (-1)^{|S|}p_{\lambda(S)},
		\] where $\lambda(S)$ is the partition whose parts are the vertex sizes of the connected components of the edge-induced subgraph of $\Gr$ specified by $S$. When $\Gr=T$ is a tree, the correspondence $S\mapsto \lambda(S)$ defines a bijection between subsets of $E_{T}$ and graph partitions of $T$.
		So, in the class of trees, the symmetric graphic function $\mathsf{X}$ is expressed as a product of two simple graphic functions 
		\[
		\mathsf{X}(T)=(\mathbb{1}*(\mathbb{1}_{-1}\cdot p))(T),\quad T\text{ is a tree.}
		\] where $p(\Gr)=p_{|V_{\Gr}|}\in \Lambda$. In particular, for paths, we have
		\[
		\sum_{n\geq 1} \mathsf{X}(\Path_n)t^n=F_{\Path}(\mathsf{X})=F_{\Path}(\mathbb{1})(F_{\Path}(\mathbb{1}_{-1}\cdot p))=\frac{\sum_{n\geq 1} (-1)^{n-1}p_nt^n}{1-\sum_{n\geq 1} (-1)^{n-1}p_nt^n}.
		\] Hence we have
		\begin{equation}
			\mathsf{X}(\Path_n)=\sum_{\lambda\vdash n} (-1)^{n-l(\lambda)}\binom{l(\lambda)}{m(\lambda)}p_{\lambda}. 
		\end{equation}   The class of trees is significant due to Stanley’s conjecture that a symmetric chromatic function is a full invariant for trees. We expect that the methods of generating functions described in this Section could help in resolving this problem.
	\end{example}

	\section{Wonderful Contractad \texorpdfstring{$\bM$}{M} 
		and Modular Compactifications \texorpdfstring{$\beM_{0,\K_\lambda}$}{MK}}
	\label{sec::wonderful_modular}
	
	In this section, we first recall the definition of the wonderful contractad $\bM$ which is given by wonderful compactifications of graphical arrangments (\S\ref{sec::Wonderful}). Second, for complete multipartite graphs $\K_{\lambda}$, we define and describe moduli spaces $\beM_{0,\K_{\lambda}}$ as an explicit quotient of the Deligne-Mumford compactification $\beM_{0,n+1}$, where $\lambda\vdash n$.
	Third, we recall the combinatorial blowdowns of $\beM_{0,n+1}$ introduced by Smyth in~\cite{smyth2009towards} under the name \emph{modular compactification}. Finally, we show that, for multipartite graphs, all these smooth algebraic varieties are isomorphic. It is worth mentioning that the wonderful contractad is well defined over any field $\Bbbk$.
	However, later on, we are focused only on the cases  $\Bbbk=\mathbb{C}$ and $\Bbbk=\mathbb{R}$ because, in the subsequent sections, we extract some data on the homology of the corresponding smooth manifolds.
	
	\subsection{Wonderful contractad}
	\label{sec::Wonderful}
	In this subsection, we recall the construction of the Wonderful contractad, for details see~\cite[Sec 2.6]{lyskov2023contractads}.

	For a connected graph $\Gr$ with at least two vertices, consider the \textit{graphic arrangement} $\B(\Gr)=\{H_{(v,w)};=\{x_v=x_w\}| (v,w)\in E_{\Gr}\}$ in $\mathbb{k}^{V_{\Gr}}$. Note that the complete intersection of hyperplanes $\cap_{e\in E_{\Gr}} H_e=\langle \sum_{v\in V_{\Gr}} x_v\rangle$ is a one-dimensional space, so we replace the vector space $\mathbb{k}^{V_{\Gr}}$ with its quotient $W_{\Gr}=\mathbb{k}^{V_{\Gr}}/\langle \sum_{v\in V_{\Gr}} x_v\rangle$ to make the arrangement $\B(\Gr)$ essential. Moreover, for a non-trivial tube $G\subset V_{\Gr}$, we consider the subspace $H_G:=\cap_{e \in E_{\Gr|_G}} H_{e}$. 
	
	We define a map on $\M(\Gr):=\Pro(W_{\Gr})\setminus \cup_{e\in E_{\Gr}} \Pro(H_e)$, the projective complement of the arrangement,
	\[
	\rho\colon \M(\Gr)\rightarrow \prod_{G}\Pro(W_{\Gr}/H_G),
	\] where $\rho=\prod_G \rho_G$ is the product of projections $\rho_G\colon \M(\Gr)\to \Pro(W_{\Gr}/H_G)$. The map $\rho$ defines an embedding of $\M(\Gr)$ in the right hand side space.
	\begin{defi} The graphic compactification, denoted $\bM(\Gr)$, is the closure of the image of the embedding $\rho$. For the graph $\Path_1$, we let $\bM(\Path_1)=\mathrm{pt}$.  
	\end{defi} 
	
	The space $\bM(\Gr)$ is a smooth projective variety containing $\M(\Gr)$ as an open set. For a tube $G\subset V_{\Gr}$, there is the embedding of wonderful compactifications
	\begin{equation}\label{eq::rainsmaps}
		\iota\colon \bM(\Gr/G)\times \bM(\Gr|_{G})\to \bM(\Gr),  
	\end{equation} defined as follows. Consider the linear map $j\colon \mathbb{k}^{V_{\Gr/G}}\rightarrow \mathbb{k}^{V_{\Gr}}$ defined by the rule: $i(x_v)=x_v$ for $v\not\in G$, and $i(x_{\{G\}})=\sum_{v\in G}x_v$. For each tube $K$ with $K\not\subset G$, this map defines an inclusion of the quotient spaces $j^{K}_G\colon W_{\Gr/G}/H_{K/G}\hookrightarrow W_{\Gr}/H_{K}$. Next, consider the coordinate inclusion $\mathbb{k}^{G}\hookrightarrow \mathbb{k}^{V_{\Gr}}$. For each tube $K\subset G$, this map induces an isomorphism $i^K_G\colon W_{\Gr|_G}/H_{K}\overset{\cong}{\rightarrow}W_{\Gr}/H_{K}$. So, we define the inclusion $\iota$ by determining $\rho_K\circ \iota$ for each tube $K$
	\[
	\rho_K\circ \iota=\begin{cases}
		\Pro(i^K_G)\circ\rho_{K}\circ \pi_{\Gr|_G}\text{, for }K\subset G,
		\\
		\Pro(j^K_G)\circ\rho_{K/G}\circ \pi_{\Gr/G}\text{, for }K\not\subset G,
	\end{cases}
	\] where $\pi_{\Gr|_G}$ and $\pi_{\Gr/G}$ are projections on the corresponding factors. The image of this map, denoted $D_G$ forms an irreducible divisor. The complement $\partial\bM(\Gr):=\bM(\Gr)\setminus\M(\Gr)$ is a divisor with normal crossings with irreducible components $D_G$.
	\begin{remark}\label{rem::wondcomp}
		The construction of graphic compactifications $\bM(\Gr)$ is a special case of the wonderful compactifications, introduced by De Concini and Processi in~\cite{de1995wonderful}. In their language, the graphic compactification $\bM(\Gr)$ is the wonderful compactification associated with so-called a graphic building set $\G(\Gr)=\{H_G\}$ consisting of subspaces $H_G$ labeled by tubes of a graph. So, all algebraic-geometric properties of $\bM(\Gr)$ follow from the general theory of wonderful compactifications. The inclusions~\eqref{eq::rainsmaps} were first described in the operadic language by Rains in~\cite{rains2010homology} and formalized by Coron in~\cite{coron2022matroids}.
	\end{remark}
	
	\begin{defi}The Wonderful contractad is the graphical collection $\bM$ of graphic compactifications, with infinitesimal compositions~\eqref{eq::rainsmaps}.
	\end{defi}
	
	Let us consider some concrete examples of  graphic compactifications explaining the motivation of the Wonderful contractad.
	\begin{example}~\label{ex::comp_wonderful} For particular types of graphs, we have
		\begin{itemize}
			\item For complete graph $\K_n$, the resulting graphic compactification $\bM(\K_n)$ coincides with the moduli space $\beM_{0,n+1}$ of $(n+1)$-marked stable curves (with one marked point singled out $\infty$ as a special). The infinitesimal compositions \[
			\circ_i: \beM_{0,n+1} \times \beM_{0,m+1} \rightarrow \beM_{0,n+m},
			\] are usual gluing maps
			obtained by gluing the special point $\infty$  of the curve  from the right factor to the $i$-th point of the curve from the left one.
			
			\item For stellar graph $\St_n$, the resulting graphic compactification $\bM(\St_n)$ coincides with the Losev-Manin moduli space $\overline{\EuScript{L}}_{0,n}$ \cite{losev2000new}. This moduli space parametrizes chains of projective lines with two poles labeled by $\infty$ and $0$ and $n$ marked points that may coincide. More generally, for a complete multipartite graph $\K_{(1^n,m)}$, the resulting graphic compactification coincides with the extended Losev-Manin moduli space $\overline{\EuScript{L}}_{0;n,m}$ of stable curves with $n$ white and $m$ black marked points~\cite{losev2004extended}.
			\item For paths $\Path_n$, the graphic compactification $\bM(\Path_n)$ is an $(n-2)$-dimensional toric projective variety called Brick manifold. Its dual polytope is the Stasheff polytope $\mathcal{K}_{n-2}$. More generally, when $\Gr$ is a tree on $n$ vertices, the resulting graphic compactification $\bM(\Gr)$ is an $(n-2)$-dimensional toric projective variety with its dual polytope $\mathcal{K}_{\La(\Gr)}$, where $\La(\Gr)$ is a line graph (we replace vertices with edges) and $\mathcal{K}_{\La(T)}$ is the associated graph associahedron, for details see~\cite{dotsenko2024reconnectads}.
		\end{itemize}
	\end{example}
	The contractad language provides a simple combinatorial description of intersections of canonical divisors $D_{G}$ in $\bM(\Gr)$.\label{page::intersectionsofdivisors} From the general theory of wonderful compactifications, for a set $\Susp$ of tubes, the intersection $D_{\Susp}=\cap_{G\in \Susp} D_G$ is non-empty if and only if $\Susp$ is \textit{nested}, i.e., every two tubes $G,H$ from $\Susp$ are comparable by inclusion or do not intersect at all. Moreover, in this case, the intersection $D_{\Susp}=\cap_{G\in \Susp} D_G$ is transversal. Since $\bM$ is a contractad, we have a morphism of contractads
	\[
	\T(\bM)\to \bM,
	\] where $\T(\bM)$ is the free contractad on $\bM$. For a stable\footnote{A rooted tree is called stable if each internal vertex has at least 2 incoming edges} $\Gr$-admissible tree $T$, the image of $\bM((T))=\prod_{v\in \Ver(T)}\bM(\In(v))$ in $\bM(\Gr)$ is a closed subvariety and coincides with the intersection of divisors of the form
	\[
	\bM((T))=\bigcap_{e\in \Edge(T)}D_{L_e},
	\] where $L_e=\Leav(T_e)$ is the leaf set of subtree $T_e\subset T$. Moreover, each nested set has the form $\Susp(T)=\{L_e|e\in \Edge(T)\}$, for a unique $\Gr$-admissible stable rooted tree. So, a non-empty intersection of divisors $D_G$ is a product of graphic compactifications for smaller graphs. 
	
	Similarly, we could describe the stratification of $\bM(\Gr)$ in terms of admissible trees as follows. The component-wise inclusion $\M(\Gr)\hookrightarrow\bM(\Gr)$ produces a morphism of contractads $\T(\M)\to \bM$, where $\T(\M)$ is the free set-contractad on $\M$\footnote{we use convention $\M(\Path_1)=\varnothing$}. This morphism is an isomorphism of set-contractads. Moreover, for a stable $\Gr$-admissible tree $T$, the image of $\M((T))$ in $\bM(\Gr)$ is a locally open subset, whose closure coincides with $\bM((T))$. All in all, there is a locally open stratification of $\bM(\Gr)$
	\begin{equation}\label{eq::stratification}
		\bM(\Gr)=\bigsqcup_{T\in \Tree_{\mathrm{st}}(\Gr)}\M((T)), \quad \M((T))\cong \prod_{v\in \Ver(T)}\M(\In(v)).   
	\end{equation}
	\begin{example}
		For complete graphs, through the isomorphism $\bM(\K_n)\cong\beM_{0,n+1}$, the strata $\M((T))$ corresponds to the strata $\eM_{0,n+1}((T))$ consisting of stable curves with dual graph $T$. Similarly, for stellar graphs, through the isomorphism $\bM(\St_n)\cong\overline{\EuScript{L}}_{0,n}$, the strata $\M(T))$ corresponds to the strata $\EuScript{L}(((T))$ consisting of stable chains with dual graph $T$.
		\begin{figure}[ht]
			\caption{Dual graphs of stable chains are $\St$-admissible trees.}
			\[
			\vcenter{\hbox{\begin{tikzpicture}[scale=0.7]
						%curve
						\draw[name path=1] (0,0)--(3,1);
						\draw[name path=2] (1.5,1)--(5,-0.25);
						\fill[name intersections={of=1 and 2}] (intersection-1) circle (2pt);
						\draw[name path=3] (4,-0.3)--(6,0.8);
						\fill[name intersections={of=2 and 3}] (intersection-1) circle (2pt);
						%nodes
						\draw[thick, fill=white] ($(0,0)!.1!(3,1)$) circle [radius=2pt];
						\node at (0.15,0.35) {\small$\infty$};
						\fill ($(0,0)!.5!(3,1)$) circle (2pt);
						\node at (1.6,0.25) {\small$2$};
						\fill ($(1.5,1)!.4!(5,-0.25)$) circle (2pt);
						\node at (3,0.2) {\small$1$};
						\fill ($(1.5,1)!.65!(5,-0.25)$) circle (2pt);
						\node at (4,0.4) {\small$3$};
						\fill ($(4,-0.3)!.5!(6,0.8)$) circle (2pt);
						\node at (4.9,0.55) {\small$4$};
						\draw[thick, fill=white] ($(4,-0.3)!.8!(6,0.8)$) circle [radius=2pt];
						\node at (5.9,0.35) {\small$0$};
						
						%\node at (3.8,0.45) {\small$0$};
			\end{tikzpicture}}}
			\quad
			\longrightarrow
			\quad
			\vcenter{\hbox{\begin{tikzpicture}[scale=0.7]
						%tree
						\draw (0,0)--(0,0.75);
						\draw (0,0.75)--(-0.75,1.5);
						\draw (0,0.75)--(0.75,1.5);
						\draw (0.75,1.5)--(0.1,2.15);
						\draw (0.75,1.5)--(0.75,2.15);
						\draw (0.75,1.5)--(1.6,2.15);
						\draw (1.6,2.15)--(1.1,2.75);
						\draw (1.6,2.15)--(2.1,2.75);
						%leaves
						\node at (0,-0.25) {\small$\infty$};
						\node at (-0.75,1.75) {\small$2$};
						\node at (0.1,2.4) {\small$1$};
						\node at (0.75,2.4) {\small$3$};
						\node at (1.1,3) {\small$4$};
						\node at (2.1,3) {\small$0$};
			\end{tikzpicture}}}
			\]
		\end{figure}
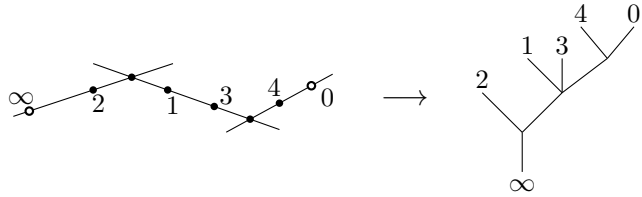 According to Example~\ref{ex::comp_wonderful}, when graph $\Gr$ is a tree, the component $\bM(\Gr)$ is a toric variety. In this case, the elements of strata coincide with the orbits of torus action.
	\end{example}
	
	\subsection{Complete multipartite graphs and Modular compactification}
	\label{sec::modular}
	\subsubsection{$\K_\lambda$-stable curves}
	\label{sec::sub::stable}
	In Example~\ref{ex::comp_wonderful}, we see that, for particular types of graphs, the corresponding component of the wonderful contractad $\bM$ can be described as the moduli space of stable curves with certain conditions on marked points. In this subsection, we show how this description extends to the whole class of complete multipartite graphs.
	
	For $n\geq 2$, let $\eM_{0,n+1}$ be the moduli space of $n$-pointed smooth curves of genus zero. Specifically, this is a space of complex projective lines $\mathbb{P}^1$ with $n+1$-marked pairwise distinct points (with one marked point singled out $\infty$ as a special) with two pointed projective lines isomorphic if there is a marked point preserving isomorphism between them. Recall that this moduli space admits a smooth compactification $\eM_{0,n+1}\subset \beM_{0,n+1}$, called a Deligne-Mumford compactification. This moduli space parametrizes $(n+1)$-pointed stable curves of genus zero, complex curves of genus zero with only singularities are double points with $(n+1)$ marked pairwise distinct smooth points.
	Let us define a moduli space, where marked points may coincide.

	\begin{defi}
		\label{def::multipartite::modular}
		Let $\mathbb{k}=\mathbb{C}$ or $\mathbb{R}$. For a connected complete multipartite graph $\K_{\lambda}$, a $\K_\lambda$-pointed stable curve is a pointed $\mathbb{k}$-curve $(\Sigma, x_{\infty},\{x_v\}_{v\in V_{\Gr}})$ of genus zero with only singularities are double points, satisfying the following conditions: 
		\begin{itemize}
			\item The marked point $x_{\infty}$ is separated from all other marked points.
			\item For adjacent vertices $v,w\in V_{\K_{\lambda}}$, the marked points are separated $x_v\neq x_w$.
			\item Each irreducible component $C\subset \Sigma$ not containing $x_{\infty}$ with a single node contains a pair of marked points $x_v,x_w\in C$ labeled by adjacent vertices $(v,w)\in E(\K_{\lambda})$.
		\end{itemize}
		The corresponding moduli stack is denoted $\beM_{0,\K_\lambda}$.
	\end{defi}
	We can define a stabilization map 
	\begin{equation}
		\label{eq::stabilization::map}
		\pi:\beM_{0,n+1}\to \beM_{0,\K_\lambda}
	\end{equation}
	that sends a stable rational curve to a $\K_\lambda$-stable rational curve by iterative contraction of all irreducible components that do not contain either $x_\infty$ or a pair of double points, or a pair of marked points labeled by adjacent vertices. Apriori the quotient curve may depend on the order in which we choose the components that we contract. However, in the case of multipartite graphs, the quotient curve is well-defined. 
	\begin{figure}[ht]
		\caption{Stabilization map $\pi\colon \beM_{0,5}\to \beM_{0,\Cyc_4}$ for $4$-cycle $\Cyc_4\cong \K_{(2^2)}$ with classical ordering. Note that the first curve remains the same. For the second curve, we collapse the component out of $\infty$, since vertices $2,4$ are not adjacent in $\Cyc_4$}
		\[
		\vcenter{\hbox{\begin{tikzpicture}[scale=0.7]
					%curve
					\draw[name path=1] (0,0)--(4,1);
					\draw[name path=2] (3,1)--(5,-0.25);
					\fill[name intersections={of=1 and 2}] (intersection-1) circle (2pt);
					%nodes
					%infty
					\draw[thick, fill=white] ($(0,0)!.1!(4,1)$) circle [radius=2pt];
					\node at (0.15,0.35) {\small$\infty$};
					%1
					\fill ($(0,0)!.3!(4,1)$) circle (2pt);
					\node at (1.4,0) {\small$1$};
					%3
					\fill ($(0,0)!.6!(4,1)$) circle (2pt);
					\node at (2.2,0.85) {\small$2$};
					%2
					\fill ($(3,1)!.4!(5,-0.25)$) circle (2pt);
					\node at (3.6,0.2) {\small$3$};
					%4
					\fill ($(3,1)!.8!(5,-0.25)$) circle (2pt);
					\node at (4.9,0.1) {\small$4$};
		\end{tikzpicture}}}
		\overset{\pi}{\mapsto}
		\vcenter{\hbox{\begin{tikzpicture}[scale=0.7]
					%curve
					\draw[name path=1] (0,0)--(4,1);
					\draw[name path=2] (3,1)--(5,-0.25);
					\fill[name intersections={of=1 and 2}] (intersection-1) circle (2pt);
					%nodes
					%infty
					\draw[thick, fill=white] ($(0,0)!.1!(4,1)$) circle [radius=2pt];
					\node at (0.15,0.35) {\small$\infty$};
					%1
					\fill ($(0,0)!.3!(4,1)$) circle (2pt);
					\node at (1.4,0) {\small$1$};
					%3
					\fill ($(0,0)!.6!(4,1)$) circle (2pt);
					\node at (2.2,0.85) {\small$2$};
					%2
					\fill ($(3,1)!.4!(5,-0.25)$) circle (2pt);
					\node at (3.6,0.2) {\small$3$};
					%4
					\fill ($(3,1)!.8!(5,-0.25)$) circle (2pt);
					\node at (4.9,0.1) {\small$4$};
		\end{tikzpicture}}}
		\text{, but }
		\vcenter{\hbox{\begin{tikzpicture}[scale=0.7]
					%curve
					\draw[name path=1] (0,0)--(4,1);
					\draw[name path=2] (3,1)--(5,-0.25);
					\fill[name intersections={of=1 and 2}] (intersection-1) circle (2pt);
					%nodes
					%infty
					\draw[thick, fill=white] ($(0,0)!.1!(4,1)$) circle [radius=2pt];
					\node at (0.15,0.35) {\small$\infty$};
					%1
					\fill ($(0,0)!.3!(4,1)$) circle (2pt);
					\node at (1.4,0) {\small$1$};
					%3
					\fill ($(0,0)!.6!(4,1)$) circle (2pt);
					\node at (2.2,0.85) {\small$3$};
					%2
					\fill ($(3,1)!.4!(5,-0.25)$) circle (2pt);
					\node at (3.6,0.2) {\small$2$};
					%4
					\fill ($(3,1)!.8!(5,-0.25)$) circle (2pt);
					\node at (4.9,0.1) {\small$4$};
		\end{tikzpicture}}}
		\overset{\pi}{\mapsto}
		\vcenter{\hbox{\begin{tikzpicture}[scale=0.7]
					%curve
					\draw[name path=1] (0,0)--(4,1);
					%nodes
					%infty
					\draw[thick, fill=white] ($(0,0)!.1!(4,1)$) circle [radius=2pt];
					\node at (0.15,0.35) {\small$\infty$};
					%1
					\fill ($(0,0)!.3!(4,1)$) circle (2pt);
					\node at (1.4,0) {\small$1$};
					%3
					\fill ($(0,0)!.6!(4,1)$) circle (2pt);
					\node at (2.2,0.85) {\small$3$};
					%2,4
					\fill ($(0,0)!.9!(4,1)$) circle (2pt);
					\node at (3.5,0.6) {\small$2,4$};
		\end{tikzpicture}}}
		\]
	\end{figure}
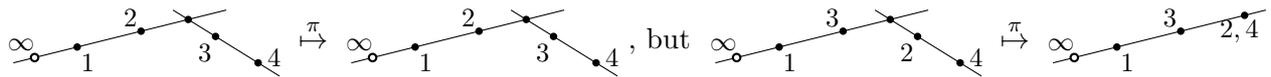
	
	Different quotients of $\beM_{0,n+1}$ of this kind were considered by different people. In particular, we recall the \emph{modular compactifications} due to Smyth that generalizes blow-down construction described in Definition~\ref{def::multipartite::modular}.
	Consequently, we conclude that $\beM_{0,\K_\lambda}$ is a smooth algebraic variety and the stabilization map $\pi$ is dominant.

	\subsubsection{Modular Compactifications}
	\label{sec::sub::modular}
	
	%\cred{Due to Smyth, everything happens over an algebraically closed  field $\mathbb{k}$ with no restrictions on characteristics But we can say that our results provides for $\mathbb{k}=\mathbb{R}$ that we have well-defined moduli spaces $\beM(\mathbb{R})$ of real stable curves.}
	
	In addition to the Deligne-Mumford compactification, there are alternative compactifications, for example, the Hasset compactifications that parametrize weighted pointed stable curves.  In~\cite{smyth2009towards}, Smyth classified all modular compactifications   of $\eM_{0,n+1}$. A \textit{modular compactification} is an open proper substack $\beM'_{0,n+1}$ of the algebraic stack $\mathcal{U}_{0,n+1}$ of $(n+1)$-pointed smoothable curves of genus zero containing $\eM_{0,n+1}$. Smyth described the combinatorial data that indexed these compactifications and called them \textit{extremal assignments} which we recall below. 
	
	A stable rational $(n+1)$-pointed curve  $\Sigma$ determines the dual graph $T_{\Sigma}$: the vertices are components of the subvariety of smooth points of $\Sigma$, the edges are double points, and leaves are marked points $[n]\cup\{\infty\}$. This graph $T_{\Sigma}$ is a rooted tree with the root $\infty$ since $\Sigma$ of genus zero.
	Denote by $\Tree(n)$ the set of stable rooted trees with the root $\infty$ and $n$ labeled leaves.
	\begin{defi}
		An extremal assignment $\mathcal{Z}$ is a rule assigning with each tree $T\in \Tree(n)$, a subset $\mathcal{Z}(T)\subset \Ver(T)$, satisfying the following properties:
		\begin{itemize}
			\item[(i)] $\mathcal{Z}(T)\neq \Ver(T)$
			\item[(ii)] For any contraction of rooted trees $\pi\colon T\rightsquigarrow T'$, we have
			\[
			v'\in \mathcal{Z}(T')\Leftrightarrow \pi^{-1}(v')\subset \mathcal{Z}(T).
			\]
		\end{itemize}
	\end{defi}
	Each extremal assignment $\mathcal{Z}$ defines the following compactification of $\eM_{0,n+1}$ called \emph{modular compactification} after Smyth.
	\begin{defi}
		A smoothable $(n+1)$-pointed curve $(\Sigma,x_1,\cdots,x_n,x_{\infty})$ of genus-zero is $\mathcal{Z}$-stable if there exists a stable curve $(\Sigma',x'_1,\cdots,x'_n,x'_{\infty})$ of genus zero with dual graph $T_{\Sigma'}$ and a morphism of pointed curves $\pi\colon (\Sigma',x'_1,\cdots,x'_n,x'_{\infty})\to(\Sigma,x_1,\cdots,x_n,x_{\infty})$, satisfying
		\begin{itemize}
			\item[(i)] $\pi$ is surjective with connected fibers 
			\item[(ii)] $\pi|_{\Sigma'\setminus \mathcal{Z}(\Sigma')}$ is an isomorphism
			\item[(iii)] the image of each connected component from $\mathcal{Z}(\Sigma')$ is a single point,
		\end{itemize} where $\mathcal{Z}(\Sigma')\subset \Sigma$ is a component obtained from the irreducible components labeled by vertices of dual graphs from $\mathcal{Z}(T_{\Sigma})$.
	\end{defi}
	Let $\beM_{0,n+1}(\mathcal{Z})$ be the resulting moduli stack of $\mathcal{Z}$-stable curves.
	
	\begin{theorem}[\cite{smyth2009towards}]
		Let $\mathcal{Z}$ be an extremal assignment. Then, the moduli stack $\beM_{0,n+1}(\mathcal{Z})$ of $\mathcal{Z}$-stable curves is a modular compactification. Conversely, for any modular compactification $\beM'$ there is an extremal assignment $\mathcal{Z}$, such that $\beM'\cong\beM_{0,n+1}(\mathcal{Z})$. Furthermore, the compactification $\beM_{0,n+1}(\mathcal{Z})$ is represented by an algebraic space.
	\end{theorem}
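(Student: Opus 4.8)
This statement is Smyth's classification theorem~\cite{smyth2009towards}; the plan is to recall the structure of his argument rather than reproduce the details.

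First I would establish the forward direction: given an extremal assignment $\mathcal{Z}$, show that $\beM_{0,n+1}(\mathcal{Z})\subseteq\mathcal{U}_{0,n+1}$ is an open proper substack containing $\eM_{0,n+1}$. Containment is immediate, since a smooth genus-zero curve has dual tree a single vertex, which axiom~(i) prevents $\mathcal{Z}$ from collapsing. Openness is a local statement: over a base $B$ carrying a family $\Sigma\to B$ of smoothable genus-zero curves, one spreads out fibrewise the data of a Deligne--Mumford stable model $\Sigma'\to\Sigma$ together with the contraction $\pi$ of the subcurve indexed by $\mathcal{Z}(T_{\Sigma'})$, using deformation theory of nodal curves and properness of $\beM_{0,n+1}\to\mathcal{U}_{0,n+1}$; the locus where such a model exists with the prescribed contracted subcurve is open, and axiom~(ii) identifies this locus with the $\mathcal{Z}$-stable one.

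Next I would verify properness through the valuative criterion over a discrete valuation ring $R$ with fraction field $K$. For existence of limits: a curve over $\Spec K$ extends, after a ramified base change, to a stable curve $\Sigma'\to\Spec R$ by properness of $\beM_{0,n+1}$, and contracting in the special fibre the connected subcurves indexed by $\mathcal{Z}(T_{\Sigma'})$ produces a $\mathcal{Z}$-stable model, with axiom~(i) ensuring the limit is still a genus-zero curve of the required shape. For separatedness: two $\mathcal{Z}$-stable extensions are dominated by a common stable model $\Sigma'\to\Spec R$, and axiom~(ii) forces the two contraction morphisms of its special fibre to agree, so the two limits coincide. This uniqueness step — an analysis of elementary modifications of families of nodal curves — is where I expect the main difficulty to lie, and it is precisely the place where the axioms of an extremal assignment are indispensable.

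For the converse I would start from a modular compactification $\beM'$ and first produce a reduction morphism $\Phi\colon\beM_{0,n+1}\to\beM'$ restricting to the identity on $\eM_{0,n+1}$ (this is where one uses that $\beM'$ is an open substack of $\mathcal{U}_{0,n+1}$: one modifies the universal stable curve over $\beM_{0,n+1}$ into a family of $\beM'$-stable curves). For each stable tree $T\in\Tree(n)$ I would then restrict $\Phi$ to the corresponding boundary stratum, pull back the universal curve, and set $\mathcal{Z}(T)$ to be the set of vertices whose components $\Phi$ contracts; axiom~(i) holds because $\Phi$ never collapses a whole fibre, and axiom~(ii) because $\Phi$ is compatible with the specializations realizing tree contractions. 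One recovers $\beM'\cong\beM_{0,n+1}(\mathcal{Z})$ by matching $R$-points via the valuative criterion. Finally, to see that $\beM_{0,n+1}(\mathcal{Z})$ is an algebraic space, I would observe that each $\mathcal{Z}$-stable curve arises from a Deligne--Mumford stable curve by contracting connected subcurves to points, so every $\mathbb{P}^1$-component still carries at least three distinct special points and hence has trivial automorphism group; alternatively one appeals directly to Artin's representability criterion applied to the open substack $\beM_{0,n+1}(\mathcal{Z})\subseteq\mathcal{U}_{0,n+1}$.
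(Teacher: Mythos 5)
The paper does not prove this statement at all: it is imported verbatim from Smyth's work as a black-box citation (as is the subsequent smoothness criterion of Moon et al.), so there is no internal argument to compare yours against. Your outline is a reasonable reconstruction of the shape of Smyth's proof --- openness inside $\mathcal{U}_{0,n+1}$, the valuative criterion with existence of limits via contraction of the $\mathcal{Z}$-indexed subcurve of a stable model and separatedness via axiom (ii), extraction of an assignment from a reduction morphism $\beM_{0,n+1}\to\beM'$, and representability via triviality of automorphism groups.

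Two places where your sketch glosses over genuine content of the original argument are worth flagging. First, when you contract a connected subcurve meeting the rest of the curve in $k\geq 2$ points, the result is no longer a nodal curve: one must work with the appropriate class of Gorenstein (for genus zero, rational $m$-fold) singularities and check that the contraction exists as a morphism of families over a base, not just fibrewise; this is where most of the technical work in Smyth's paper lives, and "spreading out fibrewise... using deformation theory of nodal curves" understates it. Second, in the converse direction the existence of the reduction morphism $\Phi\colon\beM_{0,n+1}\to\beM'$ is itself a nontrivial consequence of properness plus the valuative criterion applied stratum by stratum, and the well-definedness of $\mathcal{Z}(T)$ (independence of the point chosen in the stratum) needs the connectedness of boundary strata. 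Neither issue invalidates your plan, but a complete write-up would have to address both.
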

	
	For particular types of assignments, the resulting modular compactification is represented by a smooth proper variety. Let $\Tree_2(n)\subset \Tree(n)$ be a subset of rooted trees with 2 internal vertices. An extremal assignment $\mathcal{Z}$ is called \textit{smooth}, if, for each pair of rooted tree $T\in\Tree(n)$ and a vertex $v\in \mathcal{Z}_{\Gr}(T)$, there exists a tree $T'\in\Tree_2(n)$ and a contraction $\pi\colon T\rightsquigarrow T'$ with  $\pi(v)\in \mathcal{Z}_{\Gr}(T')$.
	\begin{theorem}~\cite{moon2018birational}
		Let $\mathcal{Z}$ be an extremal smooth assignment. Then the moduli stack $\beM_{0,n+1}(\mathcal{Z})$ is represented by a smooth proper algebraic variety.
	\end{theorem}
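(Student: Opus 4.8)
The plan is to build $\beM_{0,n+1}(\mathcal Z)$ from the Deligne--Mumford space by a tower of elementary blowdowns and to check that every stage remains a smooth projective variety. Recall the tautological stabilization morphism $\pi\colon \beM_{0,n+1}\to\beM_{0,n+1}(\mathcal Z)$, which collapses, on a stable curve $\Sigma$, exactly the subcurve attached to the vertices of $\mathcal Z(T_\Sigma)$; since $\Sigma$ has genus $0$ such a contraction creates no new singularities, so a $\mathcal Z$-stable curve is again nodal, only with some marked points allowed to collide. First I would prove a purely combinatorial lemma: any smooth extremal assignment $\mathcal Z$ is reached from the empty assignment by successively adjoining one ``minimal'' collapsible $2$-vertex type at a time, every intermediate assignment $\mathcal Z_i$ being again extremal and smooth, with $\mathcal Z_0=\varnothing$ and $\mathcal Z_k=\mathcal Z$. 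This factors $\pi$ as $\beM_{0,n+1}=X_0\to X_1\to\cdots\to X_k=\beM_{0,n+1}(\mathcal Z)$ with $X_i=\beM_{0,n+1}(\mathcal Z_i)$, where $p_i\colon X_i\to X_{i+1}$ contracts a single boundary divisor $D_i\subset X_i$, namely the one corresponding to the newly adjoined $2$-vertex type.

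It then suffices to analyse one step $p_i$, assuming inductively that $X_i$ is a smooth projective variety. The divisor $D_i$ is a boundary divisor of $X_i$, hence smooth, and it fibres over the locus in $X_{i+1}$ along which the collapsing takes place; the smoothness hypothesis on the assignment is precisely what guarantees that, \'etale-locally over its base, this fibration is a $\Pro^{r}$-bundle and that $D_i$ has codimension $1$ rather than higher (in the non-smooth case the contracted locus may have higher codimension with non-equidimensional fibres, which is exactly why Smyth's general $\beM_{0,n+1}(\mathcal Z)$ is only an algebraic space). One then identifies the normal bundle $N_{D_i/X_i}$ and checks that it restricts to $\mathcal O(-1)$ on the projective fibres; granting this, the Fujiki--Nakano contractibility criterion produces a smooth projective $X_{i+1}$ together with $p_i$, and a check on the functor of points identifies the $X_{i+1}$ so obtained with $\beM_{0,n+1}(\mathcal Z_{i+1})$. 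Iterating over the finitely many steps shows that $\beM_{0,n+1}(\mathcal Z)$ is smooth and projective; triviality of the automorphism groups of $\mathcal Z$-stable curves --- inherited from genus-$0$ rigidity together with the defining condition $\mathcal Z(T)\neq\Ver(T)$ --- ensures that the object obtained is a variety and not merely a stack or an algebraic space.

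The main obstacle is the local analysis inside the intermediate spaces $X_i$, which are no longer manifestly toroidal: $\beM_{0,n+1}$ is toroidal along its normal-crossings boundary, so the normal bundle of a boundary divisor is transparent there, but after several contractions one must control how $D_i$ and $N_{D_i/X_i}$ sit inside $X_i$ and verify the $\mathcal O(-1)$ condition, which amounts to tracking how the earlier blowdowns have modified the boundary geometry. If this bookkeeping becomes unwieldy, an alternative route is to prove smoothness directly by deformation theory: cover $\beM_{0,n+1}(\mathcal Z)$ by versal deformation spaces of its $\mathcal Z$-stable curves and show that the pointed deformation functor of each such genus-$0$ nodal curve is unobstructed, its obstruction space being an $\mathrm{Ext}^2$ that vanishes for a curve. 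That gives smoothness at once, but one would then still need Smyth's theorem for representability and a separate argument for projectivity to recover the full statement.
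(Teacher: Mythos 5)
First, a remark on context: the paper does not prove this statement at all --- it is quoted verbatim from \cite{moon2018birational} --- so there is no internal argument to compare yours against. Judged on its own terms, your strategy (factor the stabilization map $\pi\colon\beM_{0,n+1}\to\beM_{0,n+1}(\mathcal Z)$ into elementary divisorial contractions and show each step is a smooth blow-down) is a sensible one: it amounts to running the Kapranov / De Concini--Procesi iterated blow-up description of $\beM_{0,n+1}$ in reverse. Note also that for the assignments $\mathcal Z_{\K_\lambda}$ relevant to this paper, Theorem~\ref{thm::modular=wonderful} reaches the same conclusion more directly by identifying $\beM_{0,\K_\lambda}$ with the wonderful compactification $\bM(\K_\lambda)$, which is smooth and projective for free; your tower argument is the harder, more general route.

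That said, the sketch has a genuine gap at its central step. You assert that smoothness of the assignment ``is precisely what guarantees'' that the contracted divisor $D_i$ is \'etale-locally a $\Pro^{r}$-bundle with $\mathcal O(-1)$ normal bundle on the fibres. Smoothness of $\mathcal Z$ says nothing of the sort: it only says that membership in $\mathcal Z(T)$ is detected on two-vertex contractions. The mechanism that actually makes the fibres projective spaces is different and must be proved: (a) extremality forces the family of collapsible leaf-sets to be closed under passing to subsets of size at least $2$ (apply condition (ii) to the three-vertex chain with nested leaf sets $B\subset A$); and (b) if the tower is ordered by increasing $|A|$, then by the time $D_A$ is contracted every $D_B$ with $B\subsetneq A$ has already been contracted, so the fibre of $D_A$ over its image is the image of $\beM_{0,|A|+1}$ under the contraction of its entire boundary away from $\infty$, i.e.\ the Kapranov map onto $\Pro(\Bbbk^{A}/\Bbbk)\cong\Pro^{|A|-2}$. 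Neither (a) nor the identification in (b) appears in your sketch, and without them the $\Pro^{r}$-bundle claim is unsupported --- indeed, if $D_A$ were contracted first its fibre would be $\beM_{0,|A|+1}$ itself, which is not a projective space once $|A|\geq 4$. Two further points need repair: the intermediate collections $\mathcal Z_i$ must be shown to be extremal assignments (extremality is a coherence condition over all stable trees simultaneously, not just over the two-vertex types you adjoin), and the Fujiki--Nakano criterion is an analytic contractibility statement that yields neither projectivity nor representability over a general field; one needs an algebraic contraction criterion, or a direct identification of each $X_{i+1}$ with an explicit smooth projective model, to obtain the statement as claimed.
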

	\subsubsection{Graphic Modular compactifications} 
	Let $\K_\lambda$ be a complete multipartite graph on the vertex set $[n]$. For a rooted tree $T$ on $n$ leaves and vertex $v\in \Ver(T)$, let $T(v)\subset [n]$ be a set of leaves of the subtree $T_v\subset T$ outgoing from the vertex set $v$.  We consider an assignment $\mathcal{Z}_{\K_\lambda}$ on the rooted trees $\Tree(n)$, given by the rule
	\[
	\mathcal{Z}_{\K_\lambda}(T)=\{v\in \Ver(T)| T(v)\subset [n]\text{- is not a tube of} \K_\lambda\}
	\]
	\begin{prop}
		For a connected complete multipartite graph $\K_\lambda$ on the vertex set $[n]$, the assignment $\mathcal{Z}_{\K_\lambda}$ is extremal and smooth.
	\end{prop}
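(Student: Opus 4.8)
The plan is to unwind the definitions of ``extremal'' and ``smooth'' and reduce everything to one combinatorial feature of complete multipartite graphs: \emph{a subset of cardinality at least two of a non-tube is again a non-tube}. By Lemma~\ref{lem::multipartite_contraction} and Lemma~\ref{lemma:yungcone}, a subset $S$ of vertices of $\K_\lambda$ with $|S|\geq 2$ is a tube if and only if it meets at least two blocks of the independent partition; equivalently, such an $S$ fails to be a tube exactly when it lies inside a single block, and this is obviously inherited by all subsets of $S$. I would also record the elementary stability fact that every $v\in\Ver(T)$ satisfies $|T(v)|\geq 2$, since each internal vertex of a stable tree has at least two children and each child subtree carries at least one leaf. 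Condition~(i) of an extremal assignment is then immediate: the vertex $v_0\in\Ver(T)$ adjacent to the root leaf $\infty$ has $T(v_0)=[n]$, which is a tube of $\K_\lambda$ because $\K_\lambda$ is connected (using $l(\lambda)\geq 2$); hence $v_0\notin\mathcal{Z}_{\K_\lambda}(T)$ and $\mathcal{Z}_{\K_\lambda}(T)\neq\Ver(T)$.

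For condition~(ii), let $\pi\colon T\rightsquigarrow T'$ be a contraction and $v'\in\Ver(T')$. The fiber $\pi^{-1}(v')$ is a connected subtree of $T$; let $w$ be its vertex closest to the root. Contracting internal edges does not alter which leaves lie below a given vertex, so $T'(v')=T(w)$. If $\pi^{-1}(v')\subseteq\mathcal{Z}_{\K_\lambda}(T)$, then in particular $w\in\mathcal{Z}_{\K_\lambda}(T)$, so $T'(v')=T(w)$ is not a tube and $v'\in\mathcal{Z}_{\K_\lambda}(T')$. Conversely, if $v'\in\mathcal{Z}_{\K_\lambda}(T')$, then $T(w)=T'(v')$ is not a tube; for any $u\in\pi^{-1}(v')$ the vertex $u$ is a descendant of $w$, so $T(u)\subseteq T(w)$ with $|T(u)|\geq 2$, and the inheritance property above shows $T(u)$ is not a tube, i.e.\ $u\in\mathcal{Z}_{\K_\lambda}(T)$; hence $\pi^{-1}(v')\subseteq\mathcal{Z}_{\K_\lambda}(T)$. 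This gives the required equivalence.

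For smoothness, fix $T\in\Tree(n)$ and $v\in\mathcal{Z}_{\K_\lambda}(T)$, so $S:=T(v)$ is not a tube. Then $2\leq|S|\leq n-1$: the lower bound is the stability fact, while $|S|=n$ would give $S=[n]=T(v_0)$, forcing $v=v_0$, which contradicts $v\in\mathcal{Z}_{\K_\lambda}(T)$. Let $\pi\colon T\rightsquigarrow T'$ be the contraction that collapses the subtree $T_v$ to a single vertex $v_1$ and collapses the complementary subtree (the connected piece containing $v_0$ and the leaf $\infty$) to a single vertex $v_0'$; concretely only the edge joining $v$ to its parent is left uncontracted. Then $T'\in\Tree_2(n)$, the root vertex is $v_0'$ with $T'(v_0')=[n]$, the other internal vertex is $v_1$ with $T'(v_1)=S$, and $\pi(v)=v_1$. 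Since $S$ is not a tube, $v_1\in\mathcal{Z}_{\K_\lambda}(T')$, so $\pi(v)\in\mathcal{Z}_{\K_\lambda}(T')$, as required.

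The use of complete multipartiteness is concentrated entirely in the ``only if'' half of condition~(ii), via the closure of non-tubes under subsets; for a general graph this fails, which is exactly why $\mathcal{Z}_{\Gr}$ need not be extremal in general. The points I expect to need the most care are the purely combinatorial claims about contractions — that $T'(v')$ equals $T(w)$ for the top vertex $w$ of a contracted fiber, and that the tree $T'$ produced in the smoothness step really has exactly two internal vertices and is stable — but these become routine once the contractions are described explicitly as above.
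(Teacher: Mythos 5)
Your proposal is correct and follows essentially the same route as the paper's proof: connectivity of $\K_\lambda$ makes $[n]$ a tube (handling condition (i)), the fact that subsets of size $\geq 2$ of a non-tube in a complete multipartite graph are again non-tubes gives condition (ii), and smoothness is obtained by contracting to the same explicit two-vertex tree. You are somewhat more careful than the paper in spelling out both directions of the equivalence in (ii) and the bound $|T(v)|\leq n-1$, but the underlying argument is identical.
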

	\begin{proof}
		Firstly, this assignment is extremal. Indeed, since $\K_\lambda$ is connected, for the root vertex $T(r)=[n]$ is a tube, so $\mathcal{Z}_{\K_\lambda}(T)\neq \Ver(T)$. Next, consider a contraction $\pi\colon T\rightsquigarrow T'$. For a vertex $w'\in \Ver(T')$, we have $T(v)\subset T(w)$ for all $v\in \pi^{-1}(w)$.  Recall that, for a complete multipartite graph, a vertex subset forms a tube if it contains at least one edge. So, if $T(w)$ is not a tube, it implies that $T(v)$ is also not a tube for all $v\in \pi^{-1}(w')$.  The second condition of extremal assignment follows from this observation.  
		
		Secondly, the assignment $\mathcal{Z}_{\K_\lambda}$ is smooth. Indeed, given a vertex $v\in \mathcal{Z}_{\K_\lambda}(T)$, we consider a rooted 2-tree $T'$  with root vertex $v_{root}$ and non-root vertex $v'$ with $T(v')=T(v)$. In particular, we have $\mathcal{Z}_{\K_\lambda}(T')=v'$. We consider a contraction $\pi\colon T\rightsquigarrow T'$ that maps all vertices $v\leq w$ to $v'$, and all other vertices to the root vertex.
	\end{proof}
	
	By direct inspections, we get the following
	\begin{prop}
		For a complete multipartite graph $\K_\lambda$
		the definitions of $\mathcal{Z}_{\K_{\lambda}}$-stable curves and $\K_{\lambda}$-pointed stable curves coincide.
		In particular, $\beM_{0,\K_\lambda}$ is a smooth modular compactification associated with extremal assignment $\mathcal{Z}_{\K_\lambda}$.
		Furthermore, this moduli stack is represented by a smooth projective algebraic variety.
	\end{prop}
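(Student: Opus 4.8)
The plan is to reduce the statement to its first clause — that the \emph{intrinsic} conditions of Definition~\ref{def::multipartite::modular} cut out exactly the class of $\mathcal{Z}_{\K_\lambda}$-stable curves, which is defined \emph{extrinsically} through the existence of a dominating Deligne--Mumford stable curve. Once this is done, the remaining two clauses are formal: the preceding Proposition already shows that $\mathcal{Z}_{\K_\lambda}$ is an extremal smooth assignment, so Smyth's theorem \cite{smyth2009towards} turns $\beM_{0,\K_\lambda}=\beM_{0,n+1}(\mathcal{Z}_{\K_\lambda})$ into a modular compactification of $\eM_{0,n+1}$ and the result of \cite{moon2018birational} upgrades it to a smooth proper algebraic variety. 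The main tool for the first clause is a component-by-component description of the stabilization morphism~\eqref{eq::stabilization::map}: for a genus-zero stable curve $\Sigma'$ with dual rooted tree $T=T_{\Sigma'}$ I want to show that $\pi$ collapses precisely the subcurve $\mathcal{Z}_{\K_\lambda}(\Sigma')$ spanned by the vertices $v$ with $T(v)$ not a tube of $\K_\lambda$.

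First I would isolate the combinatorial input. By the proof of Lemma~\ref{lem::multipartite_contraction} and Lemma~\ref{lemma:yungcone}, a subset $S\subset[n]$ with $|S|\ge 2$ fails to be a tube of $\K_\lambda$ exactly when $S$ lies inside a single block of the independent partition; hence $T(v)$ is not a tube iff all leaves below $v$ carry labels from one common block. From this I would extract two structural facts about $\mathcal{Z}_{\K_\lambda}(T)$ inside a stable tree $T$: every non-leaf child of a $\mathcal{Z}_{\K_\lambda}$-vertex is again a $\mathcal{Z}_{\K_\lambda}$-vertex, and for a connected component $Z$ of $\mathcal{Z}_{\K_\lambda}(T)$ with top vertex $v_0$ the leaves attached to the vertices of $Z$ are exactly the elements of $T(v_0)$, all lying in one block. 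Geometrically this says that contracting $\mathcal{Z}_{\K_\lambda}(\Sigma')$ lets prescribed families of pairwise non-adjacent marked points collide; that the result is independent of the order of the individual blow-downs is precisely axiom (ii) for an extremal assignment, already verified for $\mathcal{Z}_{\K_\lambda}$ in the preceding Proposition, so $\pi(\Sigma')$ is well defined (this is the assertion announced in \S\ref{sec::sub::stable}).

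Then I would check the two implications. For ``$\mathcal{Z}_{\K_\lambda}$-stable $\Rightarrow$ $\K_\lambda$-pointed stable'': if $\pi\colon\Sigma'\to\Sigma$ realizes the definition, the component bearing $x_\infty$ has $T(\mathrm{root})=[n]$, a tube since $\K_\lambda$ is connected, so it survives and $x_\infty$ stays separated from the images of the collapsed subcurves; any two marked points identified by $\pi$ sit inside one collapsed component and hence carry labels from one block, so they are non-adjacent; and a one-nodal component $C\subset\Sigma$ avoiding $x_\infty$ is the image of a surviving vertex $w$ all of whose non-leaf children are collapsed, so $C$ carries $x_v$ for every $v\in T(w)$ and, $T(w)$ being a tube, a pair of marked points with adjacent labels — these are the three conditions of Definition~\ref{def::multipartite::modular}. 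For the converse, starting from a $\K_\lambda$-pointed stable $\Sigma$ I would perform the standard bubbling resolution — over each collision point of $k\ge 2$ (necessarily pairwise non-adjacent, same-block) marked points attach a $\mathbb{P}^1$ separating them — obtaining a genus-zero curve $\Sigma'$ that is Deligne--Mumford stable (stability being read off from the three conditions; a downward induction on the dual tree using the third condition shows that every component of $\Sigma$ already has tube label-set, so no further contraction is needed and the bubbles are exactly $\mathcal{Z}_{\K_\lambda}(\Sigma')$), and by construction $\pi$ contracts precisely $\mathcal{Z}_{\K_\lambda}(\Sigma')$, so $\Sigma$ is $\mathcal{Z}_{\K_\lambda}$-stable.

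The one point beyond Moon's theorem is projectivity, which I would deduce from the identification $\beM_{0,\K_\lambda}\cong\bM(\K_\lambda)$ with the wonderful compactification of the graphic arrangement $\B(\K_\lambda)$, projective by De Concini--Procesi (\cite{de1995wonderful}; cf.\ Remark~\ref{rem::wondcomp}) — this is established subsequently in \S\ref{sec::modular}, so here projectivity is obtained a posteriori. I expect the only genuinely non-formal step to be the local geometric bookkeeping of the third paragraph: matching, component by component, the contracted curve $\pi(\Sigma')$ with the exact wording of Definition~\ref{def::multipartite::modular} and, in the converse direction, verifying Deligne--Mumford stability of the bubbled resolution. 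Everything else is an application of the tube characterization of multipartite graphs and of the two Propositions already in place.
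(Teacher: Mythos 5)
Your argument is correct and follows the same route as the paper, which simply asserts the coincidence of the two definitions ``by direct inspection'' and leaves exactly this comparison to the reader; your write-up supplies the inspection, and in particular the key observation that a connected component of $\mathcal{Z}_{\K_{\lambda}}(\Sigma')$ contains all non-leaf descendants of its top vertex (so it is a hanging subtree whose contraction only makes same-block marked points collide, never creating singularities worse than nodes) is precisely what makes the comparison work. Your handling of projectivity as an a posteriori consequence of Theorem~\ref{thm::modular=wonderful} also matches the paper, where it appears only in the corollary to that theorem.
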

	As one of the consequences, we see that the stabilisation map~\eqref{eq::stabilization::map} that sends each stable curve $[(\Sigma,x_{\infty},x_{1},x_{2},\cdots,x_n)]$ to the $\K_\lambda$-stable curve by collapsing component from $\mathcal{Z}(T_{\Sigma})$ is a morphism of the underlying algebraic varieties.  It was shown in~\cite{moon2018birational}, that a stabilising morphism is a regular dominant map. Later we show that this morphism is a sequence of blow-downs.
	
	\subsubsection{Isomorphism between wonderful contractad and modular compactifications}
	
	Consider the open subvariety $\eM_{0,\K_\lambda}\subset \beM_{0,\K_\lambda}$ parametrising smooth $\K_\lambda$-pointed curves. By definition, a smooth $\K_\lambda$-pointed curve is a projective line $\Pro^1$ with a function $f\colon V_{\K_\lambda}\cup \{\infty\}\to \Pro^1$ such that the image of $\infty$ does not belong to the image of vertices and images of adjacent vertices do not coincide. In other words, it is an element of the graphic configuration space $\Conf_{\Cyc_{\infty}\K_\lambda}(\Pro^1)/\PGL_2(\mathbb{k})$, where $\Cyc_{\infty}\K_\lambda:=\K_{(1)\cup \lambda}$ is the graph obtained from $\K_\lambda$ by adding a vertex $\infty$ adjacent to all vertices. Moreover, two $\K_\lambda$-pointed smooth curves are equivalent if and only if there exists a marked point preserving automorphism of $\Pro^1$ between them. So, the moduli space $\eM_{0,\K_\lambda}$ coincides with the orbit space $\Conf_{\Cyc_{\infty}\K_\lambda}(\Pro^1)/\PGL_2(\mathbb{k})$.
	\begin{prop}\label{prop:openpart}
		The moduli space of $\K_\lambda$-pointed smooth curves $\EuScript{M}_{0,\K_\lambda}$ is isomorphic to the projective complement of graphic arrangement $\M(\K_\lambda)$.
	\end{prop}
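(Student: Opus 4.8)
The plan is to leverage the identification, established in the paragraph just above, of $\eM_{0,\K_{\lambda}}$ with the orbit space $\Conf_{\Cyc_{\infty}\K_{\lambda}}(\Pro^1)/\PGL_2(\Bbbk)$, and to match this orbit space with the projective arrangement complement $\M(\K_{\lambda})=\Pro(W_{\K_{\lambda}})\setminus\bigcup_{e\in E_{\K_{\lambda}}}\Pro(H_e)$ by using the special point $\infty$ to rigidify. First I would observe that every $\PGL_2$-orbit of a configuration has a representative with $x_{\infty}$ placed at $\infty\in\Pro^1$, and that the stabiliser of $\infty\in\Pro^1$ inside $\PGL_2$ is the affine group $\Aff(\Bbbk)=\{z\mapsto az+b\}\cong\bG_m\ltimes\bG_a$ acting on $\mathbb{A}^1=\Pro^1\setminus\{\infty\}$. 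Because in $\Cyc_{\infty}\K_{\lambda}$ the vertex $\infty$ is adjacent to every vertex of $\K_{\lambda}$, after this normalisation the remaining marked points $(x_v)_{v\in V_{\K_{\lambda}}}$ automatically lie in $\mathbb{A}^1$, and the non-coincidence conditions among them are exactly those recorded by the graphic arrangement $\B(\K_{\lambda})$. This reduces the claim to showing
\[
\bigl(\Bbbk^{V_{\K_{\lambda}}}\setminus\textstyle\bigcup_{e\in E_{\K_{\lambda}}}H_e\bigr)\big/\Aff(\Bbbk)\;\cong\;\M(\K_{\lambda}).
\]

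Next I would carry out the quotient by $\Aff(\Bbbk)$ in two stages. The translation subgroup $\bG_a$ acts by $x_v\mapsto x_v+b$, i.e.\ along the diagonal line $\langle\sum_v x_v\rangle$, which is precisely the line divided out in the definition $W_{\K_{\lambda}}=\Bbbk^{V_{\K_{\lambda}}}/\langle\sum_v x_v\rangle$; since every hyperplane $H_e$ contains that line, quotienting by $\bG_a$ identifies $\Bbbk^{V_{\K_{\lambda}}}\setminus\bigcup_e H_e$ with the affine complement $W_{\K_{\lambda}}\setminus\bigcup_e H_e$ (writing $H_e$ also for its image in $W_{\K_{\lambda}}$). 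The residual torus $\bG_m$ then acts by ordinary scaling on $W_{\K_{\lambda}}$; here I would use that $\K_{\lambda}$, being connected with $l(\lambda)\ge2$, has at least one edge, so that $\bigcap_e H_e=0$ in $W_{\K_{\lambda}}$ and hence the origin lies on every $H_e$. Therefore $W_{\K_{\lambda}}\setminus\bigcup_e H_e$ misses the origin, the scaling action on it is free, and its geometric quotient is by definition $\Pro(W_{\K_{\lambda}})\setminus\bigcup_e\Pro(H_e)=\M(\K_{\lambda})$.

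The step I expect to be the main obstacle is upgrading the resulting bijection of orbit spaces to an isomorphism of algebraic varieties, that is, checking that the evident assignments are morphisms in both directions. The plan here is to note that $\PGL_2$ acts freely on $\Conf_{\Cyc_{\infty}\K_{\lambda}}(\Pro^1)$ — every configuration contains $x_{\infty}$ together with two distinct adjacent points, hence three pairwise distinct points of $\Pro^1$, and only the identity of $\PGL_2$ fixes three points — and then to produce an explicit Zariski-local trivialisation of the quotient map: fixing an edge $(v_0v_1)\in E_{\K_{\lambda}}$, on the open locus $\{x_{v_0}\ne x_{v_1}\}$ there is a unique affine representative with $x_{v_0}=0$ and $x_{v_1}=1$, which gives a section over that chart, and these charts cover as $(v_0v_1)$ ranges over all edges. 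Once every quotient above is known to be geometric and Zariski-locally trivial, the two mutually inverse assignments — sending a $\K_{\lambda}$-pointed smooth curve to its normalised coordinate vector and then to $[(x_v)_v]\in\Pro(W_{\K_{\lambda}})$, and conversely sending a point of $\M(\K_{\lambda})$ to the curve $(\Pro^1;\infty,(x_v)_{v})$ — are straightforwardly checked to be morphisms, which yields $\eM_{0,\K_{\lambda}}\cong\M(\K_{\lambda})$. A more economical alternative I would keep in mind is to bypass the abstract quotients and simply define the isomorphism directly on the charts above, verifying compatibility on the overlaps.
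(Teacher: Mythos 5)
Your proposal is correct and follows essentially the same route as the paper's proof: normalise the special point to $\infty$ to reduce to $\Conf_{\K_\lambda}(\Bbbk)/\Aff_1(\Bbbk)$, then identify the translation quotient with passing to $W_{\K_\lambda}=\Bbbk^{V_{\K_\lambda}}/\langle\sum_v x_v\rangle$ and the scaling quotient with projectivization. The extra care you take about freeness of the action and Zariski-local sections is a welcome refinement of details the paper leaves implicit, but it is not a different argument.
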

	\begin{proof}
		By transitivity of $\PGL_2(\mathbb{k})$-action on $\Pro^1$, in each class $[f]$ from $\eM_{0,\K_\lambda}$ we can choose a representative that sends the special point to $\infty\in \Pro^1$. In other words, we have the isomorphism $\eM_{0,\K_\lambda}\cong \Conf_{\K_\lambda}(\Pro^1\setminus \{\infty\})/\PGL_2(\mathbb{k})_{\infty} \cong \Conf_{\K_\lambda}(\mathbb{k})/\Aff_1(\mathbb{k})$. By the definition, this graphic configuration space $\Conf_{\K_\lambda}(\mathbb{k}):=\mathbb{k}^{V_{\K_\lambda}}\setminus \B(\K_\lambda)$ is the complement to the graphic braid arrangement. The quotient of this space by the translation subgroup $\mathbb{k}^+\subset \Aff_1(\mathbb{k})$ is exactly the quotient by the diagonal subspace $\langle \sum_v x_v\rangle$, then the quotient by scalars $\mathbb{k}^{\times} \subset \Aff_1(\mathbb{k})$ is exactly the projectivization of the latter space. So, in the end, we get the projective complement $\M(\K_\lambda):=\Pro(W_{\K_\lambda})\setminus \cup_{e \in E_{\K_\lambda}} \Pro(H_e)$.
	\end{proof}
	
	Note that the morphism $\rho\colon \EuScript{M}_{0,\K_\lambda}\to \M(\K_\lambda)$ constructed above lifts to the morphism $\overline{\rho}\colon \beM_{0,\K_\lambda}\to \Pro(V)$. Indeed, the lifting is defined as follows: for an arbitrary $\K_\lambda$-stable curve we collapse all components not containing special point $\infty$, hence we obtain a map $f\colon V_{\K_\lambda}\cup \{\infty\}\to \Pro^1$. In contrast with smooth curves, while collapsing components some marked points labelled by adjacent vertices can "stick together" at the end of process, so the resulting map $f$ does not necessarily belong to the graphic configuration space $\Conf_{\Cyc_{\infty}\K_\lambda}(\Pro^1)$.
	
	Also, for a tube $G$, we associate the collapsing map $\pi_G\colon\beM_{0,\K_\lambda}\to \beM_{0,\K_\lambda|_G}$. This map is induced by ignoring all marked points out of $G$ and collapsing components as necessary to preserve stability.
	
	Let $G$ be a tube of the connected complete multipartite graph $\K_\lambda$. For $\K_\lambda/G$-stable curve $\Sigma$ and $\K_\lambda|_G$-stable curve $\Sigma'$, we produce the $\K_\lambda$-stable curve $\Sigma''$ by joining the special point of the curve $\Sigma'$ to the point labeled by $\{G\}$ of the curve $\Sigma$. By Lemma~\ref{lemma:yungcone}, this procedure is well-defined since both points are separated from other marked points. Moreover, this procedure induces the gluing map
	\[
	\circ^{\K_\lambda}_G\colon \beM_{0,\K_\lambda/G}\times\beM_{0,\K_\lambda|_G}\to \beM_{0,\K_\lambda}.
	\] 
	Note that the moduli space $\beM_{0,\K_\lambda}$ admits a locally open stratification by dual graphs of stable curves
	\[
	\beM_{0,\K_\lambda}=\coprod_{T\in \Tree_{\mathrm{st}}(\K_\lambda)} \EuScript{M}_{0,\K_\lambda}((T)).
	\] Moreover, the substitution maps define isomorphisms of the form
	\begin{equation}\label{eq:3}
		\circ^{\K_\lambda}_G\colon \EuScript{M}_{0,\K_\lambda/G}((T))\times\EuScript{M}_{0,\K_\lambda|_G}((T'))\overset{\cong}{\rightarrow}\EuScript{M}_{0,\K_\lambda}((T\circ^{\K_\lambda}_G T')).
	\end{equation}
	Hence elements of strata are direct products of moduli spaces of graph-stable smooth curves
	\begin{equation}\label{eq:4}
		\EuScript{M}_{0,\K_\lambda}((T))\cong \prod_{v\in \Ver(T)}\EuScript{M}_{0,\In(v)}. 
	\end{equation}
	Let us state the main result of this subsection.
	\begin{theorem}\label{thm::modular=wonderful}
		For a connected complete multipartite graph $\K_{\lambda}$, there is an isomorphism of algebraic varieties
		between modular compactification $\beM_{0,\K_\lambda}$ and graphic compactification $\bM(\K_\lambda)$.
		Moreover, these isomorphisms preserve contractad structures.
	\end{theorem}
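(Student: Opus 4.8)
The plan is to construct an explicit morphism $\Phi\colon\beM_{0,\K_\lambda}\to\bM(\K_\lambda)$ that restricts to the isomorphism of open parts from Proposition~\ref{prop:openpart}, and then to show that $\Phi$ is a stratum-wise bijection, hence an isomorphism. For $\Phi$ I would take the lift $\overline{\rho}$ described just before the statement: a $\K_\lambda$-stable curve $\Sigma$ is sent, after collapsing all components not containing $x_\infty$, to the point of $\Pro(V)=\prod_G\Pro(W_{\K_\lambda}/H_G)$ determined by the resulting map $f\colon V_{\K_\lambda}\cup\{\infty\}\to\Pro^1$. First I would check that this assignment is a genuine morphism: each coordinate $\rho_G$ factors through the collapsing morphism $\beM_{0,\K_\lambda}\to\beM_{0,\K_\lambda/G}$ followed by the analogue of $\overline{\rho}$ for the smaller graph $\K_\lambda/G$, so regularity follows by induction on $|V_{\K_\lambda}|$ once one knows that the relative contraction of the universal curve over $\beM_{0,\K_\lambda}$ along the sub-curve disjoint from the section $\sigma_\infty$ is a regular operation, producing a $\Pro^1$-bundle with sections and therefore a morphism to $\Pro(V)$. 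The hypothesis that $\K_\lambda$ is complete multipartite enters precisely here, through Lemma~\ref{lemma:yungcone}: the apex $\{G\}$ of $\K_\lambda/G$ is adjacent to all vertices, which is exactly what makes the iterated contractions --- and hence the curve $\overline{\rho}(\Sigma)$ --- well defined, cf.\ Definition~\ref{def::multipartite::modular}. Since $\eM_{0,\K_\lambda}$ is irreducible and dense in $\beM_{0,\K_\lambda}$ and $\overline{\rho}$ restricts on it to $\rho$, whose image lies in $\M(\K_\lambda)\subseteq\bM(\K_\lambda)$, continuity forces $\overline{\rho}(\beM_{0,\K_\lambda})\subseteq\overline{\M(\K_\lambda)}=\bM(\K_\lambda)$; so $\Phi:=\overline{\rho}$ is a morphism onto $\bM(\K_\lambda)$ restricting to the isomorphism of Proposition~\ref{prop:openpart}.

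Next I would compare the two natural stratifications, both indexed by the \emph{same} set $\Tree_{\mathrm{st}}(\K_\lambda)$ of stable $\K_\lambda$-admissible trees: the locally closed stratification~\eqref{eq::stratification} of $\bM(\K_\lambda)$ with strata $\M((T))\cong\prod_{v\in\Ver(T)}\M(\In(v))$, and the stratification $\beM_{0,\K_\lambda}=\coprod_T\eM_{0,\K_\lambda}((T))$ with strata $\eM_{0,\K_\lambda}((T))\cong\prod_{v\in\Ver(T)}\eM_{0,\In(v)}$ coming from \eqref{eq:3}--\eqref{eq:4}. The key claim is that $\Phi$ carries the stratum labelled by $T$ onto the stratum labelled by $T$; equivalently, that $\Phi$ sends the boundary divisor parametrising $\K_\lambda$-stable curves with a node separating a given tube $G$ onto the canonical divisor $D_G$, and respects the combinatorial description of nonempty divisor intersections by nested sets of tubes recalled in \S\ref{sec::Wonderful}. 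Granting this, on the stratum of $T$ the map $\Phi$ restricts to $\prod_v\bigl(\eM_{0,\In(v)}\xrightarrow{\ \sim\ }\M(\In(v))\bigr)$, which is an isomorphism by Proposition~\ref{prop:openpart} applied to each $\In(v)$; note that $\In(v)$ is again a connected complete multipartite graph, since it is an iterated contraction of an induced subgraph of $\K_\lambda$ and Lemma~\ref{lem::multipartite_contraction} is stable under both operations. Thus $\Phi$ is a bijection on each stratum, hence bijective overall; being a bijective birational morphism onto the smooth --- in particular normal --- variety $\bM(\K_\lambda)$, it is an isomorphism by Zariski's main theorem. The analogous argument over $\mathbb{R}$ (or base change of the isomorphism over $\mathbb{Q}$) handles the case $\mathbb{k}=\mathbb{R}$.

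Finally I would record the contractad compatibility. For a tube $G$ of $\K_\lambda$ the gluing map $\circ^{\K_\lambda}_G\colon\beM_{0,\K_\lambda/G}\times\beM_{0,\K_\lambda|_G}\to\beM_{0,\K_\lambda}$ and the Rains inclusion $\iota\colon\bM(\K_\lambda/G)\times\bM(\K_\lambda|_G)\to\bM(\K_\lambda)$ of~\eqref{eq::rainsmaps} restrict on the dense open product $\M(\K_\lambda/G)\times\M(\K_\lambda|_G)$ to one and the same locally closed embedding into $\bM(\K_\lambda)$ --- here $\K_\lambda/G$ and $\K_\lambda|_G$ are complete multipartite by Lemma~\ref{lem::multipartite_contraction}, so Proposition~\ref{prop:openpart} identifies their open parts. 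Since the source varieties are separated and these open products are dense, the identity $\Phi\circ\circ^{\K_\lambda}_G=\iota\circ(\Phi\times\Phi)$ holds everywhere, so the family $\{\Phi\}$ is an isomorphism of contractads; via~\eqref{eq::stabilization::map} it moreover identifies the stabilisation map with the iterated blow-down $\bM(\K_n)\to\bM(\K_\lambda)$.

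The hard part will be the middle step: matching the two stratifications, that is, setting up the dictionary between the dual trees of $\K_\lambda$-stable curves and the nested sets of tubes of $\K_\lambda$, and in particular showing that the modular boundary (degenerations along a tube $G$) is carried exactly onto the wonderful boundary divisor $D_G$. Once this dictionary is in place, the regularity of $\Phi$ and all the compatibility statements are essentially formal.
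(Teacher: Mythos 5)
Your proposal follows essentially the same route as the paper's proof: build $\Phi$ coordinate-wise from the collapsing maps and the lifts $\overline{\rho}$, check that it lands in $\bM(\K_\lambda)$ and restricts to the isomorphism of Proposition~\ref{prop:openpart} on the open parts, verify compatibility with the gluing and Rains maps~\eqref{eq::rainsmaps}, and conclude from birationality plus stratum-wise bijectivity. Two remarks. First, there is a slip in your construction of the coordinate $\rho_G$: it factors through the map $\pi_G\colon\beM_{0,\K_\lambda}\to\beM_{0,\K_\lambda|_G}$ that forgets the marked points \emph{outside} the tube $G$, not through $\beM_{0,\K_\lambda/G}$. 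Indeed $\Pro(W_{\K_\lambda}/H_G)$ has dimension $|G|-2$ and records the relative configuration of the points inside $G$, so the contracted graph is the wrong intermediate object (a dimension count already rules it out); with $\K_\lambda|_G$ in its place your inductive regularity argument goes through. Second, the step you defer as ``the hard part'' --- matching the stratification~\eqref{eq::stratification} with the dual-tree stratification of $\beM_{0,\K_\lambda}$ --- does not require an independent dictionary between dual trees and nested sets: once $\Phi$ is known to intertwine $\circ^{\K_\lambda}_G$ with $\iota$ (your density argument) and to be an isomorphism on the open strata, the isomorphism on the stratum labelled by a tree $T$ follows by applying the open-stratum case to each factor $\eM_{0,\In(v)}\to\M(\In(v))$ and transporting along the already-matched gluing maps; this is exactly how the paper closes the argument. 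With the first point corrected, your proof is complete and coincides in substance with the paper's.
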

	\begin{proof}
		First, we construct the morphism $\phi\colon\beM_{0,\K_\lambda}\to \prod_{G\in \G(\K_\lambda)}\Pro(V/G)$ as follows. For each tube $G\in \G(\K_\lambda)$, we take the composition $\beM_{0,\K_\lambda}\overset{\pi_G}{\rightarrow}\beM_{0,\K_\lambda|_G}\overset{\overline{\rho}_G}{\rightarrow}\Pro(V/G)$. By local conditions, we see that $\phi$ maps $\beM_{0,\K_\lambda}$ to the graphic compactification $\bM(\K_\lambda)$. Hence we obtain a well-defined morphism of smooth proper varieties $\phi\colon\beM_{0,\K_\lambda}\to \bM(\K_\lambda)$. By comparing gluing maps and Rain's maps, we have a commutative diagram:
		\[\begin{tikzcd}
			\beM_{0,\K_\lambda/G}\times\beM_{0,\K_\lambda|_G} && \beM_{0,\K_\lambda} \\
			\\
			\bM(\K_\lambda/G)\times\bM(\K_\lambda|_G) && \bM(\K_\lambda)
			\arrow["{\circ^{\K_\lambda}_G}", from=1-1, to=1-3]
			\arrow["\phi",from=1-3, to=3-3]
			\arrow["\phi\times\phi"',from=1-1, to=3-1]
			\arrow["{\circ^{\K_\lambda}_G}",from=3-1, to=3-3]
		\end{tikzcd}\] Hence this morphism is compatible with contractad structures on both sides. By Proposition~\ref{prop:openpart}, the restriction to smooth curves defines the isomorphism $\phi|_{\EuScript{M}_{0,\K_\lambda}}\colon \EuScript{M}_{0,\K_\lambda}\to \M(\K_\lambda)$. Hence the morphism $\phi$ is birational. Combining this observation with the compatibility of contractad and gluing maps, we conclude that $\phi$ defines an isomorphism on each element of strata $\phi|_T\colon \EuScript{M}_{0,\K_\lambda}((T))\to \M((T))$. So, the morphism $\phi$ is birational and bijective, hence it is an isomorphism.
	\end{proof}
	\noindent As an immediate corollary, we have
	\begin{sled}
		\begin{itemize}
			\item[(i)] The variety $\beM_{0,\K_\lambda}$ is projective. \item[(ii)] The complement $\beM_{0,\K_\lambda}\setminus \eM_{0,\K_\lambda}$ is a divisor with normal crossings with irreducible components $\beM((T))$ for $T\in \Tree_2(\K_\lambda)$.
			\item[(iii)] The stabilising morphism $\pi\colon \beM_{0,n+1}\to\beM_{0,\K_\lambda}$ is a sequence of blow-downs.
		\end{itemize}
	\end{sled}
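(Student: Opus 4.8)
All three statements should follow by transporting known properties of the wonderful compactification $\bM(\K_\lambda)$ along the isomorphism $\phi\colon\beM_{0,\K_\lambda}\xrightarrow{\ \sim\ }\bM(\K_\lambda)$ of Theorem~\ref{thm::modular=wonderful}. For (i), the plan is simply to recall that $\bM(\K_\lambda)$ is by construction the closure of $\rho(\M(\K_\lambda))$ inside the finite product of projective spaces $\prod_{G}\Pro(W_{\K_\lambda}/H_G)$, which is projective via the Segre embedding, and that a closed subscheme of a projective variety is projective; hence so is $\beM_{0,\K_\lambda}$. For (ii), since $\phi$ restricts to the isomorphism $\eM_{0,\K_\lambda}\cong\M(\K_\lambda)$ of Proposition~\ref{prop:openpart}, it carries $\beM_{0,\K_\lambda}\setminus\eM_{0,\K_\lambda}$ onto $\bM(\K_\lambda)\setminus\M(\K_\lambda)$, which by the theory recalled in \S\ref{sec::Wonderful} is a normal crossings divisor with irreducible components $D_G=\iota(\bM(\K_\lambda/G)\times\bM(\K_\lambda|_G))$, $G$ a non-trivial tube with $G\subsetneq V_{\K_\lambda}$; moreover, by the combinatorial description of divisor intersections in \S\ref{sec::Wonderful}, $D_G$ is the image of $\bM((T_G))$ for the unique two-vertex $\K_\lambda$-admissible tree $T_G$ whose inner vertex carries leaf set $G$, and these trees are exactly the elements of $\Tree_2(\K_\lambda)$. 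Transporting back along $\phi$ yields (ii), transversality of the intersections following from nestedness.

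\textbf{Part (iii), setup.} The first step is to identify, via $\phi$ and $\bM(\K_n)=\beM_{0,n+1}$ (Example~\ref{ex::comp_wonderful}), the stabilisation map $\pi$ with the morphism $p\colon\bM(\K_n)\to\bM(\K_\lambda)$ obtained by restricting to the wonderful models the coordinate projection $\prod_{|B|\geq 2}\Pro(W_{\K_\lambda}/H_B)\twoheadrightarrow\prod_{T\ \mathrm{tube}}\Pro(W_{\K_\lambda}/H_T)$ that forgets the factors indexed by subsets which are not tubes of $\K_\lambda$; this $p$ is well defined and surjective because $\M(\K_n)\subset\M(\K_\lambda)$ is dense open and compatible with the two embeddings $\rho$, and it agrees with $\pi$ there, hence everywhere. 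Next one observes that $\bM(\K_n)$ and $\bM(\K_\lambda)$ are both De Concini--Procesi wonderful models of arrangements living in the same space $\Pro(W_{\K_\lambda})\cong\Pro^{\,n-2}$: the first of the braid arrangement with building set $\G(\K_n)=\{H_B:|B|\geq 2\}$, the second of the graphic arrangement $\B(\K_\lambda)$ with its maximal building set $\G(\K_\lambda)=\{H_T:T\ \mathrm{a\ tube}\}$; and that $\G(\K_\lambda)\subseteq\G(\K_n)$, since for a tube $T$ of $\K_\lambda$ the subspace $H_T=\bigcap_{e\in E(\K_\lambda|_T)}H_e$ equals $\{x_i=x_j:i,j\in T\}$ precisely because $\K_\lambda|_T$ is connected.

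\textbf{Part (iii), execution.} The plan is then to realise $\bM(\K_n)$ as an iterated blow-up of $\Pro^{\,n-2}$ along (strict transforms of) the $\Pro(H_B)$ with $|B|\geq 3$, in an order that passes through $\bM(\K_\lambda)$: namely, blow up all tube-subspaces first and all non-tube-subspaces afterwards, ordering each block by decreasing cardinality (equivalently by increasing dimension of the centre). Among subsets of $[n]$, tubes of $\K_\lambda$ are upward closed while independent sets are downward closed, so no tube is contained in a non-tube; hence whenever $H_A\subsetneq H_B$, i.e.\ $A\supsetneq B$, the pair $A,B$ is either two tubes, two non-tubes, or ($A$ a tube, $B$ a non-tube), the remaining configuration ``a tube properly contained in a non-tube'' being impossible --- and in each allowed case $A$ precedes $B$ in the chosen order. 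Thus the order is a linear extension of the inclusion order on $\G(\K_n)$ and, by the De Concini--Procesi construction of $Y_{\G(\K_n)}$, it produces $\bM(\K_n)$. After the tube-phase, $\Pro^{\,n-2}$ has been blown up along the tube-subspaces of size $\geq 3$ in a valid De Concini--Procesi order for the building set $\G(\K_\lambda)$ of $\B(\K_\lambda)$, so the intermediate variety is exactly $\bM(\K_\lambda)$; the remaining non-tube blow-ups then exhibit $\pi=p$ as a composition of blow-downs.

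\textbf{Main obstacle.} The hard part will be to justify the reordering of the blow-ups: one must pass from the literal ``increasing dimension'' recipe of De Concini and Procesi~\cite{de1995wonderful} to an arbitrary linear extension of the inclusion order on the building set, which requires the local normal form for iterated blow-ups of linear subspaces in order to see that the strict transforms of the deferred (non-tube) centres still meet the remaining (tube) centres transversally at the moment they are blown up. A secondary, more routine point is the careful identification of $p$ with the stabilisation map and the bookkeeping that the intermediate model is genuinely $\bM(\K_\lambda)$ and not some other wonderful model of $\B(\K_\lambda)$.
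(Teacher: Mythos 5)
Your proposal is correct and follows essentially the same route as the paper: parts (i) and (ii) are transported along the isomorphism $\phi\colon\beM_{0,\K_\lambda}\cong\bM(\K_\lambda)$ of Theorem~\ref{thm::modular=wonderful} from the general properties of De Concini--Procesi models, and part (iii) rests on identifying $\pi$ with the morphism of wonderful compactifications induced by the inclusion of building sets $\G(\K_\lambda)\subseteq\G(\K_n)$ on the same ambient space. The only divergence is in how (iii) is finished: the paper simply cites Rains for the fact that a morphism induced by an inclusion of building sets is automatically a sequence of blow-downs, whereas you unwind that citation into an explicit iterated-blow-up argument (tube centres first, non-tube centres afterwards, each block by decreasing cardinality). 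Your reduction is sound --- the observation that tubes of $\K_\lambda$ are upward closed under inclusion, so that your order is a linear extension of the inclusion order on $\G(\K_n)$ and its tube-phase initial segment realises $\G(\K_\lambda)$, is exactly what is needed --- and the "main obstacle" you flag (that any linear extension of the inclusion order on a building set produces the same wonderful model, not just the increasing-dimension order) is a standard fact in the wonderful-compactification literature rather than a genuine gap; it can be closed by citation just as the paper closes the whole of (iii). Your version buys a self-contained picture of which centres are contracted by $\pi$ (precisely the non-tube ones), at the cost of invoking the reordering lemma explicitly.
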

	\begin{proof}
		The first two statements follow from the properties of wonderful compactifications. Note that the stabilising map $\pi\colon \beM_{0,n+1}\cong \bM(\K_n)\rightarrow \bM(\K_\lambda)\cong\beM_{0,\K_\lambda}$ is the morphism of wonderful compactifications induced from the inclusion of building sets $\G(\K_\lambda)\hookrightarrow \G(\K_n)$ on the same vector space, that is automatically a sequence of blow-downs~\cite{rains2010homology}.
	\end{proof}
	
	\section{(Co)Homology of the Complex Wonderful contractad \texorpdfstring{$\bM_{\mathbb{C}}$}{MC}}
	\label{sec::H::wonderful::C}
	
	In this section, we demonstrate that the (co)homology of the wonderful contractad $\bM_{\mathbb{C}}$ forms a quadratic Koszul contractad. We detail its generators and relations and present computations of the Hilbert series. The proof of Koszulness extends the strategy discovered by E. Getzler in \cite{getzler1994two,getzler1995operads}. To determine the generators and relations, we employ the theory of Gr\"obner bases and generalize the computations suggested in \cite{dotsenko2013quillen}. One of the main corollaries of our work is the description of the Hilbert series for modular compactifications.

	\subsection{Cohomology rings for complex variety \texorpdfstring{$\bM_{\mathbb{C}}$}{MC}}
	\label{sec::H::MC::ring}
	In this subsection, we describe cohomology rings of $\Conf_{\Gr}(\mathbb{C})$, $\M_{\mathbb{C}}(\Gr)$ and its compactification $\bM_{\mathbb{C}}(\Gr)$.

	We recall the graphic configuration spaces $\Conf_{\Gr}(\mathbb{R}^2)$, see \S\ref{sec:disks}. We shall identify the real plane $\mathbb{R}^2$ with complex line $\mathbb{C}$. Note that the space $\Conf_{\Gr}(\mathbb{C})$ coincides with the affine complement to the graphic arrangement $\B(\Gr)$ 
	\[
	\Conf_{\Gr}(\mathbb{C})=\{(x_v)\in \mathbb{C}^{V_{\Gr}}| x_v\neq x_w\text{ for }(v,w)\in E_{\Gr}\}=\mathbb{C}^{V_{\Gr}}\setminus \B(\Gr).
	\]
	
	Recall that the cohomology ring of a complement to a complex hyperplane arrangement is isomorphic to the respective Orlik-Solomon algebra~\cite{yuzvinsky2001orlik}. In the case of graphic arrangements, we obtain the following presentation of $H^{\bullet}(\Conf_{\Gr}(\mathbb{C}))$. For an edge $e=(v,w)\in E_{\Gr}$, let $\omega_e$ be the logarithmic differential form on $\Conf_{\Gr}(\mathbb{C})$ given by the formula
	\[
	\omega_e=\frac{1}{2\pi i}d\log(x_v-x_w)=\frac{1}{2\pi i}\frac{d(x_v-x_w)}{x_v-x_w}
	\] Note that the cohomology class of $\omega_e$ is integral.
	
	\begin{prop}\cite{lyskov2023contractads}\label{prop::cohomologyConf}
		The cohomology ring $H^{\bullet}(\Conf_{\Gr}(\mathbb{C}))$ is the $\mathbb{Z}$-ring generated by logarithmic forms $\omega_e$, satisfying the relations
		\[
		\sum_{i=1}^n (-1)^{i-1}\omega_{e_1}\omega_{e_2}...\hat{\omega}_{e_i}...\omega_{e_n}=0, \quad \text{if}\quad \{e_1,e_2,\cdots,e_n\} \subset E_{\Gr}\quad \text{contains a cycle.}
		\]
	\end{prop}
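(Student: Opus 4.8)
The plan is to recognize the statement as a direct application of the Orlik--Solomon theorem to the graphic arrangement $\B(\Gr)$. Since $\Conf_{\Gr}(\mathbb{C}) = \mathbb{C}^{V_{\Gr}} \setminus \B(\Gr)$ is the complement of the complexified graphic hyperplane arrangement, its integral cohomology ring is, by Orlik--Solomon \cite{yuzvinsky2001orlik}, free abelian and isomorphic as a ring to the Orlik--Solomon algebra of the arrangement: the quotient of the exterior $\bZ$-algebra on generators $e_H$, one for each hyperplane $H \in \B(\Gr)$, by the ideal generated by the elements $\partial(e_{H_1}\cdots e_{H_k}) = \sum_{i=1}^k (-1)^{i-1} e_{H_1}\cdots \widehat{e_{H_i}}\cdots e_{H_k}$ ranging over all \emph{dependent} subfamilies $\{H_1,\dots,H_k\}$ (equivalently, over the circuits of the underlying matroid). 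Under this isomorphism the generator $e_{H_e}$ corresponds to the cohomology class of the logarithmic form $\omega_e$ attached to $H_e = \{x_v = x_w\}$; the normalization $\frac{1}{2\pi i} d\log(x_v - x_w)$ is exactly the one making $[\omega_e]$ integral, since $\omega_e$ has residue $1$ along $H_e$.

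First I would identify the matroid of $\B(\Gr)$ with the cycle (graphic) matroid of $\Gr$. The hyperplane $H_{(v,w)}$ is cut out by the linear form $x_v - x_w$, so its normal covector is $\mathbf{e}_v - \mathbf{e}_w$, i.e.\ a column of the signed incidence matrix of $\Gr$; a subfamily $\{H_{e_1},\dots,H_{e_k}\}$ is dependent precisely when these covectors satisfy a nontrivial linear relation, which is a nonzero element of the kernel of the incidence matrix supported on $\{e_1,\dots,e_k\}$. Since that kernel is the cycle space of $\Gr$, the family is dependent iff $\{e_1,\dots,e_k\}$ contains (the edge set of) a cycle, and the circuits of the arrangement matroid are exactly the cycles of $\Gr$. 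Consequently the Orlik--Solomon relations translate verbatim into $\sum_{i=1}^n (-1)^{i-1}\omega_{e_1}\cdots\widehat{\omega_{e_i}}\cdots\omega_{e_n} = 0$ for every edge subset $\{e_1,\dots,e_n\}$ containing a cycle. I would finish by noting that imposing this relation for every such subset generates the same ideal as imposing it only for the cycles themselves: a cycle is such a subset, and conversely a standard matroid identity expresses $\partial$ of a dependent product as a combination of the $\partial$ of its circuits times monomials. This matches the presentation in the statement.

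I do not expect a genuine obstacle: the substantive input, the Orlik--Solomon theorem and the torsion-freeness of $H^\bullet$, is quoted, and the rest is bookkeeping. The two points deserving explicit care are the identification of the arrangement matroid with the graphic matroid (the classical description of $\ker$ of the incidence matrix as the cycle space) and the reduction from the ``contains a cycle'' relations to the circuit relations; both are routine.
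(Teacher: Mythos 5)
Your proposal is correct and follows exactly the route the paper takes: the paper gives no separate proof, merely citing the Orlik--Solomon description of the cohomology of a hyperplane-arrangement complement and specializing it to the graphic arrangement, whose matroid is the cycle matroid of $\Gr$. Your additional care about identifying dependent sets with edge sets containing a cycle, and about the integrality normalization of $\omega_e$, just fills in bookkeeping the paper leaves implicit.
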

	There is a natural action of the affine group $\Aff_1(\mathbb{C})$ on the graphic configuration space $\Conf_{\Gr}(\mathbb{C})$. Specifically, for a configuration $f\colon V_{\Gr}\to \mathbb{C}$ and a transformation $\phi\in \Aff_1(\mathbb{C})$, we have $\varphi.f=\phi\circ f$.
	\begin{lemma}\label{lemma::confandM}
		The action of $\Aff_1(\mathbb{C})$ on $\Conf_{\Gr}(\mathbb{C})$ is free and there is the isomorphism of varieties
		\[
		\Conf_{\Gr}(\mathbb{C})/\Aff_1(\mathbb{C})\cong\M_{\mathbb{C}}(\Gr).
		\]
	\end{lemma}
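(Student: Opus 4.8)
The plan is to reduce the statement to the two-stage quotient already performed in the proof of Proposition~\ref{prop:openpart}, now carried out for an arbitrary connected graph $\Gr$ with $|V_{\Gr}|\geq 2$ instead of only for $\K_{\lambda}$. First I would establish freeness: since $\Gr$ is connected with at least two vertices it has an edge $(v,w)\in E_{\Gr}$, and every $f\in\Conf_{\Gr}(\mathbb{C})$ satisfies $f(v)\neq f(w)$ by definition. If $\varphi\in\Aff_1(\mathbb{C})$ fixes $f$, i.e.\ $\varphi\circ f=f$, then $\varphi$ fixes the two distinct points $f(v),f(w)\in\mathbb{C}$; writing $\varphi(z)=az+b$ and subtracting the two fixed-point equations forces $a=1$ and hence $b=0$, so $\varphi=\mathrm{id}$. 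Thus every stabilizer is trivial.

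For the isomorphism I would use the decomposition $\Aff_1(\mathbb{C})=\mathbb{C}^{\times}\ltimes\mathbb{C}^{+}$ and quotient in two steps. The translation subgroup $\mathbb{C}^{+}$ acts by adding multiples of the all-ones vector, i.e.\ along the line $\Delta:=\langle\sum_{v\in V_{\Gr}}x_v\rangle$ that is used to form $W_{\Gr}=\mathbb{C}^{V_{\Gr}}/\Delta$; since each hyperplane $H_e=\{x_v=x_w\}$ contains $\Delta$, the arrangement $\B(\Gr)$ is $\mathbb{C}^{+}$-stable and descends to the linear arrangement $\{\overline{H}_e\}$ in $W_{\Gr}$, so projecting along $\Delta$ identifies $\Conf_{\Gr}(\mathbb{C})/\mathbb{C}^{+}$ with $W_{\Gr}\setminus\bigcup_{e}\overline{H}_e$. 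The residual scalar action of $\mathbb{C}^{\times}$ is the homothety action on $W_{\Gr}$; as $0\in\overline{H}_e$ for every $e$, the origin is excluded from $W_{\Gr}\setminus\bigcup_e\overline{H}_e$, and since each $\overline{H}_e$ is a linear cone the quotient by $\mathbb{C}^{\times}$ is precisely $\Pro(W_{\Gr})\setminus\bigcup_e\Pro(\overline{H}_e)=\M_{\mathbb{C}}(\Gr)$. Composing, $\Conf_{\Gr}(\mathbb{C})/\Aff_1(\mathbb{C})\cong\M_{\mathbb{C}}(\Gr)$, realized as the restriction of the canonical morphism $\mathbb{C}^{V_{\Gr}}\setminus\Delta\to\Pro(W_{\Gr})$.

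The only content beyond bookkeeping is that these set-theoretic quotients are geometric quotients inducing isomorphisms of varieties. For the translation step this holds because $\Conf_{\Gr}(\mathbb{C})\to\Conf_{\Gr}(\mathbb{C})/\mathbb{C}^{+}$ is a trivial $\mathbb{C}^{+}$-torsor: a section comes from fixing one coordinate, equivalently from any linear splitting of $\mathbb{C}^{V_{\Gr}}\to W_{\Gr}$. For the scalar step one restricts the standard geometric quotient $W_{\Gr}\setminus\{0\}\to\Pro(W_{\Gr})$ to the $\mathbb{C}^{\times}$-invariant open set $W_{\Gr}\setminus\bigcup_e\overline{H}_e$, obtaining a geometric quotient onto its open image $\M_{\mathbb{C}}(\Gr)$; combined with freeness of the whole action, the induced bijective morphism between smooth varieties is an isomorphism. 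I expect the verification that the translation quotient is a trivial torsor to be the only spot requiring even a word of justification — everything else is literally the computation in the proof of Proposition~\ref{prop:openpart}.
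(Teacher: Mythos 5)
Your proposal is correct and follows essentially the same route as the paper: the paper's proof of this lemma simply refers back to Proposition~\ref{prop:openpart}, whose argument is exactly your two-stage quotient (translations along $\langle\sum_v x_v\rangle$ followed by projectivization under $\mathbb{C}^{\times}$). Your explicit freeness check via an edge $(v,w)$ and the torsor/geometric-quotient bookkeeping are welcome elaborations of details the paper leaves implicit, but they do not change the argument.
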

	\begin{proof}
		The proof repeats the ones of Proposition~\ref{prop:openpart}.
	\end{proof} Similarly to the case of an affine complement to hyperplane arrangement, there is the projective analogue of the Orlik-Solomon algebra~\cite{yuzvinsky2001orlik} in the case of a projective complement. The action of scalar subgroup $\mathbb{C}^{\times}$ produces the differential $\partial$ on the cohomology ring $H^{\bullet}(\Conf_{\Gr}(\mathbb{C}))$ given on generators by the rule $\partial\omega_e=1$.
	\begin{prop}\label{prop::cohomologyM}
		The cohomology ring $H^{\bullet}(\M_{\mathbb{C}}(\Gr))$ may be identified with the kernel of the differential $\partial$ on $H^{\bullet}(\Conf_{\Gr}(\mathbb{C}))$ whose action on the generators is $\partial\omega_e=1$.
	\end{prop}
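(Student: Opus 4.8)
The plan is to reduce the statement to the standard description of the cohomology of a complement of a central (hence projective) complex hyperplane arrangement via its Orlik--Solomon algebra, and then to match this description with the graphic presentation of Proposition~\ref{prop::cohomologyConf}.

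First I would split the free $\Aff_1(\mathbb{C})$-action of Lemma~\ref{lemma::confandM}, writing $\Aff_1(\mathbb{C}) = \mathbb{C}^{+}\rtimes\mathbb{C}^{\times}$. Since every hyperplane $H_e = \{x_v = x_w\}$ of $\B(\Gr)$ contains the diagonal line, the arrangement is pulled back from $W_{\Gr}$, so the translation quotient gives $\Conf_{\Gr}(\mathbb{C}) \cong \M^{\mathrm{ess}}\times\mathbb{C}$, where $\M^{\mathrm{ess}}$ is the essentialized complement in $W_{\Gr}$; in particular the projection induces an isomorphism $H^{\bullet}(\Conf_{\Gr}(\mathbb{C}))\cong H^{\bullet}(\M^{\mathrm{ess}})$. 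The residual scalar action of $\mathbb{C}^{\times}$ on $\M^{\mathrm{ess}}$ is free with quotient exactly $\M_{\mathbb{C}}(\Gr)$. Here I would use that $\Gr$ is connected with at least two vertices, hence has an edge $e_0 = (v_0,w_0)$, and the linear form $x_{v_0}-x_{w_0}$ is nowhere vanishing on $\M^{\mathrm{ess}}$ and $\mathbb{C}^{\times}$-equivariant; it trivializes the bundle, giving $\M^{\mathrm{ess}}\cong\M_{\mathbb{C}}(\Gr)\times\mathbb{C}^{\times}$ and a quotient map $p\colon\M^{\mathrm{ess}}\to\M_{\mathbb{C}}(\Gr)$.

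By the Künneth formula, $H^{\bullet}(\Conf_{\Gr}(\mathbb{C}))\cong H^{\bullet}(\M_{\mathbb{C}}(\Gr))\otimes\Lambda[\omega_{e_0}]$, where $\omega_{e_0} = \tfrac{1}{2\pi i}\,d\log(x_{v_0}-x_{w_0})$ is the pullback of the standard generator of $H^{1}(\mathbb{C}^{\times})$ and the first factor is embedded as $p^{*}H^{\bullet}(\M_{\mathbb{C}}(\Gr))\otimes 1$. The operator $\partial$ induced by the $\mathbb{C}^{\times}$-action is, by construction, contraction against the $\Lambda[\omega_{e_0}]$-factor, so $\ker\partial = p^{*}H^{\bullet}(\M_{\mathbb{C}}(\Gr))$; to conclude I would only need to check that this geometric $\partial$ coincides with the Orlik--Solomon derivation determined by $\partial\omega_e = 1$ on the presentation of Proposition~\ref{prop::cohomologyConf}, which holds because under the $\mathbb{C}^{\times}$-action every $d\log(x_v - x_w)$ pulls back to $d\log z$ on the $\mathbb{C}^{\times}$-factor, forcing $\partial\omega_e = 1$ for all edges $e$. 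Since $p^{*}$ is a ring homomorphism and a bijection onto $\ker\partial$, this identifies the cohomology \emph{ring} of $\M_{\mathbb{C}}(\Gr)$ with $\ker\partial$, as claimed.

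There is no essential obstacle here; the two points requiring attention are (a) the triviality of the $\mathbb{C}^{\times}$-bundle $\M^{\mathrm{ess}}\to\M_{\mathbb{C}}(\Gr)$, which is exactly where connectedness of $\Gr$ (existence of an edge) enters, and (b) the identification of the geometric $\partial$ with the algebraic derivation $\partial\omega_e = 1$. Both can in fact be bypassed by directly quoting the general fact that, for an essential complex hyperplane arrangement with affine complement $M$, the cohomology of the projectivized complement $\Pro M$ is the kernel of the Orlik--Solomon differential on $H^{\bullet}(M)$~\cite{yuzvinsky2001orlik}, specialized to the graphic arrangement $\B(\Gr)$ together with Proposition~\ref{prop::cohomologyConf}.
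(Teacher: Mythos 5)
Your proof is correct and follows the same route as the paper: the paper's entire argument is to combine Lemma~\ref{lemma::confandM} (identifying $\M_{\mathbb{C}}(\Gr)$ with the projectivized complement of the essentialized graphic arrangement) with the general description of the cohomology of a projective arrangement complement as $\ker\partial$ in the Orlik--Solomon algebra, cited from~\cite{yuzvinsky2001orlik}. Your main body simply unpacks the standard proof of that general fact (trivializing the $\mathbb{C}^{\times}$-bundle by a linear form $x_{v_0}-x_{w_0}$ and applying K\"unneth), and your closing remark that everything can be bypassed by quoting the general result is exactly what the paper does.
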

	\begin{proof} Follows from the description of cohomology rings of projective complement to hyperplane arrangements~\cite{yuzvinsky2001orlik} and Lemma~\ref{lemma::confandM}.
	\end{proof}
	Now, consider the case of graphic compactification $\bM(\Gr)$. Thanks to Remark~\ref{rem::wondcomp}, we can apply the description of cohomology rings of wonderful compactifications~\cite{de1995wonderful} to the case of graphic compactifications.
	\begin{prop}[Divisor presentation]\label{prop:divisorpres}
		The cohomology ring $H^{\bullet}(\bM_{\mathbb{C}}(\Gr))$ is the $\mathbb{Z}$-ring generated by Poincare dual classes $[D_G]$ of canonical divisors, labeled by non-trivial proper tubes $G\subset V_{\Gr}$,  satisfying the relations
		\begin{gather}
			[D_G][D_H]=0, \text{ for } G\cap H\neq \varnothing\text{ and }G,H\text{ are incomparable}
			\\
			\sum_{G: e\subset G} [D_G]=\sum_{G': e'\subset G'} [D_{G'}], \text{ for each pair of edges }e,e'\in E_{\Gr}.\label{eq::linearelforbM}
		\end{gather}
	\end{prop}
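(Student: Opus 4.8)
The plan is to obtain the presentation by specializing the De Concini--Procesi description of the cohomology ring of a wonderful model to the graphic building set $\G(\Gr)=\{H_G\}$ (Remark~\ref{rem::wondcomp}) and then rewriting its relations in purely combinatorial terms. First I would recall from \cite{de1995wonderful} that, for a subspace arrangement in $\Pro(W)$ with building set $\mathcal{G}$, the integral cohomology ring of the wonderful model $Y_{\mathcal{G}}$ is generated by the Poincar\'e duals of the exceptional divisors $D_A$, $A\in\mathcal{G}$, subject to (i) monomial relations $\prod_{A\in\mathcal{S}}[D_A]=0$ for every $\mathcal{G}$-non-nested subset $\mathcal{S}$, and (ii) a family of linear-type relations, one for each element of the intersection lattice. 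Since the intersection lattice of $\B(\Gr)$ is the graph-partition poset $\parti(\Gr)$ and $H_G$ corresponds to the partition whose only non-trivial block is $G$, everything reduces to bookkeeping on tubes; in particular $H^{\bullet}(\bM_{\mathbb{C}}(\Gr))$ is automatically concentrated in even degrees and torsion-free.

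For the monomial relations I would isolate the combinatorial core as a lemma: a family of tubes $\{G_1,\dots,G_k\}$ is $\G(\Gr)$-nested if and only if it is pairwise nested, i.e. any two of the $G_i$ are comparable under inclusion or disjoint. (A set of one-block partitions is nested precisely when every sub-antichain has join outside $\mathcal{G}$; an antichain of pairwise disjoint tubes has join with several non-trivial blocks, hence not of the form $H_G$, while an incomparable overlapping pair $G,H$ has join $H_{G\cup H}$, which lies in $\mathcal{G}$ since $G\cup H$ is a tube.) Consequently the non-nested part of the ideal is generated in degree two, namely by the products $[D_G][D_H]$ with $G,H$ incomparable and $G\cap H\neq\varnothing$ --- the first displayed relation.

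For the linear relations I would use the blow-down $\pi\colon\bM_{\mathbb{C}}(\Gr)\to\Pro(W_{\Gr})$. For an edge $e=(v,w)$ the strict transform of the hyperplane $\Pro(H_e)$ is the divisor $D_{\{v,w\}}$, and an exceptional divisor $D_G$ lies over $\Pro(H_e)$ exactly when $H_G\subseteq H_e$, i.e. when $\{v,w\}\subseteq G$; hence $\pi^{*}[\Pro(H_e)]=\sum_{G\colon e\subset G}[D_G]$. Since $[\Pro(H_e)]$ is the hyperplane class of $\Pro(W_{\Gr})$ for every edge $e$, all these sums coincide, which is exactly relation~\eqref{eq::linearelforbM}; this also identifies the common value with $\pi^{*}h$ and shows that the De Concini--Procesi linear relations attached to the hyperplanes $H_e$ are accounted for.

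The step I expect to be the main obstacle is \emph{completeness}: one must check that the two stated families already generate the whole De Concini--Procesi ideal, so that no further relations (vanishing of high powers of $\pi^{*}h$, or linear relations coming from higher flats) are needed. I would settle this by a Hilbert-series argument: writing $R$ for the quotient of $\mathbb{Z}[x_G]$ by the ideal generated by the two families (with $x_G\mapsto[D_G]$), exhibit an explicit spanning set of $R$ given by the admissible monomials of the wonderful model (nested sets with bounded multiplicities), and compare the resulting Poincar\'e polynomial with the known one for $\bM_{\mathbb{C}}(\Gr)$ --- which can be read off from the stratification~\eqref{eq::stratification} and the generating-series machinery of \S\ref{sec::hilbertseries}. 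Since the tautological map $R\to H^{\bullet}(\bM_{\mathbb{C}}(\Gr))$ is visibly surjective, equality of Hilbert series forces it to be an isomorphism. (Alternatively one may reduce each remaining De Concini--Procesi generator directly to a combination of our relations, using that nestedness is pairwise and that in a graphic arrangement every flat is an intersection of the $H_e$; I would present whichever argument is shorter.)
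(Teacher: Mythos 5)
Your overall route coincides with the paper's, which disposes of this proposition in one line by citing the general description of cohomology rings of wonderful models for the graphic building set $\G(\Gr)$ from \cite{de1995wonderful}; what you have done is to unpack that citation. The parts you work out explicitly are correct: the intersection lattice of $\B(\Gr)$ is $\parti(\Gr)$, a family of tubes is nested precisely when it is pairwise nested (an antichain is forced to consist of pairwise disjoint tubes, whose join has at least two non-trivial blocks and hence is not of the form $H_G$, while an incomparable overlapping pair joins to $H_{G\cup H}\in\G(\Gr)$), so the non-nested monomial relations are generated by the stated quadrics; and the identification $\rho_{V_\Gr}^{*}[\Pro(H_e)]=\sum_{G\supseteq e}[D_G]$ (all multiplicities $1$, strict transform of $\Pro(H_e)$ equal to $D_e$) correctly produces relation~\eqref{eq::linearelforbM} from the linear equivalence of hyperplanes.

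The genuine gap is the completeness step, and your two proposed fixes are not on equal footing. The reduction of the full De Concini--Procesi ideal --- whose generators include higher-degree elements of the form $\prod_{A\in\mathcal{H}}c_A\cdot\bigl(\sum_{C\supseteq B}c_C\bigr)^{d}$ --- to the quadratic-monomial-plus-linear presentation is not mere bookkeeping: it is precisely the content of the Feichtner--Yuzvinsky theorem on the algebras $D(\mathcal{L},\mathcal{G})$, which, after eliminating the generator attached to the top element $\hat 1$ via the linear relations, yields verbatim the presentation of the proposition. Citing that result is the clean way to close the argument. By contrast, the Hilbert-series comparison as you describe it is circular: the Poincar\'e polynomials of $\bM_{\mathbb{C}}(\Gr)$ produced by the generating-series machinery (Theorem~\ref{thm::rec_for_bM}) are derived downstream of this very proposition, through Lemma~\ref{lemma: hypergen} and Theorem~\ref{thm::homologywond_hyper}. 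If you insist on a dimension count, you must compute the Betti numbers of $\bM_{\mathbb{C}}(\Gr)$ independently, e.g.\ from the stratification~\eqref{eq::stratification} together with purity of the strata $\M((T))$, not from \S\ref{sec::hilbertseries}.
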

	\begin{proof}
		Follows from the description of cohomology rings of wonderful compactifications in the case of the graphic building set $\G(\Gr)$~\cite{de1995wonderful}.
	\end{proof}
	Now, we present an alternative description of cohomology rings.  For each tube $G \subset V_{\Gr}$, we have the projection:
	\begin{equation*}
		\rho_G: \bM_{\mathbb{C}}(\Gr) \rightarrow \Pro(W_{\Gr}/H_G)
	\end{equation*}
	Let us denote by $\xi_G$ the pullback of tautological bundle $\mathcal{O}_{\Pro}(1)$ via this projection: $\xi_G=\rho_G^*\mathcal{O}_{\Pro}(1)$. We define the $h$-class as the first Chern class of this line bundle: $h_G=c_1(\xi_G)$. In particular, for an edge $e$, we have $h_e=0$. Let us express a divisor presentation of $h$-classes.
	
	\begin{lemma}\label{lemma:divisorhpres}
		For a tube $G$, the class $h_G$ is given by the formula
		\[
		h_G = \sum_{K: e\subset K, G\not\subset K} [D_K],
		\] where the sum ranges over all tubes $K$ that contains fixed edge $e$ and does not contain $G$. This formula is valid for any edge $e\subset G$.
	\end{lemma}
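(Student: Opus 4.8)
The plan is to identify $h_G = c_1(\rho_G^*\mathcal{O}_{\Pro}(1))$ with an explicit effective divisor on $\bM_{\mathbb{C}}(\Gr)$ by pulling back a hyperplane in $\Pro(W_{\Gr}/H_G)$ along $\rho_G$ and analyzing which boundary divisors $D_K$ show up in the pullback. Concretely, fix an edge $e=(v,w)\subset G$ and consider the affine linear functional $x_v-x_w$ on $W_{\Gr}$; since $e\subset G$, this functional descends to a nonzero functional on $W_{\Gr}/H_G$ (because $H_G=\cap_{f\in E_{\Gr|_G}}H_f\subset H_e$), so it defines a hyperplane $L_e\subset \Pro(W_{\Gr}/H_G)$. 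Then $c_1(\mathcal{O}_{\Pro}(1)) = [L_e]$, hence $h_G = [\rho_G^{-1}(L_e)]$ as a Cartier divisor class. The point is to compute the multiplicities of the boundary divisors $D_K$ in $\rho_G^{-1}(L_e)$.

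The key step is a local computation at the generic point of each boundary divisor $D_K$. Recall from the description on page~\pageref{page::intersectionsofdivisors} that $D_K\cong \bM(\Gr/K)\times\bM(\Gr|_K)$, and that $\rho_K$ restricted to $D_K$ factors through the two projections via the maps $\Pro(i^K_G)$ and $\Pro(j^K_G)$ from~\eqref{eq::rainsmaps}. I would argue: the rational function on $\bM_{\mathbb{C}}(\Gr)$ obtained as the ratio of the section cutting out $\rho_G^{-1}(L_e)$ against a fixed section of $\xi_G$ has a zero/pole along $D_K$ of order determined by whether the blow-up center $H_K$ is contained in the hyperplane $\{x_v = x_w\}$ pulled back to $W_{\Gr}/H_G$, equivalently by whether $e\subset K$ and whether $G\subset K$. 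When $e\not\subset K$, the functional $x_v-x_w$ is already a nonvanishing unit near the generic point of the corresponding stratum, so $D_K$ does not occur; when $e\subset K$ but $G\subset K$, the strict transform behavior cancels (the projection $\rho_G$ is constant along the $\bM(\Gr|_K)$-factor in the relevant direction because $H_G\subset H_K$), so again $D_K$ does not appear; and when $e\subset K$ and $G\not\subset K$, the center $H_K$ lies inside the hyperplane and the exceptional divisor $D_K$ is picked up with multiplicity exactly $1$. This is the standard behavior of pullbacks of hyperplanes under iterated blow-ups in a building set (De Concini–Procesi~\cite{de1995wonderful}), so I would phrase it as a direct application of their local normal form rather than reproving it.

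Collecting these multiplicities gives $h_G = \sum_{K\colon e\subset K,\ G\not\subset K}[D_K]$, which is the asserted formula. Finally, independence of the chosen edge $e\subset G$ follows immediately from the linear relation~\eqref{eq::linearelforbM}: for two edges $e,e'\subset G$ one has $\sum_{K\colon e\subset K}[D_K] = \sum_{K\colon e'\subset K}[D_K]$ in $H^2(\bM_{\mathbb{C}}(\Gr))$, and subtracting the common contribution of those $K$ with $G\subset K$ (note $G\subset K$ forces $e,e'\subset K$ simultaneously since $e,e'\subset G$) yields $\sum_{K\colon e\subset K, G\not\subset K}[D_K] = \sum_{K\colon e'\subset K, G\not\subset K}[D_K]$. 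The main obstacle I anticipate is bookkeeping the local multiplicities cleanly — in particular carefully justifying the vanishing in the case $e\subset K$, $G\subset K$, where one must use that $\rho_G$ genuinely factors through $\Pro(j^K_G)$ on $D_K$ and that $j^K_G$ kills the relevant direction; once that is set up, the rest is formal manipulation with the relations of Proposition~\ref{prop:divisorpres}.
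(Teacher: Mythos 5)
Your overall strategy is the one the paper itself uses: fix an edge $e=(v,w)\subset G$, regard $s_e=x_v^*-x_w^*$ as a global section of $\mathcal{O}_{\Pro}(1)$ on $\Pro(W_{\Gr}/H_G)$ (legitimate since $H_G\subset H_e$), pull it back to a section $\tau_e$ of $\xi_G$, and read off $h_G=c_1(\xi_G)$ from the zero divisor of $\tau_e$, which is supported on the boundary because $x_v\neq x_w$ on $\M_{\mathbb{C}}(\Gr)$. The paper determines which $D_K$ occur by restricting $\tau_e$ to each boundary divisor via the structure maps~\eqref{eq::rainsmaps} rather than by a local computation at generic points, but this is the same bookkeeping.

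There is, however, a concrete error in your case $e\subset K$, $G\subset K$, and as stated your justification would prove the opposite of what you assert. First, $G\subset K$ gives $H_K\subset H_G$ (one intersects more hyperplanes), not $H_G\subset H_K$. More seriously, for $G\subset K$ the restriction $\rho_G|_{D_K}$ is \emph{not} constant along the $\bM(\Gr|_K)$-factor: by the case analysis in the definition of~\eqref{eq::rainsmaps} it equals $\Pro(i^{G}_{K})\circ\rho_{G}\circ\pi_{\Gr|_K}$, i.e.\ it is constant along the $\bM(\Gr/K)$-factor and factors through $\bM(\Gr|_K)$. If it factored through $\bM(\Gr/K)$ as you claim, then $\tau_e|_{D_K}$ would be pulled back along $\Pro(j^{G}_{K})$, which annihilates $x_v^*-x_w^*$ whenever $e\subset K$, and $D_K$ would then lie in the zero divisor --- contradicting your (correct) conclusion that it does not contribute. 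The correct reason is that for $G\subset K$ the restriction $\tau_e|_{D_K}$ is the pullback of $s_e$ through $\rho_G\circ\pi_{\Gr|_K}$, and $s_e$ is generically nonvanishing on $\bM(\Gr|_K)$ because $e$ is an edge of $\Gr|_K$. With that case repaired, the other two cases and the multiplicity-one claim (smooth centers contained in a smooth hyperplane) go through; your derivation of edge-independence from~\eqref{eq::linearelforbM} is valid but unnecessary, since the left-hand side $h_G$ is defined without reference to $e$.
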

	\begin{proof}
		Recall that the space of global sections of $\Orb_{\Pro(W_{\Gr}/H_G)}(1)$ is the dual space $(W_{\Gr}/H_G)^*=H_G^{\bot}$. For an edge $e=(v,w)\subset G$ we consider a global section $s_e=x^*_v-x^*_w$. Its pullback $\tau_e=(\rho_G)^*s_e$ defines a global section of $\xi_G$. Let us describe the zero locus of this section. Note that the restriction of this section on $\M(\Gr)$ is nowhere zero. From the description of contractad maps~\eqref{eq::rainsmaps}, we see that zero of this section belongs to divisors $D_K$ labelled by tubes $K$ containing edge $e$ and not containing tube $G$. Indeed, for $G\not\subset K$, we have the commutative diagram
		\[\begin{tikzcd}
			{\bM(\Gamma/K)\times\bM(\Gamma|_K)} & {\bM(\Gamma)} & {\mathbb{P}(W_{\Gamma}/H_G)} \\
			{\bM(\Gamma/K)} && {\mathbb{P}(W_{\Gamma/K}/H_{G/K})}
			\arrow["{\circ^{\Gamma}_K}", from=1-1, to=1-2]
			\arrow["{\rho_G}", from=1-2, to=1-3]
			\arrow["{\pi_{\Gamma/K}}"', from=1-1, to=2-1]
			\arrow["{\rho_{G/K}}", from=2-1, to=2-3]
			\arrow["{\mathbb{P}(j^G_{K})}", from=2-3, to=1-3]
		\end{tikzcd}\]
		So, we have $\tau_e|_{D_K}=\pi_{\Gr/K}^*\rho_{G/K}^*(\Pro(j^G_K)^*s_e)$. Note that the linear dual $(j^G_K)^{*}\colon H_{G}^{\bot}\twoheadrightarrow H_{G/K}^{\bot}$ is the restriction of the projection $\mathbb{C}^{G}\twoheadrightarrow \mathbb{C}^{G/K}$ given by $x^*_v\mapsto x^*_v$ for $v\in G\cap K$ and $x^*_{w}\mapsto x^*_{K}$ otherwise. So, when $K$ contains edge $e$, we have $(j^G_K)^{*}(x^*_v-x^*_w)=0$, hence $\Pro(j^G_K)^*s_e=0$. This implies that the restriction $\tau_e|_{D_K}$ is identically zero. Since the section $\tau_e$ has no poles, the Chern class $h_G=c_1(\xi_{G})$ is given by the zero locus of $\tau_e$ that implies the desired formula.
	\end{proof}
	Let us define a formal element $[D_{\Gr}]=\sum_{e\subset G}[D_G]$, that is independent of the choice of edge $e$. In this notation, the $h$-class is rewritten in a suitable way
	\[
	h_G=-\sum_{K: G\subset K} [D_G]
	\]
	In~\cite{yuzvinsky2002small}, these classes were introduced for arbitrary building sets (they denote these classes by symbol $\sigma$). Moreover, the authors described a presentation of cohomology rings in terms of $h$-classes. Applying their results to our case, we obtain a new presentation of $H^{\bullet}(\bM(\Gr))$.   
	\begin{prop}[h-presentation]~\cite{pagaria2021hodge}
		The cohomology ring $H^{\bullet}(\bM_{\mathbb{C}}(\Gr))$ is generated by h-classes $h_G$, for tubes $|G|\geq 3$, satisfying the relations
		\begin{gather}
			h_G^2=0 \text{, for } |G| =3
			\\
			h_G(h_G-h_H) \text{, if } G \text{ covers } H \text{ as sets}
			\\
			(h_{H_1\cup H_2}-h_{H_1})(h_{H_1\cup H_2}-h_{H_1}) \text{, if } H_1\cap H_2\neq \varnothing.
		\end{gather}
	\end{prop}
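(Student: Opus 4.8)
The plan is to obtain this presentation by specialising the general description of the cohomology ring of a wonderful compactification in terms of Yuzvinsky's $\sigma$-classes~\cite{yuzvinsky2002small}, in the form recorded by Pagaria~\cite{pagaria2021hodge}, to the graphic building set $\G(\Gr)=\{H_G\}$, and then rewriting everything in the combinatorics of tubes. The first step is to fix the dictionary. The correspondence $G\mapsto H_G$ is inclusion-reversing: if $G\subseteq K$ then $E(\Gr|_G)\subseteq E(\Gr|_K)$, hence $H_K\subseteq H_G$; so the poset of members of $\G(\Gr)$ ordered by inclusion of subspaces is the opposite of the poset of tubes of $\Gr$ ordered by inclusion of vertex sets. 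Since $\Gr|_G$ is connected, $\operatorname{codim}_{W_\Gr}H_G=|G|-1$, hence $\dim\Pro(W_\Gr/H_G)=|G|-2$; consequently $h_G=c_1(\rho_G^{\ast}\Orb_{\Pro}(1))$ vanishes as soon as $|G|\le 2$ (in particular $h_e=0$ for an edge $e$), and the nontrivial generators coming from the general presentation are exactly the $h_G$ with $|G|\ge 3$, as asserted.

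Next I would translate the relations. The relation $h_G^2=0$ for $|G|=3$ is immediate: then $\Pro(W_\Gr/H_G)\cong\Pro^1$ and $h_G$ is the pullback of the hyperplane class, whose square already vanishes. For the covering relation $h_G(h_G-h_H)=0$ when the tube $G$ covers the tube $H$, and for the relation $(h_{H_1\cup H_2}-h_{H_1})(h_{H_1\cup H_2}-h_{H_2})=0$ attached to an intersecting incomparable pair of tubes $H_1,H_2$, one first notes that $H_1\cup H_2$ is again a tube --- a union of two connected induced subgraphs sharing a vertex is connected --- so all classes appearing are legitimate generators; these two families are then precisely the images, under the dictionary above, of the remaining relations in~\cite{pagaria2021hodge}. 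As an essentially self-contained cross-check, which I would also include, one can argue through the divisor presentation of Proposition~\ref{prop:divisorpres}: substitute each $h_G$ in terms of the $[D_K]$ by Lemma~\ref{lemma:divisorhpres}, observe that the resulting linear change of variables between $\{[D_G]\}$ and the classes $\{[D_\Gr]\}\cup\{h_G:|G|\ge 3\}$ is unitriangular with respect to the inclusion order on tubes, and verify that under this change the ideal generated by~\eqref{eq::linearelforbM} and by $[D_G][D_H]=0$ is carried onto the ideal generated by the three stated expressions.

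The main obstacle I expect is the bookkeeping of \emph{completeness} in the translation step rather than any single computation: the general presentation of~\cite{yuzvinsky2002small,pagaria2021hodge} is a priori indexed by more elaborate data (flags, or antichains, of members of the building set), and one must verify that for $\G(\Gr)$ every such relation is already a consequence of the three listed families, i.e.\ that the graphic building set is regular enough that the presentation collapses to at most one generator per tube together with only the comparability and intersecting-pair relations. Concretely, the higher-power relations $h_G^{|G|-1}=0$ for $|G|\ge 4$ must be recovered by choosing a maximal chain of tubes $G=G_0\supsetneq G_1\supsetneq\cdots\supsetneq G_{|G|-2}$ with $|G_i|=|G|-i$ (delete the leaves of a spanning tree of $\Gr|_G$ one at a time) and iterating the covering relation $h_{G_i}^2=h_{G_i}h_{G_{i+1}}$ to get $h_{G_0}^{|G|-1}=h_{G_0}h_{G_1}\cdots h_{G_{|G|-2}}$, whose last factor is the $h$-class of a size-$2$ tube and hence zero; the one genuinely new combinatorial input is the closure of the set of tubes under unions of intersecting pairs, which is exactly what makes the third family well posed.
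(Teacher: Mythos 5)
Your proposal is correct and follows essentially the same route as the paper, which simply cites \cite{yuzvinsky2002small,pagaria2021hodge} and specialises the general $h$-class presentation of wonderful compactifications to the graphic building set $\G(\Gr)$. Your write-up supplies the dictionary and translation details (including the recovery of $h_G^{|G|-1}=0$ from chains of tubes and the closure of tubes under unions of intersecting pairs) that the paper leaves implicit, and correctly reads the second factor of the third relation as $h_{H_1\cup H_2}-h_{H_2}$.
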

	The advantage of these classes is the compatibility with contractad maps. 
	\begin{prop}
		\label{prop::hypermaps}
		The contractad structure on the wonderful contractad $\bM_{\mathbb{C}}$ induces a cocontractad structure on the graphical collection of cohomology rings $H^{\bullet}(\bM_{\mathbb{C}})$. The infinitesimal compositions are homomorphisms of algebras given by the rule:
		\begin{gather}
			\triangle^{\Gr}_G:=(\circ^{\Gr}_G)^*\colon H^{\bullet}(\bM_{\mathbb{C}}(\Gr))\to H^{\bullet}(\bM_{\mathbb{C}}(\Gr/G))\otimes H^{\bullet}(\bM_{\mathbb{C}}(\Gr|_G))
			\\
			h_H\mapsto \begin{cases}
				1\otimes h_H\text{, if }H\subset G;
				\\
				h_{H/G}\otimes 1\text{, else,}
			\end{cases}
		\end{gather}
		where $H/G$ is the image of $H$ under the contraction $\Gr \rightarrow \Gr/G$.
	\end{prop}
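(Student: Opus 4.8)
The plan is to verify that the infinitesimal composition maps $\circ^{\Gr}_G$ of the wonderful contractad, which are regular maps of smooth projective varieties $\iota\colon\bM_{\mathbb{C}}(\Gr/G)\times\bM_{\mathbb{C}}(\Gr|_G)\to\bM_{\mathbb{C}}(\Gr)$ defined in~\eqref{eq::rainsmaps}, induce algebra homomorphisms on cohomology, and then to compute the pullbacks of the generators $h_H$ on the nose. The cocontractad axioms (coassociativity, counitality, compatibility with graph automorphisms) are automatic consequences of the contractad axioms for $\bM_{\mathbb{C}}$ together with functoriality of $H^{\bullet}(-;\mathbb{Z})$ and the Künneth formula $H^{\bullet}(X\times Y)\cong H^{\bullet}(X)\otimes H^{\bullet}(Y)$, which applies because all cohomology here is torsion-free and concentrated in even degrees (this follows from the description of wonderful compactifications, e.g.\ the divisor presentation of Proposition~\ref{prop:divisorpres}). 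So the only genuine content is the formula for $\triangle^{\Gr}_G(h_H)$, and since $H^{\bullet}(\bM_{\mathbb{C}}(\Gr))$ is generated by the $h$-classes (Proposition, $h$-presentation), knowing the image of each generator determines the map completely.

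The key step is the identification $\triangle^{\Gr}_G(h_H)=\iota^*h_H$ using the fact that $h_H=c_1(\xi_H)$ with $\xi_H=\rho_H^*\mathcal{O}_{\mathbb{P}}(1)$, and that Chern classes commute with pullback: $\iota^*h_H=c_1(\iota^*\xi_H)=c_1((\rho_H\circ\iota)^*\mathcal{O}_{\mathbb{P}}(1))$. Thus everything reduces to analysing the composite $\rho_H\circ\iota\colon\bM_{\mathbb{C}}(\Gr/G)\times\bM_{\mathbb{C}}(\Gr|_G)\to\mathbb{P}(W_{\Gr}/H_H)$, and this is exactly the data that enters the very definition of $\iota$. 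By the defining formula for $\iota$ recalled just before~\eqref{eq::rainsmaps}, one has two cases. If $H\subset G$, then $\rho_H\circ\iota=\mathbb{P}(i^H_G)\circ\rho_H\circ\pi_{\Gr|_G}$, where $i^H_G\colon W_{\Gr|_G}/H_H\xrightarrow{\cong}W_{\Gr}/H_H$ is an isomorphism; hence $\iota^*\xi_H$ is $\pi_{\Gr|_G}^*$ of the corresponding bundle on the second factor, so $\iota^*h_H=1\otimes h_H$ under Künneth. If $H\not\subset G$, then $\rho_H\circ\iota=\mathbb{P}(j^H_G)\circ\rho_{H/G}\circ\pi_{\Gr/G}$, and since $j^H_G\colon W_{\Gr/G}/H_{H/G}\hookrightarrow W_{\Gr}/H_H$ is a linear inclusion inducing an isomorphism on the relevant projective spaces (its image is all of $W_{\Gr}/H_H$ because $H_H\supset H_G$ in this case, so the quotients have the same dimension — this is the point where one uses that $H$ does not contain $G$), the pullback of $\mathcal{O}_{\mathbb{P}}(1)$ is again $\mathcal{O}_{\mathbb{P}}(1)$ on the first factor, giving $\iota^*h_H=h_{H/G}\otimes 1$. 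Here one should double-check that $H/G$ is indeed a tube of $\Gr/G$ when $H$ is a tube of $\Gr$ not contained in $G$; this is routine and also follows from Lemma~\ref{lem::multipartite_contraction} in the multipartite case, but holds for arbitrary connected graphs by inspecting the contraction.

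The main obstacle is the bookkeeping in the second case: making precise that $\mathbb{P}(j^H_G)$ is an isomorphism of projective spaces (not merely an embedding of a smaller one), since the formula $h_H\mapsto h_{H/G}\otimes 1$ would fail otherwise. The cleanest way around this is to note that $j^H_G$ is by construction surjective after passing to the quotients by $H_H$ and $H_{H/G}$ — equivalently, $j$ carries a complement of $H_{H/G}$ in $W_{\Gr/G}$ onto a complement of $H_H$ in $W_{\Gr}$ — which one reads off directly from the explicit coordinate descriptions of $j$ and of $H_H$. Alternatively, one can verify the formula on divisor classes instead: combine Lemma~\ref{lemma:divisorhpres} expressing $h_H$ as a sum of $[D_K]$ with the known pullback behaviour of the divisors $D_K$ under $\iota$ (namely $\iota^*[D_K]$ splits into a part on $\bM(\Gr/G)$ coming from tubes $K$ with $K\not\subset G$ and a part on $\bM(\Gr|_G)$ from tubes $K\subset G$, as in the standard theory of wonderful compactifications, cf.~\cite{de1995wonderful,rains2010homology}), and check the two presentations agree. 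Either route is a finite combinatorial check; once it is done, the statement that $\triangle^{\Gr}_G$ is an algebra homomorphism is immediate since $\iota^*$ always is, and the cocontractad axioms follow formally.
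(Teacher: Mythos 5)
Your proposal follows essentially the same route as the paper's (very terse) proof: identify $\triangle^{\Gr}_G(h_H)$ with $c_1$ of the pullback of $\xi_H$ along the Rains map $\iota$, read off the answer from the two cases in the definition of $\rho_K\circ\iota$, and note that the algebra-homomorphism property and the cocontractad axioms are formal. One correction to your second case: the claim that $\Pro(j^H_G)$ is an isomorphism of projective spaces, justified by ``$H_H\supset H_G$ when $H\not\subset G$,'' is not right --- the containment $H_G\subset H_H$ holds precisely when $H\subset G$, and whenever $|H\cap G|\geq 2$ with $H\not\subset G$ one has $\dim W_{\Gr/G}/H_{H/G}=|H|-|H\cap G|<|H|-1=\dim W_{\Gr}/H_H$, so $\Pro(j^H_G)$ is a \emph{proper} linear embedding. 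Fortunately this does not affect the conclusion: for any injective linear map $V\hookrightarrow W$ the induced map $\Pro(V)\hookrightarrow\Pro(W)$ pulls $\mathcal{O}_{\Pro(W)}(1)$ back to $\mathcal{O}_{\Pro(V)}(1)$ (the tautological line over a point of $\Pro(V)$ is the same line viewed inside $W$), so $\iota^*\xi_H=\pi_{\Gr/G}^*\xi_{H/G}$ and $h_H\mapsto h_{H/G}\otimes 1$ holds regardless of whether $\Pro(j^H_G)$ is onto. With that repair (or via your alternative check through Lemma~\ref{lemma:divisorhpres} and the divisor classes) the argument is complete and agrees with the paper.
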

	\begin{proof}
		The formula above follows from the descriptions of pullback $(\circ^{\Gr}_G)^*\xi_H$ and functorial properties of Chern classes.    
	\end{proof} 
	
	\subsection{Gravity and Hypercommutative contractads}
	\label{sec::gravity}
	In this subsection, we define contractads generalising the operads of Gravity and Hypercommutative algebras, first introduced by Getzler in~\cite{getzler1994two,getzler1995operads}.
	
	Consider the little 2-disks contractad $\D_2$ (see \S\ref{sec:disks}). On each component $\D_2(\Gr)$ there is the action of the circle $S^1$ induced by the rotation of the unit disk. Explicitly, for each element $\phi\in S^1$ and disk configurations $f\colon\coprod_{v\in V_{\Gr}}\Disc_v\to \Disc$, we have $\phi.f=R_{\phi}\circ f$, where $R_{\phi}$ is the rotation of $\Disc$ with angle $\phi$. With respect to this action, the contractad $\D_2$ forms a contractad in the category of $S^1$-spaces.
	
	The action of the group $S^1$ on $\D_2$, produces the action of Grassman algebra $H_{\bullet}(S^1)=\mathsf{k}[\Delta]/(\Delta^2)$, where $\Delta=[S^1]$ is the fundamental class, on the Gerstenhaber contractad $H_{\bullet}(\D_2)=\Gerst$ ( see \S\ref{sec::gerst}). Since $\Delta^2=0$, $\Delta$ defines a square-zero derivation of degree $1$, given on generators by the rule
	\begin{gather*}
		\Delta(m) = b
		\\
		\Delta(b) = 0
	\end{gather*}
	Since the $S^1$-action on $\D_2$ is free, we immediately get
	\begin{prop}
		\label{prp::Gerst::splits}
		The dg contractad $(\Gerst, \Delta)$ is acyclic. 
	\end{prop}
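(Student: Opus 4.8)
The plan is to deduce the statement from the freeness of the circle action by trivializing the corresponding principal bundle component by component. Recall that \emph{acyclic} here is meant in the unital sense: it says that the homology of the dg contractad $(\Gerst,\Delta)$ is the unit graphical collection $\mathbb{1}$, i.e.\ $\Gerst(\Path_1)=\mathsf{k}\cdot\Id$ carries the zero differential (clear, since $\D_2(\Path_1)$ is contractible), while $(\Gerst(\Gr),\Delta)$ is acyclic for every connected graph $\Gr$ with $|V_{\Gr}|\geq 2$. So fix such a $\Gr$.

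First I would verify that the rotation action of $S^1$ on $\D_2(\Gr)$ is free. A nontrivial rotation of the ambient disk fixes only its center, so it can stabilize a configuration of little disks only if each little disk is centered at the origin; since distinct little disks have disjoint interiors, at most one of them is centered at the origin, which is incompatible with $|V_{\Gr}|\geq 2$. Hence $\D_2(\Gr)\to\D_2(\Gr)/S^1$ is a principal $S^1$-bundle. Next, using that $\Gr$ is connected with at least two vertices, I would pick an edge $e_0=(v_0,w_0)\in E_{\Gr}$ and construct a section of this bundle: in each $S^1$-orbit select the unique configuration in which the vector from the center of the disk labelled $v_0$ to the center of the disk labelled $w_0$ is a positive real number. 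This is well defined and continuous because those two centers never coincide (adjacency of $v_0$ and $w_0$ forces the disks to be disjoint, hence to have distinct centers). A principal bundle admitting a section is trivial, so $\D_2(\Gr)\cong S^1\times\bigl(\D_2(\Gr)/S^1\bigr)$ $S^1$-equivariantly, the circle acting by translation on the first factor.

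The rest is formal. Taking homology, $\Gerst(\Gr)=H_{\bullet}(\D_2(\Gr))\cong H_{\bullet}(S^1)\otimes H_{\bullet}(\D_2(\Gr)/S^1)$ as a module over $H_{\bullet}(S^1)=\mathsf{k}[\Delta]/(\Delta^2)$, the algebra acting on the first tensor factor only, so the operator induced by $\Delta$ on $\Gerst(\Gr)$ is $\Delta_{S^1}\otimes\Id$. Since $H_{\bullet}(S^1)$ is one-dimensional in degrees $0$ and $1$, the map $\Delta_{S^1}\colon H_0(S^1)\to H_1(S^1)$, $1\mapsto[S^1]$, is an isomorphism; thus $(H_{\bullet}(S^1),\Delta_{S^1})$ is acyclic and therefore so is $(\Gerst(\Gr),\Delta)$. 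Combined with the trivial case $\Gr=\Path_1$, this is exactly the asserted unital acyclicity. One could instead invoke the Gysin sequence of $\D_2(\Gr)\to\D_2(\Gr)/S^1$ together with the identity $\Delta=\partial\circ q_{*}$ expressing $\Delta$ through the transfer and the connecting homomorphism, but the section argument above is cleaner and makes the compatibility of $\Delta$ with the product decomposition immediate.

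The only step requiring any care is producing the global section — equivalently, the triviality of the circle bundle — which is where connectedness of $\Gr$ and $|V_{\Gr}|\geq 2$ enter; I do not anticipate a genuine obstacle there, and the remainder is bookkeeping about Künneth decompositions.
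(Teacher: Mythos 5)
Your proof is correct and follows the same route as the paper, which disposes of this proposition in one clause (``since the $S^1$-action on $\D_2$ is free\ldots''); in fact you supply the detail that makes that clause work, namely the explicit section trivializing the circle bundle, without which freeness alone would not suffice (the Hopf fibration $S^1\to S^3\to S^2$ is free yet has $\Delta=0\neq$ acyclic on homology). One sentence in your freeness argument is imprecise: in the contractad $\D_2(\Gr)$ only little disks labelled by \emph{adjacent} vertices are required to have disjoint interiors, so ``distinct little disks have disjoint interiors'' is false in general and ``at most one of them is centered at the origin'' can fail for a non-adjacent pair; the conclusion survives because connectedness with $|V_{\Gr}|\geq 2$ provides at least one edge, and the two disks on that edge cannot both sit at the origin --- exactly the adjacency-based reasoning you already use, correctly, when constructing the section.
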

	The kernel of a derivation of a contractad is itself a contractad, so we may give the following definition.
	\begin{defi}
		The gravity contractad $\Grav$ is the kernel
		\[
		\Grav:=\ker \Delta \subset \Gerst.
		\]
	\end{defi}
	
	The main result of this subsection is an explicit presentation of the contractad $\Grav$.
	\begin{theorem}\label{thm:gravpres}
		The contractad $\Grav$ is  generated by symmetric generators $\lambda_{\Gr}$ of degree 1 in each component $\Gr$, satisfying the relations: For each graph $\Gr$, we have
		\begin{gather}
			\sum_{e \in E_{\Gr}} \lambda_{\Gr/e} \circ_e \lambda_{\Gr|_e}=0,\label{eq:edgetype}
			\\
			\sum_{e \in E_{\Gr|_G}} \lambda_{\Gr/e} \circ_e \lambda_{\Gr|_e}=\lambda_{\Gr/G} \circ^{\Gr}_G \lambda_{\Gr|_G}\text{, for tubes  with at least 3 vertices.}\label{eq:tubetype}
		\end{gather} Moreover, this contractad is Koszul.
	\end{theorem}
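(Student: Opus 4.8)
The plan is to (a) produce explicit generators of $\Grav=\ker\Delta$, (b) verify that they satisfy the two stated families of relations, and (c) prove, by a Hilbert series comparison supported by a Gr\"obner-basis computation in the spirit of~\cite{dotsenko2013quillen} and the strategy of~\cite{getzler1994two,getzler1995operads}, that these generators and relations give a complete quadratic (hence Koszul) presentation.

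\emph{Generators and relations.} Using the splitting $\Gerst\cong\Com\circ\Susp^{-1}\Lie$ of Theorem~\ref{thm:gerstpres}, let $\mu_{\Gr}\in\Gerst(\Gr)$ be the image of the canonical generator of $\Com(\Gr)$ (the ``all-$m$'' normal monomial of Example~\ref{ex::grobner_for_classical_contractads}), which sits in degree $0$. Since $\Delta$ is a square-zero derivation of the contractad $\Gerst$ with $\Delta(m)=b$, $\Delta(b)=0$, the element $\lambda_{\Gr}:=\Delta(\mu_{\Gr})$ automatically lies in $\ker\Delta=\Grav$, is symmetric (as $\Delta$ commutes with graph automorphisms), and has degree $1$. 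Expanding $\Delta(\mu_{\Gr})$ by the Leibniz rule over the tree-monomial expression for $\mu_{\Gr}$ and collecting terms with the $\Gerst$-relations~\eqref{eq::gerst::relations} yields precisely the identities~\eqref{eq:edgetype} and~\eqref{eq:tubetype}: the edge-type relation encodes $\Delta^2=0$, and the tube-type relation encodes compatibility of $\Delta$ with the infinitesimal compositions $\circ^{\Gr}_G$. A direct inspection on the Orlik--Solomon basis of $\Gerst(\Gr)=H_{\bullet}(\Conf_{\Gr}(\mathbb{C}))$ (Proposition~\ref{prop::cohomologyConf}) then shows that $\Grav(\Gr)$ modulo the images of all compositions from strictly smaller components is one-dimensional, spanned by $\lambda_{\Gr}$; hence the $\lambda_{\Gr}$ generate $\Grav$, and there is a surjection $\widetilde{\Grav}\twoheadrightarrow\Grav$, where $\widetilde{\Grav}$ is the contractad presented by the generators $\lambda_{\Gr}$ and relations~\eqref{eq:edgetype},~\eqref{eq:tubetype}.

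\emph{Koszulness.} Following Getzler's strategy, I would prove $\widetilde{\Grav}$ is Koszul by passing to the shuffle contractad $\widetilde{\Grav}^{\forget}$ and equipping its monomials with a ``quantum $\grpermlex$''-type order, as used for $\Gerst$ and $\Ass$ in Example~\ref{ex::grobner_for_classical_contractads}, chosen so that the leading terms of~\eqref{eq:edgetype} and~\eqref{eq:tubetype} are the left-comb monomials with the deepest inner vertex. Verifying that all $S$-polynomials reduce to zero shows that these relations already form a quadratic Gr\"obner basis; consequently $\widetilde{\Grav}$ is Koszul, its normal monomials are described explicitly, and its quadratic dual is the contractad $\Hyper$ with divisor-type relations matching~\eqref{eq::linearelforbM}. (Alternatively, one may deduce Koszulness geometrically from the stratification~\eqref{eq::stratification} of $\bM_{\mathbb{C}}(\Gr)$: the associated Gysin/weight spectral sequence realizes the cobar complex of the cocontractad dual to $\widetilde{\Grav}$, and purity of $H^{\bullet}(\bM_{\mathbb{C}}(\Gr))$, as for a smooth projective variety, forces its homology to be concentrated in syzygy degree $0$, which is exactly Koszulness by Corollary~\ref{cor::koszul::quadratic}.)

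\emph{Conclusion.} On one hand, the Koszul functional equation of Theorem~\ref{thm::hilbertkoszul} applied to the Koszul pair $(\widetilde{\Grav},\widetilde{\Grav}^{!})$, together with the explicit normal-monomial count, determines $\dim\widetilde{\Grav}(\Gr)$ for every $\Gr$. On the other hand, acyclicity of the mixed complex $(\Gerst,\Delta)$ (Proposition~\ref{prp::Gerst::splits}) gives $\ker\Delta=\operatorname{im}\Delta$, so $\Gerst(\Gr)$ splits, as a graded vector space, as $\Grav(\Gr)$ together with a degree shift of $\Grav(\Gr)$; hence $\dim\Grav(\Gr)=\tfrac12\dim\Gerst(\Gr)$, and $\dim\Gerst(\Gr)$ is the renormalised chromatic polynomial of $\Gr$ by~\eqref{eq::chrom_gerst}. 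Matching these two computations in every component forces the surjection $\widetilde{\Grav}\twoheadrightarrow\Grav$ to be an isomorphism, which proves both the presentation and Koszulness. I expect the main obstacle to be the Gr\"obner-basis step: choosing the right monomial order and carrying out the reduction of $S$-polynomials graph by graph, which is where the arguments of Getzler and of Dotsenko must genuinely be adapted to contractads; the Hilbert series comparison is then essentially bookkeeping.
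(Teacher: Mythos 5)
Your overall architecture matches the paper's: define $\lambda_{\Gr}:=\Delta(m_{\Gr})$, derive the edge-type relation from $\Delta^2=0$ and the tube-type relation from the derivation property, obtain a surjection $\widetilde{\Grav}\twoheadrightarrow\Grav$, and close the argument by comparing dimensions against $\dim\Grav(\Gr)=\tfrac12\dim\Gerst(\Gr)$, which follows from acyclicity of $(\Gerst,\Delta)$. Two remarks on the first half. For generation, the paper's route is cleaner and self-contained: since $\ker\Delta=\operatorname{im}\Delta$, every class in $\Grav(\Gr)$ is $\Delta$ of a basis monomial $(m_{\Gr/I};b^{(I_1)},\ldots,b^{(I_k)})$ of $\Gerst\cong\Com\circ\Susp^{-1}\Lie$, and the Leibniz rule collapses this to $(\lambda_{\Gr/I};b^{(I_1)},\ldots,b^{(I_k)})$ because $\Delta b=0$; your appeal to a ``direct inspection of the Orlik--Solomon basis'' to see that the indecomposables are one-dimensional is both vaguer and unnecessary.

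The genuine gap is in the Koszulness step. You propose to verify that the relations form a quadratic Gr\"obner basis by reducing all $S$-polynomials to zero, graph by graph, and you correctly flag this as the main obstacle --- but it is left entirely unexecuted, and your concluding dimension count depends on it (the Koszul functional equation and the identification of normal monomials with a basis both presuppose the Gr\"obner property, so as written the argument is circular). The paper avoids this computation altogether: normal monomials with respect to the leading terms of the relations \emph{span} the presented contractad unconditionally, so it suffices to exhibit, for a suitable $2$-weighted $\grpermlex$-order, an explicit two-to-one matching $T\mapsto\{\mathrm{m}(T),\mathrm{b}(T)\}$ from normal $\widetilde{\Grav}$-monomials onto the known normal-monomial basis of $\Gerst$. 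This gives the one-sided bound $\dim\widetilde{\Grav}(\Gr)\le\tfrac12\dim\Gerst(\Gr)$ for free; combined with the surjection it forces equality, and the Gr\"obner property (hence Koszulness) then follows as a corollary rather than as an input. Your alternative geometric route via purity of $H^{\bullet}(\bM_{\mathbb{C}}(\Gr))$ proves Koszulness of the homology contractad of the compactification, not of $\widetilde{\Grav}$; transferring it to $\widetilde{\Grav}$ would require already knowing the presentation you are trying to establish. To repair your proof, replace the $S$-polynomial verification by the spanning-plus-counting argument.
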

	\begin{proof}
		For graph $\Gr$, let $\lambda_{\Gr}$ be
		\[
		\lambda_{\Gr}:=\Delta(m_{\Gr})=\sum_{e\in E_{\Gr}} m_{\Gr/e}\circ^{\Gr}_e b.
		\] In particular, for path $\Path_2$, we have $\lambda_{\Path_2}:=b$.
		
		Let us show that the elements $\lambda_{\Gr}$ generate the contractad $\Grav$. Recall that we have an isomorphism $\Gerst \cong \Com\circ \Susp^{-1}\Lie$ on the level of graphical collections, hence each component $\Gerst(\Gr)$ is spanned by monomials of the form $(m_{\Gr/I};b^{(I_1)},b^{(I_2)},...,b^{(I_k)})$, where $I=\{I_1,...,I_k\}$ is a graph partition of $\Gr$ and $b^{(I_j)}\in\Susp^{-1}\Lie(\Gr|_{I_j})$. Since the complex $(\Gerst,\Delta)$ is acyclic, we have $\Grav_{\bullet}(\Gr)=\Delta(\Gerst_{\bullet-1}(\Gr))$. So, each component of $\Grav$ is spanned by monomials of the form
		\begin{multline*}
			\Delta(m_{\Gr/I};b^{(I_1)},b^{(I_2)},...,b^{(I_k)})=
			\\
			=(\Delta m_{\Gr/I};b^{(I_1)},b^{(I_2)},...,b^{(I_k)})+ \sum_{i=1}^k \pm (m_{\Gr/I};b^{(I_1)},b^{(I_2)},...,\Delta b^{(I_i)},...,b^{(I_k)})=
			\\
			=(\lambda_{\Gr/I};b^{(I_1)},b^{(I_2)},...,b^{(I_k)}).
		\end{multline*} 
		From these observations and identity $b=\lambda_{\Path_2}$, we conclude that elements $\{\lambda_{\Gr}\}$ generate $\Grav$ as a contractad. Next, let us verify the relations. The relations of the first type~\eqref{eq:edgetype} follow from the identity
		\[
		\Delta(\lambda_{\Gr})=\Delta\left(\sum_{e\in E_{\Gr}} m_{\Gr/e}\circ^{\Gr}_e b\right)=\sum_{e\in E_{\Gr}} \lambda_{\Gr/e}\circ^{\Gr}_e \lambda_{\Gr|_e},
		\] and the fact that $\lambda_{\Gr}$ is a boundary, $\Delta(\lambda_{\Gr})=0$. The relations of the second type~\eqref{eq:tubetype} follow from the identity
		\begin{multline}
			\lambda_{\Gr/G}\circ^{\Gr}_G\lambda_{\Gr|_G}=\Delta(m_{\Gr/G}\circ^{\Gr}_G \lambda_{\Gr|_G})=\Delta\left(m_{\Gr/G}\circ^{\Gr}_G\left(\sum_{e\in E_{\Gr|_{G}}} m_{\Gr/e}\circ^{\Gr}_e b\right)\right)=
			\\
			=\Delta\left(\sum_{e\in E_{\Gr|_G}} m_{\Gr/e}\circ^{\Gr}_e b\right) = \sum_{e \in E_{\Gr|_G}} \lambda_{\Gr/e} \circ_e \lambda_{\Gr|_e}.
		\end{multline} 
		All in all, we have the onto morphism of contractads $\Pop \twoheadrightarrow \Grav$, where $\Pop$ is the contractad obtained from the generators and relations of this form. We claim that this morphism is exactly an isomorphism. Since $(\Gerst, \Delta)$ is an acyclic complex, we have dimension equality $\dim \Grav(\Gr)=\frac{1}{2}\dim \Gerst(\Gr)$. Therefore, by dimension reasons, it suffices to check that, for each  graph $\Gr$, we have the inequality $2\dim \Pop(\Gr)\leq \dim \Gerst(\Gr)$. 
		
		To establish this inequality, we use Gr\"obner bases for shuffle contractads from \S\ref{sec::Grobner::contractad}. Consider the shuffle version $\Pop^{\forget}:=\T_{\Sha}(\lambda_{\Gr})/\langle \R^{\forget} \rangle$ and the modified $\grpermlex$-order on tree monomials $\Tree_{\{\lambda_{\Gr}\}}$ as follows. For elements $\lambda_{\Gr}, \lambda_{\Omega}$ we put $\lambda_{\Gr}<\lambda_{\Omega}$ if $|V_{\Gr}|>|V_{\Omega}|$. Consider some total refinement of this order and $\mathsf{deglex}$-order on words $\{\lambda_{\Gr}\}^*$ associated with this extension. For a tree monomial $T\in \Tree_{\{\lambda_{\Gr}\}}$, we define its $2$-weight as the minus number of internal vertices of valency $3$. To compare two tree monomials, we first compare their 2-weights, and if their 2-weights are equal, we compare monomials with respect to the $\grpermlex$-extension  of monomial order on $\{\lambda_{\Gr}\}^*$.
		
		The leading terms of quadratic relations $\R^{\forget}(\Gr,<)$ with respect to this order have the form
		\begin{gather}
			\lambda_{\Gr/G}\circ^{\Gr}_{G} \lambda_{\Gr|_G}\text{, for tubes } G  \text{ with at least 3 vertices}\label{eq:lead1}
			\\
			\lambda_{\Gr/e}\circ_e^{\Gr}\lambda_{\Gr|_e}\text{, for edge } e \in E_{\Gr}\text{, containing minimal vertex and its minimal adjacent vertex}.\label{eq:lead2}
		\end{gather}
		
		Let $T$ be a normal monomial with respect to $\R^{\forget}$. The leading terms of the form~\eqref{eq:lead1}, ensure us that all vertices of the underlying tree, except maybe the root vertex, are of the valency $3$. Also, the relations of the form~\ref{eq:lead2} imply, that $\Pop$-normal binary trees coincide with $\Lie$-normal monomials from Example~\ref{ex::grobner_for_classical_contractads} (if we replace labelling $\lambda_{\Path_2}$ with $b$).
		
		For a normal monomial $T$, we define two monomials $\mathrm{m}(T),\mathrm{b}(T)$ labeled by $m,b$ as follows. Suppose that the root vertex of $T$ is labeled by an element $\lambda_{\Omega}$. The monomial $\mathrm{m}(T)$ is obtained from $T$ by replacing $\lambda_{\Omega}$ with the $\Com$-monomial $m_{\Omega}$, and vertices $\lambda_{\Path_2}$ with $b$. Similarly, we define the monomial $\mathrm{b}(T)$ with vertices $\lambda_{\Path_2}$ replaced with $b$ and vertex $\lambda_{\Gr}$ replaced with monomial $m_{\Gr/e}\circ^{\Gr}_e b$, where $e$ is the edge consisting of minimal vertex and its minimal adjacent. According to the previous observations, the resulting monomials $\mathrm{m}(T),\mathrm{b}(T)$ are $\Gerst$-monomials from Example~\ref{ex::grobner_for_classical_contractads}. Furthermore, we have
		\[
		\mathrm{m}(\mathsf{Norm}_{\Pop}(\Gr))\cup \mathrm{b}(\mathsf{Norm}_{\Pop}(\Gr))=\mathsf{Norm}_{\Gerst}(\Gr), \quad \mathrm{m}(\mathsf{Norm}_{\Pop}(\Gr))\cap \mathrm{b}(\mathsf{Norm}_{\Pop}(\Gr))=\varnothing.
		\] In particular, we have $2|\mathsf{Norm}_{\Pop}(\Gr)|=\dim\Gerst(\Gr)$. Since normal monomials span the contractad, we conclude the desired inequality $\dim \Pop(\Gr)\leq \frac{1}{2}\dim \Gerst(\Gr)$. So, we obtain the desired isomorphism $\Pop\cong \Grav$.
	\end{proof}
	
	Moreover, during the proof, we constructed the quadratic Gr\"obner basis for $\Grav^{\forget}$.
	\begin{sled}\label{cor::gravgrobner}
		The contractad $\Grav$ has a quadratic Gr\"obner basis with respect to the 2-weighted $\grpermlex$-order described above.
	\end{sled}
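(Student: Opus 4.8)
The statement is a direct bookkeeping of the data already assembled in the proof of Theorem~\ref{thm:gravpres}, so the plan is to make that bookkeeping explicit. I would rely on the general principle for shuffle contractads (\cite[\S4]{lyskov2023contractads}): for an ideal $\I\subseteq\T_\Sha(\E)$ and a subset $\G\subseteq\I$, the shuffle monomials not divisible by any leading term $\LT(g)$, $g\in\G$, always span the quotient $\T_\Sha(\E)/\I$; and $\G$ is a Gröbner basis precisely when these normal monomials are linearly independent, equivalently when in every component their number equals $\dim\bigl(\T_\Sha(\E)/\I\bigr)$. I would apply this to the shuffle contractad $\Grav^{\forget}$ presented by the generators $\lambda_\Gr$ and the quadratic relations $\R^{\forget}$ of types~\eqref{eq:edgetype}--\eqref{eq:tubetype}, with respect to the $2$-weighted $\grpermlex$-order.

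First I would record the leading terms. Using the order — compare $2$-weights first, break ties by the $\grpermlex$-extension of the order on words in the $\lambda_\Gr$'s — one checks that the maximal term of the edge-type relation~\eqref{eq:edgetype} is $\lambda_{\Gr/e}\circ^{\Gr}_e\lambda_{\Gr|_e}$ for the edge $e$ joining the minimal vertex of $\Gr$ to its minimal adjacent vertex, and that the maximal term of the tube-type relation~\eqref{eq:tubetype} is its right-hand side $\lambda_{\Gr/G}\circ^{\Gr}_G\lambda_{\Gr|_G}$; these are precisely~\eqref{eq:lead1} and~\eqref{eq:lead2}. Next I would identify the normal monomials: forbidding divisibility by~\eqref{eq:lead1} forces every internal vertex of the underlying tree other than the root to be trivalent, and forbidding divisibility by~\eqref{eq:lead2} on the resulting binary part makes it coincide with the $\Lie$-normal monomials of Example~\ref{ex::grobner_for_classical_contractads}. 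Then the two substitution maps $\mathrm{m}$ and $\mathrm{b}$ built in the proof of Theorem~\ref{thm:gravpres} inject $\mathsf{Norm}_{\Grav}(\Gr)$ into $\mathsf{Norm}_{\Gerst}(\Gr)$ with disjoint images whose union is all of $\mathsf{Norm}_{\Gerst}(\Gr)$; since the relations of $\Gerst$ already form a Gröbner basis (Example~\ref{ex::grobner_for_classical_contractads}), this yields $2\,|\mathsf{Norm}_{\Grav}(\Gr)|=|\mathsf{Norm}_{\Gerst}(\Gr)|=\dim\Gerst(\Gr)$.

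Finally I would combine this count with $\dim\Grav(\Gr)=\tfrac12\dim\Gerst(\Gr)$, which follows from acyclicity of the complex $(\Gerst,\Delta)$ (Proposition~\ref{prp::Gerst::splits}), to get $|\mathsf{Norm}_{\Grav}(\Gr)|=\dim\Grav(\Gr)$ for every connected graph $\Gr$. Since the normal monomials span $\Grav^{\forget}$ and their number equals the dimension, they form a basis; hence $\R^{\forget}$ is a Gröbner basis, necessarily quadratic because its elements are, and by \cite[Th.~4.3.1]{lyskov2023contractads} this reproves that $\Grav$ is Koszul. The only genuinely technical point is the leading-term computation: one must verify that the unconventional $2$-weight refinement really does single out~\eqref{eq:lead1} — rather than one of the summands $\lambda_{\Gr/e}\circ_e\lambda_{\Gr|_e}$ — as the leading term of the tube-type relation; everything else is a transcription of material already present in the proof of Theorem~\ref{thm:gravpres}.
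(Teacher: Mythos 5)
Your proposal is correct and follows essentially the same route as the paper: Corollary~\ref{cor::gravgrobner} is obtained there precisely by observing that the proof of Theorem~\ref{thm:gravpres} already identifies the leading terms \eqref{eq:lead1}--\eqref{eq:lead2}, counts the normal monomials via the maps $\mathrm{m}$ and $\mathrm{b}$ into $\mathsf{Norm}_{\Gerst}(\Gr)$, and matches their number with $\dim\Grav(\Gr)=\tfrac12\dim\Gerst(\Gr)$ coming from the acyclicity of $(\Gerst,\Delta)$. Your explicit appeal to the span-versus-dimension criterion for Gr\"obner bases is exactly the bookkeeping the paper leaves implicit.
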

	
	Since the contractad $\Grav$ is quadratic, we may define its Koszul dual contractad.
	\begin{defi}\label{def:hyper}
		The HyperCommutative contractad $\Hyper$ is the desuspension of the Koszul dual of the gravity contractad:
		\[
		\Hyper:=\Susp^{-1}(\Grav)^{!}.
		\]
	\end{defi}
	Let us give a presentation of the contractad $\Hyper$ by generators and relations.
	\begin{prop}\label{prop::hyperpres}
		The contractad $\Hyper$ is generated by symmetric elements $\nu_{\Gr} \in \Hyper(\Gr)$ of degree $2(|V_\Gamma|-2)$, satisfying the following relations: for each connected graph $\Gr$ and each pair of edges $e,e'\in E_{\Gr}$, we have
		\begin{equation}\label{eq:hyperrel}
			\sum_{G\colon e\subset G} \nu_{\Gr/G}\circ^{\Gr}_G \nu_{\Gr|_G}=\sum_{G\colon e'\subset G} \nu_{\Gr/G}\circ^{\Gr}_G \nu_{\Gr|_G}.
		\end{equation}
	\end{prop}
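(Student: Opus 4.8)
The plan is to read the presentation of $\Hyper$ off the presentation of $\Grav$ (Theorem~\ref{thm:gravpres}) by applying Koszul duality, since by Definition~\ref{def:hyper} one has $\Hyper=\Susp^{-1}(\Grav)^{!}$. So the first step is to record the quadratic data of $\Grav$ as $\Grav=\Pop(\E,\R)$: the generating graphical collection $\E$ has $\E(\Gr)=\mathsf{k}\lambda_{\Gr}$ — one-dimensional, with trivial $\Aut(\Gr)$-action, placed in homological degree $1$ — for every connected graph $\Gr$ with at least two vertices, and $\E(\Path_1)=0$; and $\R\subset\E\circ'\E$ is spanned componentwise by the edge-type element $r_{\Gr}:=\sum_{e\in E_{\Gr}}\lambda_{\Gr/e}\circ_{e}\lambda_{\Gr|_{e}}$ of~\eqref{eq:edgetype} together with the tube-type elements $r_{\Gr,G}:=\sum_{e\in E_{\Gr|_{G}}}\lambda_{\Gr/e}\circ_{e}\lambda_{\Gr|_{e}}-\lambda_{\Gr/G}\circ^{\Gr}_{G}\lambda_{\Gr|_{G}}$ of~\eqref{eq:tubetype}, one for each proper tube $G$ with $|G|\geq 3$.

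Next I would exploit that $\E$ is one-dimensional on every connected graph with $\geq 2$ vertices. Then $(\E\circ'\E)(\Gr)=\bigoplus_{G}\E(\Gr/G)\otimes\E(\Gr|_{G})$ has a distinguished basis $\{\lambda_{\Gr/G}\circ^{\Gr}_{G}\lambda_{\Gr|_{G}}\}$ indexed by the proper non-trivial tubes $G$ of $\Gr$ (those with $2\leq|G|\leq|V_{\Gr}|-1$), and the $2$-element tubes are exactly the edges of $\Gr$. By formula~\eqref{eq::Koszul::dual}, $(\Grav)^{!}=\Pop(s^{-1}\Susp^{-1}\E^{*},\R^{\bot})$, and since $\E$ is finite-dimensional componentwise the quadratic pairing between $(\E\circ'\E)(\Gr)$ and the corresponding space for the dual generators factors through the tree decomposition of $\circ'$, hence is diagonal in the tube index; after normalising the dual generators $\mu_{\Gr}$, the dual space gets the dual basis $\{\mu_{\Gr/G}\circ^{\Gr}_{G}\mu_{\Gr|_{G}}\}$ and $\R^{\bot}(\Gr)$ is the ordinary linear orthogonal complement of $\R(\Gr)$. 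Writing an element of the dual space via its coordinates $(c_{G})$ over the tubes, the constraints imposed by $r_{\Gr}$ and by $r_{\Gr,G}$ read $\sum_{e\in E_{\Gr}}c_{e}=0$ and $c_{G}=\sum_{e\in E_{\Gr|_{G}}}c_{e}$ respectively.

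The core of the argument is then to identify $\R^{\bot}(\Gr)$ with the linear span of the relations~\eqref{eq:hyperrel}. For two edges $e,e'$ of $\Gr$ set $h_{\Gr,e,e'}:=\sum_{G\colon e\subset G}\mu_{\Gr/G}\circ^{\Gr}_{G}\mu_{\Gr|_{G}}-\sum_{G\colon e'\subset G}\mu_{\Gr/G}\circ^{\Gr}_{G}\mu_{\Gr|_{G}}$, whose $G$-coordinate is $[e\subset G]-[e'\subset G]$; using $\sum_{f\in E_{\Gr|_{G}}}[e\subset f]=[e\subset G]$ one checks in one line that $h_{\Gr,e,e'}$ satisfies both families of constraints, so $h_{\Gr,e,e'}\in\R^{\bot}(\Gr)$. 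For the reverse inclusion I would count dimensions: the relations $r_{\Gr}$ and the $r_{\Gr,G}$ with $|G|\geq 3$ are linearly independent — each $r_{\Gr,G}$ is the unique one with a nonzero coordinate on the tube $G$, while $r_{\Gr}\neq 0$ is supported on $2$-element tubes — so $\dim\R(\Gr)=1+\#\{\text{proper tubes }|G|\geq 3\}$ whenever $|V_{\Gr}|\geq 3$, whence $\dim\R^{\bot}(\Gr)=\#E_{\Gr}-1$; on the other hand, fixing an edge $e_{0}$, the $2$-element coordinates of $h_{\Gr,e_{0},e}$ are $\delta_{e_{0}}-\delta_{e}$, linearly independent for $e\neq e_{0}$, so the $h_{\Gr,e,e'}$ span a subspace of dimension exactly $\#E_{\Gr}-1$. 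Comparing dimensions gives $\R^{\bot}(\Gr)=\langle h_{\Gr,e,e'}\rangle$ (the cases $\Gr\cong\Path_{1},\Path_{2}$ being vacuous), hence $(\Grav)^{!}=\Pop(s^{-1}\Susp^{-1}\E^{*},\{h_{\Gr,e,e'}\})$.

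The final step is to pass to $\Hyper=\Susp^{-1}(\Grav)^{!}$. Since the suspension contractad is one-dimensional componentwise, $\Susp^{-1}$ of a quadratically presented contractad is again quadratically presented, with generators Hadamard-twisted by $\Susp^{-1}$ and relations transported accordingly, exactly as for operads; writing $\nu_{\Gr}$ for the resulting generator, tracking degrees through~\eqref{eq::Koszul::dual} and the outer Hadamard product gives $\deg\mu_{\Gr}=|V_{\Gr}|-3$ and then $\deg\nu_{\Gr}=(|V_{\Gr}|-3)+(|V_{\Gr}|-1)=2(|V_{\Gr}|-2)$, while the two sign twists (one from $\E^{*}\rightsquigarrow\Susp^{-1}\E^{*}$, one from the outer $\Susp^{-1}$) cancel, so $\nu_{\Gr}$ is symmetric. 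Moreover every monomial $\mu_{\Gr/G}\circ^{\Gr}_{G}\mu_{\Gr|_{G}}$ occurring in $h_{\Gr,e,e'}$ has one and the same homological degree $|V_{\Gr}|-5$, independent of $G$, so the Koszul signs produced by the Hadamard twist are identical for all of them and $h_{\Gr,e,e'}$ turns into precisely~\eqref{eq:hyperrel}. I expect the only genuinely delicate point to be this last paragraph — the bookkeeping of suspensions and Koszul signs that guarantees the generators are symmetric of the stated degree and that the transported relations are the sign-free identities~\eqref{eq:hyperrel}; everything before it is elementary linear algebra on the tube-indexed bases.
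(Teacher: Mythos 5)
Your proposal is correct and follows essentially the same route as the paper: identify $\nu_{\Gr}$ as the dual generator, track degrees and signs through the suspensions in~\eqref{eq::Koszul::dual} to get symmetric generators of degree $2(|V_{\Gr}|-2)$, verify that the elements~\eqref{eq:hyperrel} are orthogonal to the relations of $\Grav$, and close with a dimension count showing $\dim\R^{\bot}(\Gr)=|E_{\Gr}|-1$. The only (immaterial) difference is in how that count is organised — you compute $\dim\R_{\Grav}(\Gr)$ directly from the presentation and test independence of the $h_{\Gr,e,e'}$ on their edge coordinates, whereas the paper uses $\dim\Grav^{(2)}(\Gr)=|E_{\Gr}|-1$ and a telescoping argument on a chain of edges.
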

	\begin{proof}
		Let us denote by $\nu_{\Gr}$ the generator dual to the generator $\lambda_{\Gr}$ of the contractad $\Grav$. According to the general formula for the Koszul dual contractad, we have 
		\[
		\Hyper := \Susp^{-1}(\Susp^{-1}(\Grav^{\text{!`}})^*)=\Susp^{-2}(\Grav^{\text{!`}})^*,
		\]
		so the homological degree of $\nu_{\Gamma}$ is $|\nu_{\Gamma}|=2(|V_\Gamma|-1)-2=2(|V_\Gamma|-2)$. Moreover, since we took the double suspension of $(\Grav^{\text{!`}})^*$, these elements are $\Aut(\Gamma)$-invariant.  One can check by a direct inspection that the relations \eqref{eq:hyperrel}
		\[
		\R_{e,e'}=\sum_{G\colon e\subset G} \nu_{\Gr/G}\circ^{\Gr}_G \nu_{\Gr|_G}-\sum_{G\colon e'\subset G} \nu_{\Gr/G}\circ^{\Gr}_G \nu_{\Gr|_G}.
		\]
		are ortogonal to all relations of $\Grav$. 
		
		Let us verify that we found all relations. Since of orthogonality, for each graph $\Gr$, we have $\dim \R_{\Hyper}(\Gr)=\dim \Grav^{(2)}(\Gr)=\dim \Grav_{2}(\Gr)=|E_{\Gr}|-1$. So, it suffices to find $(|E_{\Gr}|-1)$-linearly independent relations. Consider some ordering $e_1,e_2,\cdots,e_k$ of edges of graph $\Gr$, where $k=|E_{\Gr}|$, and, for $i=1,2,\cdots, (k-1)$, we let $\R_i:=\R_{e_i,e_{i+1}}$. Suppose that, for some coefficients $c_i$, we have $\sum^{k-1}_{i=1} c_i\R_i=0$. Collecting the coefficient of a particular monomial $\nu_{\Gr/e_i}\circ_{e_i}^{\Gr} \nu_{\Gr_T}$, we see that 
		\[
		c_1=0;\quad c_{k-1}=0;\quad  c_i-c_{i+1}=0 \text{ for }i=1,2,\cdots,k-2,
		\] that implies $c_i=0$ for all $i$. So, we found the desired number of linear independent relations.
		
	\end{proof}
	\subsection{Homology of Wonderful contractad} 
	\label{sec::H::wonderful}
	In this subsection, we describe the homology contractad $H_{\bullet}(\bM_{\mathbb{C}})$ of the Wonderful contractad $\bM_{\mathbb{C}}$ in terms of generators and relations. Also, we prove that this contractad is indeed Koszul.

	\begin{lemma}\label{lemma: hypergen}
		The contractad $H_{\bullet}(\bM_{\mathbb{C}})(\Gr)$ is generated by fundamental classes $[\bM_{\mathbb{C}}(\Gr)]$ ranging over all connected graphs $\Gr$. In particular, the contractad  $H_{\bullet}(\bM_{\mathbb{C}})(\Gr)$ admits the weight-grading given by the rule $\mathrm{w}([\bM_{\mathbb{C}}((T))])=|\Ver(T)|$.
	\end{lemma}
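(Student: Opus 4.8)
The plan is to exploit the stratification~\eqref{eq::stratification} of $\bM_{\mathbb{C}}(\Gr)$ by admissible trees together with the fact that each open stratum $\M_{\mathbb{C}}((T))\cong \prod_{v\in\Ver(T)}\M_{\mathbb{C}}(\In(v))$ is the complement of a (projective) graphic arrangement, hence an affine-type variety whose cohomology is generated in degree two. Concretely, I would proceed by induction on the number of vertices $|V_{\Gr}|$. For the base cases $\Path_1$ and $\Path_2$ there is nothing to prove (a point, resp.\ a point). For the inductive step, recall from page~\pageref{page::intersectionsofdivisors} that the boundary $\partial\bM_{\mathbb{C}}(\Gr)=\bM_{\mathbb{C}}(\Gr)\setminus\M_{\mathbb{C}}(\Gr)$ is a normal crossings divisor whose components are the $D_G$, $G$ a non-trivial proper tube, and that each $D_G$ is the image of the Rains map $\iota\colon\bM_{\mathbb{C}}(\Gr/G)\times\bM_{\mathbb{C}}(\Gr|_G)\to\bM_{\mathbb{C}}(\Gr)$. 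By the inductive hypothesis, $H_{\bullet}(\bM_{\mathbb{C}}(\Gr/G))$ and $H_{\bullet}(\bM_{\mathbb{C}}(\Gr|_G))$ are generated by the fundamental classes of smaller graphic compactifications, so the image of $H_{\bullet}(D_G)=H_{\bullet}(\bM_{\mathbb{C}}(\Gr/G))\otimes H_{\bullet}(\bM_{\mathbb{C}}(\Gr|_G))$ inside $H_{\bullet}(\bM_{\mathbb{C}}(\Gr))$ consists precisely of contractad compositions $\nu_{\Gr/I}\circ\cdots$ of the fundamental classes, i.e.\ of decomposable elements in the sense of the free contractad map $\T(H_{\bullet}(\bM_{\mathbb{C}}))\to H_{\bullet}(\bM_{\mathbb{C}})$.

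So it suffices to show that the boundary classes $[D_G]$ together with the fundamental class $[\bM_{\mathbb{C}}(\Gr)]$ span $H_{\bullet}(\bM_{\mathbb{C}}(\Gr))$ additively. Here I would use the Gysin/long exact sequence of the pair $(\bM_{\mathbb{C}}(\Gr),\M_{\mathbb{C}}(\Gr))$, or dually the Deligne weight filtration. The key point is that $\M_{\mathbb{C}}(\Gr)$, being the projective complement of a graphic arrangement, has cohomology concentrated in a way that, after taking the top class away, is ``pushed in'' from the boundary strata; more precisely, the homology of $\bM_{\mathbb{C}}(\Gr)$ is, as a graded group, the direct sum over admissible trees $T$ of the compactly-supported homology of the strata $\M_{\mathbb{C}}((T))$, and each such stratum of positive codimension lies in some $D_G$. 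Since $H_{\bullet}(\bM_{\mathbb{C}}(\Gr))$ is even-dimensional (all strata are complex, so the spectral sequence degenerates for parity reasons — this is the usual argument for wonderful compactifications of complex arrangements, cf.\ De~Concini--Procesi), the only class not accounted for by the boundary is the fundamental class itself. Combining this with the divisor presentation of Proposition~\ref{prop:divisorpres} (which expresses $H^{\bullet}$, hence dually $H_{\bullet}$, in terms of the $[D_G]$), one concludes that $H_{\bullet}(\bM_{\mathbb{C}})$ is generated, as a contractad, by the fundamental classes $\nu_{\Gr}:=[\bM_{\mathbb{C}}(\Gr)]$.

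For the weight-grading statement: by general properties of smooth projective wonderful compactifications, $H^{2i}(\bM_{\mathbb{C}}(\Gr))$ is pure of Hodge--Tate type $(i,i)$, and the class of a stratum $\bM_{\mathbb{C}}((T))$ has (complex) codimension equal to $|\Edge(T)|=|\Ver(T)|-1$ (one codimension for each internal edge of $T$), so its dual homology class sits in homological degree $2(|V_{\Gr}|-1)-2|\Edge(T)|$. Tracking this through the contractad composition $\T(H_{\bullet}(\bM_{\mathbb{C}}))\to H_{\bullet}(\bM_{\mathbb{C}})$ shows that assigning weight $|\Ver(T)|$ to a monomial indexed by $T$ is additive under grafting and matches the homological degree up to the fixed affine shift, hence defines a genuine weight-grading compatible with the contractad structure. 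The main obstacle will be step two: verifying cleanly that the boundary classes together with the fundamental class exhaust $H_{\bullet}(\bM_{\mathbb{C}}(\Gr))$ — this is where one must invoke the degeneration of the relevant (Gysin, or Leray for the stratification) spectral sequence and the purity of the cohomology, rather than a purely formal induction; everything else is bookkeeping with the combinatorics of admissible trees already set up in \S\ref{sec:sub::contractads} and \S\ref{sec::Wonderful}.
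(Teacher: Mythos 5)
Your proposal cites the right key fact at the very end (the divisor presentation, Proposition~\ref{prop:divisorpres}), but the argument you actually build the proof on contains a genuine error. The claim that ``the homology of $\bM_{\mathbb{C}}(\Gr)$ is, as a graded group, the direct sum over admissible trees $T$ of the compactly-supported homology of the strata $\M_{\mathbb{C}}((T))$'' is false. Already for $\Gr=\K_3$ one has $\bM_{\mathbb{C}}(\K_3)\cong\beM_{0,4}\cong\Pro^1$, whose total homology has dimension $2$, while the open stratum $\M_{\mathbb{C}}(\K_3)\cong\mathbb{C}\setminus\{0,1\}$ alone contributes Borel--Moore homology of total dimension $3$, plus three point strata: the stratification spectral sequence for $H^{BM}_\bullet(\bM_{\mathbb{C}}(\Gr))$ does \emph{not} degenerate at $E_1$. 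The spectral sequence that does degenerate (and that the paper uses, but only later, in Theorem~\ref{thm:hyperkoszul}) is the Deligne weight spectral sequence running in the opposite direction: it computes $H^\bullet(\M_{\mathbb{C}}(\Gr))$ from the cohomology of the \emph{closed} strata $\bM_{\mathbb{C}}((T))$, its $E_1$ page is the bar construction of $H_\bullet(\bM_{\mathbb{C}})$, and its degeneration at $E_2$ is precisely the Koszulness statement --- not the generation statement. Consequently your step ``the only class not accounted for by the boundary is the fundamental class itself'' is not justified by the machinery you invoke.

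The actual proof in the paper is a single observation and needs none of the induction, Gysin sequences, or purity: by Proposition~\ref{prop:divisorpres} the ring $H^{\bullet}(\bM_{\mathbb{C}}(\Gr))$ is generated by the classes $[D_G]$, so every positive-degree class is a polynomial in them; each nonzero square-free monomial $[D_{G_1}]\cdots[D_{G_k}]$ is, by transversality of nested intersections, Poincar\'e dual to the fundamental class of $\bM_{\mathbb{C}}((T))=\bigcap_e D_{L_e}$ for a stable $\Gr$-admissible tree $T$ (and non-square-free monomials reduce to these via the linear relations~\eqref{eq::linearelforbM}); and $\bM_{\mathbb{C}}((T))\cong\prod_{v\in\Ver(T)}\bM_{\mathbb{C}}(\In(v))$ exhibits its class as a contractad composition of fundamental classes of smaller graphs. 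Your closing discussion of the weight grading (codimension of $\bM_{\mathbb{C}}((T))$ equals $|\Edge(T)|=|\Ver(T)|-1$, additivity under grafting) is correct and matches the paper. I would recommend excising the spectral-sequence paragraph entirely and promoting your final sentence invoking Proposition~\ref{prop:divisorpres} to be the whole argument.
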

	\begin{proof}
		By Proposition~\ref{prop:divisorpres}, the homology of $\bM_{\mathbb{C}}(\Gr)$ are spanned by fundamental classes of intersections of canonical divisors $D_G$. As we have seen on page~\pageref{page::intersectionsofdivisors},  each nonempty intersection of these divisors has the form $\bM_{\mathbb{C}}((T))$, for some stable $\Gr$-admissible tree $T$, and this subvariety $\bM_{\mathbb{C}}((T))\cong \prod_{v\in\Ver(T)}\bM_{\mathbb{C}}(\In(v))$ is a product of graphic wonderful compactifications for smaller graphs.
	\end{proof}

	Next, using the technique of logarithmic forms, we introduce a cocontractad structure on the cohomology of the uncompactified part $\M_{\mathbb{C}}$. Consider the embedding of $\M_{\mathbb{C}}(\Gr/G)\times\M_{\mathbb{C}}(\Gr|_G)$ as a strata of $\bM_{\mathbb{C}}(\Gr)$ via the contractad embedding $\circ^{\Gr}_G\colon \bM_{\mathbb{C}}(\Gr/G)\times\bM_{\mathbb{C}}(\Gr|_G)\to \bM_{\mathbb{C}}(\Gr)$. Since the complement $\partial\bM_{\mathbb{C}}(\Gr)=\bM_{\mathbb{C}}(\Gr)\setminus \M_{\mathbb{C}}(\Gr)$ is a divisor with normal crossing, the theory of logarithmic forms alongside divisors implies the residue map
	\[
	\mathrm{Res}\colon H^{\bullet}(\M_{\mathbb{C}}(\Gr))\to H^{\bullet-1}(\M_{\mathbb{C}}(\Gr/G)\times \M_{\mathbb{C}}(\Gr|_G)).
	\] To make these morphisms homogeneous, we replace cohomological grading with a new one given by the rule: $|\alpha|'=-1-|\alpha|$.
	These morphisms assemble the graphical collection $H^{-1-\bullet}(\M_{\mathbb{C}})$ into a graded cocontractad\footnote{This notation means that we consider $H^{-1-\bullet}(\M_{\mathbb{C}})$ as a graded cocontractad with grading components $H(\M_{\mathbb{C}})_{\bullet}:=H^{-1-\bullet}(\M_{\mathbb{C}})$}. The examination of cocontractad axioms is analogous to ones in~\cite{aras2017}.
	Let us state the main result of this section.
	\begin{theorem}\label{thm:hyperkoszul}
		The contractad $H_{\bullet}(\bM_{\mathbb{C}})$ is Koszul. Moreover, there is a quasi-isomorphism of cocontractads
		\[
		\Susp^{-2} H^{-\bullet-1}(\M_{\mathbb{C}})\to \mathsf{B}(H_{\bullet}(\bM_{\mathbb{C}})).
		\]
	\end{theorem}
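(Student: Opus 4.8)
The plan is to follow Getzler's strategy, as adapted to the operadic setting in \cite{getzler1995operads} and \cite{aras2017}, and to exploit Corollary~\ref{cor::koszul::quadratic}: it suffices to exhibit a quasi-isomorphism from the (shifted) cocontractad $\Susp^{-2}H^{-\bullet-1}(\M_{\mathbb{C}})$ into the bar construction $\mathsf{B}(H_{\bullet}(\bM_{\mathbb{C}}))$ that lands in syzygy degree~$0$, and then conclude that $H_{\bullet}(\bM_{\mathbb{C}})$ is quadratic and Koszul. First I would recall, from the locally open stratification~\eqref{eq::stratification} together with Lemma~\ref{lemma: hypergen}, that $\mathsf{B}(H_{\bullet}(\bM_{\mathbb{C}}))$ as a graphical collection is the free cocontractad on the suspension of $\overline{H_{\bullet}(\bM_{\mathbb{C}})}$, so its underlying space for a graph $\Gr$ is $\bigoplus_{T}\bigotimes_{v\in\Ver(T)} s\,\overline{H_{\bullet}(\bM_{\mathbb{C}}(\In(v)))}$, the sum over stable $\Gr$-admissible trees. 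The two pieces of the bar differential are $d_1$ (internal, vanishing here since $H_{\bullet}(\bM_{\mathbb{C}})$ has zero differential) and $d_2$, the sum over ways of contracting a single internal edge of $T$ using the contractad product on homology. On the other side, I would spell out that $H^{-\bullet-1}(\M_{\mathbb{C}})$ carries the residue cocontractad structure described just above the theorem, with the residue maps $\mathrm{Res}$ along the normal-crossing boundary divisors $D_G$ providing the cocomposition.

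The core construction is the comparison map. The Gysin/residue long exact sequences associated with each boundary divisor $D_G\subset\bM_{\mathbb{C}}(\Gr)$ relate $H^{\bullet}(\M_{\mathbb{C}}(\Gr))$, $H^{\bullet}(\bM_{\mathbb{C}}(\Gr))$ and the fundamental classes $[\bM_{\mathbb{C}}((T))]$; assembling these over all $T$ gives a map from $H^{-\bullet-1}(\M_{\mathbb{C}})$ into the bar complex, where the tree $T$ records the nested set of divisors one has taken residues along, and the decoration of each vertex $v$ is the corresponding fundamental class in $H_{\bullet}(\bM_{\mathbb{C}}(\In(v)))$. Up to the double suspension $\Susp^{-2}$, which is exactly the homological bookkeeping needed to make the degrees match (the generator $\nu_\Gr$ has degree $2(|V_\Gr|-2)$, the generator $\lambda_\Gr=s\nu_\Gr$ sits in the right spot), this is a morphism of graded cocontractads. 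That it is a map of complexes — i.e. intertwines the bar differential $d_2$ with the differential on $H^{-\bullet-1}(\M_{\mathbb{C}})$ coming from the remaining residues — is the compatibility of residue-along-one-divisor with residue-along-a-smaller-nested-family, which is a standard property of logarithmic/residue calculus on a normal crossing divisor; I would cite or reproduce the verification in the style of \cite{aras2017}. To see it is a quasi-isomorphism I would argue componentwise in each graph $\Gr$: both sides decompose over the stratification, the stratum $\M_{\mathbb{C}}((T))\cong\prod_v\M_{\mathbb{C}}(\In(v))$ matches the summand of the bar complex indexed by $T$, and by induction on $|V_\Gr|$ one reduces to the known fact that for each graph the cohomology of $\M_{\mathbb{C}}(\Gr)$ (Proposition~\ref{prop::cohomologyM}) is resolved by the complex of fundamental classes of $\bM_{\mathbb{C}}((T))$; concretely, the spectral sequence of the stratification of $\bM_{\mathbb{C}}(\Gr)$ degenerates, which is where the purity/Hodge-theoretic input (formality, as in the moduli-of-curves case) enters.

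Once the quasi-isomorphism $\Susp^{-2}H^{-\bullet-1}(\M_{\mathbb{C}})\xrightarrow{\ \simeq\ }\mathsf{B}(H_{\bullet}(\bM_{\mathbb{C}}))$ is in hand, I observe that the source is concentrated in syzygy degree~$0$: this amounts to the statement that $H^{\bullet}(\M_{\mathbb{C}}(\Gr))$ is, as a weight-graded object for the weight grading $\mathrm{w}([\bM_{\mathbb{C}}((T))])=|\Ver(T)|$ of Lemma~\ref{lemma: hypergen}, supported in the extreme weight — equivalently, that the cohomology of $\mathsf{B}^{\bullet}H_{\bullet}(\bM_{\mathbb{C}})$ lives only in syzygy degree $0$. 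Then Corollary~\ref{cor::koszul::quadratic} immediately gives that $H_{\bullet}(\bM_{\mathbb{C}})$ has a quadratic presentation and is Koszul, and, by dualizing the bar cohomology, identifies $H^{-\bullet-1}(\M_{\mathbb{C}})$ (up to the appropriate suspension) with the Koszul dual cocontractad; comparing generators and relations with Proposition~\ref{prop::hyperpres} and Theorem~\ref{thm:gravpres} then shows $H_{\bullet}(\bM_{\mathbb{C}})\cong\Hyper$ and $H_{\bullet+1}(\M_{\mathbb{C}})\cong\Grav$. The main obstacle I anticipate is not the formal cocontractad bookkeeping but the degeneration/purity input: proving that the comparison map is a quasi-isomorphism requires knowing that the mixed Hodge structure (or the weight filtration from the stratification) on $H^{\bullet}(\M_{\mathbb{C}}(\Gr))$ is pure enough to force concentration in a single syzygy degree, generalizing Getzler's computation for $\M_{0,n}$; this is the step I would devote the most care to, likely by an inductive argument on graphs using the product decomposition of strata together with the known cohomology of wonderful compactifications from \S\ref{sec::H::MC::ring}.
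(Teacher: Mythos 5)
Your proposal follows essentially the same route as the paper: identify the first page of the Deligne (residue/weight) spectral sequence of the normal--crossing compactification $\M_{\mathbb{C}}(\Gr)\subset\bM_{\mathbb{C}}(\Gr)$ --- whose $p$-fold divisor intersections are indexed by stable $\Gr$-admissible trees --- with the bar construction of $H_{\bullet}(\bM_{\mathbb{C}})$ via Poincar\'e duality, and then use purity of the mixed Hodge structure on $H^{\bullet}(\M_{\mathbb{C}}(\Gr))$ to force degeneration at $E_2$ and concentration in syzygy degree $0$. The one step where you anticipate more work than is actually needed is the purity input: it requires no induction on graphs, since by Propositions~\ref{prop::cohomologyConf} and~\ref{prop::cohomologyM} the ring $H^{\bullet}(\M_{\mathbb{C}}(\Gr))$ is a subring of $H^{\bullet}(\Conf_{\Gr}(\mathbb{C}))$, which is generated by logarithmic forms and hence pure of weight $2k$ in degree $k$.
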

	\begin{proof}
		According to \cite{deligne1971theorie}, if a smooth projective complex algebraic variety $M$ is represented as $M=U\sqcup D$ where $D$ is a normal crossing divisor $D$ with components $D_1$,\ldots, $D_N$, one can define the sheaf of logarithmic differential forms $\mathcal{E}_M^\bullet(\log D)$
		\[
		H^\bullet(U)\cong H^\bullet(M,\mathcal{E}_M^\bullet(\log D)),
		\]
		and there is the Deligne spectral sequence   
		\[
		E_1^{-p,q}=H^{-2p+q}(D^p,\epsilon_p)\twoheadrightarrow E_{\infty}^{-p,q}=\mathrm{gr}_W H^{-p+q}(U)
		\]
		where 
		\[
		D^p=\bigsqcup_{i_1<\cdots<i_p} D_{i_1}\cap\cdots D_{i_p},
		\]
		and $\epsilon_p$ is the locally constant line bundle which over the component $D_{i_1}\cap\cdots D_{i_p}$ is the sign representation of the group of permutations of $i_1,\ldots,i_p$ placed in homological degree $p$.
		In our case, if we take $M=\bM_{\mathbb{C}}(\Gr)$ and $U=\M_{\mathbb{C}}(\Gr)$, the components of $D$ are indexed by tubes with at least 2 vertices, and the intersections $D^p$ are indexed by the set $\Tree_{p+1}(\Gamma)\subset \Tree_{\mathrm{st}}(\Gr)$ consisting of $\Gr$-admissible stable trees with $p+1$ internal vertices.   
		\[
		H^{-2p+q}(D^p,\epsilon_p)=\bigoplus\limits_{T\in \Tree_{p+1}(\Gamma)}H^{-2p+q}(\bM_{\mathbb{C}}((T)),\epsilon_p).
		\]
		Since cohomology groups of $\bM_{\mathbb{C}}(\Gr)$ are concentrated in even degrees, we have $E^{-p,q}_1=0$ for odd $q$. Hence we can consider only even $q$. The dimension of the closure of $\bM_{\mathbb{C}}((T))$ is equal to $-2p+2(|V_{\Gr}|-2)$, hence by Poincare duality we have $H^{-2p+q}(\bM_{\mathbb{C}}((T)))\cong H_{2(|V_{\Gr}|-2)-q}(\bM_{\mathbb{C}}((T)))$. The differential $d_1$ of the spectral sequence is the composition
		\[\begin{tikzcd}
			\bigoplus\limits_{T\in \Tree_{p+1}(\Gamma)}H^{-2p+q}(\bM_{\mathbb{C}}((T)),\epsilon_p) \arrow[swap]{d}{\mathrm{PD}} & 
			\bigoplus\limits_{T\in \Tree_{p}(\Gamma)}H^{-2p+2+q}(\bM_{\mathbb{C}}((T)),\epsilon_p)  \\
			\bigoplus\limits_{T\in \Tree_{p+1}(\Gamma)}H_{2(|V_{\Gr}|-2)-q}(\bM_{\mathbb{C}}((T)),\epsilon_p)\arrow{r}{} 
			& \bigoplus\limits_{T\in \Tree_{p}(\Gamma)}H_{2(|V_{\Gr}|-2)-q}(\bM_{\mathbb{C}}((T)),\epsilon_p)\arrow{u}{\mathrm{PD}}
		\end{tikzcd}
		\]
		where the vertical arrows are induced by the Poincar\'e duality, and the horizontal arrow is induced by inclusions of strata. Thus, we see that the first page of the Deligne spectral sequence computes the homology of the bar construction of the contractad $H_{\bullet}(\bM_{\mathbb{C}})$. Note that the appearance of $\epsilon_p$ corresponds to the fact that the bar construction is the free contractad on the shift $s H_{\bullet}(\bM_{\mathbb{C}})$. 
		
		By Lemma~\ref{lemma: hypergen}, the bar construction of $H_{\bullet}(\bM_{\mathbb{C}})$ admits the well-defined syzygy degree grading. Under the above-mentioned identification, the syzygy degree $k$ component of the bar construction of $H_{\bullet}(\bM_{\mathbb{C}})$ is given by the formula 
		\[
		\mathsf{B}^k(H_{\bullet}(\bM_{\mathbb{C}}))(\Gr)\cong \bigoplus_{q-p=k} E^{-p,2q}_1.
		\]
		So, to show that the contractad $H_{\bullet}(\bM_{\mathbb{C}})$ is Koszul it suffices to show that the second page of the Deligne spectral sequence is concentrated on the diagonal $q-2p=0$. This statement follows from the standard Hodge theory argument combined with the following lemma.
		
		\begin{lemma}
			For a graph $\Gr$, the mixed Hodge structure of $H^k(\M_{\mathbb{C}}(\Gr))$ is pure of weight $2k$.
		\end{lemma}
		\begin{proof}
			Thanks to Proposition~\ref{prop::cohomologyConf}, the cohomology ring of $\Conf_{\Gr}(\mathbb{C})$ is generated by the logarithmic differential forms $\omega_e$ ; it follows that the mixed Hodge structure of $H^k(\Conf_{\Gr}(\mathbb{C}))$ is pure of weight $2k$. Thanks to Proposition~\ref{prop::cohomologyM}, the cohomology $H^{\bullet}(\M_{\mathbb{C}}(\Gr))$ is a subring of $H^{\bullet}(\Conf_{\Gr}(\mathbb{C}))$, and the result follows.
		\end{proof}
		\noindent Since the mixed Hodge structure of $H^p(\M_{\mathbb{C}}(\Gr))$ is manifestly pure of weight $2p$, we have 
		\[
		E_2^{-p,q}=
		\begin{cases}
			H^p(\M_{\mathbb{C}}(\Gr)), \text{ if } q=2p,\\
			\qquad  0 \qquad\qquad \text{ otherwise}. 
		\end{cases}
		\]
		In operadic terms, and using the homological degree convention, this means that the cohomology of the bar construction of $H_{\bullet}(\bM_{\mathbb{C}})$ is concentrated in syzygy degree 0. This implies that the contractad $H_{\bullet}(\bM_{\mathbb{C}})$ is Koszul, as required.
		
		Since the Deligne spectral sequence is collapsing on the second page, we conclude the exact sequence
		\[
		0\to H^p(\M_{\mathbb{C}}(\Gr))\to\bigoplus\limits_{T\in \Tree_{p+1}(\Gamma)}H^{0}(\bM_{\mathbb{C}}((T)),\epsilon_p) \to\bigoplus\limits_{T\in \Tree_{p}(\Gamma)}H^{2}(\bM_{\mathbb{C}}((T)),\epsilon_p)\to \ldots
		\] Combining these exact sequences and suitably desuspending for degree reasons we conclude the morphism of dg cocontractads 
		\[
		\Susp^{-2} H^{-\bullet-1}(\M_{\mathbb{C}})\to \mathsf{B}(H_{\bullet}(\bM_{\mathbb{C}})),
		\] that is quasi-isomorphism by above mentioned arguments. 
	\end{proof}
	
	\begin{sled}\label{cor::homologywond_koszul}
		The contractad $H_{\bullet}(\bM_{\mathbb{C}})$ is quadratic and Koszul, with Koszul dual cocontractad
		\[
		H_{\bullet}(\bM_{\mathbb{C}})^{\cokoszul}\cong \Susp^{-2}H^{-1-\bullet}(\M_{\mathbb{C}}).
		\]
	\end{sled}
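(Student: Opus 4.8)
The statement is essentially a repackaging of Theorem~\ref{thm:hyperkoszul} together with the Koszul-duality formalism for contractads recalled in~\S\ref{sec::Koszul::contractads}, so the plan is short. By Lemma~\ref{lemma: hypergen} the contractad $H_{\bullet}(\bM_{\mathbb{C}})$ is weight-graded, with $[\bM_{\mathbb{C}}(\Gr)]$ of weight $1$. In the proof of Theorem~\ref{thm:hyperkoszul} the syzygy-degree-$k$ component of the bar construction was identified with the line $\bigoplus_{q-p=k} E_1^{-p,2q}$ of the Deligne spectral sequence, and it was shown that $E_2^{-p,q}=0$ unless $q=2p$, i.e. that $E_2$ is supported on the single diagonal corresponding to syzygy degree $0$. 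Hence the cohomology of $\mathsf{B}^{\bullet}(H_{\bullet}(\bM_{\mathbb{C}}))$ is concentrated in syzygy degree $0$, and Corollary~\ref{cor::koszul::quadratic} yields at once that $H_{\bullet}(\bM_{\mathbb{C}})$ admits a quadratic presentation and is Koszul.

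For the Koszul dual cocontractad I would argue as follows. For any Koszul quadratic contractad $\Pop$ the Koszul inclusion $\Pop^{\cokoszul}\hookrightarrow\mathsf{B}\Pop$ onto the syzygy-degree-$0$ part of the bar construction is a quasi-isomorphism of cocontractads; since $\Pop^{\cokoszul}$ carries zero differential, it is thereby canonically identified with the homology of $\mathsf{B}\Pop$ (which, by the previous paragraph applied to $\Pop=H_{\bullet}(\bM_{\mathbb{C}})$, is entirely in syzygy degree $0$). Composing this identification with the quasi-isomorphism $\Susp^{-2}H^{-\bullet-1}(\M_{\mathbb{C}})\to\mathsf{B}(H_{\bullet}(\bM_{\mathbb{C}}))$ of Theorem~\ref{thm:hyperkoszul}, and using that the source of the latter has no internal differential and so equals its own homology, I obtain the asserted isomorphism of cocontractads
\[
H_{\bullet}(\bM_{\mathbb{C}})^{\cokoszul}\;\cong\;\Susp^{-2}H^{-1-\bullet}(\M_{\mathbb{C}}).
\]
The weight grading on the left (the number of internal vertices of the indexing admissible tree, Lemma~\ref{lemma: hypergen}) corresponds on the right to the cohomological degree $p$ of $H^p(\M_{\mathbb{C}}(\Gr))$, exactly as recorded in the shape of $E_2$.

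The only point that requires genuine care — and it is already implicit in the proof of Theorem~\ref{thm:hyperkoszul} — is that the comparison map be compatible with the full cocontractad structures, not merely with the underlying complexes: one must check that the residue comultiplication on $H^{-1-\bullet}(\M_{\mathbb{C}})$ along the strata $\M_{\mathbb{C}}((T))$ matches, under the Deligne-spectral-sequence identification, the comultiplication of the cofree cocontractad $\mathsf{B}(H_{\bullet}(\bM_{\mathbb{C}}))$ restricted to its (syzygy-degree-$0$) cohomology. This is a statement that residues of logarithmic forms along the normal-crossing boundary divisor are compatible with grafting of $\Gr$-admissible trees, verified as in the analogous operadic computation of~\cite{aras2017} cited earlier; apart from this bookkeeping there is nothing new to prove beyond Theorem~\ref{thm:hyperkoszul}.
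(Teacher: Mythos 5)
Your proposal is correct and follows essentially the same route as the paper: the corollary is indeed just a repackaging of Theorem~\ref{thm:hyperkoszul} (syzygy-degree-$0$ concentration of the bar homology gives quadraticity and Koszulness via Corollary~\ref{cor::koszul::quadratic}, and composing the Koszul inclusion $\Pop^{\cokoszul}\to\mathsf{B}^{0}\Pop$ with the quasi-isomorphism $\Susp^{-2}H^{-\bullet-1}(\M_{\mathbb{C}})\to\mathsf{B}(H_{\bullet}(\bM_{\mathbb{C}}))$ identifies the Koszul dual cocontractad). Your closing remark about checking that the residue comultiplication matches the bar-construction comultiplication is exactly the point the paper delegates to the analogue of~\cite{aras2017}, so nothing is missing.
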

	
	Let us describe the explicit presentation of contractad $H_{\bullet}(\bM_{\mathbb{C}})$ in terms of generators and relations.
	
	\begin{theorem}\label{thm::homologywond_hyper}
		The homology of the complex wonderful contractad is isomorphic to the hypercommutative contractad:
		\[
		H_{\bullet}(\bM_{\mathbb{C}})\cong \Hyper.
		\]
		Moreover, the Poincare residue cocontractad is isomorphic to the cocontractad dual to gravity:
		\[
		H^{-1-\bullet}(\M_{\mathbb{C}})\cong \Grav^{*}.
		\]  
	\end{theorem}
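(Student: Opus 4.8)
The plan is to leverage Corollary~\ref{cor::homologywond_koszul}, which already asserts that $H_{\bullet}(\bM_{\mathbb{C}})$ is quadratic and Koszul with Koszul dual cocontractad $\Susp^{-2}H^{-1-\bullet}(\M_{\mathbb{C}})$; so it remains only to pin down the quadratic presentation of $H_{\bullet}(\bM_{\mathbb{C}})$ and match it with the one for $\Hyper$ given in Proposition~\ref{prop::hyperpres}. By Lemma~\ref{lemma: hypergen} this contractad is generated by the fundamental classes $[\bM_{\mathbb{C}}(\Gr)]$, one in each component, forming the weight-one part. First I would match these with the generators $\nu_{\Gr}$: the fundamental class of the smooth projective variety $\bM_{\mathbb{C}}(\Gr)$ lies in homological degree $2\dim_{\mathbb{C}}\bM_{\mathbb{C}}(\Gr)=2(|V_{\Gr}|-2)$, which is exactly $\deg\nu_{\Gr}$, and it is $\Aut(\Gr)$-invariant because the graph automorphisms act by biholomorphisms and hence preserve the complex orientation.

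Next I would compute the weight-two relations. The weight-two part of the free contractad on these generators, evaluated at $\Gr$, is spanned by the elements $\nu_{\Gr/G}\circ^{\Gr}_{G}\nu_{\Gr|_G}$ as $G$ runs over proper non-trivial tubes of $\Gr$. Under the canonical surjection onto $H_{\bullet}(\bM_{\mathbb{C}})(\Gr)$ the infinitesimal composition of fundamental classes is the pushforward along the stratum inclusion $\iota\colon\bM_{\mathbb{C}}(\Gr/G)\times\bM_{\mathbb{C}}(\Gr|_G)\cong D_G\hookrightarrow\bM_{\mathbb{C}}(\Gr)$ of~\eqref{eq::rainsmaps}, so $\nu_{\Gr/G}\circ^{\Gr}_{G}\nu_{\Gr|_G}$ maps to $\iota_{*}[\bM_{\mathbb{C}}(\Gr/G)\times\bM_{\mathbb{C}}(\Gr|_G)]$, which is the Poincar\'e dual of the divisor class $[D_G]\in H^{2}(\bM_{\mathbb{C}}(\Gr))$. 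By the divisor presentation (Proposition~\ref{prop:divisorpres}) the space $H^{2}(\bM_{\mathbb{C}}(\Gr))$ is spanned by the classes $[D_G]$ with the only linear relations being~\eqref{eq::linearelforbM}; the quadratic relations $[D_G][D_H]=0$ sit in $H^{4}$ and so contribute nothing in weight two. Since Poincar\'e duality is an isomorphism, the kernel of the weight-two surjection is spanned precisely by the preimages of~\eqref{eq::linearelforbM}, and transporting these through Poincar\'e duality produces exactly the relations~\eqref{eq:hyperrel} defining $\Hyper$. Hence $H_{\bullet}(\bM_{\mathbb{C}})\cong\Hyper$ as quadratic contractads.

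For the residue cocontractad I would combine this with Corollary~\ref{cor::homologywond_koszul}: $\Susp^{-2}H^{-1-\bullet}(\M_{\mathbb{C}})\cong H_{\bullet}(\bM_{\mathbb{C}})^{\cokoszul}\cong\Hyper^{\cokoszul}$. By Definition~\ref{def:hyper} we have $\Hyper=\Susp^{-1}\Grav^{!}$, and since $\Grav$ is Koszul (Theorem~\ref{thm:gravpres}) the standard suspension bookkeeping for Koszul duality gives $\Hyper^{!}\cong\Susp\,\Grav$, hence $\Hyper^{\cokoszul}\cong\Susp^{-1}(\Hyper^{!})^{*}\cong\Susp^{-2}\Grav^{*}$. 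Comparing the two identifications and cancelling $\Susp^{-2}$ yields $H^{-1-\bullet}(\M_{\mathbb{C}})\cong\Grav^{*}$.

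The hard part will be the weight-two computation: one must verify carefully that the contractad composition of fundamental classes really is the pushforward along the boundary stratum inclusion $\iota$ and that $\iota_{*}$ of a fundamental class is Poincar\'e dual to $[D_G]$ (using smoothness of $\bM_{\mathbb{C}}(\Gr)$ and of the stratum together with the compatibility of~\eqref{eq::rainsmaps} with the divisor structure described on page~\pageref{page::intersectionsofdivisors}), and that the degree bookkeeping is tight enough that the divisor presentation accounts for all, and only, the quadratic relations. The remaining suspension juggling for the cocontractad statement is routine.
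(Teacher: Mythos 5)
Your proposal is correct and follows essentially the same route as the paper: generation by fundamental classes (Lemma~\ref{lemma: hypergen}), quadraticity from Corollary~\ref{cor::homologywond_koszul}, the divisor presentation (Proposition~\ref{prop:divisorpres}) to pin down the weight-two relations, and the same suspension bookkeeping for the $\Grav^{*}$ statement. The only (harmless) difference is that where you identify the weight-two kernel directly from the completeness of the divisor presentation, the paper instead establishes a surjection $\Hyper\twoheadrightarrow H_{\bullet}(\bM_{\mathbb{C}})$ and closes the gap with a dimension count, computing $\dim H^{2}(\bM_{\mathbb{C}}(\Gr))$ via the $h$-presentation and $\dim\R_{\Hyper}(\Gr)=|E_{\Gr}|-1$ from Proposition~\ref{prop::hyperpres}.
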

	\begin{proof}
		By Lemma~\ref{lemma: hypergen}, the contractad $H_{\bullet}(\bM_{\mathbb{C}})$ is generated by fundamental classes $[\bM_{\mathbb{C}}(\Gr)]\in H_{2(|V_{\Gr}|-2)}(\bM(\Gr))$. Thanks to Corollary, the contractad $H_{\bullet}(\bM_{\mathbb{C}})$ is quadratic and hence by grading reasons all non-trivial relations are concentrated in  $H_{\bullet}^{(2)}(\bM_{\mathbb{C}})(\Gr)=H_{2(|V_{\Gr}|-4)}(\bM_{\mathbb{C}}(\Gr))$. Recall from Proposition~\ref{prop:divisorpres}, that, for each triple of graph $\Gr$ and edges $e,e'\in E_{\Gr}$, we have the identity
		\[
		\sum_{G: e\subset G} [D_G]=\sum_{G': e'\subset G'} [D_{G'}]
		\] Since $D_G\cong \bM_{\mathbb{C}}(\Gr/G)\times\bM_{\mathbb{C}}(\Gr|_G)$, the relation above is rewritten in the contractad language as follows
		\begin{equation}\label{eq::relH(M)}
			\sum_{G\colon e\subset G} [\bM_{\mathbb{C}}(\Gr/G)]\circ^{\Gr}_G [\bM_{\mathbb{C}}(\Gr|_G)]=\sum_{G'\colon e'\subset G} [\bM_{\mathbb{C}}(\Gr/G')]\circ^{\Gr}_G [\bM_{\mathbb{C}}(\Gr|_G')].   
		\end{equation} So, by Proposition~\ref{prop::hyperpres}, the correspondence $\nu_{\Gr}\mapsto [\bM_{\mathbb{C}}(\Gr)]$ provides the surjective morphism of contractads $\Hyper\cong H_{\bullet}(\bM_{\mathbb{C}})$.
		
		Since both contractads are quadratic it suffices to determine that their weight two components have the same dimensions. By Proposition~\ref{prop::hyperpres}, $\dim \Hyper^{(2)}(\Gr)=\dim \T^{(2)}(\nu_{\Gr})-\dim \R_{\Hyper}(\Gr)=|Tube_{\geq 2}(\Gr)|-|E_{\Gr}|+1$, where $|Tube_{\geq 2}(\Gr)|$ is the number of proper tubes with at least $2$ vertices. So, the difference on the right hand side is equal to the number of tubes (including $V_{\Gr}$) with at least $3$ vertices. From Poincare duality, we have $H^{(2)}_{\bullet}(\bM_{\mathbb{C}})(\Gr)=H_{2(V_{\Gr}-2)-2}(\bM(\Gr))\cong H^2(\bM_{\mathbb{C}}(\Gr))$ and $h$ classes indexed by tubes with at least $3$ vertices (including tube $V_{\Gr}$) form a basis. So, we deduce the desired isomorphism $\Hyper\cong H_{\bullet}(\bM_{\mathbb{C}})$.
		
		By Definition~\ref{def:hyper} and definition of Koszul dual cocontractad, we have $\Hyper^{\cokoszul}\cong \Susp^{-2}\Grav^*$. Thanks to Theorem~\ref{thm:hyperkoszul}, we have $H_{\bullet}(\bM_{\mathbb{C}})^{\cokoszul}\cong \Susp^{-2}H^{-1-\bullet}(\M_{\mathbb{C}})$. Combining these observations with the isomorphism  $\Hyper\cong H_{\bullet}(\bM_{\mathbb{C}})$, we conclude the isomorphism of cocontractads $H^{-1-\bullet}(\M_{\mathbb{C}})\cong \Grav^*$.
	\end{proof}
	\subsection{Monomial basis of \texorpdfstring{$H_{\bullet}(\bM_{\mathbb{C}})$}{HMC} and Koszulity of Cohomology rings} 
	\label{sec::hycom::monomials}
	
	Thanks to Corollary~\ref{cor::gravgrobner}, the contractad $\Grav$ has a quadratic Gr\"obner basis with respect to 2-weighted $\grpermlex$-order. Recall~\cite{lyskov2023contractads}, that if quadratic shuffle contractad $\Pop$ has a quadratic Gr\"obner basis, then its Koszul dual $\Pop^!$ has a quadratic Gr\"obner basis with respect to inverse order. Hence with respect to the dual order, we have
	\begin{sled}
		The contractad $\Hyper$ admits a quadratic Gr\"obner basis.
	\end{sled}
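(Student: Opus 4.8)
The plan is to deduce this immediately from Corollary~\ref{cor::gravgrobner} together with the compatibility of quadratic Gr\"obner bases with Koszul duality, exactly as one does for operads.

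First I would observe that by Definition~\ref{def:hyper} we have $\Hyper=\Susp^{-1}(\Grav)^{!}$, so $\Hyper$ and $\Grav^{!}$ have the same underlying graphical collection up to a sign twist by the one-dimensional components of $\Susp^{-1}$. Consequently their shuffle versions have identical sets of shuffle (tree) monomials in every component, leading terms of elements are unaffected by the twist, and a family of elements is a Gr\"obner basis for the defining ideal of $(\Grav^{!})^{\forget}$ precisely when the corresponding sign-twisted family is one for $\Hyper^{\forget}$. Hence it suffices to exhibit a quadratic Gr\"obner basis for $(\Grav^{!})^{\forget}$.

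Second, Corollary~\ref{cor::gravgrobner} says that $\Grav^{\forget}$ has a quadratic Gr\"obner basis with respect to the 2-weighted $\grpermlex$-order built in the proof of Theorem~\ref{thm:gravpres}, whose leading terms are the monomials \eqref{eq:lead1}--\eqref{eq:lead2}. Since in each component $(\Gr,<)$ the set of quadratic shuffle monomials on the generators $\{\lambda_{\Gr}\}$ is finite, reversing this total order again gives a total order, and monotonicity with respect to contractad compositions is preserved, so the reversed order is again a monomial order in the sense of \S\ref{sec::Grobner::contractad}. Now I would apply the Gr\"obner-duality statement for contractads recalled above (from~\cite[\S 4]{lyskov2023contractads}): a quadratic shuffle contractad with a quadratic Gr\"obner basis for a monomial order $<$ has a quadratic Gr\"obner basis for its Koszul dual with respect to the reversed order. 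Taking the contractad to be $\Grav^{\forget}$ yields a quadratic Gr\"obner basis for $(\Grav^{!})^{\forget}$, hence for $\Hyper^{\forget}$, and therefore $\Hyper$ admits a quadratic Gr\"obner basis.

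There is essentially no obstacle here: the genuine work was carried out in establishing Corollary~\ref{cor::gravgrobner}, and the only points needing attention are the routine verifications that the reversed 2-weighted $\grpermlex$-order is a bona fide monomial order and that the desuspension $\Susp^{-1}$ does not interfere with leading-term combinatorics. As a consistency check one may note that, under the reversed order, the normal monomials of $\Hyper$ are exactly the shuffle trees that are \emph{not} $\Grav$-normal, which recovers the equality $\dim\Hyper(\Gr)=\dim H_{\bullet}(\bM_{\mathbb{C}})(\Gr)$ from Theorem~\ref{thm::homologywond_hyper}.
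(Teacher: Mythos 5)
Your argument is correct and is essentially the paper's own proof: it invokes Corollary~\ref{cor::gravgrobner} together with the fact that a quadratic Gr\"obner basis for a quadratic shuffle contractad yields a quadratic Gr\"obner basis for its Koszul dual with respect to the reversed order, the desuspension being harmless for the leading-term combinatorics. (Only your closing ``consistency check'' is inaccurate: complementation of leading terms under order reversal holds at the quadratic level, but in weight $\geq 3$ the normal monomials of $\Hyper$ are \emph{not} simply the tree monomials that fail to be $\Grav$-normal --- a tree can have one forbidden quadratic subtree of each kind and so be normal for neither --- though this aside plays no role in the actual argument.)
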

	With respect to inverse 2-weighted $\grpermlex$-order, the leading terms of quadratic relations~\eqref{eq:hyperrel} of $\Hyper$, have the form
	\begin{equation}
		\nu_{\Gr/e}\circ^{\Gr}_{e} \nu_e\text{, for }e\neq e_{\min},   
	\end{equation} for each ordered graph $(\Gr,<)$, where $e_{\min}=(v_1,v_2)$ is the unique edge consisting of minimal vertex $v_1$ of $(\Gr,<)$ with its minimal adjacent vertex $v_2$. Recall that normal monomials with respect to leading terms of Gr\"obner basis form a basis of the corresponding contractad. So, we obtain the following result.
	\begin{theorem}[Monomial basis for Homology]\label{thm::monbasishyper}
		Let $\Gr$ be a connected graph with a fixed ordering of vertices. The homology groups of the graphic compactification $\bM_{\mathbb{C}}(\Gr)$ have the additive basis consisting of fundamental classes $[\bM_{\mathbb{C}}((T))]$, labeled by stable $\Gr$-admissible trees $T$, satisfying the following condition: For each 2-subtree $T'\subset T$ with a top internal vertex of valency $3$
		\[  
		T'=\vcenter{\hbox{\begin{tikzpicture}[
					scale=0.6,
					vert/.style={inner sep=2pt, circle,  draw, thick},
					leaf/.style={inner sep=2pt,rectangle},
					edge/.style={-,black!30!black, thick},
					]
					%tau1+(-3,2)
					\node[vert] (1l) at (-3,2){\scriptsize$\tau_1$};
					\node[leaf] (1ll) at (-3.35,2.7) {\scriptsize\space};
					\node[leaf] (1lm) at (-3,2.75) {\scriptsize\space};
					\node[leaf] (1lr) at (-2.65,2.7) {\scriptsize\space};
					\draw[edge] (1l)--(1ll);
					\draw[edge] (1l)--(1lm);
					\draw[edge] (1l)--(1lr);
					%tau2+(-1,2)
					\node[vert] (2l) at (-1,2){\scriptsize$\tau_2$};
					\node[leaf] (2ll) at (-1.35,2.7) {\scriptsize\space};
					\node[leaf] (2lm) at (-1,2.75) {\scriptsize\space};
					\node[leaf] (2lr) at (-0.65,2.7) {\scriptsize\space};
					\draw[edge] (2l)--(2ll);
					\draw[edge] (2l)--(2lm);
					\draw[edge] (2l)--(2lr);
					%taui+(0,1)
					\node[vert] (4l) at (0,1) {\scriptsize$\tau_i$};
					\node[leaf] (4ll) at (-0.35,1.7) {\scriptsize\space};
					\node[leaf] (4lm) at (0,1.75) {\scriptsize\space};
					\node[leaf] (4lr) at (0.35,1.7) {\scriptsize\space};
					\draw[edge] (4l)--(4ll);
					\draw[edge] (4l)--(4lm);
					\draw[edge] (4l)--(4lr);
					%tauk+(2,1)
					\node[vert] (kl) at (2,1){\scriptsize$\tau_k$};
					\node[leaf] (kll) at (1.65,1.7) {\scriptsize\space};
					\node[leaf] (klm) at (2,1.75) {\scriptsize\space};
					\node[leaf] (klr) at (2.35,1.7) {\scriptsize\space};
					\draw[edge] (kl)--(kll);
					\draw[edge] (kl)--(klm);
					\draw[edge] (kl)--(klr);
					
					%internalvertices
					\node[vert] (root) at (0,0) {\space};
					\node[vert] (bi) at (-2,0.8) {\space};
					%leaves
					\node[leaf] (3l) at (-1,1) {\scriptsize$\cdots$};
					
					\node[leaf] (5l) at (1,1) {\scriptsize$\cdots$};
					\node[leaf] (dotsatroot) at (0,-1) {$\cdots$};
					%edges
					\draw[edge] (dotsatroot)--(root);
					\draw[edge] (root)--(bi);
					\draw[edge] (root)--(3l);
					\draw[edge] (root)--(4l);
					\draw[edge] (root)--(5l);
					\draw[edge] (root)--(kl);
					\draw[edge] (bi)--(1l);
					\draw[edge] (bi)--(2l);
		\end{tikzpicture}}}, 
		\] with $\tau_1$,$\tau_2$ subtrees outgoing from the top vertex, and $\tau_3,\cdots,\tau_k$ the other outgoing subtrees, we have:
		\begin{itemize}
			\item[(i)] For $i>1$, $\min L(\tau_i)>\min L(\tau_1)$,
			\item[(ii)] For $i>2$, if union $L(\tau_i)\cup L(\tau_1)\subset V_{\Gr}$ is a tube, then $\min L(\tau_i)>\min L(\tau_2)$,
		\end{itemize} where $L(\tau_i)$ is the leaf set of $\tau_i$, and $\min L(\tau_i)$ is a minimal leaf with respect to vertex-ordering.   
	\end{theorem}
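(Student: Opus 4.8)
The plan is to read the basis off the quadratic Gröbner basis for $\Hyper$ and then make the normality condition explicit. First I would invoke Theorem~\ref{thm::homologywond_hyper} to identify $H_{\bullet}(\bM_{\mathbb{C}})$ with $\Hyper$; under this identification $\nu_{\Gr}$ is the fundamental class $[\bM_{\mathbb{C}}(\Gr)]$, and since the contractad structure on $H_{\bullet}(\bM_{\mathbb{C}})$ is induced by the embeddings $\circ^{\Gr}_{G}$ and, as recalled on page~\pageref{page::intersectionsofdivisors}, $\bM_{\mathbb{C}}((T))\cong\prod_{v}\bM_{\mathbb{C}}(\In(v))$ is the transversal intersection of the divisors $D_{L_{e}}$, the tree monomial of a stable $\Gr$-admissible tree $T$ (canonically decorated by the symmetric generators $\nu_{\In(v)}$) maps to the stratum class $[\bM_{\mathbb{C}}((T))]$ by $\nu_{\Gr}\mapsto[\bM_{\mathbb{C}}(\Gr)]$ together with multiplicativity. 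Next, since $\Hyper^{\forget}$ admits a quadratic Gröbner basis with respect to the inverse $2$-weighted $\grpermlex$-order (the Corollary stated just above), the general theory of Gröbner bases for shuffle contractads of~\cite[\S 4]{lyskov2023contractads} gives that the tree monomials not divisible by any leading term of this Gröbner basis form an additive basis of $\Hyper(\Gr)$, hence of $H_{\bullet}(\bM_{\mathbb{C}}(\Gr))$. So it remains to identify the non-divisible monomials with the trees in the statement.

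For this I would unwind the divisibility criterion, which for a quadratic Gröbner basis is a local condition on $2$-vertex subtrees: a shuffle tree monomial is normal iff for every internal edge of the underlying tree the associated $2$-vertex subtree — obtained by contracting the $k$ hanging subtrees $\tau_{1},\dots,\tau_{k}$ to leaves, remembering only the sets $L(\tau_{j})$ and their minima — is not a leading term. The leading terms recalled before the statement all have the shape $\nu_{\Omega/e}\circ^{\Omega}_{e}\nu_{e}$ with $\nu_{e}=\nu_{\Path_{2}}$ binary; hence only $2$-vertex subtrees whose top vertex has valency $3$ can obstruct normality, and this is exactly the local picture $T'$ drawn in the statement. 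Given such a $T'$, set $H:=\Gr|_{L(\tau_{1})\cup\dots\cup L(\tau_{k})}/\{L(\tau_{1}),\dots,L(\tau_{k})\}$, a connected graph on $k$ vertices $w_{1},\dots,w_{k}$ with $w_{j}\leftrightarrow\tau_{j}$, ordered by $w_{i}<w_{j}\Leftrightarrow\min L(\tau_{i})<\min L(\tau_{j})$ (so $w_{1}<w_{2}$ in the shuffle presentation). By the very definition of the input graph (and the identification $\In(v)\cong\Gr|_{\cup L_{e_{i}}}/\{L_{e_{i}}\}$) one checks directly that $T'$ is the $2$-vertex monomial $\nu_{H/e}\circ^{H}_{e}\nu_{e}$ for the edge $e=\{w_{1},w_{2}\}\in E_{H}$, with bottom vertex carrying $\nu_{H/e}$. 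Therefore $T'$ is innocuous precisely when $e=e_{\min}(H)$.

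Finally I would spell out $e=e_{\min}(H)$. Since $e_{\min}(H)$ joins the minimal vertex of $H$ to its minimal neighbour and $w_{1}<w_{2}$, the equality $\{w_{1},w_{2}\}=e_{\min}(H)$ forces $w_{1}$ to be the minimal vertex of $H$, i.e.\ $\min L(\tau_{1})<\min L(\tau_{i})$ for all $i>1$, which is condition (i); and it forces $w_{2}$ to be the minimal neighbour of $w_{1}$, i.e.\ every $\tau_{i}$ with $i>2$ and $L(\tau_{i})\cup L(\tau_{1})$ a tube of $\Gr$ (these being exactly the neighbours of $w_{1}$ in $H$ other than $w_{2}$) satisfies $\min L(\tau_{i})>\min L(\tau_{2})$, which is condition (ii); conversely (i) and (ii) return $e=e_{\min}(H)$. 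Running this over all such subtrees of $T$ identifies the normal stable $\Gr$-admissible trees with those of the statement. I do not expect a deep obstacle here; the only delicate point is the bookkeeping in this last step — keeping straight the min-leaf ordering of the shuffle presentation against the induced order $<^{\mathrm{ind}}$ on contracted graphs, and verifying that the contractad divisibility criterion really reduces to the single edge $\{w_{1},w_{2}\}$ of $H$ — which is why I would carry out the whole translation at the level of $2$-vertex subtrees as above.
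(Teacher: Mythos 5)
Your proposal is correct and follows essentially the same route as the paper: identify $H_{\bullet}(\bM_{\mathbb{C}})$ with $\Hyper$, invoke the quadratic Gr\"obner basis for $\Hyper^{\forget}$ with respect to the inverse $2$-weighted $\grpermlex$-order whose leading terms are $\nu_{\Gr/e}\circ^{\Gr}_e\nu_e$ for $e\neq e_{\min}$, and translate non-divisibility into the local conditions (i) and (ii) on $2$-subtrees. Your write-up is in fact more explicit than the paper's (which essentially asserts the translation), and your bookkeeping of the contracted graph $H$, the edge $\{w_1,w_2\}$, and the equivalence $e=e_{\min}(H)\Leftrightarrow$ (i)\,\&\,(ii) is accurate.
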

	
	Let us explain the motivation of this basis. In~\cite{dotsenko2022homotopy}, Dotsenko showed that the cohomology ring $H^{\bullet}(\beM_{0,n+1})$ in $h$-presentations admits a quadratic Gr\"oebner basis, which implies that the cohomology ring $H^{\bullet}(\beM_{0,n+1})$ is Koszul. The main idea was the construction a one-to-one correspondence between normal algebraic monomials in $H^{\bullet}(\beM_{0,n+1})$ and normal operadic monomials in the Hypercommutative operad $\mathsf{Hyper}(n)$. In~\cite{coron2023supersolvability},  Coron generalized Dotsenko's result to a particular type of building sets. In particular, his results imply the Koszulity of cohomology rings $H^{\bullet}(\bM(\Gr))$ in the case $\Gr$ is a chordal graph.  We suspect that these results could be generalized to the case of arbitrary connected graph $\Gr$ using the contractad monomial basis of $\Hyper$ from Theorem~\ref{thm::monbasishyper}.
	\begin{conjecture}
		For a connected graph $\Gr$, the cohomology ring $H^{\bullet}(\bM(\Gr))$ is Koszul.
	\end{conjecture}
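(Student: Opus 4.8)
The plan is to adapt the strategy of Dotsenko~\cite{dotsenko2022homotopy} for $\beM_{0,n+1}$ and of Coron~\cite{coron2023supersolvability} for chordal graphs, removing the chordality hypothesis by exploiting the explicit contractad basis of $\Hyper$ provided by Theorem~\ref{thm::monbasishyper}. Since a connected graded associative algebra that possesses a quadratic Gr\"obner basis of relations is Koszul, it suffices to find a monomial order on the polynomial generators in the $h$-presentation of $H^{\bullet}(\bM(\Gr))$ (the $h$-presentation of \S\ref{sec::H::MC::ring}) for which the three families of $h$-relations already constitute a Gr\"obner basis. Equivalently, after fixing a total order on the vertex set $V_{\Gr}$, I would describe the set $N(\Gr)$ of monomials $\prod_{G} h_G^{a_G}$ not divisible by any leading term of a quadratic $h$-relation, and verify that $|N(\Gr)|$ equals $\dim H^{\bullet}(\bM(\Gr))$ in every cohomological degree; since normal monomials always span the quotient, this forces $N(\Gr)$ to be a basis and the quadratic relations to be a Gr\"obner basis.

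First I would fix the term order. The natural candidate is the polynomial-algebra shadow of the inverse $2$-weighted $\grpermlex$-order used for $\Hyper$ in \S\ref{sec::hycom::monomials}, arranged so that the leading monomials of the $h$-relations are supported on pairs of tubes that are ``incompatible'' in the nested-set sense, and so that $N(\Gr)$ consists precisely of products $\prod_{G\in\mathcal N}h_G^{a_G}$ over nested families $\mathcal N$ of tubes with $|G|\ge 3$, the exponents $a_G$ lying in the range dictated by the covering relations of $\mathcal N$, as in the Yuzvinsky/Pagaria bases. Second, and this is the core of the argument, I would construct a bijection between $N(\Gr)$ and the normal tree monomials of $\Hyper(\Gr)$ listed in Theorem~\ref{thm::monbasishyper}: a stable $\Gr$-admissible tree $T$ gives the nested family $\{\Leav(T_e)\mid e\in\Edge(T)\}$, and the extra data in Theorem~\ref{thm::monbasishyper}---for each valence-$3$ internal vertex, the choice of which incoming subtrees play the roles of $\tau_1,\tau_2$---should be encoded by a ``depth'' statistic producing the exponents $a_G$, exactly in the spirit of Dotsenko's bijection for $\beM_{0,n+1}$. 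The admissibility conditions (i)--(ii) of Theorem~\ref{thm::monbasishyper} would then have to be matched, term by term, with the exponent bounds defining $N(\Gr)$.

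Third, I would close the loop: by $H_{\bullet}(\bM_{\mathbb{C}})\cong\Hyper$ (Theorem~\ref{thm::homologywond_hyper}) together with Poincar\'e duality for the smooth projective variety $\bM_{\mathbb{C}}(\Gr)$, the Hilbert series of $H^{\bullet}(\bM(\Gr))$ is determined by that of $\Hyper(\Gr)$, so the bijection above yields $|N(\Gr)|=\dim H^{\bullet}(\bM(\Gr))$ in each degree, whence $N(\Gr)$ is a basis, the quadratic $h$-relations form a Gr\"obner basis, and $H^{\bullet}(\bM(\Gr))$ is Koszul. The main obstacle is precisely what restricted Coron to chordal graphs: when $\Gr$ contains an induced cycle a tube can be cut along several edges, so two quadratic $h$-relations of the form $(h_{H_1\cup H_2}-h_{H_1})(h_{H_1\cup H_2}-h_{H_2})$ coming from different overlapping pairs can share a leading divisor, and one must check that every resulting $S$-polynomial reduces to zero---equivalently, that no new higher-weight Gr\"obner element ever appears and the count $|N(\Gr)|$ is exactly right for all connected $\Gr$. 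I expect the tube-theoretic clause (ii) of Theorem~\ref{thm::monbasishyper}, which has no counterpart in the complete-graph case, to be exactly the ingredient that makes this reduction succeed; a subsidiary nuisance is keeping the order on $h$-monomials synchronized with the $2$-weighted order on $\Hyper$-trees so that leading terms correspond under the bijection.
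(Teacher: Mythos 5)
The statement you are addressing is stated in the paper as a \emph{conjecture}: the authors explicitly do not prove it, and the sentence immediately preceding it (``We suspect that these results could be generalized\ldots using the contractad monomial basis of $\Hyper$ from Theorem~\ref{thm::monbasishyper}'') is precisely the strategy you have written out. So your proposal reproduces the authors' intended programme rather than supplying a proof, and the programme as you present it still has its decisive steps open. Concretely: (1) you never actually define the monomial order on the commutative polynomial ring in the $h_G$'s, nor exhibit the leading terms of the three families of $h$-relations under it; ``the polynomial-algebra shadow of the inverse $2$-weighted $\grpermlex$-order'' is not a construction, and it is not automatic that an order on shuffle tree monomials descends to a term order on a polynomial ring with the required compatibility. (2) The core of the argument --- the bijection between the normal $h$-monomials $N(\Gr)$ and the admissible trees of Theorem~\ref{thm::monbasishyper}, with the exponents $a_G$ recovered from a ``depth'' statistic and conditions (i)--(ii) matched to the exponent bounds --- is only asserted (``should be encoded,'' ``would then have to be matched, term by term''). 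In Dotsenko's case $\Gr=\K_n$ this matching is the entire content of the proof, and for general $\Gr$ the clause (ii) of Theorem~\ref{thm::monbasishyper} introduces a genuinely new constraint whose ring-theoretic counterpart you do not identify. (3) You yourself flag that for a graph with an induced cycle distinct overlapping pairs $H_1,H_2$ produce quadratic relations sharing a leading divisor, so that $S$-polynomials must be shown to reduce to zero; this is exactly the point where Coron's argument requires chordality, and you offer only the expectation that clause (ii) resolves it. Until that reduction (or equivalently the cardinality identity $|N(\Gr)|=\dim H^{\bullet}(\bM(\Gr))$ in each degree) is verified for an arbitrary connected graph, nothing has been proved.

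To be clear, the surrounding logic is sound: normal monomials always span, so a degreewise cardinality match would force them to be a basis, would certify the quadratic $h$-relations as a Gr\"obner basis, and would give Koszulness; and Poincar\'e duality together with $H_{\bullet}(\bM_{\mathbb{C}})\cong\Hyper$ does reduce the dimension count to counting the trees of Theorem~\ref{thm::monbasishyper}. What is missing is the combinatorial heart --- the explicit term order, the explicit bijection, and the confluence check for non-chordal graphs. As the paper stands, these are open, which is why the statement is a conjecture and not a theorem.
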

	\subsection{Hilbert series}
	In this subsection, we compute the Hilbert series of wonderful compactifications $\bM_{\mathbb{C}}(\Gr)$ using the machinery of graphic functions. 
	
	We consider the graphic function $\chi_q(\bM_{\mathbb{C}})$ that computes Hilbert series of the components $\bM(\Gr)$ of wonderful contractad
	\begin{gather*}
		\chi_q(\bM_{\mathbb{C}})(\Gr)=\sum_{i=0} \dim H^{2i}(\bM_{\mathbb{C}}(\Gr))q^i
	\end{gather*} Recall that the graphic compactification $\bM_{\mathbb{C}}(\Gr)$ has no non-trivial odd cohomology groups, hence the polynomial $\chi_q(\bM_{\mathbb{C}})(\Gr)$ completely determines Betti numbers of $\bM_{\mathbb{C}}(\Gr)$.
	\begin{lemma}\label{lemma::hilbertwond}
		We have
		\[
		\chi^{\mathrm{w}}_q(\Hyper)=q\chi_q(\bM_{\mathbb{C}})+(1-q)\epsilon.
		\]
	\end{lemma}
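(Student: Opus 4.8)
The plan is to relate the two weight-graded graphic functions by carefully tracking what the weight grading on $\Hyper$ means in terms of the geometry of $\bM_{\mathbb{C}}$. Recall from Lemma~\ref{lemma: hypergen} that $\Hyper \cong H_{\bullet}(\bM_{\mathbb{C}})$ carries a weight grading with $\mathrm{w}([\bM_{\mathbb{C}}((T))]) = |\Ver(T)|$, and that the generator $\nu_{\Gr}$ (the fundamental class $[\bM_{\mathbb{C}}(\Gr)]$) sits in weight $1$. On the other hand, the homological degree of a class $[\bM_{\mathbb{C}}((T))]$ is $2\dim_{\mathbb{C}}\bM_{\mathbb{C}}((T)) = 2\sum_{v\in\Ver(T)}(|\In(v)|-2)$, and by Poincar\'e duality this corresponds to the cohomology class in degree $2(|V_{\Gr}|-2) - 2\dim\bM_{\mathbb{C}}((T))$, i.e. $2(|\Edge(T)|)$ counting internal edges — equivalently $2(|\Ver(T)|-1)$. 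So for a monomial of weight $r = |\Ver(T)|$ the corresponding cohomology class of $\bM_{\mathbb{C}}(\Gr)$ lives in degree $2(r-1)$. First I would make this bookkeeping precise: the weight-$r$ part of $\Hyper(\Gr)$ is canonically identified with $H^{2(r-1)}(\bM_{\mathbb{C}}(\Gr))$ (using Poincar\'e duality as in the proof of Theorem~\ref{thm::homologywond_hyper}), for all $r\geq 1$.

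Granting that identification, the computation is immediate. By definition
\[
\chi^{\mathrm{w}}_q(\Hyper)(\Gr) = \sum_{r\geq 1} \dim\left(\Hyper(\Gr)^{(r)}\right) q^r = \sum_{r\geq 1} \dim H^{2(r-1)}(\bM_{\mathbb{C}}(\Gr))\, q^r = q\sum_{i\geq 0} \dim H^{2i}(\bM_{\mathbb{C}}(\Gr))\, q^i = q\,\chi_q(\bM_{\mathbb{C}})(\Gr),
\]
valid whenever $\Gr$ has at least two vertices, where the reindexing $i = r-1$ is used and the sum over $r$ starts at $r=1$ since $\bM_{\mathbb{C}}(\Gr)$ is connected so $H^0\neq 0$. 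For the one-vertex graph $\Gr = \Path_1$ the conventions differ: $\Hyper(\Path_1) = \mathbb{1}$ is concentrated in weight $0$, so $\chi^{\mathrm{w}}_q(\Hyper)(\Path_1) = 1$, whereas $\bM_{\mathbb{C}}(\Path_1) = \mathrm{pt}$ gives $\chi_q(\bM_{\mathbb{C}})(\Path_1) = 1$ and $q\cdot 1 = q \neq 1$. The discrepancy is exactly $1 - q$, supported on $\Path_1$, which is precisely $(1-q)\epsilon$. Combining the two cases yields
\[
\chi^{\mathrm{w}}_q(\Hyper) = q\,\chi_q(\bM_{\mathbb{C}}) + (1-q)\epsilon
\]
as graphic functions.

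The main obstacle is the first step: justifying that the weight grading on $\Hyper$ matches the cohomological grading on $\bM_{\mathbb{C}}$ shifted by one, i.e. that there are no "extra" weight components and that weight $r$ really does correspond to degree $2(r-1)$ rather than just giving an inequality. This needs the fact that $\Hyper$ is generated purely in weight $1$ (Lemma~\ref{lemma: hypergen}) together with the purity statement from the proof of Theorem~\ref{thm:hyperkoszul} — that $H^k(\M_{\mathbb{C}}(\Gr))$ is pure of weight $2k$ and the Deligne spectral sequence degenerates on the diagonal $q = 2p$ — which pins down the homological degree of every weight-$r$ monomial as claimed. Once that is in place, the rest is the elementary reindexing above and the separate inspection of the trivial component $\Path_1$.
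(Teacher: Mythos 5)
Your proposal is correct and follows essentially the same route as the paper: identify the weight-$r$ component of $\Hyper(\Gr)$ with $H^{2(r-1)}(\bM_{\mathbb{C}}(\Gr))$ via the degree bookkeeping for $\Gr$-admissible trees and Poincar\'e duality, reindex to get $q\chi_q(\bM_{\mathbb{C}})(\Gr)$ for $\Gr\neq\Path_1$, and correct by $(1-q)\epsilon$ on the one-vertex graph. The only difference is that your appeal to purity and the Deligne spectral sequence in the last paragraph is not needed: since $H_{\bullet}(\bM_{\mathbb{C}})$ is spanned by the classes $[\bM_{\mathbb{C}}((T))]$ and the homological degree $2(|V_{\Gr}|-r-1)$ of such a class is an exact function of its weight $r=|\Ver(T)|$, the identification is forced without any further input.
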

	\begin{proof}
		We have the following connection between weight and homological grading
		\[
		\Hyper^{(r)}(\Gr)=\Hyper(\Gr)_{2(|V_{\Gr}|-r-1)}\cong H_{2(|V_{\Gr}|-r-1)}(\bM_{\mathbb{C}}(\Gr))\overset{\mathsf{P.D.}}{\cong} H^{2(r-1)}(\bM_{\mathbb{C}}(\Gr)),
		\] for $\Gr\neq\Path_1$. So, for $\Gr\neq\Path_1$, we have
		\[
		\chi^{\mathrm{w}}_q(\Hyper)(\Gr)=\sum^{|V_{\Gr}|-1}_{r\geq 1} \dim H^{2(r-1)}(\bM_{\mathbb{C}}(\Gr)) q^{r}=q\chi_q(\bM_{\mathbb{C}})(\Gr)
		\] From which we deduce the identity $\chi^{\mathrm{w}}_q(\Hyper)=q\chi_q(\bM_{\mathbb{C}})+(1-q)\epsilon$.
	\end{proof}
	
	Let us state the useful formula
	\begin{lemma}\label{lemma::hilbertgrav}
		We have
		\[
		\chi^{\mathrm{w}}_{-q}(\Grav)=\frac{q}{q-1}\chi_{-q}(\Gerst)-\frac{1}{q-1}\epsilon.
		\]
	\end{lemma}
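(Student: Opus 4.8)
The plan is to relate the three weight-graded graphic functions $\chi^{\mathrm{w}}_{-q}(\Grav)$, $\chi^{\mathrm{w}}_{-q}(\Hyper)$, and $\chi_{-q}(\Gerst)$ by exploiting the three structural facts already established in the excerpt: that $\Grav$ is Koszul with Koszul dual $\Hyper$ (Definition~\ref{def:hyper}, Theorem~\ref{thm:gravpres}), that $H_{\bullet}(\bM_{\mathbb{C}})\cong\Hyper$ with its homological grading matching the weight grading up to a shift (Lemma~\ref{lemma::hilbertwond}), and that $(\Gerst,\Delta)$ is acyclic with $\Grav=\ker\Delta$ (Proposition~\ref{prp::Gerst::splits}). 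First I would pin down the precise Euler-characteristic relation between $\Grav$ and $\Gerst$: since $(\Gerst_{\bullet}(\Gr),\Delta)$ is acyclic with $\Delta$ of homological degree $+1$ and $\Grav_{\bullet}(\Gr)=\ker\Delta=\operatorname{im}\Delta$, the exact sequence $0\to\Grav_{\bullet}\to\Gerst_{\bullet}\to\Grav_{\bullet+1}\to 0$ gives, on the level of graded dimensions in the homological variable, the identity $\dim\Gerst(\Gr)_i = \dim\Grav(\Gr)_i + \dim\Grav(\Gr)_{i+1}$ in each component. The subtlety is the bookkeeping of signs and of the change of variable: the homological degrees of $\Grav$ and $\Gerst$ are governed by the weight through $\Grav^{(r)}(\Gr)=\Grav(\Gr)_{?}$, and one must convert the $\Delta$-exactness into a clean statement about $\chi^{\mathrm{w}}_{-q}(\Grav)$ versus $\chi_{-q}(\Gerst)$.

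Concretely, I would record the weight-vs-homological-degree dictionaries: for $\Gr\neq\Path_1$, $\Grav^{(r)}(\Gr)$ sits in homological degree $r$ (consistent with $\lambda_{\Gr}$ of weight $1$ and degree $1$, cf.\ Theorem~\ref{thm:gravpres}) while $\Gerst(\Gr)$ has its weight space of degree $|V_\Gr|-1$ equal to all of $\Gerst(\Gr)$, with homological degree on $\Susp^{-1}\Lie$-factors contributing. The map $\Delta$ raises homological degree by one and \emph{preserves} weight (it sends $m\mapsto b$, both weight one), so the short exact sequence above is an equality of weight-homogeneous pieces. Summing $(-q)^{(\text{weight})}$ (equivalently tracking $\chi^{\mathrm{w}}_{-q}$) over the exact sequence, and using that the homological degree within a fixed weight space of $\Gerst$ ranges appropriately, yields a relation of the shape $\chi_{-q}(\Gerst)=\chi^{\mathrm{w}}_{-q}(\Grav)+(\text{shift in }q)\cdot\chi^{\mathrm{w}}_{-q}(\Grav)$ up to the correction term on $\Path_1$; solving for $\chi^{\mathrm{w}}_{-q}(\Grav)$ gives $\chi^{\mathrm{w}}_{-q}(\Grav)=\frac{q}{q-1}\chi_{-q}(\Gerst)-\frac{1}{q-1}\epsilon$, where the $-\frac{1}{q-1}\epsilon$ term fixes the discrepancy coming from the single component $\Path_1$ (where $\Grav(\Path_1)=\Gerst(\Path_1)=\mathbb{1}$ but the exact-sequence count degenerates). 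I would double-check the correction term by evaluating both sides on $\Path_1$: the left side is $\chi^{\mathrm{w}}_{-q}(\Grav)(\Path_1)=1$, the right side is $\frac{q}{q-1}\cdot 1-\frac{1}{q-1}=\frac{q-1}{q-1}=1$, which matches, and then verify it on $\Path_2$ as a sanity check.

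The main obstacle I anticipate is getting the change of variable and the sign conventions exactly right: $\Gerst$ carries a homological grading that is \emph{not} equal to its weight grading (the weight is fixed to $|V_\Gr|-1$ while the homological degree runs over the number of $b$'s), whereas for $\Grav$ the weight and homological grading coincide on generators but the contractad is built from $\Delta$-images so a single weight space spreads over several homological degrees. The cleanest route is probably to avoid manipulating $\Gerst$'s homological grading directly and instead argue purely with $\Delta$-exactness weight-space by weight-space, using that $\Delta$ preserves weight, so that within each fixed graph $\Gr$ and each fixed weight $r$ the alternating sum of dimensions (which is what $\chi_{-q}$ and $\chi^{\mathrm{w}}_{-q}$ encode coefficient-by-coefficient in $q$) telescopes. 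Once the telescoping is set up correctly the identity drops out; I expect the bulk of the work to be notational rather than conceptual, and the $\epsilon$-correction to be forced by the unit component alone.
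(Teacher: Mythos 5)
Your proposal is correct and follows essentially the same route as the paper: the paper also observes that weight and homological gradings coincide on $\Grav$ (so $\chi^{\mathrm{w}}_{-q}(\Grav)=\chi_{-q}(\Grav)$) and then uses the split acyclicity of $(\Gerst,\Delta)$, in the form $\Gerst_{\bullet}\cong\overline{\Grav}_{\bullet}\oplus s^{-1}\overline{\Grav}_{\bullet}\oplus\mathbb{1}$, which is exactly your short exact sequence $0\to\Grav_{i}\to\Gerst_{i}\to\Grav_{i+1}\to 0$ at the level of graded dimensions. Your factor $1+(-q)^{-1}=\tfrac{q-1}{q}$ and the $\varepsilon$-correction on $\Path_1$ match the paper's computation.
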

	\begin{proof}
		Since each generator of $\Grav$ has homological degree equal to $1$, the homological and weight gradings of this contractad coincides. So, $\chi^{\mathrm{w}}_{-q}(\Grav)=\chi_{-q}(\Grav)$. Since the dg contractad $(\Gerst,\Delta)$ splits exact,  we have an isomorphism of homologically graded graphical collections
		\[
		\Gerst_{\bullet}\cong \overline{\Grav}_{\bullet}\oplus s^{-1}\overline{\Grav}_{\bullet}\oplus \mathbb{1},
		\] from which we obtain the desired formula.
	\end{proof}
	Now we are ready to state one of the main results of this paper.
	\begin{theorem}[Recurrence for $\bM_{\mathbb{C}}$]\label{thm::rec_for_bM}
		We have
		\begin{equation}\label{eq::rec_for_bM1}
			\chi_q(\bM_{\mathbb{C}})*\left(\frac{q}{q-1}\epsilon-\frac{1}{q(q-1)}\mathfrak{X}(q)\right)=\epsilon,   
		\end{equation} where $\mathfrak{X}(q)$ is the chromatic graphic function (see Definition~\ref{def::chromgrfun}). Equivalently, we have
		\begin{equation}\label{eq::rec_for_bM2}
			\chi_q(\bM_{\mathbb{C}})*\left(\frac{q}{q-1}\mathbb{1}-\frac{1}{q-1}\mathbb{1}_q\right)=\mathbb{1}.   
		\end{equation}
		Explicitly, we have the reccurence of the form
		\begin{equation}
			\sum_{I\vdash\Gr}\chi_q(\bM_{\mathbb{C}})(\Gr/I)\prod_{G\in I}\frac{q-q^{|G|-1}}{q-1}=1.  
		\end{equation}
	\end{theorem}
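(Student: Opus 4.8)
The plan is to derive the recurrence as a formal consequence of the Koszul functional equation of Theorem~\ref{thm::hilbertkoszul} applied to $\Hyper\simeq H_\bullet(\bM_{\mathbb{C}})$, fed through Lemma~\ref{lemma::hilbertwond}, Lemma~\ref{lemma::hilbertgrav} and Lemma~\ref{lemma::formula_for_chrom}. First I record the input: the contractad $\Hyper$ is quadratic, weight-graded (one generator $\nu_\Gamma$ of weight $1$ per component) and Koszul by Theorem~\ref{thm::homologywond_hyper}, and its Koszul dual contractad is $\Susp\Grav$ by Definition~\ref{def:hyper} together with biduality of the Koszul construction. Since the suspension $\Susp$ modifies each component only by a one-dimensional factor, it does not affect the weight-graphic function, $\chi^{\mathrm{w}}_q(\Susp\Grav)=\chi^{\mathrm{w}}_q(\Grav)$. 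Hence Theorem~\ref{thm::hilbertkoszul} yields
\[
\chi^{\mathrm{w}}_{-q}(\Grav)*\chi^{\mathrm{w}}_q(\Hyper)=\epsilon ,
\]
and, both factors having invertible linear coefficient, the identity also holds with the factors interchanged.

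Next I would eliminate $\Hyper$ and $\Grav$. Substituting $\chi^{\mathrm{w}}_q(\Hyper)=q\,\chi_q(\bM_{\mathbb{C}})+(1-q)\epsilon$ (Lemma~\ref{lemma::hilbertwond}) into $\chi^{\mathrm{w}}_q(\Hyper)*\chi^{\mathrm{w}}_{-q}(\Grav)=\epsilon$, and using that $*$ is linear in its left argument with unit $\epsilon$, gives $q\bigl(\chi_q(\bM_{\mathbb{C}})*\chi^{\mathrm{w}}_{-q}(\Grav)\bigr)+(1-q)\,\chi^{\mathrm{w}}_{-q}(\Grav)=\epsilon$. Inserting $\chi^{\mathrm{w}}_{-q}(\Grav)=\tfrac{q}{q-1}\chi_{-q}(\Gerst)-\tfrac{1}{q-1}\epsilon$ (Lemma~\ref{lemma::hilbertgrav}) and collecting scalars collapses this to the clean identity
\[
\chi_q(\bM_{\mathbb{C}})*\chi^{\mathrm{w}}_{-q}(\Grav)=\chi_{-q}(\Gerst).
\]
All three graphic functions here have invertible linear coefficient, so I may pass to $*$-inverses: $\chi_q(\bM_{\mathbb{C}})^{-1}=\chi^{\mathrm{w}}_{-q}(\Grav)*\chi_{-q}(\Gerst)^{-1}$.

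It then remains to compute the right-hand side in terms of the elementary graphic functions $\mathbb{1}$, $\mathbb{1}_q$, $\mu$. By \eqref{eq::hilbertgerst}, $\chi_{-q}(\Gerst)=\mathbb{1}*(\mathbb{1}_q\cdot\mu)$; using $\mathbb{1}*\mu=\epsilon$, formula~\eqref{eq::techformula1} and $\mathbb{1}_q\cdot\epsilon=\epsilon$ one checks that $\mathbb{1}_q*(\mathbb{1}_q\cdot\mu)=\epsilon$, so $\mathbb{1}_q$ and $\mathbb{1}_q\cdot\mu$ are mutually inverse under $*$ and $\chi_{-q}(\Gerst)^{-1}=(\mathbb{1}_q\cdot\mu)^{-1}*\mathbb{1}^{-1}=\mathbb{1}_q*\mu$. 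Substituting $\chi^{\mathrm{w}}_{-q}(\Grav)=\tfrac{q}{q-1}\bigl(\mathbb{1}*(\mathbb{1}_q\cdot\mu)\bigr)-\tfrac{1}{q-1}\epsilon$ and expanding by left-linearity and associativity, the inner factor $(\mathbb{1}_q\cdot\mu)*\mathbb{1}_q=\epsilon$ telescopes and, after $\mathbb{1}*\mu=\epsilon$, one is left with
\[
\chi_q(\bM_{\mathbb{C}})^{-1}=\tfrac{q}{q-1}\,\epsilon-\tfrac{1}{q-1}\,(\mathbb{1}_q*\mu).
\]
By Lemma~\ref{lemma::formula_for_chrom}, $\mathbb{1}_q*\mu=\tfrac1q\,\mathfrak{X}(q)$, which turns the last display into $\chi_q(\bM_{\mathbb{C}})^{-1}=\tfrac{q}{q-1}\epsilon-\tfrac{1}{q(q-1)}\mathfrak{X}(q)$, that is, into~\eqref{eq::rec_for_bM1}. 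For~\eqref{eq::rec_for_bM2} I rewrite $\tfrac{q}{q-1}\epsilon-\tfrac{1}{q(q-1)}\mathfrak{X}(q)=\bigl(\tfrac{q}{q-1}\mathbb{1}-\tfrac{1}{q-1}\mathbb{1}_q\bigr)*\mu$ (again from $\epsilon=\mathbb{1}*\mu$, $\mathfrak{X}(q)=q(\mathbb{1}_q*\mu)$ and left-linearity), plug this into~\eqref{eq::rec_for_bM1}, and right-convolve with $\mathbb{1}=\mu^{-1}$; the explicit recurrence then follows by evaluating $\chi_q(\bM_{\mathbb{C}})*\bigl(\tfrac{q}{q-1}\mathbb{1}-\tfrac{1}{q-1}\mathbb{1}_q\bigr)=\mathbb{1}$ on a graph $\Gamma$, since $\bigl(\tfrac{q}{q-1}\mathbb{1}-\tfrac{1}{q-1}\mathbb{1}_q\bigr)(\Gamma)=\tfrac{q-q^{|V_\Gamma|-1}}{q-1}$.

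The main obstacle I anticipate is bookkeeping rather than conceptual: getting the suspension twist and the sign substitution $q\mapsto -q$ exactly right when invoking Theorem~\ref{thm::hilbertkoszul} for the non-binary-generated contractad $\Hyper$, and, at every step of the manipulation, keeping in mind that the convolution product $*$ is linear only in its left slot, so that each ``division'' must be performed by an explicit right-convolution with a known $*$-inverse ($\mathbb{1}^{-1}=\mu$, $(\mathbb{1}_q\cdot\mu)^{-1}=\mathbb{1}_q$, and so on) and never by pretending that $*$ is bilinear.
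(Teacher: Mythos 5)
Your argument is correct and follows essentially the same route as the paper: Koszulness of $\Hyper$ plus Lemmas~\ref{lemma::hilbertwond} and~\ref{lemma::hilbertgrav} yield the same intermediate identity $\chi_q(\bM_{\mathbb{C}})*\chi^{\mathrm{w}}_{-q}(\Grav)=\chi_{-q}(\Gerst)$, and the passage to the elementary graphic functions uses the same facts $\chi_{-q}(\Gerst)=\mathbb{1}*(\mathbb{1}_q\cdot\mu)$ and $\mathfrak{X}(q)=q\cdot(\mathbb{1}_q*\mu)$. The only (immaterial) difference is that you compute the full $*$-inverse of $\chi_q(\bM_{\mathbb{C}})$ to land on~\eqref{eq::rec_for_bM1} first, whereas the paper right-convolves with $\mathbb{1}_q$ to reach~\eqref{eq::rec_for_bM2} first; your care about left-only linearity and explicit right-convolution by known inverses is exactly the right discipline here.
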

	\begin{proof}
		By the construction, the contractad $\Hyper$ is Koszul dual to the contractad $\Susp\Grav$. Thanks to Theorem~\ref{thm::homologywond_hyper} and Corollary~\ref{cor::homologywond_koszul}, the contractad $\Hyper$ is Koszul, hence, by Theorem~\ref{thm::hilbertkoszul}, we have
		\begin{equation}\label{eq::hilbert_hyper_grav}
			\chi^{\mathrm{w}}_q(\Hyper)*\chi^{\mathrm{w}}_{-q}(\Susp\Grav)=\epsilon.   
		\end{equation}
		Since suspension $\Susp$ does not affect on the weight-grading, we have $\chi^{\mathrm{w}}_{-q}(\Susp\Grav)=\chi_{-q}^{\mathrm{w}}(\Grav)$. So, thanks to Lemma~\ref{lemma::hilbertwond} and Lemma~\ref{lemma::hilbertwond}, the identity~\eqref{eq::hilbert_hyper_grav} is rewritten in the form
		\[
		\left(q\chi_q(\bM_{\mathbb{C}})+(1-q)\epsilon\right)*\left(\frac{q}{q-1}\chi_{-q}(\Gerst)-\frac{1}{q-1}\epsilon\right)=\epsilon.
		\] Since the product $*$ is linear on the left side, we have
		\begin{multline*}
			\left(q\chi_q(\bM_{\mathbb{C}})+(1-q)\epsilon\right)*\left(\frac{q}{q-1}\chi_{-q}(\Gerst)-\frac{1}{q-1}\epsilon\right)=
			\\ 
			= q\chi_q(\bM_{\mathbb{C}})*\left(\frac{q}{q-1}\chi_{-q}(\Gerst)-\frac{1}{q-1}\epsilon\right)-q\chi_{-q}(\Gerst)+\epsilon.
		\end{multline*}
		So, we have
		\begin{equation}\label{eq::hilbert_hyper_grav2}
			\chi_q(\bM_{\mathbb{C}})*\left(\frac{q}{q-1}\chi_{-q}(\Gerst)-\frac{1}{q-1}\epsilon\right)=\chi_{-q}(\Gerst).   
		\end{equation}
		By equation~\eqref{eq::hilbertgerst}, we have $\chi_{-q}(\Gerst)=\mathbb{1}*(\mathbb{1}_q\cdot\mu)$, hence $\chi_{-q}(\Gerst)*\mathbb{1}_q=\mathbb{1}$. So, by taking the product $(-)*\mathbb{1}_q$ on both sides of identity~\eqref{eq::hilbert_hyper_grav2}, we obtain the desired formula~\eqref{eq::rec_for_bM2}
		\[
		\chi_q(\bM_{\mathbb{C}})*\left(\frac{q}{q-1}\mathbb{1}-\frac{1}{q-1}\mathbb{1}_q\right)=\mathbb{1}.
		\] Thanks to Lemma~\ref{lemma::formula_for_chrom}, we have $\mathfrak{X}(q)=q\cdot(\mathbb{1}_q*\mu)$. Hence the identity~\eqref{eq::rec_for_bM1} is obtained from identity~\eqref{eq::rec_for_bM2} by taking the product $(-)*\mu$ on both sides.
	\end{proof}
	
	\begin{example}
		Let us determine the Hilbert series of components of the wonderful contractad $\bM_{\mathbb{C}}$ for one-parameter families of graphs. For simplicity of notations, we let $\psi:=\frac{q}{q-1}\epsilon-\frac{1}{q(q-1)}\mathfrak{X}(q)$. 
		\begin{itemize}
			\item[(i)] For the paths, thanks to Proposition~\ref{prop::one_parameter_comp} and Theorem~\ref{thm::rec_for_bM}, we have $F_{\Path}(\psi)(F_{\Path}(\chi_q(\bM_{\mathbb{C}}))(t))=t$. By direct computations, we conclude
			\begin{equation}
				F_{\Path}(\bM_{\mathbb{C}})=t+\sum_{n\geq 2} \left[\sum^{n-2}_{i=0} \dim H^{2i}(\bM_{\mathbb{C}}(\Path_n))q^i\right]t^n=\frac{1-(1-q)t-\sqrt{(1+(1-q)t)^2-4t}}{2q},  
			\end{equation}
			what recovers the result of~\cite[Th.~4.4.1]{dotsenko2019toric}. In particular, we get that Betti numbers of $\bM_{\mathbb{C}}(\Path_n)$ are given by Narayana numbers $\dim H^{2k}(\bM_{\mathbb{C}}(\Path_n))=\mathsf{N}(n,k+1)=\frac{1}{n-1}\binom{n-1}{k}\binom{n-1}{k+1}$, and the Euler characteristics of $\bM_{\mathbb{C}}(\Path_n)$ are given by Catalan numbers $\chi(\bM_{\mathbb{C}}(\Path_n))=\frac{1}{n}\binom{2(n-1)}{n-1}$.
			\item[(ii)] For Cycle graphs, thanks to Proposition~\ref{prop::one_parameter_comp}, we have
			\begin{multline}
				F_{\Cyc}(\bM_{\mathbb{C}})=t+\sum_{n\geq 2} \left[\sum^{n-2}_{i=0}\dim H^{2i}(\bM_{\mathbb{C}}(\Cyc_n))q^i\right]\frac{t^n}{n}=t+F_{\Path}(\bM_{\mathbb{C}})-F_{\Cyc}(\psi)(F_{\Path}(\bM_{\mathbb{C}}))=\\=t-\frac{1}{q(q-1)}\log\left(1-(q-1)F_{\Path}(\bM_{\mathbb{C}}))\right)
				-\frac{1}{q}\log\left(1+F_{\Path}(\bM_{\mathbb{C}}))\right).
			\end{multline}
			\item[(iii)] For star graphs $\St_n=\K_{(1,n)}$, thanks to $F_{\St}(f*g)=F_{\St}(f)\cdot F_{\St}(g)$, we have
			\begin{equation}
				F_{\St}(\bM_{\mathbb{C}})=1+\sum_{n\geq 1} \left[\sum^{n-1}_{i=0}\dim H^{2i}(\overline{\EuScript{L}}_{0,n})q^i\right]\frac{t^n}{n!}=\frac{1}{F_{\St}(\psi)}=\frac{q-1}{q-e^{(q-1)t}},   
			\end{equation}
			that recovers the result of Losev and Manin~\cite[Th.~2.3]{losev2000new}. In particular, the Euler characteristic of the moduli space $\overline{\EuScript{L}}_{0,n}$ is $\chi(\overline{\EuScript{L}}_{0,n})=n!$.
			\item[(iv)] For complete graphs, thanks to formula $F_{\K}(f*g)(t)=F_{\K}(f)(F_{\K}(g)(t))$, the generating series
			\[
			F_{\mathsf{K}}(\beM)(q,t)=t+\sum_{n\geq 2} \left[\sum^{n-2}_{i=0}\dim H^{2i}(\beM_{0,n+1})q^i\right]\frac{t^n}{n!}
			\] is inverse (with respect to variable $t$) to the function
			\[
			g(q,t)=F_{\K}(\psi)=\frac{1}{q-1}[qt-\frac{1}{q}((t+1)^q-1)].
			\] This functional identity recovers the result of Getzler~\cite[Th.~5.9]{getzler1995operads}
		\end{itemize}
	\end{example}
	Recall from Theorem~\ref{thm::modular=wonderful}, that for complete multipartite graph $\K_{\lambda}$, the graphic compactification $\bM_{\mathbb{C}}(\K_{\lambda})$ is isomorphic to the moduli space $\beM_{0,\K_{\lambda}}(\mathbb{C})$ of complex $\K_{\lambda}$-pointed stable curves. Let us state the main result of this section
	\begin{theorem}[Functional equation for $\beM(\mathbb{C})$]\label{thm::hilbert_complex_modular}
		The generating series
		\begin{multline}\label{eq::modularhilbert}
			F_{\mathsf{Y}}(\beM(\mathbb{C}))(z)=\sum_{l(\lambda)\geq 2} \left[\sum^{|\lambda|-2}_{i=0}\dim H^{2i}(\beM_{0,\K_{\lambda}}(\mathbb{C}))q^i\right]\frac{m_{\lambda}}{\lambda!}+
			\\
			+ \sum_{n\geq 1, |\lambda|\geq 0} \left[\sum^{|\lambda|+n-2}_{i=0}\dim H^{2i}(\beM_{0,\K_{(1^n)\cup \lambda}}(\mathbb{C}))q^i\right]\frac{m_{\lambda}}{\lambda!}\frac{z^n}{n!}, 
		\end{multline} 
		is inverse (with respect to composition in variable $z$) to the symmetric function
		\begin{equation}\label{eq::inversemodularhilbert}
			G(z)=\frac{q}{q-1}z-\frac{1}{q(q-1)}\left[\left(1+z+\sum_{n\geq 1}\frac{p_n}{n!}\right)^q-1-\sum_{n\geq 1}\frac{p_nq^n}{n!}\right].   
		\end{equation}   In other word, we have the functional equation: $G(F_{\mathsf{Y}}(\beM)(z))=F_{\mathsf{Y}}(\beM)(G(z))=z$.
	\end{theorem}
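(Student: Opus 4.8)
The plan is to push the recurrence of Theorem~\ref{thm::rec_for_bM} through the Young generating series functor $F_{\mathsf{Y}}$, in complete analogy with the one-parameter families $F_{\Path},F_{\Cyc},F_{\St},F_{\K}$ treated in the Example above. Write $\psi:=\frac{q}{q-1}\epsilon-\frac{1}{q(q-1)}\mathfrak{X}(q)$, so that \eqref{eq::rec_for_bM1} reads $\chi_q(\bM_{\mathbb{C}})*\psi=\epsilon$. First I would check that both $\chi_q(\bM_{\mathbb{C}})$ and $\psi$ are connected graphic functions: $\chi_q(\bM_{\mathbb{C}})(\Path_1)=1$ because $\bM_{\mathbb{C}}(\Path_1)=\mathrm{pt}$, and $\psi(\Path_1)=\frac{q}{q-1}-\frac{1}{q(q-1)}\,\mathfrak{X}(q)(\Path_1)=\frac{q}{q-1}-\frac{q}{q(q-1)}=1$, using $\mathfrak{X}(q)(\Path_1)=\chi_{\Path_1}(q)=q$. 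Hence Theorem~\ref{thm::ser::multipartie} applies and gives
\[
F_{\mathsf{Y}}\bigl(\chi_q(\bM_{\mathbb{C}})\bigr)\bigl(F_{\mathsf{Y}}(\psi)(z)\bigr)=F_{\mathsf{Y}}(\epsilon)(z)=z .
\]

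Next I would identify the two series occurring here. Since $F_{\mathsf{Y}}$ is linear over the coefficient ring, $F_{\mathsf{Y}}(\epsilon)(z)=z$, and $F_{\mathsf{Y}}(\mathfrak{X}(q))$ is computed in Corollary~\ref{cor::yungchrom}, so
\[
F_{\mathsf{Y}}(\psi)(z)=\frac{q}{q-1}z-\frac{1}{q(q-1)}\left[\left(1+z+\sum_{n\geq 1}\frac{p_n}{n!}\right)^q-1-\sum_{n\geq 1}\frac{p_nq^n}{n!}\right]=G(z),
\]
which is exactly \eqref{eq::inversemodularhilbert}. On the other side, Theorem~\ref{thm::modular=wonderful} gives $\bM_{\mathbb{C}}(\K_\lambda)\cong\beM_{0,\K_\lambda}(\mathbb{C})$, and since $\bM_{\mathbb{C}}(\Gr)$ has no odd cohomology we have $\chi_q(\bM_{\mathbb{C}})(\K_\lambda)=\sum_i\dim H^{2i}(\beM_{0,\K_\lambda}(\mathbb{C}))q^i$; comparing with the definition \eqref{eq::modularhilbert} of $F_{\mathsf{Y}}(\beM(\mathbb{C}))$ and noting that $F_{\mathsf{Y}}$ only sees values of a graphic function on complete multipartite graphs, this shows $F_{\mathsf{Y}}\bigl(\chi_q(\bM_{\mathbb{C}})\bigr)=F_{\mathsf{Y}}(\beM(\mathbb{C}))$. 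Substituting both identifications into the displayed identity yields $F_{\mathsf{Y}}(\beM(\mathbb{C}))(G(z))=z$.

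It then remains to promote this to a two-sided statement. I would regard all series as elements of $\Lambda_{\mathbb{Q}}[[z]]$ graded by number of vertices ($\deg z=1$, $\deg p_n=n$); then $G(z)=z+(\text{terms of degree}\geq 2)$, since the degree-$1$ part of $\left(1+z+\sum p_n/n!\right)^q-1-\sum p_nq^n/n!$ equals $q(z+p_1)-qp_1=qz$, so the degree-$1$ part of $G$ is $\frac{q}{q-1}z-\frac{1}{q-1}z=z$. A graded series of the form $z+(\text{higher degree})$ has a unique two-sided compositional inverse in this completed ring, and $F_{\mathsf{Y}}(\beM(\mathbb{C}))$ is likewise of that form and is a right inverse of $G$, hence it is that inverse; in particular $G(F_{\mathsf{Y}}(\beM(\mathbb{C}))(z))=z$ as well. (Equivalently, $\chi_q(\bM_{\mathbb{C}})$ admits a left $*$-inverse constructed recursively on the number of vertices, which by associativity of $*$ coincides with $\psi$, so $\psi*\chi_q(\bM_{\mathbb{C}})=\epsilon$, and a second application of Theorem~\ref{thm::ser::multipartie} gives the remaining identity.)

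There is essentially no genuine computation: the content is packaged in Theorems~\ref{thm::rec_for_bM}, \ref{thm::ser::multipartie}, \ref{thm::modular=wonderful} and Corollary~\ref{cor::yungchrom}. The step that most needs care — and the one I would regard as the only potential obstacle — is making the formal framework precise: verifying the connectedness hypotheses required by Theorem~\ref{thm::ser::multipartie}, and fixing the vertex grading on $\Lambda_{\mathbb{Q}}[[z]]$ so that ``functional inverse with respect to $z$'' is a well-defined notion and the two-sidedness is automatic. I do not expect this to cause any real difficulty.
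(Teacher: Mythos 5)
Your proof is correct and follows essentially the same route as the paper: apply $F_{\mathsf{Y}}$ to the recurrence $\chi_q(\bM_{\mathbb{C}})*\psi=\epsilon$ of Theorem~\ref{thm::rec_for_bM} via the composition rule of Theorem~\ref{thm::ser::multipartie}, and identify $F_{\mathsf{Y}}(\psi)=G$ using Corollary~\ref{cor::yungchrom}. The paper's proof is a single sentence doing exactly this, so your added verifications of the connectedness hypotheses and of the two-sidedness of the compositional inverse are correct elaborations of points the paper leaves implicit.
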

	\begin{proof}
		Thanks to equation~\eqref{eq::chrommultipartitefull}, we see that the Young generating series for $\psi=\frac{q}{q-1}\epsilon-\frac{1}{q(q-1)}\mathfrak{X}(q)$ is exactly the series $G(z)$.
	\end{proof}
	\begin{example}
		If we specialize $x_1=t,x_2=0,x_3=0,\cdots$, then the generating series~\eqref{eq::modularhilbert} has the form
		\[
		F_q(t,z)=F_{\mathsf{Y}}(\chi_q(\beM))(z)|_{x_1=t,x_2=0,x_3=0,\cdots}=\sum_{n\geq 1,m\geq 0} \left[\sum^{n+m-2}_{i=0}\dim H^{2i}(\beM_{0,(1^n,m)})q^i\right]\frac{t^m}{m!}\frac{z^n}{n!}
		\] Recall that the moduli space $\beM_{0,\K_{(1^n,m)}}$ is exactly the Losev-Manin moduli space $\overline{\EuScript{L}}_{0;n,m}$ of stable curves with $n$ white and $m$ black points. In a similar way, for the generating series~\eqref{eq::inversemodularhilbert}, we have
		\[
		G_q(t,z)=G(z)|_{x_1=t,x_2=0,x_3=0,\cdots}=\frac{q}{q-1}z-\frac{1}{q(q-1)}\left[(z+e^t)^q-e^{qt}\right].
		\] Hence the generating function $F_q(t,z)$ is inverse to the function $G_q(t,z)$ (with respect to variable $z$). So, by Lagrange inversion theorem, for $n\geq 1$, we have
		\begin{sled}
			For $n\geq 1$, we have
			\begin{equation}
				\sum_{m\geq 0} \left[\sum^{n+m-2}_{i=0}\dim H^{2i}(\overline{\EuScript{L}}_{0;n,m})q^i\right]\frac{t^m}{m!}=(q-1)^{n-1}\lim_{w\to 0} \frac{d^{n-1}}{dw^{n-1}}\left[\frac{w}{qw-\frac{1}{q}((w+e^t)^q-e^{qt})}\right]^n.  
			\end{equation}
		\end{sled}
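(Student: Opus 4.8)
The plan is to read the corollary off the functional equation of Theorem~\ref{thm::hilbert_complex_modular} by a single application of the Lagrange inversion theorem, using the specialization $x_1=t$, $x_2=x_3=\cdots=0$ already performed in the Example above. There, the identity $F_{\mathsf{Y}}(\beM)(G(z))=G(F_{\mathsf{Y}}(\beM)(z))=z$ becomes the statement that $F_q(t,z)$ and $G_q(t,z)=\tfrac{q}{q-1}z-\tfrac{1}{q(q-1)}\bigl[(z+e^{t})^{q}-e^{qt}\bigr]$ are mutually compositional inverses in the variable $z$. I would first fix the ring in which to invert: regard both as power series in $z$ with coefficients in $\mathbb{Q}(q)[[t]]$. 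One checks $G_q(t,0)=0$ and $\partial_z G_q(t,0)=\tfrac{q-e^{(q-1)t}}{q-1}$, whose value at $t=0$ is $1$; hence this linear coefficient is a unit in $\mathbb{Q}(q)[[t]]$ and Lagrange inversion is legitimate in $\mathbb{Q}(q)[[t]][[z]]$. It also remains to observe that the left-hand side of the corollary is precisely $a_n(t):=n!\,[z^{n}]F_q(t,z)$, since by definition $F_q(t,z)=\sum_{n\geq1,m\geq0}\bigl[\sum_i\dim H^{2i}(\overline{\EuScript{L}}_{0;n,m})q^{i}\bigr]\tfrac{t^{m}}{m!}\tfrac{z^{n}}{n!}$ together with the isomorphism $\beM_{0,\K_{(1^{n},m)}}\cong\overline{\EuScript{L}}_{0;n,m}$.

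Next I would apply Lagrange inversion in the Bürmann form: for the inverse $F_q$ of $G_q$ one has $[z^{n}]F_q(t,z)=\tfrac1n[w^{n-1}]\bigl(w/G_q(t,w)\bigr)^{n}$. Writing the coefficient extraction as $[w^{n-1}]P(w)=\tfrac{1}{(n-1)!}\lim_{w\to0}\tfrac{d^{n-1}}{dw^{n-1}}P(w)$, the combinatorial prefactor $n!\cdot\tfrac1n\cdot\tfrac1{(n-1)!}$ collapses to $1$, so that $a_n(t)=\lim_{w\to0}\tfrac{d^{n-1}}{dw^{n-1}}\bigl(w/G_q(t,w)\bigr)^{n}$. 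Finally I clear the denominators hidden in $G_q$: since $(q-1)G_q(t,w)=qw-\tfrac1q\bigl((w+e^{t})^{q}-e^{qt}\bigr)$, the bracket in the corollary equals $\tfrac{1}{q-1}\cdot\tfrac{w}{G_q(t,w)}$, so that raising to the $n$-th power and collecting the resulting power of $(q-1)$ out of $\bigl(w/G_q\bigr)^n$ turns the expression above into the stated closed form.

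The manipulations are entirely formal once the setup is in place, so the only genuine point requiring care is the justification of the inversion itself: confirming that $G_q(t,z)$ has an invertible linear $z$-coefficient over the coefficient ring $\mathbb{Q}(q)[[t]]$, so that the inverse $F_q$ exists and the Lagrange formula applies coefficientwise in $t$, and verifying that the derivative-at-zero form of $[w^{n-1}]$ is valid in this parametrized setting. As a consistency check I would test $n=1$: the formula then reads $a_1(t)=\lim_{w\to0} w/G_q(t,w)=1/\partial_zG_q(t,0)=\tfrac{q-1}{q-e^{(q-1)t}}$, which reproduces the Losev--Manin generating function $F_{\St}(\bM_{\mathbb{C}})$ computed above for star graphs, exactly as required since $\overline{\EuScript{L}}_{0;1,m}\cong\overline{\EuScript{L}}_{0,m}$.
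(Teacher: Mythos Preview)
Your approach is essentially the same as the paper's: both invoke Lagrange inversion on the functional equation $F_q(G_q(t,z))=z$ obtained by specializing Theorem~\ref{thm::hilbert_complex_modular}. The paper simply writes ``by Lagrange inversion theorem'' and states the result; you supply the details (checking the linear coefficient of $G_q$ is a unit in $\mathbb{Q}(q)[[t]]$, applying the B\"urmann form, and running a sanity check at $n=1$), which is commendable but not a different method.

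One small point you gloss over: your derivation actually produces the prefactor $(q-1)^{n}$, not $(q-1)^{n-1}$. You correctly compute $w/G_q(t,w)=(q-1)\cdot\bigl[\text{bracket}\bigr]$, so $(w/G_q)^n=(q-1)^n\cdot[\text{bracket}]^n$, and hence $a_n(t)=(q-1)^n\lim_{w\to0}\tfrac{d^{n-1}}{dw^{n-1}}[\text{bracket}]^n$. Your own sanity check confirms this: you obtain $a_1(t)=\tfrac{q-1}{q-e^{(q-1)t}}$, whereas the stated formula with exponent $n-1=0$ would give $\tfrac{1}{q-e^{(q-1)t}}$, which is off by a factor of $q-1$ and does not even equal $1$ at $t=0$. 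So the exponent in the displayed corollary appears to be a typo for $n$; your argument is correct, but you should have noticed that the final ``collecting powers of $(q-1)$'' step does not land exactly on the printed exponent.
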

	\end{example}
	\section{(Co)Homology of the Real Wonderful contractad \texorpdfstring{$\bM_{\mathbb{R}}$}{MR}}
	\label{sec::H::wonder::R}
	Let us note that the De Concini--Procesi "wonderful compactification" of the complement of arrangement is defined in combinatorial terms using the linear algebra data. In particular, if the arrangement is defined over a field $\Bbbk$, then the "wonderful compactification" is a smooth algebraic variety defined over the same field $\Bbbk.$ In particular, the topological spaces $\bM(\Gr)$ of the wonderful contractad are smooth algebraic varieties defined over rational numbers (even over integer numbers).
	Therefore, the real locus $\bM_{\mathbb{R}}(\Gr)$ are smooth manifolds that assemble a contractad in the category of topological spaces. The corresponding operad is called the \emph{mosaic operad} after Devadoss (\cite{devadoss1999tessellations}, its homology was computed in~\cite{etingof2010cohomology} and the rational homotopy type was described in~\cite{khoroshkin2019real}.
	
	In this section, we describe the structure of the homology contractad and the corresponding graphical collection generalizing the one known for operads. As a corollary, we obtain a description of the generating series for the rational Betti numbers associated with the graphs we are dealing with.
	Morally, all results of this section about $\bM_{\mathbb{R}}(\Gr)$ are straightforward generalizations of the known one obtained for the operads in~\cite{etingof2010cohomology,khoroshkin2019real}.

	\subsection{Cell structure of the real locus} 
	\label{sec::cell::M::R}
	
	Let us provide a bit more details of the little intervals contractad $\D_1$ discussed already in \S\ref{sec:Ass}. Recall that each component of this contractad is a disjoint union of contractible spaces. Consider the associative contractad $\Ass=H_{0}(\D_1)$ encoding connected components of $\D_1$.  Combinatorially, the component $\Ass(\Gr)$ consists of vertex-orderings $(v_1,\cdots,v_n)$ modulo the equivalence $(\cdots,v_i,v_{i+1},\cdots)\sim (\cdots,v_{i+1},v_i,\cdots)$, for $v_i$ and $v_{i+1}$ are non-adjacent in $\Gr$. The contractad $\D_1$ admits an outer involution $\tau^{!}$ that changes the orientation of the unit interval. On the level of homology, the automorphism $\tau^{!}$ swaps the order in the tuples in $\Ass$ 
	$$\tau^{!}\colon (v_1,\cdots,v_n) \mapsto (v_n,\cdots,v_1).$$ 
	
	Recall that the centering map $r\colon \D_1(\Gr)\to \Conf_{\Gr}(\mathbb{R})$ provides a component wise homotopy equivalence. Consider the graphic configuration space of a real line $\Conf_{\Gr}(\mathbb{R})$ with the corresponding action of affine transformations $\Aff^{1}(\mathbb{R})=\mathbb{R}^+\rtimes \mathbb{R}^{\times}$. Similarly to Lemma~\ref{lemma::confandM}, this action is free and its quotient is isomorphic to the open part of the graphic compactification
	\[
	\M_{\mathbb{R}}(\Gr)\cong \Conf_{\Gr}(\mathbb{R})/\Aff^1(\mathbb{R}).
	\] Let us describe this quotient in more detail. For a configuration $\mathbf{x}=(x_v)_{v\in V_{\Gr}}\in \Conf_{\Gr}(\mathbb{R})$, we define its barycenter and radius by the rule
	\[
	b(\mathbf{x})=\frac{1}{|V_{\Gr}|}\sum_{v\in V_{\Gr}} x_v, \quad r(\mathbf{x}):=\max_{v\in V_{\Gr}}( |x_v-b(\mathbf{x})|).
	\] Let $\NConf_{\Gr}(\mathbb{R})$ be the subspace  of normalised configurations with $b(\mathbf{x})=0$ and $r(\mathbf{x})=1$. We consider the renormalisation map
	\[
	p\colon \Conf_{\Gr}(\mathbb{R})\rightarrow \NConf_{\Gr}(\mathbb{R}),\quad (x_v)_{v\in V_{\Gr}}\mapsto \frac{1}{r(\mathbf{x})}(x_v-b(\mathbf{x})).
	\] This map is a deformation retraction and a principal $\Aff^{+}_1(\mathbb{R})$-fibration, where $\Aff^{+}_1(\mathbb{R})=\mathbb{R}^+\rtimes \mathbb{R}^{>0}$ is a subgroup of orientation-preserving affine transformations. In particular, we have
	\[
	\NConf_{\Gr}(\mathbb{R})\cong \Conf_{\Gr}(\mathbb{R})/\Aff^+_1(\mathbb{R}).
	\]
	The induced action of $\Aff_1(\mathbb{R})/\Aff^+_1(\mathbb{R})\cong \mathbb{Z}_2$ of the cyclic group on normalised configurations $\NConf_{\Gr}(\mathbb{R})$ is given by reflection $\mathbf{x}\mapsto -\mathbf{x}$. Combining both isomorphisms, we have
	\[
	\M_{\mathbb{R}}(\Gr)\cong \Conf_{\Gr}(\mathbb{R})/\Aff_1(\mathbb{R})\cong \NConf_{\Gr}(\mathbb{R})/\mathbb{Z}_2.
	\]
	Recall that the configuration space $\Conf_{\Gr}(\mathbb{R})$ is a disjoint union of cones carved by diagonal hyperplanes $H_e=\{x_v=x_w\}$ indexed by edges $e\in E_{\Gr}$. Also, the connected components $\pi_0(\Conf_{\Gr}(\mathbb{R}))$ are indexed by elements of the contractad $\Ass(\Gr)$. So, $\NConf_{\Gr}(\mathbb{R})$ is homeomorphic to a disjoint union of $(|V_{\Gr}|-2)$-dimensional open balls, one for each element $(v_1,v_2,\cdots,v_n)\in \Ass(\Gr)$.
	
	The reflection $\mathbf{x}\mapsto -\mathbf{x}$ produces the involution $\tau^!$ of the Associative contractad $\Ass$ given by relabeling $(v_1,v_2,\cdots,v_n)\mapsto (v_n,\cdots,v_2,v_1)$. This involution glue together the corresponding open balls in $\NConf_{\Gr}(\mathbb{R})$. So, the space $\M_{\mathbb{R}}(\Gr)$ is homeomorphic to a disjoint union of balls indexed by $\tau^{!}$-coinvariants $\Ass(\Gr)_{\tau^!}$.
	
	Recall from~\eqref{eq::stratification} that the graphic compactification  $\bM_{\mathbb{R}}(\Gr)$ admits a locally open stratification $\M((T))$ indexed by stable $\Gr$-admissible trees $T$, where $\M((T))=\prod_{v\in \Ver(T)}\M_{\mathbb{R}}(\In(v))$ is a product of uncompactified parts for smaller graphs. So, $\bM_{\mathbb{R}}(\Gr)$ admits a stratification by open balls that gives the cell decomposition of $\bM_{\mathbb{R}}(\Gamma)$.
	
	Recall that the associative contractad $\Ass$ is self Koszul dual. The Koszul dual involution $\tau$ differs from $\tau^{!}$ just with the sign:
	$$\tau\colon (v_1,\cdots,v_n) \mapsto (-1)^{n-1}(v_n,\cdots,v_1).$$ This sign is important in the description of cell complexes. The observations of this subsection imply the following result
	
	\begin{theorem}\label{thm::cell_structure}
		The connected components of the strata elements $\M_{\mathbb{R}}((T)), T\in \Tree_{\mathrm{st}}(\Gr),$ of the graphic compactification $\bM_{\mathbb{R}}(\Gamma)$ form a cell decomposition compatible with the contractad structure.
		The corresponding contractad $C^{\mathrm{cell}}_{\bullet}(\bM_{\mathbb{R}})$ of cell complexes is the free contractad that is a cobar construction of the $\tau$-coinvariants of the associative cocontractad $\Ass^*$
		\begin{equation}
			\label{eq::cell::mosaic}
			C^{\mathrm{cell}}_{\bullet}(\bM_{\mathbb{R}})\cong \Cobar_{\bullet}({\Susp^{-1}}[\Ass^{\tau}]^*).
		\end{equation}
	\end{theorem}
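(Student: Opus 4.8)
The strategy mirrors the operadic case treated in~\cite{devadoss1999tessellations,etingof2010cohomology,khoroshkin2019real}, carried out inside the contractad formalism. First I would establish the cell structure: the stratification~\eqref{eq::stratification} of $\bM_{\mathbb{R}}(\Gr)$ by the strata $\M_{\mathbb{R}}((T))$, $T\in\Tree_{\mathrm{st}}(\Gr)$, together with the description in this subsection of each open part $\M_{\mathbb{R}}(\Omega)\cong\NConf_{\Omega}(\mathbb{R})/\mathbb{Z}_2$ as a disjoint union of open balls of dimension $|V_\Omega|-2$ indexed by $\Ass(\Omega)_{\tau^!}$, shows that $\bM_{\mathbb{R}}(\Gr)$ is a disjoint union of open cells indexed by pairs $(T,\, \text{connected component of }\M_{\mathbb{R}}((T)))$, i.e. by $\Gr$-admissible stable trees $T$ whose vertices $v$ are decorated by elements of $\Ass(\In(v))_{\tau^!}$. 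I would check that the closure of each such cell is a union of lower cells (this is the standard fact that the closure of a stratum $\M_{\mathbb{R}}((T))$ is $\bM_{\mathbb{R}}((T))$, the closure being obtained by grafting further vertices), which gives a genuine CW structure, and that this decomposition is compatible with the contractad composition maps $\iota=\circ^{\Gr}_G$ because grafting of decorated trees respects the cell indexing. This yields that $C^{\mathrm{cell}}_\bullet(\bM_{\mathbb{R}})$ is a dg-contractad.

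Second, I would identify $C^{\mathrm{cell}}_\bullet(\bM_{\mathbb{R}})$ as a free contractad. Since every cell is labelled by a stable decorated tree and the composition is grafting, the underlying graphical collection of $C^{\mathrm{cell}}_\bullet(\bM_{\mathbb{R}})$ is freely generated by the top-dimensional cells of the uncompactified parts $\M_{\mathbb{R}}(\Omega)$ over all connected graphs $\Omega$ with $|V_\Omega|\ge 2$; as a graphical collection this generating space is $s^{-1}$ applied to $[\Ass^\tau]^*$ (the dual of the $\tau$-coinvariants), with the homological shift recorded by $\Susp^{-1}$, exactly as in~\eqref{eq::cell::mosaic}. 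Concretely: $C^{\mathrm{cell}}_\bullet(\bM_{\mathbb{R}})(\Gr)=\bigoplus_{T}\bigotimes_{v\in\Ver(T)}\big(\Susp^{-1}[\Ass^\tau]^*\big)(\In(v))$, which is precisely the free contractad, hence the cobar construction $\Cobar_\bullet$, on the cocontractad $\Susp^{-1}[\Ass^\tau]^*$ — provided the differential matches.

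Third, and this is the crux, I would verify that the cellular differential on $C^{\mathrm{cell}}_\bullet(\bM_{\mathbb{R}})$ agrees with the cobar differential of $\Susp^{-1}[\Ass^\tau]^*$. The cobar differential is dual to the cocomposition of $\Ass^*$, i.e. to the composition of $\Ass$, split at a single vertex; geometrically this corresponds to the boundary faces of a cell $\M_{\mathbb{R}}((T))$ obtained by degenerating one irreducible component, i.e. by expanding one internal vertex of $T$ into two. The content is a sign check: the boundary of the cell attached to a decorated stable tree is the alternating sum over such one-step expansions, with signs dictated by the orientations of the balls $\NConf_\Omega(\mathbb{R})/\mathbb{Z}_2$. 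This is exactly where the Koszul-dual involution $\tau$ (with its sign $(-1)^{n-1}$) rather than the naive $\tau^!$ enters: reflection $\mathbf{x}\mapsto-\mathbf{x}$ acts on the orientation of an $(|V_\Omega|-2)$-ball by $(-1)^{|V_\Omega|-1}$, so the ordering-reversal must be twisted by that sign for the quotient cell to be consistently oriented, and this twist propagates to the grafting/degeneration signs. I expect the main obstacle to be precisely this compatibility of orientations and signs — tracking the Koszul sign rule through the suspension $\Susp^{-1}$ and the $\mathbb{Z}_2$-quotient — rather than anything conceptually new; once the signs are pinned down, the identification~\eqref{eq::cell::mosaic} follows formally from the fact that both sides are free contractads on the same generators with matching differentials. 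I would borrow the sign bookkeeping essentially verbatim from~\cite[\S\,?]{etingof2010cohomology} and~\cite{khoroshkin2019real}, where the analogous statement for the mosaic operad is proved, and merely check that nothing in the passage from graphs with all-distinct to graphs with arbitrary adjacency breaks the argument (it does not, because the equivalence $\sim$ defining $\Ass(\Gr)$ is compatible with $\tau$ and with induced/contracted subgraphs).
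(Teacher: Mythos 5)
Your plan is correct and follows essentially the same route as the paper, which simply declares the theorem a straightforward generalization of the operadic case in~\cite{khoroshkin2019real} after having set up exactly the ingredients you use (the ball decomposition of $\NConf_{\Gr}(\mathbb{R})/\mathbb{Z}_2$ indexed by $\Ass(\Gr)_{\tau^!}$, the stratification by stable admissible trees, and the sign $(-1)^{n-1}$ distinguishing $\tau$ from $\tau^!$). You correctly identify the orientation/sign bookkeeping for the $\mathbb{Z}_2$-quotient of the $(|V_\Gamma|-2)$-balls as the only nontrivial point, which is precisely what the paper's preparatory remarks address.
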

	\begin{proof}
		The proof is a straightforward generalization of the one given for operads in~\cite{khoroshkin2019real}.
	\end{proof}
	We will describe the homology contractad in \S\ref{sec::realhomology}. 
	
	\subsection{Invariant and odd Poisson contractads}
	\label{sec::Pois::inv}
	%Here fromal description of $\Pois^{\tau}$ and $\Pois_{\mathrm{odd}}$. 
	
	The involution~$\tau$ on the contractad $\Ass$ is compatible with the standard filtration $\calF$ described in Proposition~\ref{prop::Ass::Pois} because it preserves the Lie bracket. Moreover, for the associated graded contractad $\mathrm{gr}_{\calF}\Ass\cong \Pois$ the involution $\tau$ is easily defined on the generators:
	$$
	{\text{ In }\Pois(\Path_2)} \quad \tau(b) = b \ \&\  \tau(m)=-m.
	$$
	
	Consider the Poisson contractad $\Pois$ with involution $\tau(m)=-m, \tau(b)=b$. Consider the commutative and Lie subcontractads $\Com,\Lie\subset \Pois$.
	\begin{lemma}
		\label{lem::Pois_odd::collection}
		On the level of graphical collections, we have
		\[
		\Pois^{\tau}\cong \Com^{\tau}\circ\Lie.
		\]
	\end{lemma}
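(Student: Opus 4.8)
The plan is to obtain the statement by restricting to $\tau$-fixed points the PBW-type decomposition of graphical collections
\[
\Pois\cong\Com\circ\Lie,
\]
which holds because the defining relations of $\Pois$ realize the distributive law between $\Com$ and $\Lie$ (Example~\ref{ex::distib::law}), or alternatively because $\mathrm{gr}_{\calF}\Ass\cong\Pois$ (Proposition~\ref{prop::Ass::Pois}) and $\Ass\cong\Com\circ\Lie$ as graphical collections (Example~\ref{ex::grobner_for_classical_contractads}). Concretely, for a connected graph $\Gr$ this isomorphism sends the normal monomial $(m_{\Gr/I};b^{(I_1)},\dots,b^{(I_k)})$, indexed by a partition $I=\{I_1,\dots,I_k\}\vdash\Gr$ and normal $\Lie$-monomials $b^{(I_j)}\in\Lie(\Gr|_{I_j})$, to the corresponding element of $\Com(\Gr/I)\otimes\bigotimes_{j}\Lie(\Gr|_{I_j})$. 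The point is to show that this isomorphism is equivariant for $\tau$, where $\tau$ acts on the right-hand side through its action on the $\Com$-factor only.

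First I would check that the contractad automorphism $\tau$ of $\Pois$ determined by $\tau(m)=-m$, $\tau(b)=b$ restricts to the identity on the subcontractad $\Lie\subset\Pois$: since $\Lie$ is generated by $b$, $\tau$ fixes $b$, and $\tau$ commutes with all infinitesimal compositions, so it fixes every iterated composite of $b$, hence all of $\Lie$. Next, the component $\Com(\Gr)$ is one-dimensional, spanned by a monomial built from $|V_{\Gr}|-1$ copies of the binary generator $m$; as $\tau$ multiplies each $m$-decoration by $-1$ without changing the tree, it acts on $\Com(\Gr)$ by the scalar $(-1)^{|V_{\Gr}|-1}$. A normal monomial $(m_{\Gr/I};b^{(I_1)},\dots,b^{(I_k)})$ of $\Pois(\Gr)$ uses the generator $m$ only inside its $\Com$-part $m_{\Gr/I}\in\Com(\Gr/I)$, which involves exactly $|V_{\Gr/I}|-1=|I|-1$ copies of $m$; therefore $\tau$ multiplies this monomial by $(-1)^{|I|-1}$ and leaves its tree shape and its $b$-labels unchanged. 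In particular $\tau$ is diagonal in the monomial basis, preserves the direct sum over $I$, and corresponds under the decomposition above to the operator $\tau|_{\Com}\otimes\mathrm{id}$.

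Taking $\tau$-invariants is now harmless: over $\mathbb{Q}$ the invariants of a tensor product with one factor acted on trivially split off that factor, so
\[
\Pois^{\tau}(\Gr)\cong\bigoplus_{I\vdash\Gr}\Com(\Gr/I)^{\tau}\otimes\bigotimes_{G\in I}\Lie(\Gr|_{G})=(\Com^{\tau}\circ\Lie)(\Gr),
\]
naturally in $\Gr$. It remains only to identify $\Com^{\tau}$: by the scalar computed above, $\Com^{\tau}(\Gr)$ equals $\Com(\Gr)$ when $|V_{\Gr}|$ is odd and $0$ when $|V_{\Gr}|$ is even, i.e. $\Com^{\tau}$ is the subcollection $\Com_{\mathrm{odd}}$ of graphs with an odd number of vertices, which is the desired conclusion. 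The only delicate step is the equivariance assertion of the previous paragraph; once the signs $(-1)^{|I|-1}$ are bookkept through the PBW basis there is no further obstacle.
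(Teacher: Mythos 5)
Your proposal is correct and follows essentially the same route as the paper: both pass through the PBW-type decomposition $\Pois\cong\Com\circ\Lie$ coming from the distributive law and observe that $\tau$ acts on a monomial $(m_{\Gr/I};b^{(I_1)},\dots,b^{(I_k)})$ by the scalar $(-1)^{|I|-1}$, so that taking invariants isolates the odd-$|I|$ summands, i.e.\ $\Com^{\tau}\circ\Lie$. Your write-up simply makes explicit the sign bookkeeping and the equivariance of the decomposition that the paper's two-line argument leaves implicit.
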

	\begin{proof} Recall from Example~\ref{ex::distib::law}, that the Poisson contractad is obtained from the commutative and Lie contractads by distributive law, in particular, we have $\Pois\cong \Com\circ\Lie$. For a monomial $\alpha=(m_{\Gr/I};b^{(G_1)},b^{(G_2)},\cdots,b^{(G_k)})$ which is obtained by iterated multiplication $m_{\Gr/I}$ and iterated composition of $b$, we have $\tau(\alpha)=(-1)^{|I|-1}\alpha$. So, we obtain the desired decomposition.
	\end{proof}
	Let us describe the presentation of the contractad $\Pois^{\tau}$.
	\begin{theorem}[$\tau$-invariants of Commutative and Poisson contractads]\label{thm::invariant_com_and_pois}\hfill\break \begin{itemize}
			\item[(i)] The contractad $\Com^{\tau}$ of $\tau$-invariants for $\Com$ is the quadratic Koszul contractad generated by symmetric ternary operation $\mu(x_1,x_2,x_3):=m(m(x_1,x_2),x_3)$ in the components $\Path_3$ and $\K_3$ respectively, satisfying the relations: For each graph $\Gr$ on $5$ vertices $\{1,2,3,4,5\}$, we have
			\begin{equation}\label{eq::comodd_rel}
				\mu(x_{i_1},x_{i_2},\mu(x_{i_3},x_{i_4},x_{i_5}))=\mu(x_{j_1},x_{j_2},\mu(x_{j_3},x_{j_4},x_{j_5})),    
			\end{equation} where $\{i_3,i_4,i_5\}$ and $\{j_3,j_4,j_5\}$ are tubes of $\Gr$. 
			\item[(ii)]The contractad $\Pois^{\tau}$ of $\tau$-invariants for $\Pois$ is the quadratic Koszul contractad obtained from contractads $\Com^{\tau}$ and $\Lie$ by the following rewriting rule: For each pair $(\Gr,G)$, where $\Gr$ is a graph on $4$ vertices $\{1,2,3,4\}$ and 3-tube $G=\{1,2,3\}$, we have the following analogue of Leibnitz identity:
			\begin{gather}\label{eq::rewr_rule_for_inariant}
				b(\mu(x_1,x_2,x_3),x_4)=\epsilon_{3,4}\mu(x_1,x_2,b(x_3,x_4))+\epsilon_{2,4}\mu(x_1,b(x_2,x_4),x_3)+\epsilon_{1,4}\mu(b(x_1,x_4),x_2,x_3),
			\end{gather} where $\epsilon_{v,w}=1$ for $v,w$ are adjacent and zero otherwise.
		\end{itemize}
	\end{theorem}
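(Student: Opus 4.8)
\emph{Part (i).} The plan is first to pin down the $\tau$-action on $\Com$. Since $\Com(\Gr)$ is one-dimensional, spanned by any iterated product of $|V_{\Gr}|-1$ copies of the generator $m$, and $\tau(m)=-m$, the involution acts on $\Com(\Gr)$ by the scalar $(-1)^{|V_{\Gr}|-1}$; hence $\Com^{\tau}=\Com_{\mathrm{odd}}$, the subcontractad spanned by graphs with an odd number of vertices. I would then check that $\Com_{\mathrm{odd}}$ is generated by the ternary classes $\mu$ in $\Path_3$ and $\K_3$: every connected graph on at least three vertices contains a tube $G$ with $|G|=3$ (peel a leaf $\ell$ of a spanning tree, its neighbour $u$, and a further tree-neighbour $w$ of $u$), and contracting such a $G$ lowers the vertex count by $2$, so induction on $|V_{\Gr}|$ writes every element of $\Com_{\mathrm{odd}}(\Gr)$ as an iterated $\mu$-composition. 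Because each $\Com(\Gr)$ is one-dimensional, any two ways $\mu_{\Gr/G}\circ^{\Gr}_{G}\mu_{\Gr|_{G}}$ and $\mu_{\Gr/G'}\circ^{\Gr}_{G'}\mu_{\Gr|_{G'}}$ of composing two $\mu$'s in a five-vertex graph coincide, which is exactly relation~\eqref{eq::comodd_rel}.

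To show that~\eqref{eq::comodd_rel} generates all relations and that $\Com^{\tau}$ is Koszul, I would exhibit a quadratic Gr\"obner basis for the shuffle version of $\Com_{\mathrm{odd}}$, using a ternary refinement of the reverse $\grpermlex$-order from Example~\ref{ex::grobner_for_classical_contractads}. The normal monomial attached to an ordered connected graph $\Gr$ on an odd number of vertices should be the ``ternary left comb'' $m_{\Gr}=m_{\Gr/G}\circ^{\Gr}_{G}\mu$, where $G=\{v_1,v_2,v_3\}$ with $v_1$ minimal, $v_2$ minimal adjacent to $\{v_1\}$ and $v_3$ minimal adjacent to $\{v_1,v_2\}$; the leading terms of~\eqref{eq::comodd_rel} are precisely the remaining quadratic compositions. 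Since there is exactly one such comb per odd graph (and none for even graphs), whereas $\dim\Com_{\mathrm{odd}}(\Gr)$ equals $1$ for odd and $0$ for even graphs, the normal monomials are automatically linearly independent; hence the quadratic relations are a Gr\"obner basis and $\Com^{\tau}$ is Koszul.

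\emph{Part (ii).} Set $\mathcal{Q}:=\Com^{\tau}\vee_{\lambda}\Lie$, the contractad built from the rewriting rule $\lambda$ determined by~\eqref{eq::rewr_rule_for_inariant} (the ``Leibniz'' move pushing a bracket below a $\mu$), exactly in the spirit of $\Gerst\cong\Com\vee\Susp^{-1}\Lie$ in Example~\ref{ex::distib::law}. Identity~\eqref{eq::rewr_rule_for_inariant} is a twofold application of the ordinary Leibniz relation from~\eqref{eq::gerst::relations}, so it holds in $\Pois$ and restricts to $\Pois^{\tau}$; the relations of $\Com^{\tau}$ and of $\Lie$ also hold there, which gives a surjection $\mathcal{Q}\twoheadrightarrow\Pois^{\tau}$. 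On the other hand, Lemma~\ref{lem::Pois_odd::collection} identifies $\Pois^{\tau}\cong\Com^{\tau}\circ\Lie$ as graphical collections, and Proposition~\ref{prop::distributive_law}(i) supplies a surjection $\Com^{\tau}\circ\Lie\twoheadrightarrow\mathcal{Q}$. Comparing dimensions componentwise forces all three maps to be isomorphisms; in particular $\Com^{\tau}\circ\Lie\overset{\cong}{\rightarrow}\mathcal{Q}$, so $\lambda$ does define a distributive law. As $\Com^{\tau}$ (by Part (i)) and $\Lie$ are Koszul, Proposition~\ref{prop::distributive_law}(iii) yields that $\Pois^{\tau}\cong\mathcal{Q}$ is Koszul.

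\emph{Main obstacle.} The only genuinely non-formal step is the Gr\"obner basis computation in Part (i): choosing a monomial order on ternary shuffle trees for which the left comb is the unique normal monomial and checking confluence of the associated rewriting. Once that is in place, the dichotomy $\dim\Com_{\mathrm{odd}}(\Gr)\in\{0,1\}$ makes everything else — the surjectivity checks, the dimension count, and hence the distributive law in Part (ii) — purely bookkeeping.
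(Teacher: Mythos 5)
Your proposal is correct and follows essentially the same route as the paper: for (i), identifying $\Com^{\tau}$ with the odd-vertex subcontractad, generating it by the ternary $\mu$'s via $3$-tubes, and establishing Koszulness through a quadratic Gr\"obner basis for the reverse $\grpermlex$-type order whose unique normal monomial per odd graph is the left comb built on the minimal $3$-tube $\{v_1,v_2,v_3\}$; for (ii), verifying the Leibniz rewriting rule inside $\Pois^{\tau}$, sandwiching $\Com^{\tau}\vee_{\lambda}\Lie$ between the two surjections from and onto $\Com^{\tau}\circ\Lie$ via Lemma~\ref{lem::Pois_odd::collection} and Proposition~\ref{prop::distributive_law}, and concluding the distributive law and Koszulness. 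No gaps beyond the Gr\"obner-basis confluence check that the paper itself also leaves implicit.
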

	\begin{proof}
		(i) For a generator $m_{\Gr}\in \Com(\Gr)$, we have $\tau(m_{\Gr})=(-1)^{|V_{\Gr}|-1}m_{\Gr}$, so $m_{\Gr}$ is invariant if and only if $\Gr$ has an odd number of vertices. Hence the contractad $\Com^{\tau}$ has non-trivial one-dimensional components only for graphs with an odd number of vertices. Therefore this contractad is generated by ternary operations $\mu_{\Gr}(x_1,x_2,x_3):=m(m(x_1,x_2),x_3)$ for $\Gr=\Path_3,\K_3$ and relations~\eqref{eq::comodd_rel} are obvious.
		Let $\Pop$ be a contractad generated by $\mu_{\Path_3}$ and $\mu_{\K_3}$ subject to relations~\eqref{eq::comodd_rel} and it remains to verify the isomorphism of vector spaces $\Pop(\Gr)\cong \Com^{\tau}(\Gr)$ for all graphs.  
		We show that $\Pop(\Gr)$ is one-dimensional for a graph $\Gamma$ with an odd number of vertices using Gr\"obner bases for contractads from \S\ref{sec::Grobner::contractad}. Consider the shuffle version $\Pop^{\forget}$ and the monomial order reverse to $\grpermlex$-order. The leading terms of relations~\eqref{eq::comodd_rel} with respect to this order are
		\[
		\mu_{\Gr/G}\circ^{\Gr}_{G} \mu_{\Gr|_G}, G\neq G_{\min},
		\] where $G_{\min}=\{v_1,v_2,v_3\}$ is a unique $3$-tube of an ordered graph with the following property: $v_1$ is a minimal vertex of graph $\Gr$, $v_2$ is a minimal adjacent to $v_1$, $v_3$ is a minimal vertex such that the union $\{v_1,v_2,v_3\}$ is a tube. Similarly to the monomial basis for $\Com^{\forget}$ from Example~\ref{sec::Grobner::contractad}, for each odd graph $\Gr$, there is a unique normal monomial $\mu_{\Gr}$ defined recursively by the rule
		\[
		\mu_{\Gr}=\mu_{\Gr/G_{\min}}\circ \mu_{\Gr|_G},
		\] where $G_{\min}$ is a unique minimal $3$-tube. Since normal monomials linearly span a contractad, we conclude $\Pop\cong \Com^{\tau}$ as required. Moreover, since normal monomials form a basis, this implies that quadratic relations form a Gr\"obner basis, so $\Com^{\tau}$ is Koszul.
		
		(ii) Thanks to Lemma~\ref{lem::Pois_odd::collection}, the contractad $\Pois^{\tau}$ is generated by subcontractads $\Com^{\tau}$ and $\Lie$. Next, let us verify the rewriting rule~\eqref{eq::rewr_rule_for_inariant}
		\begin{multline*}
			b(\mu(x_1,x_2,x_3),x_4)=b(m(m(x_1,x_2),x_3),x_4) = \\
			=\epsilon_{3,4}m(m(x_1,x_2),b(x_3,x_4))+\epsilon_{\{1,2\},4}m(x_3,b(m(x_1,x_2),x_4))=\\=\epsilon_{3,4}\mu(x_1,x_2,b(x_3,x_4))+\epsilon_{\{1,2\},4}[\epsilon_{2,4}m(x_3,m(x_1,b(x_2,x_4)))+\epsilon_{1,4}m(x_3,m(x_2,b(x_1,x_4)))]=\\=\epsilon_{3,4}\mu(x_1,x_2,b(x_3,x_4))+\epsilon_{2,4}\mu(x_1,b(x_2,x_4),x_3)+\epsilon_{1,4}\mu(b(x_1,x_4),x_2,x_3).
		\end{multline*} So, we have a surjective morphism of contractads $\Com^{\tau}\vee_{\lambda}\Lie\twoheadrightarrow \Pois^{\tau}$, where $\Com^{\tau}\vee_{\lambda}\Lie$ is the quadratic contractad obtained from rewriting rule~\eqref{eq::rewr_rule_for_inariant}. Thanks to Proposition~\ref{prop::distributive_law} and Lemma~\ref{lem::Pois_odd::collection}, we have a surjective endomorphism of the graphical collection $$\Com^{\tau}\circ\Lie\twoheadrightarrow \Com^{\tau}\vee_{\lambda}\Lie\twoheadrightarrow\Pois^{\tau}=\Com^{\tau}\circ\Lie$$ 
		that automatically implies the isomorphism $\Pois^{\tau}\cong \Com^{\tau}\vee_{\lambda}\Lie$. Hence, by Proposition~\ref{prop::distributive_law}, the rewriting rule $\lambda$ defines a distributive law. Since $\Com^{\tau}$ and $\Lie$ are Koszul, thanks to Proposition~\ref{prop::distributive_law}, $\Pois^{\tau}$ is also Koszul.
	\end{proof}
	We will also denote the contractad $\Com^{\tau}$ by $\Com_{\mathrm{odd}}$ because it consists of a unique operation for each graph with an odd number of vertices.
	Let $\Lie_{\mathrm{odd}}$, $\Pois_{\mathrm{odd}}$ be contractads Koszul dual to $\Com^{\tau}$ and $\Pois^{\tau}$ respectively. As an immediate consequence of Theorem~\ref{thm::invariant_com_and_pois}, we have
	\begin{theorem}[Odd Lie and Poisson contractads]\label{thm::odd_lie_and_pois} \hfill\break \begin{itemize}
			\item[(i)] The odd Lie contractad $\Lie_{\mathrm{odd}}$ is the contractad with anti-symmetric ternary generators $p_{\Path_3},p_{\K_3}$ of homological degree $1$, that is
			\[
			p_{\Gr}(x_{\sigma(1)},x_{\sigma(2)},x_{\sigma(3)})=(-1)^{\sigma}p_{\Gr}(x_1,x_2,x_3), \quad \Gr=\Path_3,\K_3.
			\] satisfying the relations: for each graph $\Gr$ on $5$ vertices $\{1,\cdots,5\}$, we have
			\[
			\sum (-1)^{(i_1,i_2,i_3,i_4,i_5)}p(x_{i_1},x_{i_2},p(x_{i_3},x_{i_4},x_{i_5}))=0,
			\] where the sum ranges over all 3-tubes $\{i_3,i_4,i_5\}$, $i_1<i_2$ remaining vertices and $(-1)^{(i_1,i_2,j_1,j_2,j_3)}$ is the sign of the corresponding partition.
			
			\item[(ii)] The odd Poisson contractad $\Pois_{\mathrm{odd}}$ is the contractad obtained from contractads $\Com$ and $\Lie_{\mathrm{odd}}$ by the rewriting rule:  for each pair $(\Gr,e)$, where $\Gr$ is a graph on 4 vertices $\{1,2,3,4\}$ and edge $e=\{1,2\}$, we have have the following analogue of Leibnitz identity:
			\begin{equation}\label{eq::rewr_rule_for_odd}
				p(m(x_1,x_2),x_3,x_4)=\epsilon_{2,3,4}m(x_1,p(x_2,x_3,x_4))+\epsilon_{1,3,4}m(p(x_1,x_3,x_4),x_2),    
			\end{equation} where $\epsilon_{v,w,u}=1$ for $\{v,w,u\}$ a tube and zero otherwise. In particular, this contractad is Koszul and on the level of graphical collections, we have 
			\[
			\Pois_{\mathrm{odd}}\cong \Com\circ \Lie_{\mathrm{odd}}.
			\]
		\end{itemize}
	\end{theorem}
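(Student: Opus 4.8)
The theorem is obtained by dualizing the two parts of Theorem~\ref{thm::invariant_com_and_pois} via the Koszul duality of \S\ref{sec::Koszul::contractads} together with the distributive-law formalism of Proposition~\ref{prop::distributive_law}. For part (i), the plan is to compute $\Lie_{\mathrm{odd}} := (\Com^{\tau})^{!}$ directly from the presentation of $\Com^{\tau}=\Com_{\mathrm{odd}}$. Since $\Com^{\tau}$ is quadratic with a one-dimensional space of generators $\E$, symmetric and concentrated in homological degree $0$ in the components $\Path_3$ and $\K_3$, the formula $\Pop^{!}=\Pop(s^{-1}\Susp^{-1}\E^{*},\R^{\bot})$ applies. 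I would first carry out the suspension bookkeeping: in a ternary component $\Susp^{-1}$ raises homological degree by $|V_\Gr|-1=2$ and twists the $\Aut(\Gr)$-action by the sign character, and $s^{-1}$ then lowers degree by $1$, so the dual generators $p_{\Path_3},p_{\K_3}$ are anti-symmetric of homological degree $1$. Next I would compute $\R^{\bot}$: it is the annihilator, inside $(\E\circ'\E)(\Gr)$ for each $5$-vertex graph $\Gr$, of the span of the differences $\mu(x_{i_1},x_{i_2},\mu(x_{i_3},x_{i_4},x_{i_5}))-\mu(x_{j_1},x_{j_2},\mu(x_{j_3},x_{j_4},x_{j_5}))$ of \eqref{eq::comodd_rel}. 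This is the ternary analogue of the computation turning total associativity of a commutative product into the Jacobi identity, restricted to those triples that are tubes of $\Gr$, and it yields the single antisymmetrized relation $\sum(-1)^{(i_1,\ldots,i_5)}p(x_{i_1},x_{i_2},p(x_{i_3},x_{i_4},x_{i_5}))=0$. Koszulity of $\Lie_{\mathrm{odd}}$ is then immediate: the proof of Theorem~\ref{thm::invariant_com_and_pois}(i) exhibits a quadratic Gr\"obner basis for $\Com^{\tau}$, so its Koszul dual has a quadratic Gr\"obner basis with respect to the reverse order, as recalled in \S\ref{sec::hycom::monomials}.

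For part (ii), I would invoke Theorem~\ref{thm::invariant_com_and_pois}(ii), which presents $\Pois^{\tau}\cong\Com^{\tau}\vee_{\lambda}\Lie$ as a genuine distributive law with rewriting rule \eqref{eq::rewr_rule_for_inariant}. The identities recorded after the definition of distributive law give $(\Pop\vee_{\lambda}\Q)^{!}\cong\Q^{!}\vee_{\lambda^{!}}\Pop^{!}$ with $\lambda^{!}=s^{-2}\Susp^{-1}\gamma^{*}$; combined with $\Lie^{!}\cong\Com$ and $(\Com^{\tau})^{!}\cong\Lie_{\mathrm{odd}}$ from part (i), this yields $\Pois_{\mathrm{odd}}=(\Pois^{\tau})^{!}\cong\Com\vee_{\lambda^{!}}\Lie_{\mathrm{odd}}$. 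The remaining task is to identify $\lambda^{!}$: transposing \eqref{eq::rewr_rule_for_inariant} through the Koszul pairing and carrying the signs coming from the degree-$1$ generator $p$ produces exactly the Leibniz-type rule \eqref{eq::rewr_rule_for_odd}, with $\epsilon_{v,w,u}=1$ precisely when $\{v,w,u\}$ is a tube. Koszulity of $\Pois_{\mathrm{odd}}$ and the graphical-collection decomposition $\Pois_{\mathrm{odd}}\cong\Com\circ\Lie_{\mathrm{odd}}$ then follow from Proposition~\ref{prop::distributive_law}(ii)--(iii) together with the fact that the underlying graphical collection of $\Pop\vee_{\lambda}\Q$ for a genuine distributive law is $\Pop\circ\Q$.

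I expect the main obstacle to be the explicit verification of the two dualized relations — the generalized Jacobi relation as $\R^{\bot}$ of \eqref{eq::comodd_rel}, and the dual rewriting rule $\lambda^{!}$ as \eqref{eq::rewr_rule_for_odd}. Both are mechanical once the pairing conventions are fixed, but the sign bookkeeping for the homological-degree-$1$ ternary generators (the combined effect of $s^{-1}$, $\Susp^{-1}$, and the sign twist on $\Aut(\Gr)$) requires care, and throughout one must track which triples and quintuples of vertices of the ambient graph are actually tubes, since this is exactly the feature distinguishing these contractads from their operadic counterparts.
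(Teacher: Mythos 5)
Your proposal is correct and follows essentially the same route as the paper's proof: part (i) by direct dualization of the quadratic presentation of $\Com^{\tau}$, with the anti-symmetry and signs of the generators and relations coming from the suspensions, and part (ii) by applying $(\Pop\vee_{\lambda}\Q)^!\cong\Q^!\vee_{\lambda^!}\Pop^!$ together with Proposition~\ref{prop::distributive_law} to the distributive law of Theorem~\ref{thm::invariant_com_and_pois}(ii) and identifying $\lambda^!$ with the rewriting rule \eqref{eq::rewr_rule_for_odd} by inspection. The paper's own proof is a terser version of exactly this argument, leaving the suspension bookkeeping and the orthogonality computation as ``direct computations.''
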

	\begin{proof}(i) By direct computations, the signs in relations appear due to suspensions (ii) Recall that a Koszul dual of contractad $\Pop\vee_{\lambda}\Q$ obtained from rewriting rules is again a contractad obtained from rewriting rule. Applying to our case, we have
		\[
		\Pois_{\mathrm{odd}}=[\Pois^\tau]^!=(\Com^{\tau}\vee_{\lambda}\Lie)^!\cong \Lie^!\vee_{\lambda^!}(\Com^{\tau})^!=\Com\vee_{\lambda^!} \Lie_{\mathrm{odd}}. 
		\] By direct inspection, we see that the Koszul dual $\lambda^!$ to rewriting rule~\eqref{eq::rewr_rule_for_inariant} coincides with rewriting rule~\eqref{eq::rewr_rule_for_odd}. Thanks to Theorem~\ref{thm::invariant_com_and_pois} and Proposition~\ref{prop::distributive_law} we conclude that this rule provides a distributive law, so $\Pois_{\mathrm{odd}}\cong \Com\circ\Lie_{\mathrm{odd}}$ and the contractad $\Pois_{\mathrm{odd}}$ is Koszul.
	\end{proof}
	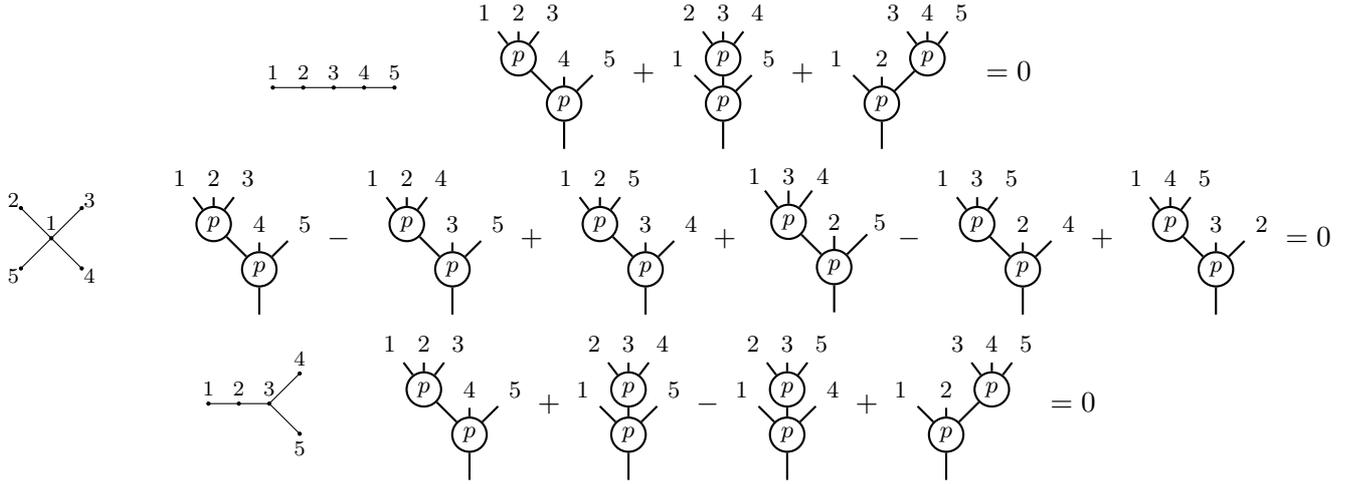
\begin{figure}[ht]
		\begin{gather*}
			%P5
			\vcenter{\hbox{\begin{tikzpicture}[scale=0.4]
						\fill (0,0) circle (2pt);
						\node at (0,0.5) {\scriptsize$1$};
						\fill (1,0) circle (2pt);
						\node at (1,0.5) {\scriptsize$2$};
						\fill (2,0) circle (2pt);
						\node at (2,0.5) {\scriptsize$3$};
						\fill (3,0) circle (2pt);
						\node at (3,0.5) {\scriptsize$4$};
						\fill (4,0) circle (2pt);
						\node at (4,0.5) {\scriptsize$5$};
						\draw (0,0)--(1,0)--(2,0)--(3,0)--(4,0);    
			\end{tikzpicture}}}\quad\quad
			%(123)45
			\vcenter{\hbox{\begin{tikzpicture}[
						scale=0.6,
						vert/.style={circle,  draw=black!30!black, thick, minimum size=1mm, inner sep=2pt},
						leaf/.style={rectangle, thick, minimum size=1mm},
						edge/.style={-,black!30!black, thick},
						]
						%nodes
						\node[vert] (bot) at (0,1) {\footnotesize$p$};
						\node[vert] (up) at (-1,2) {\footnotesize$p$};
						%leaves
						\node[leaf] (bot1) at (1,2) {\footnotesize$5$};
						\node[leaf] (bot2) at (0,2) {\footnotesize$4$};
						\node[leaf] (up1) at (-1.75,3) {\footnotesize$1$};
						\node[leaf] (up2) at (-1,3) {\footnotesize$2$};
						\node[leaf] (up3) at (-0.25,3) {\footnotesize$3$};
						%edges
						\draw[edge] (0,0)--(bot);
						\draw[edge] (bot)--(bot1);
						\draw[edge] (bot)--(bot2);
						\draw[edge] (bot)--(up);
						\draw[edge] (up)--(up1);
						\draw[edge] (up)--(up2);
						\draw[edge] (up)--(up3);
			\end{tikzpicture}}}+
			%1(234)5
			\vcenter{\hbox{\begin{tikzpicture}[
						scale=0.6,
						vert/.style={circle,  draw=black!30!black, thick, minimum size=1mm, inner sep=2pt},
						leaf/.style={rectangle, thick, minimum size=1mm},
						edge/.style={-,black!30!black, thick},
						]
						%nodes
						\node[vert] (bot) at (0,1) {\footnotesize$p$};
						\node[vert] (up) at (0,2) {\footnotesize$p$};
						%leaves
						\node[leaf] (bot1) at (1,2) {\footnotesize$5$};
						\node[leaf] (bot2) at (-1,2) {\footnotesize$1$};
						\node[leaf] (up1) at (-0.75,3) {\footnotesize$2$};
						\node[leaf] (up2) at (0,3) {\footnotesize$3$};
						\node[leaf] (up3) at (0.75,3) {\footnotesize$4$};
						%edges
						\draw[edge] (0,0)--(bot);
						\draw[edge] (bot)--(bot1);
						\draw[edge] (bot)--(bot2);
						\draw[edge] (bot)--(up);
						\draw[edge] (up)--(up1);
						\draw[edge] (up)--(up2);
						\draw[edge] (up)--(up3);
			\end{tikzpicture}}}+
			%12(345)
			\vcenter{\hbox{\begin{tikzpicture}[
						scale=0.6,
						vert/.style={circle,  draw=black!30!black, thick, minimum size=1mm, inner sep=2pt},
						leaf/.style={rectangle, thick, minimum size=1mm},
						edge/.style={-,black!30!black, thick},
						]
						%nodes
						\node[vert] (bot) at (0,1) {\footnotesize$p$};
						\node[vert] (up) at (1,2) {\footnotesize$p$};
						%leaves
						\node[leaf] (up1) at (0.25,3) {\footnotesize$3$};
						\node[leaf] (up2) at (1,3) {\footnotesize$4$};
						\node[leaf] (up3) at (1.75,3) {\footnotesize$5$};
						\node[leaf] (bot1) at (-1,2) {\footnotesize$1$};
						\node[leaf] (bot2) at (0,2) {\footnotesize$2$};
						%edges
						\draw[edge] (0,0)--(bot);
						\draw[edge] (bot)--(bot1);
						\draw[edge] (bot)--(bot2);
						\draw[edge] (bot)--(up);
						\draw[edge] (up)--(up1);
						\draw[edge] (up)--(up2);
						\draw[edge] (up)--(up3);
			\end{tikzpicture}}}=0
			\\
			%St4
			\vcenter{\hbox{\begin{tikzpicture}[scale=0.4]
						\fill (0,0) circle (2pt);
						\node at (0,0.5) {\scriptsize$1$};
						\fill (-1,1) circle (2pt);
						\node at (-1.25,1.25) {\scriptsize$2$};
						\fill (1,1) circle (2pt);
						\node at (1.25,1.25) {\scriptsize$3$};
						\fill (1,-1) circle (2pt);
						\node at (1.25,-1.25) {\scriptsize$4$};
						\fill (-1,-1) circle (2pt);
						\node at (-1.25,-1.25) {\scriptsize$5$};
						\draw (0,0)--(-1,1);
						\draw (0,0)--(1,1);
						\draw (0,0)--(1,-1);
						\draw (0,0)--(-1,-1);
			\end{tikzpicture}}}\quad\quad
			%12345
			\vcenter{\hbox{\begin{tikzpicture}[
						scale=0.6,
						vert/.style={circle,  draw=black!30!black, thick, minimum size=1mm, inner sep=2pt},
						leaf/.style={rectangle, thick, minimum size=1mm},
						edge/.style={-,black!30!black, thick},
						]
						%nodes
						\node[vert] (bot) at (0,1) {\footnotesize$p$};
						\node[vert] (up) at (-1,2) {\footnotesize$p$};
						%leaves
						\node[leaf] (bot1) at (1,2) {\footnotesize$5$};
						\node[leaf] (bot2) at (0,2) {\footnotesize$4$};
						\node[leaf] (up1) at (-1.75,3) {\footnotesize$1$};
						\node[leaf] (up2) at (-1,3) {\footnotesize$2$};
						\node[leaf] (up3) at (-0.25,3) {\footnotesize$3$};
						%edges
						\draw[edge] (0,0)--(bot);
						\draw[edge] (bot)--(bot1);
						\draw[edge] (bot)--(bot2);
						\draw[edge] (bot)--(up);
						\draw[edge] (up)--(up1);
						\draw[edge] (up)--(up2);
						\draw[edge] (up)--(up3);
			\end{tikzpicture}}}-
			%12435_odd
			\vcenter{\hbox{\begin{tikzpicture}[
						scale=0.6,
						vert/.style={circle,  draw=black!30!black, thick, minimum size=1mm, inner sep=2pt},
						leaf/.style={rectangle, thick, minimum size=1mm},
						edge/.style={-,black!30!black, thick},
						]
						%nodes
						\node[vert] (bot) at (0,1) {\footnotesize$p$};
						\node[vert] (up) at (-1,2) {\footnotesize$p$};
						%leaves
						\node[leaf] (bot1) at (1,2) {\footnotesize$5$};
						\node[leaf] (bot2) at (0,2) {\footnotesize$3$};
						\node[leaf] (up1) at (-1.75,3) {\footnotesize$1$};
						\node[leaf] (up2) at (-1,3) {\footnotesize$2$};
						\node[leaf] (up3) at (-0.25,3) {\footnotesize$4$};
						%edges
						\draw[edge] (0,0)--(bot);
						\draw[edge] (bot)--(bot1);
						\draw[edge] (bot)--(bot2);
						\draw[edge] (bot)--(up);
						\draw[edge] (up)--(up1);
						\draw[edge] (up)--(up2);
						\draw[edge] (up)--(up3);
			\end{tikzpicture}}}+
			%12534_even
			\vcenter{\hbox{\begin{tikzpicture}[
						scale=0.6,
						vert/.style={circle,  draw=black!30!black, thick, minimum size=1mm, inner sep=2pt},
						leaf/.style={rectangle, thick, minimum size=1mm},
						edge/.style={-,black!30!black, thick},
						]
						%nodes
						\node[vert] (bot) at (0,1) {\footnotesize$p$};
						\node[vert] (up) at (-1,2) {\footnotesize$p$};
						%leaves
						\node[leaf] (bot1) at (1,2) {\footnotesize$4$};
						\node[leaf] (bot2) at (0,2) {\footnotesize$3$};
						\node[leaf] (up1) at (-1.75,3) {\footnotesize$1$};
						\node[leaf] (up2) at (-1,3) {\footnotesize$2$};
						\node[leaf] (up3) at (-0.25,3) {\footnotesize$5$};
						%edges
						\draw[edge] (0,0)--(bot);
						\draw[edge] (bot)--(bot1);
						\draw[edge] (bot)--(bot2);
						\draw[edge] (bot)--(up);
						\draw[edge] (up)--(up1);
						\draw[edge] (up)--(up2);
						\draw[edge] (up)--(up3);
			\end{tikzpicture}}}+
			%13425_even
			\vcenter{\hbox{\begin{tikzpicture}[
						scale=0.6,
						vert/.style={circle,  draw=black!30!black, thick, minimum size=1mm, inner sep=2pt},
						leaf/.style={rectangle, thick, minimum size=1mm, inner sep=2pt},
						edge/.style={-,black!30!black, thick},
						]
						%nodes
						\node[vert] (bot) at (0,1) {\footnotesize$p$};
						\node[vert] (up) at (-1,2) {\footnotesize$p$};
						%leaves
						\node[leaf] (bot1) at (1,2) {\footnotesize$5$};
						\node[leaf] (bot2) at (0,2) {\footnotesize$2$};
						\node[leaf] (up1) at (-1.75,3) {\footnotesize$1$};
						\node[leaf] (up2) at (-1,3) {\footnotesize$3$};
						\node[leaf] (up3) at (-0.25,3) {\footnotesize$4$};
						%edges
						\draw[edge] (0,0)--(bot);
						\draw[edge] (bot)--(bot1);
						\draw[edge] (bot)--(bot2);
						\draw[edge] (bot)--(up);
						\draw[edge] (up)--(up1);
						\draw[edge] (up)--(up2);
						\draw[edge] (up)--(up3);
			\end{tikzpicture}}}-
			%13524_odd
			\vcenter{\hbox{\begin{tikzpicture}[
						scale=0.6,
						vert/.style={circle,  draw=black!30!black, thick, minimum size=1mm, inner sep=2pt},
						leaf/.style={rectangle, thick, minimum size=1mm},
						edge/.style={-,black!30!black, thick},
						]
						%nodes
						\node[vert] (bot) at (0,1) {\footnotesize$p$};
						\node[vert] (up) at (-1,2) {\footnotesize$p$};
						%leaves
						\node[leaf] (bot1) at (1,2) {\footnotesize$4$};
						\node[leaf] (bot2) at (0,2) {\footnotesize$2$};
						\node[leaf] (up1) at (-1.75,3) {\footnotesize$1$};
						\node[leaf] (up2) at (-1,3) {\footnotesize$3$};
						\node[leaf] (up3) at (-0.25,3) {\footnotesize$5$};
						%edges
						\draw[edge] (0,0)--(bot);
						\draw[edge] (bot)--(bot1);
						\draw[edge] (bot)--(bot2);
						\draw[edge] (bot)--(up);
						\draw[edge] (up)--(up1);
						\draw[edge] (up)--(up2);
						\draw[edge] (up)--(up3);
			\end{tikzpicture}}}+
			%14523_even
			\vcenter{\hbox{\begin{tikzpicture}[
						scale=0.6,
						vert/.style={circle,  draw=black!30!black, thick, minimum size=1mm, inner sep=2pt},
						leaf/.style={rectangle, thick, minimum size=1mm},
						edge/.style={-,black!30!black, thick},
						]
						%nodes
						\node[vert] (bot) at (0,1) {\footnotesize$p$};
						\node[vert] (up) at (-1,2) {\footnotesize$p$};
						%leaves
						\node[leaf] (bot1) at (1,2) {\footnotesize$2$};
						\node[leaf] (bot2) at (0,2) {\footnotesize$3$};
						\node[leaf] (up1) at (-1.75,3) {\footnotesize$1$};
						\node[leaf] (up2) at (-1,3) {\footnotesize$4$};
						\node[leaf] (up3) at (-0.25,3) {\footnotesize$5$};
						%edges
						\draw[edge] (0,0)--(bot);
						\draw[edge] (bot)--(bot1);
						\draw[edge] (bot)--(bot2);
						\draw[edge] (bot)--(up);
						\draw[edge] (up)--(up1);
						\draw[edge] (up)--(up2);
						\draw[edge] (up)--(up3);
			\end{tikzpicture}}}=0
			\\
			%D4
			\vcenter{\hbox{\begin{tikzpicture}[scale=0.4]
						\fill (0,0) circle (2pt);
						\node at (0,0.5) {\scriptsize$1$};
						\fill (1,0) circle (2pt);
						\node at (1,0.5) {\scriptsize$2$};
						\fill (2,0) circle (2pt);
						\node at (2,0.5) {\scriptsize$3$};
						\fill (3,1) circle (2pt);
						\node at (3,1.5) {\scriptsize$4$};
						\fill (3,-1) circle (2pt);
						\node at (3,-1.5) {\scriptsize$5$};
						\draw (0,0)--(1,0)--(2,0);
						\draw (2,0)--(3,1);
						\draw (2,0)--(3,-1);
			\end{tikzpicture}}}\quad\quad
			%(123)45
			\vcenter{\hbox{\begin{tikzpicture}[
						scale=0.6,
						vert/.style={circle,  draw=black!30!black, thick, minimum size=1mm, inner sep=2pt},
						leaf/.style={rectangle, thick, minimum size=1mm},
						edge/.style={-,black!30!black, thick},
						]
						%nodes
						\node[vert] (bot) at (0,1) {\footnotesize$p$};
						\node[vert] (up) at (-1,2) {\footnotesize$p$};
						%leaves
						\node[leaf] (bot1) at (1,2) {\footnotesize$5$};
						\node[leaf] (bot2) at (0,2) {\footnotesize$4$};
						\node[leaf] (up1) at (-1.75,3) {\footnotesize$1$};
						\node[leaf] (up2) at (-1,3) {\footnotesize$2$};
						\node[leaf] (up3) at (-0.25,3) {\footnotesize$3$};
						%edges
						\draw[edge] (0,0)--(bot);
						\draw[edge] (bot)--(bot1);
						\draw[edge] (bot)--(bot2);
						\draw[edge] (bot)--(up);
						\draw[edge] (up)--(up1);
						\draw[edge] (up)--(up2);
						\draw[edge] (up)--(up3);
			\end{tikzpicture}}}+
			%1(234)5
			\vcenter{\hbox{\begin{tikzpicture}[
						scale=0.6,
						vert/.style={circle,  draw=black!30!black, thick, minimum size=1mm, inner sep=2pt},
						leaf/.style={rectangle, thick, minimum size=1mm},
						edge/.style={-,black!30!black, thick},
						]
						%nodes
						\node[vert] (bot) at (0,1) {\footnotesize$p$};
						\node[vert] (up) at (0,2) {\footnotesize$p$};
						%leaves
						\node[leaf] (bot1) at (1,2) {\footnotesize$5$};
						\node[leaf] (bot2) at (-1,2) {\footnotesize$1$};
						\node[leaf] (up1) at (-0.75,3) {\footnotesize$2$};
						\node[leaf] (up2) at (0,3) {\footnotesize$3$};
						\node[leaf] (up3) at (0.75,3) {\footnotesize$4$};
						%edges
						\draw[edge] (0,0)--(bot);
						\draw[edge] (bot)--(bot1);
						\draw[edge] (bot)--(bot2);
						\draw[edge] (bot)--(up);
						\draw[edge] (up)--(up1);
						\draw[edge] (up)--(up2);
						\draw[edge] (up)--(up3);
			\end{tikzpicture}}}-
			%1(235)4
			\vcenter{\hbox{\begin{tikzpicture}[
						scale=0.6,
						vert/.style={circle,  draw=black!30!black, thick, minimum size=1mm, inner sep=2pt},
						leaf/.style={rectangle, thick, minimum size=1mm},
						edge/.style={-,black!30!black, thick},
						]
						%nodes
						\node[vert] (bot) at (0,1) {\footnotesize$p$};
						\node[vert] (up) at (0,2) {\footnotesize$p$};
						%leaves
						\node[leaf] (bot1) at (1,2) {\footnotesize$4$};
						\node[leaf] (bot2) at (-1,2) {\footnotesize$1$};
						\node[leaf] (up1) at (-0.75,3) {\footnotesize$2$};
						\node[leaf] (up2) at (0,3) {\footnotesize$3$};
						\node[leaf] (up3) at (0.75,3) {\footnotesize$5$};
						%edges
						\draw[edge] (0,0)--(bot);
						\draw[edge] (bot)--(bot1);
						\draw[edge] (bot)--(bot2);
						\draw[edge] (bot)--(up);
						\draw[edge] (up)--(up1);
						\draw[edge] (up)--(up2);
						\draw[edge] (up)--(up3);
			\end{tikzpicture}}}+
			%12(345)
			\vcenter{\hbox{\begin{tikzpicture}[
						scale=0.6,
						vert/.style={circle,  draw=black!30!black, thick, minimum size=1mm, inner sep=2pt},
						leaf/.style={rectangle, thick, minimum size=1mm},
						edge/.style={-,black!30!black, thick},
						]
						%nodes
						\node[vert] (bot) at (0,1) {\footnotesize$p$};
						\node[vert] (up) at (1,2) {\footnotesize$p$};
						%leaves
						\node[leaf] (up1) at (0.25,3) {\footnotesize$3$};
						\node[leaf] (up2) at (1,3) {\footnotesize$4$};
						\node[leaf] (up3) at (1.75,3) {\footnotesize$5$};
						\node[leaf] (bot1) at (-1,2) {\footnotesize$1$};
						\node[leaf] (bot2) at (0,2) {\footnotesize$2$};
						%edges
						\draw[edge] (0,0)--(bot);
						\draw[edge] (bot)--(bot1);
						\draw[edge] (bot)--(bot2);
						\draw[edge] (bot)--(up);
						\draw[edge] (up)--(up1);
						\draw[edge] (up)--(up2);
						\draw[edge] (up)--(up3);
			\end{tikzpicture}}}=0
		\end{gather*}
		\caption{Quadratic relations in $\Lie_{\mathrm{odd}}$ for trees on $5$ vertices. Note that arranging of leaves is important by signs reasons}
	\end{figure}
	\begin{figure}[ht]
		\begin{gather*}
			%P4
			\vcenter{\hbox{\begin{tikzpicture}[scale=0.5]
						\fill (0,0) circle (2pt);
						\node at (0,0.5) {\footnotesize$1$};
						\fill (1,0) circle (2pt);
						\node at (1,0.5) {\footnotesize$2$};
						\fill (2,0) circle (2pt);
						\node at (2,0.5) {\footnotesize$3$};
						\fill (3,0) circle (2pt);
						\node at (3,0.5) {\footnotesize$4$};
						\draw (0,0)--(1,0)--(2,0)--(3,0);    
			\end{tikzpicture}}}\quad\quad
			%relP4,1
			\vcenter{\hbox{\begin{tikzpicture}[
						scale=0.8,
						vert/.style={circle,  draw=black!30!black, thick, minimum size=1mm, inner sep=3pt},
						leaf/.style={rectangle, thick, minimum size=1mm},
						edge/.style={-,black!30!black, thick},
						]
						%nodes
						\node[vert] (bot) at (0,1) {$p$};
						\node[vert] (up) at (-1,2) {$m$};
						%leaves
						\node[leaf] (bot1) at (1,2) {$4$};
						\node[leaf] (bot2) at (0,2) {$3$};
						\node[leaf] (up1) at (-1.6,3) {$1$};
						\node[leaf] (up3) at (-0.4,3) {$2$};
						%edges
						\draw[edge] (0,0)--(bot);
						\draw[edge] (bot)--(bot1);
						\draw[edge] (bot)--(bot2);
						\draw[edge] (bot)--(up);
						\draw[edge] (up)--(up1);
						\draw[edge] (up)--(up3);
			\end{tikzpicture}}}=  
			\vcenter{\hbox{\begin{tikzpicture}[
						scale=0.8,
						vert/.style={circle,  draw=black!30!black, thick, minimum size=1mm, inner sep=3pt},
						leaf/.style={rectangle, thick, minimum size=1mm},
						edge/.style={-,black!30!black, thick},
						]
						%nodes
						\node[vert] (bot) at (0,1) {$m$};
						\node[vert] (up) at (0.75,2) {$p$};
						%leaves
						\node[leaf] (up1) at (0,3) {$2$};
						\node[leaf] (up2) at (0.75,3) {$3$};
						\node[leaf] (up3) at (1.5,3) {$4$};
						\node[leaf] (bot1) at (-0.75,2) {$1$};
						%edges
						\draw[edge] (0,0)--(bot);
						\draw[edge] (bot)--(bot1);
						\draw[edge] (bot)--(up);
						\draw[edge] (up)--(up1);
						\draw[edge] (up)--(up2);
						\draw[edge] (up)--(up3);
			\end{tikzpicture}}},\quad 
			%relP4,2
			\vcenter{\hbox{\begin{tikzpicture}[
						scale=0.8,
						vert/.style={circle,  draw=black!30!black, thick, minimum size=1mm, inner sep=3pt},
						leaf/.style={rectangle, thick, minimum size=1mm},
						edge/.style={-,black!30!black, thick},
						]
						%nodes
						\node[vert] (bot) at (0,1) {$p$};
						\node[vert] (up) at (0,2) {$m$};
						%leaves
						\node[leaf] (bot1) at (1,2) {$4$};
						\node[leaf] (bot2) at (-1,2) {$1$};
						\node[leaf] (up1) at (-0.6,3) {$2$};
						\node[leaf] (up3) at (0.6,3) {$3$};
						%edges
						\draw[edge] (0,0)--(bot);
						\draw[edge] (bot)--(bot1);
						\draw[edge] (bot)--(bot2);
						\draw[edge] (bot)--(up);
						\draw[edge] (up)--(up1);
						\draw[edge] (up)--(up3);
			\end{tikzpicture}}}=0;
			\\
			%C4    
			\vcenter{\hbox{\begin{tikzpicture}[scale=0.5]
						\fill (0,0) circle (2pt);
						\fill (0,1.5) circle (2pt);
						\fill (1.5,0) circle (2pt);
						\fill (1.5,1.5) circle (2pt);
						\draw (0,0)--(1.5,0)--(1.5,1.5)--(0,1.5)-- cycle;
						\node at (-0.25,1.75) {\footnotesize$1$};
						\node at (1.75,1.75) {\footnotesize$2$};
						\node at (1.75,-0.25) {\footnotesize$3$};
						\node at (-0.25,-0.25) {\footnotesize$4$};
			\end{tikzpicture}}}\quad\quad
			%relC4
			\vcenter{\hbox{\begin{tikzpicture}[
						scale=0.8,
						vert/.style={circle,  draw=black!30!black, thick, minimum size=1mm, inner sep=3pt},
						leaf/.style={rectangle, thick, minimum size=1mm},
						edge/.style={-,black!30!black, thick},
						]
						%nodes
						\node[vert] (bot) at (0,1) {$p$};
						\node[vert] (up) at (-1,2) {$m$};
						%leaves
						\node[leaf] (bot1) at (1,2) {$4$};
						\node[leaf] (bot2) at (0,2) {$3$};
						\node[leaf] (up1) at (-1.6,3) {$1$};
						\node[leaf] (up3) at (-0.4,3) {$2$};
						%edges
						\draw[edge] (0,0)--(bot);
						\draw[edge] (bot)--(bot1);
						\draw[edge] (bot)--(bot2);
						\draw[edge] (bot)--(up);
						\draw[edge] (up)--(up1);
						\draw[edge] (up)--(up3);
			\end{tikzpicture}}}=
			\vcenter{\hbox{\begin{tikzpicture}[
						scale=0.8,
						vert/.style={circle,  draw=black!30!black, thick, minimum size=1mm},
						leaf/.style={rectangle, thick, minimum size=1mm, inner sep=3pt},
						edge/.style={-,black!30!black, thick},
						]
						%nodes
						\node[vert] (bot) at (0,1) {$m$};
						\node[vert] (up) at (0.75,2) {$p$};
						%leaves
						\node[leaf] (up1) at (0,3) {$2$};
						\node[leaf] (up2) at (0.75,3) {$3$};
						\node[leaf] (up3) at (1.5,3) {$4$};
						\node[leaf] (bot1) at (-0.75,2) {$1$};
						%edges
						\draw[edge] (0,0)--(bot);
						\draw[edge] (bot)--(bot1);
						\draw[edge] (bot)--(up);
						\draw[edge] (up)--(up1);
						\draw[edge] (up)--(up2);
						\draw[edge] (up)--(up3);
			\end{tikzpicture}}}+
			\vcenter{\hbox{\begin{tikzpicture}[
						scale=0.8,
						vert/.style={circle,  draw=black!30!black, thick, minimum size=1mm, inner sep=3pt},
						leaf/.style={rectangle, thick, minimum size=1mm},
						edge/.style={-,black!30!black, thick},
						]
						%nodes
						\node[vert] (bot) at (0,1) {$m$};
						\node[vert] (up) at (-0.75,2) {$p$};
						%leaves
						\node[leaf] (bot1) at (0.75,2) {$2$};
						\node[leaf] (up1) at (-1.5,3) {$1$};
						\node[leaf] (up2) at (-0.75,3) {$3$};
						\node[leaf] (up3) at (0,3) {$4$};
						%edges
						\draw[edge] (0,0)--(bot);
						\draw[edge] (bot)--(bot1);
						\draw[edge] (bot)--(up);
						\draw[edge] (up)--(up1);
						\draw[edge] (up)--(up2);
						\draw[edge] (up)--(up3);
			\end{tikzpicture}}}
		\end{gather*}
		\caption{Distributive law between $\Lie_{\mathrm{odd}}$ and $\Com$ in $\Pois_{\mathrm{odd}}$ for $\Path_4$ and $\Cyc_4$. We mention only minimal generating set of rewriting rules, remaining relations are obtained from graph-automorphisms}
		\label{fig:enter-label}
	\end{figure}
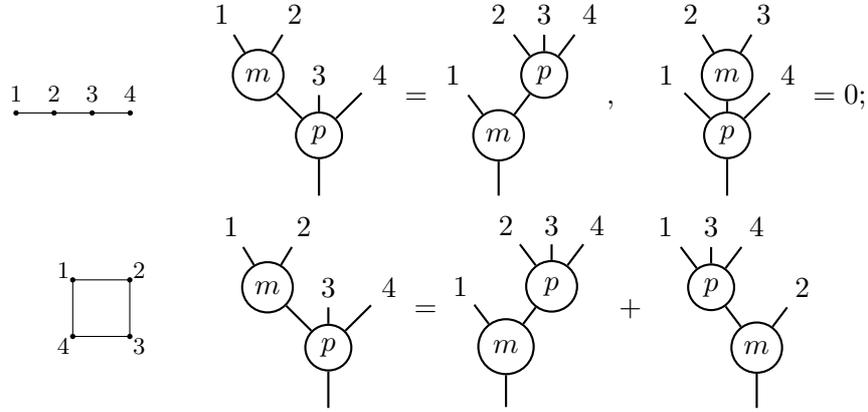
	\subsection{Homology of the real locus}
	\label{sec::realhomology}
	
	Finally, we can describe the rational cohomology of the contractad $\bM_{\mathbb{R}}$. The proof repeats the proof of the analogous fact discovered for the operad $\bM_{\mathbb{R}}$ in~\cite{khoroshkin2019real}.
	We start from the following observation
	\begin{theorem}
		The algebraic contractad $[\Ass]^{\tau}$ of $\tau$-invariants on the associative contractads is generated by binary operation $b\in\Ass(\Path_2)$ and by ternary operations $\mu$ in components $\Path_3$ and $\K_3$ subject to the following list of relations:
		\begin{enumerate}
			\item 
			$b$  generates the contractad $\Lie$,  i.e. $b$ satisfies the Jacobi identities; 
			\item 
			The Leibnitz identity~\eqref{eq::rewr_rule_for_inariant} holds for interchanging between $b$ and $\mu$.
			\item 
			For each graph $\Gr$ on $5$ vertices $\{1,2,3,4,5\}$ and a pair of tubes $(i_3,i_4,i_5)$, $(j_3,j_4,j_5)$ in $\Gamma$ we have an inhomogeneous identity: 
			\begin{equation}
				\label{eq::Ass::inv::identity}    
				\mu(x_{i_1},x_{i_2},\mu(x_{i_3},x_{i_4},x_{i_5})) - \mu(x_{j_1},x_{j_2},\mu(x_{j_3},x_{j_4},x_{j_5})) =     
				\phi_3(x_{i_1},x_{i_2},x_{i_3},x_{i_4},x_{i_5}) +
				\phi_4(x_{i_1},x_{i_2},x_{i_3},x_{i_4},x_{i_5}),
			\end{equation}
			where $\phi_3$ is a cubic expression that uses one operation $\mu$ and two operations $b$ and $\phi_4$ is a quartic expression that uses $4$ operations $b$.    
		\end{enumerate}
	\end{theorem}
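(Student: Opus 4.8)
The plan is to realize $[\Ass]^{\tau}$ as an inhomogeneous deformation of the graded contractad $\Pois^{\tau}$, whose presentation is already recorded in Theorem~\ref{thm::invariant_com_and_pois}(ii), and then to close the argument with a dimension count obtained from the filtration $\calF$ of $\Ass$ by the number of operations $m$. Recall that $\tau$ acts on the standard generators of $\Ass$ by $\tau(b)=b$ and $\tau(m)=-m$, where $m=\nu+\nu^{\op}$ and $b=\nu-\nu^{\op}$; since $\tau$ is an involutive automorphism of the contractad $\Ass$, the element $\mu:=m(m(x_1,x_2),x_3)$ in $\Ass(\Path_3)$ and in $\Ass(\K_3)$ satisfies $\tau(\mu)=(-m)((-m)(x_1,x_2),x_3)=\mu$, so $b,\mu\in[\Ass]^{\tau}$. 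That $b$ generates a copy of $\Lie$ is just the embedding $\Lie\hookrightarrow\Ass$ mentioned after~\eqref{eq::Ass::rel}, and the Leibnitz identity~\eqref{eq::rewr_rule_for_inariant} is verified by exactly the computation carried out in the proof of Theorem~\ref{thm::invariant_com_and_pois}(ii): it uses only the relations~\eqref{eq::jordanlie_pres} and the derivation relation (the second line of~\eqref{eq::Ass::rel}), both of which coincide with their Poisson counterparts in~\eqref{eq::gerst::relations}; the ``strange'' term $b(b(x_1,x_3),x_2)$ that distinguishes~\eqref{eq::Ass::rel} from the Poisson relations never enters. Thus relations $(1)$ and $(2)$ hold verbatim in $[\Ass]^{\tau}$.

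For the third relation, writing $\mu$ as an iterated product of $m$ turns $\mu(x_{i_1},x_{i_2},\mu(x_{i_3},x_{i_4},x_{i_5}))$ into the word $m(m(x_{i_1},x_{i_2}),m(m(x_{i_3},x_{i_4}),x_{i_5}))$, of $m$-degree $4$. Two such words attached to two different tubes differ by corrections produced by the associator relation~\eqref{eq::Ass::rel} (with the appropriate coefficients $\epsilon$ on non-complete graphs, $m$ being associative along paths by~\eqref{eq::jordanlie_pres}); crucially, each use of~\eqref{eq::Ass::rel} trades two operations $m$ for two operations $b$ and therefore lowers the $m$-degree by $2$. Iterating at most twice produces an identity of the shape~\eqref{eq::Ass::inv::identity} with $\phi_3$ of $m$-degree $2$ (one $\mu$, two $b$'s) and $\phi_4$ of $m$-degree $0$ (four $b$'s), everything being rewritten through the generators $b$ and $\mu$. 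Making this bookkeeping explicit is the step I expect to be the main obstacle: it is the only place where a genuine calculation is unavoidable, and one must be careful to keep track of the $\epsilon$'s and of the fact that all intermediate terms really do lie in the subcontractad generated by $b,\mu$. It is, however, a direct graph-theoretic generalization of the operadic computation in~\cite{khoroshkin2019real}.

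Let $\Pop$ denote the contractad presented by the generators $b,\mu$ and the relations $(1)$--$(3)$. By the above there is a morphism $\Pop\to[\Ass]^{\tau}$. To see it is surjective, equip $[\Ass]^{\tau}$ with the filtration induced by $\calF$; since taking $\mathbb{Z}/2$-invariants is exact over $\mathbb{Q}$, we get $\mathrm{gr}_{\calF}[\Ass]^{\tau}\cong[\mathrm{gr}_{\calF}\Ass]^{\tau}\cong\Pois^{\tau}$ by Proposition~\ref{prop::Ass::Pois}, and under this identification $b,\mu$ become precisely the generators of $\Pois^{\tau}$ from Theorem~\ref{thm::invariant_com_and_pois}(ii). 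Hence the subcontractad of $[\Ass]^{\tau}$ generated by $b$ and $\mu$ has associated graded equal to all of $\Pois^{\tau}$, so by the usual filtered Nakayama argument it is everything, i.e. $\Pop\twoheadrightarrow[\Ass]^{\tau}$.

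Finally, filter $\Pop$ itself by the number of operations $m$ (so $b$ has degree $0$ and $\mu$ has degree $2$). Relations $(1)$ and $(2)$ are homogeneous of $m$-degrees $0$ and $2$, and in relation $(3)$ the top-degree component is exactly the $\Com^{\tau}$-relation~\eqref{eq::comodd_rel} of $m$-degree $4$, while $\phi_3$ and $\phi_4$ have strictly smaller $m$-degree. Therefore $\mathrm{gr}\,\Pop$ is a quotient of the contractad generated by $b,\mu$ modulo the Jacobi relations, the rewriting rule~\eqref{eq::rewr_rule_for_inariant}, and~\eqref{eq::comodd_rel}, which is exactly $\Pois^{\tau}$. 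Hence $\dim\Pop(\Gr)=\dim\mathrm{gr}\,\Pop(\Gr)\le\dim\Pois^{\tau}(\Gr)=\dim\mathrm{gr}_{\calF}[\Ass]^{\tau}(\Gr)=\dim[\Ass]^{\tau}(\Gr)$ for every connected $\Gr$, and combined with the surjection of the previous paragraph this forces $\Pop\cong[\Ass]^{\tau}$, proving the theorem. (Alternatively one could establish the dimension equality by exhibiting a Gröbner basis on the free contractad on $\{b,\mu\}$, but the comparison with $\Pois^{\tau}$ through the $m$-filtration makes this unnecessary.)
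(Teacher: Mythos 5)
Your proposal is correct and follows essentially the same route as the paper: both arguments rest on the filtration $\calF$ by the number of Jordan operations $m$, the identification $\mathrm{gr}_{\calF}[\Ass]^{\tau}\cong\Pois^{\tau}$, and the lifting of the quadratic presentation of $\Pois^{\tau}$ from Theorem~\ref{thm::invariant_com_and_pois} to an inhomogeneous presentation of $[\Ass]^{\tau}$ (with relation~\eqref{eq::Ass::inv::identity} obtained by observing that its leading term vanishes in the associated graded, so the correction lies in lower $\calF$-filtration and, for degree and $\tau$-parity reasons, consists of the terms $\phi_3$ and $\phi_4$). The only divergence is in how completeness of the relations is closed: where you use the explicit dimension count $\dim\mathrm{gr}\,\Pop\le\dim\Pois^{\tau}=\dim[\Ass]^{\tau}$ together with the surjection $\Pop\twoheadrightarrow[\Ass]^{\tau}$, the paper phrases the same PBW-type conclusion by noting that the leading terms of the relations assemble a quadratic Gr\"obner basis for $\Pois^{\tau}$, hence the inhomogeneous relations assemble a Gr\"obner basis for $[\Ass]^{\tau}$.
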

	\begin{proof}
		Recall that in Example~\ref{sec:Ass} we explained that contractad $\Ass$ is generated by a binary nonsymmetric operation $\nu$ and the operation $b:=\nu-\nu^{\op}$ defines a Lie bracket and $m:=\nu+\nu^{\op}$ defines a Jordan operation.
		As we explained in~\S\ref{sec::Pois::inv} the involution $\tau$ on $\Ass$ preserves the filtration $\calF$ is given by the number of Jordan operations. The associated graded to this filtration is a quadratic Koszul contractad $\Pois^{\tau}$ whose presentation in generators and relations was given in Theorem~\ref{thm::invariant_com_and_pois}. 
		Therefore, the contractad $\Ass^{\tau}$ is generated by the Lie bracket $b$ and by operations 
		$$\mu(x_1,x_2,x_3):=\sum \nu(\nu(x_{i_1},x_{i_2}),x_{i_3}) + \nu(x_{i_3},\nu(x_{i_1},x_{i_2})),$$ 
		where summation goes through all edges $(i_1 i_2)$ in a graph $\Gamma$ on $3$ vertices.
		We see that Jacobi relations hold for $b$ because $b$ generates the subcontractad isomorphic to $\Lie$. Second, we know that $b$ differentiates associative product $\mu$ and, therefore, $b$ differentiates any expression of $\mu$'s which implies the generalized Leibnitz identity~\eqref{eq::rewr_rule_for_inariant}.
		Finally, we know that the left hand side of~\eqref{eq::Ass::inv::identity} holds in the associated graded contractad $\Pois^{\tau}$. Therefore, the right hand side belongs to the lower components of the filtration $\calF$ and can be presented with at most one operation $\mu$. However, this should be an operation with $5$ arguments, so, this expression either contains one $\mu$ and two $b$ or has $4$ operation $b$ involved. These components correspond to cubic and quartic components on the right-hand side of~\eqref{eq::Ass::inv::identity}.

		Note, that the associated graded to the relations we explained are quadratic relations for $\Pois^{\tau}$ that assemble a quadratic Gr\"obner basis for the contractad $\Pois^{\tau}$. Therefore, the relations we are considering also assemble an inhomogeneous Gr\"obner basis for $\Ass^{\tau}$, which implies that the list of relations we presented is full.
	\end{proof}

	\begin{theorem}\label{thm::homology_real_locus}
		The algebraic contractad $H_{\bullet}(\bM_{\mathbb{R}})$ is isomorphic to the quadratic Koszul contractad $\Pois_{\mathrm{odd}}$. In particular, we have an isomorphism of graphical collections:
		\begin{equation}
			\label{eq::M_R::homology::collectio}
			H_{\bullet}(\bM_{\mathbb{R}};\mathbb{Q})\cong 
			\Pois_{\mathrm{odd}} \cong
			\Com\circ \Lie_{\mathrm{odd}}.
		\end{equation}
		However, this contractad is not formal, meaning that the contractads $C^{\mathrm{cell}}_{\bullet}(\bM_{\mathbb{R}};\mathbb{Q})$ and $H_{\bullet}(\bM_{\mathbb{R}};\mathbb{Q})$ are not equivalent as differential graded contractads.
	\end{theorem}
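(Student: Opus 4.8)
The plan is to adapt the operadic argument of~\cite{khoroshkin2019real} to contractads. The input is Theorem~\ref{thm::cell_structure}, which presents the cellular chain complex as a cobar construction, $C^{\mathrm{cell}}_{\bullet}(\bM_{\mathbb{R}};\mathbb{Q})\cong \Cobar_{\bullet}(\Susp^{-1}[\Ass^{\tau}]^{*})$. The presentation of $[\Ass]^{\tau}$ established just above is \emph{inhomogeneous quadratic}: the relations~\eqref{eq::Ass::inv::identity} have a quadratic leading part together with the cubic correction $\phi_3$ and the quartic correction $\phi_4$. Thus $[\Ass]^{\tau}$ carries the filtration $\calF$ by the number of $\mu$-operations, and its associated graded is the quadratic Koszul contractad $\Pois^{\tau}$ of Theorem~\ref{thm::invariant_com_and_pois}(ii). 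First I would push this filtration to the cobar complex: the dual filtration on $[\Ass^{\tau}]^{*}$ induces a filtration on $\Cobar_{\bullet}(\Susp^{-1}[\Ass^{\tau}]^{*})$ whose lowest-order part of the differential is precisely the part coming from the quadratic (i.e.\ $\Pois^{\tau}$-) coproduct; hence $\mathrm{gr}_{\calF}\,C^{\mathrm{cell}}_{\bullet}(\bM_{\mathbb{R}})\cong \Cobar_{\bullet}(\Susp^{-1}[\Pois^{\tau}]^{*})$.

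The next step computes the homology of this associated graded complex. Up to the overall suspension, $\Cobar_{\bullet}(\Susp^{-1}[\Pois^{\tau}]^{*})$ is the $\mathbb{Q}$-linear dual of the bar construction $\mathsf{B}(\Pois^{\tau})$; since $\Pois^{\tau}$ is Koszul, $H_{\bullet}(\mathsf{B}(\Pois^{\tau}))$ is concentrated in syzygy degree $0$ and equals the Koszul dual cocontractad $(\Pois^{\tau})^{\cokoszul}$. Dualizing and using~\eqref{eq::Koszul::dual} together with Theorem~\ref{thm::odd_lie_and_pois}, the homology of $\mathrm{gr}_{\calF}\,C^{\mathrm{cell}}_{\bullet}(\bM_{\mathbb{R}})$ is $(\Pois^{\tau})^{!}\cong\Pois_{\mathrm{odd}}$, still concentrated in syzygy degree $0$. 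Consequently the $E_{1}$-page of the spectral sequence of $\calF$ is supported on a single syzygy line, all higher differentials vanish, the sequence degenerates and converges (the filtration being bounded and exhaustive on each component), and we obtain an isomorphism of graphical collections $H_{\bullet}(\bM_{\mathbb{R}};\mathbb{Q})\cong \Pois_{\mathrm{odd}}\cong \Com\circ\Lie_{\mathrm{odd}}$. To upgrade this to an isomorphism of contractads I would identify the generators: the point class of $\bM_{\mathbb{R}}(\Path_{2})$ gives the commutative generator of $\Com$, the fundamental classes of the circles $\bM_{\mathbb{R}}(\Path_{3})$ and $\bM_{\mathbb{R}}(\K_{3})$ give the ternary odd-Lie generators, and the defining relations of $\Pois_{\mathrm{odd}}$ from Theorem~\ref{thm::odd_lie_and_pois} hold because they are the $\calF$-associated-graded shadows of the identities already verified in $[\Ass]^{\tau}$.

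For the non-formality statement it is enough to exhibit a single connected graph whose component of the dg contractad $C^{\mathrm{cell}}_{\bullet}(\bM_{\mathbb{R}})$ is not formal. I would take $\Gr=\K_{4}$, for which $\bM_{\mathbb{R}}(\K_{4})$ is the real locus of $\beM_{0,5}$, the corresponding component of the mosaic operad; non-formality there is classical, witnessed by a nontrivial triple Massey product in $H^{\bullet}(\bM_{\mathbb{R}}(\K_{4});\mathbb{Q})$, see~\cite{etingof2010cohomology,khoroshkin2019real}. Equivalently, the route I would spell out is that the minimal $\infty$-model of $C^{\mathrm{cell}}_{\bullet}(\bM_{\mathbb{R}})$ inherits a nonzero higher (quadratic-to-cubic) operation from the cubic term $\phi_3$ in~\eqref{eq::Ass::inv::identity}, which is impossible for a formal dg contractad since $H_{\bullet}(\bM_{\mathbb{R}})$ carries the zero differential.

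The main obstacle I anticipate is bookkeeping rather than conceptual: tracking the suspensions in $\Cobar$, in Theorem~\ref{thm::cell_structure}, and in the Koszul-duality normalization~\eqref{eq::Koszul::dual} so that the outcome is \emph{exactly} $\Pois_{\mathrm{odd}}$ and not a shift of it, with consistent Koszul signs; and checking that the filtration $\calF$ on the cobar complex is compatible with the contractad composition and is complete and exhaustive on each $\bM_{\mathbb{R}}(\Gr)$, so that the spectral-sequence comparison is legitimate. The genuine mathematical content is the Koszulity of $\Pois^{\tau}$ and $\Pois_{\mathrm{odd}}$, already available from Theorems~\ref{thm::invariant_com_and_pois} and~\ref{thm::odd_lie_and_pois}; the non-formality part reduces to the known operadic statement once one observes that restricting the wonderful contractad to complete graphs recovers the mosaic operad.
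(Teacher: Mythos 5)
The main part of your argument---filtering the cobar complex $\Cobar_{\bullet}(\Susp^{-1}[\Ass^{\tau}]^{*})$ by the filtration induced from $\calF$, identifying the associated graded with $\Cobar_{\bullet}(\Susp^{-1}[\Pois^{\tau}]^{*})$, and using the Koszulity of $\Pois^{\tau}$ to force degeneration of the spectral sequence onto syzygy degree $0$---is exactly the route the paper takes, and your extra care about generators and suspensions is consistent with it.

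The non-formality part, however, has a genuine gap. Your proposed witness, ``a nontrivial triple Massey product in $H^{\bullet}(\bM_{\mathbb{R}}(\K_4);\mathbb{Q})$,'' does not exist: $\bM_{\mathbb{R}}(\K_4)\cong\beM_{0,5}(\mathbb{R})$ is a closed (non-orientable) surface, hence formal as a topological space, and in any case $H^{2}(\beM_{0,5}(\mathbb{R});\mathbb{Q})=0$, so every rational triple Massey product of degree-one classes vanishes for degree reasons. More importantly, even if some component were non-formal as a \emph{space}, that would say nothing about formality of the \emph{dg contractad}: the former concerns the cup product on a single component, the latter concerns the composition maps $\circ^{\Gr}_{G}$ between components, and the two notions are logically independent. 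Your second suggestion---that the cubic term $\phi_3$ in~\eqref{eq::Ass::inv::identity} forces a nonzero higher operation on the minimal model---is the right idea, but as stated it is only the observation that one particular $\infty$-structure has a nonzero higher operation; you still must rule out that a change of $\infty$-isomorphism kills it. The paper closes this gap cleanly by passing to bar constructions: a dg-contractad equivalence $C^{\mathrm{cell}}_{\bullet}(\bM_{\mathbb{R}};\mathbb{Q})\simeq H_{\bullet}(\bM_{\mathbb{R}};\mathbb{Q})$ would give $\mathsf{B}\Cobar(\Susp^{-1}[\Ass^{\tau}]^{*})\simeq\mathsf{B}\Cobar(\Susp^{-1}[\Pois^{\tau}]^{*})$, hence $[\Ass^{\tau}]^{*}\simeq[\Pois^{\tau}]^{*}$, contradicting the fact that $\Ass^{\tau}$ and its associated graded $\Pois^{\tau}$ are non-isomorphic contractads. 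You should replace the Massey-product appeal with this (or an equivalent invariance) argument.
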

	\begin{proof}
		Thanks to Isomorphism~\eqref{eq::cell::mosaic} we know that the cell complex $C^{\mathrm{cell}}_{\bullet}(\bM_{\mathbb{R}};\mathbb{Q})$ is isomorphic to the cobar complex of the cocontractad dual to the contractad $\Ass^{\tau}$.
		Consider, the filtration on this cobar complex induced by filtration $\calF$ on $\Ass^{\tau}$. The associated graded complex is the cobar complex of the cocontractad $(\Pois^{\tau})^{*}$. 
		We know that the latter is Koszul and its cohomology coincides with $\Pois_{\mathrm{odd}}$. Therefore, by degree reasons, we see that the corresponding spectral sequence on $\Omega(\Susp^{-1}[\Ass^{\tau}]^*)$ degenerates in the first term, which leads to the coincidence of the homology of $\bM_{\mathbb{R}}$ and $\Pois_{\mathrm{odd}}$. 
		
		If we suppose that $C^{\mathrm{cell}}_{\bullet}(\bM_{\mathbb{R}};\mathbb{Q})$ and $H_{\bullet}(\bM_{\mathbb{R}};\mathbb{Q})$ are equivalent, then their bar complexes should be also equivalent contractads, but we have:
		$$
		\mathsf{B}(C^{\mathrm{cell}}_{\bullet}(\bM_{\mathbb{R}};\mathbb{Q})) \simeq 
		\mathsf{B}\Cobar({\Susp^{-1}}[\Ass^{\tau}]^*) \sim 
		[\Ass^{\tau}]^* \not\simeq [\Pois^{\tau}]^* \sim 
		\mathsf{B}\Cobar({\Susp^{-1}}[\Pois^{\tau}]^*) \simeq
		\mathsf{B}(H_{\bullet}(\bM_{\mathbb{R}};\mathbb{Q})).$$
	\end{proof}
	
	We would like to finish this section with several interesting observations.
	\begin{remark}
		For $\Pois$ and $\Ass$ the relations in $\Path_3$ coincide. This implies that the restriction of contractads $\Pois$ and $\Ass$ to trees coincide. 
		In particular, this implies that $\bM_{\mathbb{R}}$ is a formal contractad in the class of trees.
	\end{remark}
	\begin{remark}
		Isomorphism~\eqref{eq::M_R::homology::collectio} of graphical collections can be derived directly from the paper~\cite{rains2010homology} where he describes the homology of the real locus of a wonderful compactification in terms of the homology of a partition posets with odd parts. 
		
		Moreover, what follows from the same paper of Rains is that the integer homology of  $H_{\bullet}(\bM_{\mathbb{R}};\mathbb{Z})$ has only $2$-torsion and one has the following isomorphism for $\bZ_2$-coefficients:
		$$H^{\bullet}(\bM_{\mathbb{R}}(\Gr);\mathbb{Z}_2)\cong H^{2\bullet}(\bM_{\mathbb{C}}(\Gr);\mathbb{Z}_2) \cong  \Hyper_{2\bullet}(\Gr)\otimes \mathbb{Z}_2.$$ This isomorphism holds on the level of multiplicative and contractad structures.
	\end{remark}

	\subsection{Hilbert Series}
	\label{sec::M::real::Hilb}
	Similarly to the complex case, we compute Hilbert series of real wonderful compactifications $\bM_{\mathbb{R}}(\Gr)$ using the machinery of graphic functions. 
	
	We consider the graphic function $\chi_{-q}(\bM_{\mathbb{R}})$ that computes Hilbert series of components of the real wonderful contractad
	\[
	\chi_{-q}(\bM_{\mathbb{R}})(\Gr)=\sum_{i}(-q)^i\dim H^i(\bM_{\mathbb{R}}(\Gr);\mathbb{Q}).
	\] For simplicity of notations, let $\mathbb{1}^{\mathrm{odd}}_q:=\chi^{\mathrm{w}}_q(\Com_{\mathrm{odd}})$ and $\mu^{\mathrm{odd}}_q:=\chi^{\mathrm{w}}_{-q}(\Lie_{\mathrm{odd}})$. In particular, we have $\mathbb{1}^{\mathrm{odd}}_q(\Gr)=\sqrt{q}^{|V_{\Gr}|-1}$, for graphs on odd vertices, and zero otherwise. Thanks to Theorem~\ref{thm::invariant_com_and_pois}, $\Com_{\mathrm{odd}}$ is Koszul contractad, hence we have
	\begin{equation}
		\mathbb{1}^{\mathrm{odd}}_q*\mu^{\mathrm{odd}}_q=\mu^{\mathrm{odd}}_q*\mathbb{1}^{\mathrm{odd}}_q=\epsilon.
	\end{equation} Thanks to Isomorphism~\eqref{eq::M_R::homology::collectio} we have
	\begin{theorem}[Reccurence for $\bM_{\mathbb{R}}$]
		The following equalities on graphic functions are satisfied:
		\[
		\chi_{-q}(\bM_{\mathbb{R}})=\mathbb{1}*\mu^{\mathrm{odd}}_q \Leftrightarrow \chi_{-q}(\bM_{\mathbb{R}})*\mathbb{1}^{\mathrm{odd}}_q=\mathbb{1}.
		\] In particular, we have the recurrence
		\begin{equation}
			\sum_{I}\chi_{-q}(\bM_{\mathbb{R}})(\Gr/I)\cdot \sqrt{q}^{|V_{\Gr}|-|I|}=1,    
		\end{equation} where the sum ranges over all partitions $I$ of $\Gr$ made up of odd blocks.
	\end{theorem}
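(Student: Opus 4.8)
The plan is to reduce the whole statement to the graphical-collection isomorphism $H_{\bullet}(\bM_{\mathbb{R}};\mathbb{Q})\cong\Com\circ\Lie_{\mathrm{odd}}$ of Theorem~\ref{thm::homology_real_locus}, combined with the multiplicativity of graphic functions under the contraction product (the Proposition-Definition of \S\ref{sec::grafic::func}) and the Koszulness of $\Com_{\mathrm{odd}}$. First I would rephrase $\chi_{-q}(\bM_{\mathbb{R}})$ as a Poincar\'e polynomial for the homological grading: since each $\bM_{\mathbb{R}}(\Gr)$ is a compact manifold, $\dim H^{i}(\bM_{\mathbb{R}}(\Gr);\mathbb{Q})=\dim H_{i}(\bM_{\mathbb{R}}(\Gr);\mathbb{Q})$, so $\chi_{-q}(\bM_{\mathbb{R}})(\Gr)=\sum_{i}(-q)^{i}\dim H_{i}(\bM_{\mathbb{R}}(\Gr);\mathbb{Q})$.

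The decisive bookkeeping step is the identification of this grading on $\Com\circ\Lie_{\mathrm{odd}}$. Under the isomorphism of Theorem~\ref{thm::homology_real_locus} the $\Com$-factor sits in homological degree $0$, while the generators $p$ of $\Lie_{\mathrm{odd}}$ are ternary of homological degree $1$ (Theorem~\ref{thm::odd_lie_and_pois}); moreover on $\Lie_{\mathrm{odd}}$ the homological grading coincides with the weight grading (number of $p$'s), and $\Lie_{\mathrm{odd}}(\Gr|_{G})=0$ unless $|G|$ is odd. Hence a monomial built over a partition $I\vdash\Gr$ with $\Lie_{\mathrm{odd}}$-pieces on the blocks $G\in I$ has homological degree $\sum_{G\in I}\tfrac{|G|-1}{2}=\tfrac{|V_{\Gr}|-|I|}{2}$, and $\dim\Com(\Gr/I)=1$. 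Summing over $I$ and recognising the contraction product of graphic functions, this gives
\[
\chi_{-q}(\bM_{\mathbb{R}})=\chi(\Com)*\chi^{\mathrm{w}}_{-q}(\Lie_{\mathrm{odd}})=\mathbb{1}*\mu^{\mathrm{odd}}_{q}.
\]

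Next I would invoke that $\Com_{\mathrm{odd}}$ is quadratic and Koszul with Koszul dual $\Lie_{\mathrm{odd}}$ (Theorem~\ref{thm::invariant_com_and_pois}), both generated in weight one, so Theorem~\ref{thm::hilbertkoszul} yields $\mu^{\mathrm{odd}}_{q}*\mathbb{1}^{\mathrm{odd}}_{q}=\mathbb{1}^{\mathrm{odd}}_{q}*\mu^{\mathrm{odd}}_{q}=\varepsilon$ (as recorded just before the statement). Composing $\chi_{-q}(\bM_{\mathbb{R}})=\mathbb{1}*\mu^{\mathrm{odd}}_{q}$ on the right with $\mathbb{1}^{\mathrm{odd}}_{q}$ and using associativity of $*$ gives $\chi_{-q}(\bM_{\mathbb{R}})*\mathbb{1}^{\mathrm{odd}}_{q}=\mathbb{1}*\varepsilon=\mathbb{1}$, the second form; right-composing that identity with $\mu^{\mathrm{odd}}_{q}$ and using the other inverse recovers the first, so the two are equivalent. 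Finally, unwinding $\chi_{-q}(\bM_{\mathbb{R}})*\mathbb{1}^{\mathrm{odd}}_{q}=\mathbb{1}$ on a graph $\Gr$ through the definition of $*$ and of $\mathbb{1}^{\mathrm{odd}}_{q}$ (namely $\sqrt{q}^{\,|G|-1}$ on odd blocks and $0$ otherwise) kills every partition having an even block and produces exactly $\sum_{I}\chi_{-q}(\bM_{\mathbb{R}})(\Gr/I)\,\sqrt{q}^{\,|V_{\Gr}|-|I|}=1$, the sum running over partitions into odd blocks.

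The only genuinely delicate point is the grading match used in the second paragraph: one must be sure that the homological grading on $H_{\bullet}(\bM_{\mathbb{R}}(\Gr))$, transported through Theorem~\ref{thm::homology_real_locus}, is precisely the one induced by placing $\Com$ in degree $0$ and retaining the internal grading of $\Lie_{\mathrm{odd}}$ — equivalently, that that theorem is an isomorphism of \emph{homologically graded} graphical collections rather than merely of ungraded ones. The cell model of \S\ref{sec::cell::M::R} makes this transparent, but it is the step that has to be spelled out; everything after it is a formal manipulation with the associative product $*$.
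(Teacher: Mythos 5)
Your argument is correct and is exactly the route the paper intends: the theorem is stated immediately after the phrase ``Thanks to Isomorphism~\eqref{eq::M_R::homology::collectio} we have'' with no further proof, and your write-up supplies precisely the missing details --- the grading bookkeeping turning $\Com\circ\Lie_{\mathrm{odd}}$ into $\mathbb{1}*\mu^{\mathrm{odd}}_q$, the Koszul inverse $\mu^{\mathrm{odd}}_q*\mathbb{1}^{\mathrm{odd}}_q=\varepsilon$ recorded just before the theorem, and the associativity of $*$ to pass between the two forms. The one point you flag as delicate (that Theorem~\ref{thm::homology_real_locus} is an isomorphism of homologically graded collections) is indeed what the paper relies on, via the cell model and the degeneration of the spectral sequence in its proof.
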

	What follows, that for complete multipartite graphs $\K_\lambda$ we can compute explicitly the generating series for rational Betti numbers of real locus of modular compactifications $\beM_{0,\K_{\lambda}}(\mathbb{R})$.
	\begin{theorem}[Hilbert series for $\beM(\mathbb{R})$]\label{thm::hilbert_real_modular}
		We have
		\begin{multline}
			\label{eq::M_R::series}
			F_{\mathsf{Y}}(\beM(\mathbb{R}))=
			\\
			=\sum_{l(\lambda)\geq 2} \left[\sum_i (-q)^i\dim H^i(\beM_{0,\K_{\lambda}}(\mathbb{R});\mathbb{Q})\right] \frac{m_{\lambda}}{\lambda!}+\sum_{n\geq 1,|\lambda|\geq 0} \left[\sum_i (-q)^i\dim H^i(\beM_{0,\K_{(1^n)\cup \lambda}}(\mathbb{R});\mathbb{Q})\right] \frac{m_{\lambda}}{\lambda!}\frac{z^n}{n!}=\\
			= \left[\sqrt{q}(z+\mathsf{SINH}_q)+\sqrt{q(z+\mathsf{SINH}_q)^2+1}\right]^{\frac{1}{\sqrt{q}}}-1-\sum_{n\geq 1}\frac{p_n}{n!},
		\end{multline}
		where  $\mathsf{SINH}_q=\sum_{n\geq 0}\frac{p_{2n+1}q^n}{(2n+1)!}$. 
	\end{theorem}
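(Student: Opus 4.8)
The plan is to feed the recurrence $\chi_{-q}(\bM_{\mathbb{R}})*\mathbb{1}^{\mathrm{odd}}_q=\mathbb{1}$ from the preceding theorem into the composition rule for Young generating functions. By Theorem~\ref{thm::modular=wonderful} the real modular compactification $\beM_{0,\K_{\lambda}}(\mathbb{R})$ is isomorphic to $\bM_{\mathbb{R}}(\K_{\lambda})$, so $F_{\mathsf{Y}}(\beM(\mathbb{R}))=F_{\mathsf{Y}}(\chi_{-q}(\bM_{\mathbb{R}}))$. Both graphic functions $\chi_{-q}(\bM_{\mathbb{R}})$ and $\mathbb{1}^{\mathrm{odd}}_q$ are connected (their value on $\Path_1$ is $1$), so applying $F_{\mathsf{Y}}$ and invoking Theorem~\ref{thm::ser::multipartie} turns the recurrence into the functional equation
\[
F_{\mathsf{Y}}(\beM(\mathbb{R}))\bigl(F_{\mathsf{Y}}(\mathbb{1}^{\mathrm{odd}}_q)(z)\bigr)=F_{\mathsf{Y}}(\mathbb{1})(z),
\]
whose right-hand side equals $e^{z+p_1}-1-\sum_{n\geq 1}\tfrac{p_n}{n!}$ by Example~\ref{ex::completemultipartite_for_com_lie}. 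It therefore remains to put $F_{\mathsf{Y}}(\mathbb{1}^{\mathrm{odd}}_q)$ in closed form and then to invert it.

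The closed form for $F_{\mathsf{Y}}(\mathbb{1}^{\mathrm{odd}}_q)$ is the combinatorial heart of the argument. Writing $r=\sqrt q$, we have $\mathbb{1}^{\mathrm{odd}}_q(\K_{\mu})=r^{|\mu|-1}$ when $|\mu|$ is odd and $0$ otherwise, and the Young series must be assembled from the $\K_{(1^n)\cup\lambda}$ while controlling the parity of the total number of vertices $n+|\lambda|$. Using the identity $\tfrac{p_1^k}{k!}=\sum_{\mu\vdash k}\tfrac{m_{\mu}}{\mu!}$ one gets $\sum_{|\mu|\geq 0}r^{|\mu|}\tfrac{m_{\mu}}{\mu!}=e^{rp_1}$, hence its even and odd parts in $|\mu|$ are $\cosh(rp_1)$ and $\sinh(rp_1)$; separating the $(1^n)$-block count by parity yields analogous $\cosh(rz)$ and $\sinh(rz)$ factors, and the length-one correction $\sum_{n\text{ odd}}r^{n-1}\tfrac{p_n}{n!}$ that must be subtracted is precisely $\mathsf{SINH}_q$. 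The hyperbolic addition formula then collapses everything to
\[
F_{\mathsf{Y}}(\mathbb{1}^{\mathrm{odd}}_q)(z)=\frac{1}{\sqrt q}\sinh\!\bigl(\sqrt q\,(z+p_1)\bigr)-\mathsf{SINH}_q .
\]
The delicate points are the parity bookkeeping between the $\lambda$-part and the $(1^n)$-part of $\K_{(1^n)\cup\lambda}$, and noting that the degree-$1$ term of this series is exactly $z$ (the $p_1$-contributions cancel), which legitimises composition and inversion in $\Lambda_{\mathbb{Q}}[[z]]$.

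Finally, writing $w=F_{\mathsf{Y}}(\mathbb{1}^{\mathrm{odd}}_q)(z)$ we get $\sinh(\sqrt q\,(z+p_1))=\sqrt q\,(w+\mathsf{SINH}_q)$, so, taking the branch of the square root with constant term $1$,
\[
e^{\sqrt q\,(z+p_1)}=\sqrt q\,(w+\mathsf{SINH}_q)+\sqrt{q\,(w+\mathsf{SINH}_q)^2+1},
\]
and consequently $e^{z+p_1}=\bigl[\sqrt q\,(w+\mathsf{SINH}_q)+\sqrt{q\,(w+\mathsf{SINH}_q)^2+1}\bigr]^{1/\sqrt q}$. Substituting this into $F_{\mathsf{Y}}(\beM(\mathbb{R}))(w)=e^{z+p_1}-1-\sum_{n\geq 1}\tfrac{p_n}{n!}$ and renaming $w$ to $z$ gives exactly \eqref{eq::M_R::series}. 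I expect the main obstacle to be the parity bookkeeping in the middle paragraph rather than the concluding algebraic inversion, which is routine once $F_{\mathsf{Y}}(\mathbb{1}^{\mathrm{odd}}_q)$ is known explicitly.
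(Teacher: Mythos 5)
Your proposal is correct and follows essentially the same route as the paper: both hinge on the closed form $F_{\mathsf{Y}}(\mathbb{1}^{\mathrm{odd}}_q)=\tfrac{1}{\sqrt q}\sinh(\sqrt q(z+p_1))-\mathsf{SINH}_q$ and the recurrence $\chi_{-q}(\bM_{\mathbb{R}})*\mathbb{1}^{\mathrm{odd}}_q=\mathbb{1}$, and your substitution $e^{\sqrt q(z+p_1)}=\sqrt q(w+\mathsf{SINH}_q)+\sqrt{q(w+\mathsf{SINH}_q)^2+1}$ is exactly the paper's $\mathrm{arcsinh}(x)=\ln(x+\sqrt{x^2+1})$ step, merely performed before rather than after naming the inverse series $F_{\mathsf{Y}}(\mu^{\mathrm{odd}}_q)$. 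The parity bookkeeping and the observation that the linear term is $z$ (so that composition and inversion are legitimate) are both handled correctly.
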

	\begin{proof}
		Similarly to computations in Example~\ref{ex::completemultipartite_for_com_lie}, we have
		\begin{equation}
			F_{\mathsf{Y}}(\mathbb{1}^{\mathrm{odd}}_q)=\frac{1}{\sqrt{q}}\sinh(\sqrt{q}(z+p_1))-\mathsf{SINH}_q
		\end{equation} Solving the functional equation for $\mu^{\mathrm{odd}}_q$, we have
		\begin{multline*}
			F_{\mathsf{Y}}(\mu^{\mathrm{odd}}_q)=\frac{1}{\sqrt{q}}\mathrm{arcsinh}(\sqrt{q}(z+\mathsf{SINH}_q))-p_1=
			\\ =
			\frac{1}{\sqrt{q}}\ln\left(\sqrt{q}(z+\mathsf{SINH}_q)+\sqrt{q(z+\mathsf{SINH}_q)^2+1}\right)-p_1,
		\end{multline*} thanks to the identity $\mathrm{arcsinh}(x)=\ln(x+\sqrt{x^2+1})$. The desired formula follows from
		\[
		F_{\mathsf{Y}}(\beM(\mathbb{R}))=F_{\mathsf{Y}}(\mathbb{1})(F_{\mathsf{Y}}(\mu^{\mathrm{odd}}_q))=\exp(F_{\mathsf{Y}}(\mu^{\mathrm{odd}}_q)+p_1)-1-\sum_{n\geq 1} \frac{p_n}{n!}.
		\]
	\end{proof}
	Let us list the generating series for one-parameter series of graphs. 
	\begin{sled}[Hilbert series for one-parameter families]
		For one-parameter families of graphs, we have
		\begin{gather}
			F_{\Path}(\bM_{\mathbb{R}})=t+\sum_{n\geq 2} 
			\left[\sum_{i\geq 0}\dim H_i(\bM_{\mathbb{R}}(\Path_n);\mathbb{Q})(-q)^i\right]t^n=\frac{\sqrt{1+4qt^2}-1}{2qt+1-\sqrt{1+4qt^2}};
			\\
			F_{\St}(\overline{\EuScript{L}}_{\mathbb{R}})=1+\sum_{n\geq 1} \left[\sum_{i\geq 0}\dim H_i(\overline{\EuScript{L}}_{0,n}(\mathbb{R});\mathbb{Q})(-q)^i\right]\frac{t^n}{n!}=\frac{e^t}{\cosh(\sqrt{q}t)};
			\\
			F_{\K}(\beM_{\mathbb{R}})=1+t+\sum_{n\geq 2} \left[\sum_{i\geq 0}\dim H_i(\beM_{0,n+1}(\mathbb{R});\mathbb{Q})(-q)^i\right]\frac{t^n}{n!}=\left(\sqrt{q}t+\sqrt{qt^2+1}\right)^{\frac{1}{\sqrt{q}}};
			\\
			\begin{array}{rl}
				F_{\Cyc}(\bM_{\mathbb{R}})= & t+\sum_{n\geq 2} \left[\sum_{i\geq 0}\dim H_i(\bM_{\mathbb{R}}(\Cyc_n);\mathbb{Q}) (-q)^i\right]\frac{t^n}{n}=   
				\\
				& \phantom{.........}=t-\log(1-L_q(t))-\frac{1}{2\sqrt{q}}\log\left(\frac{1+\sqrt{q}L_q(t)}{1-\sqrt{q}L_q(t)}\right);
			\end{array}
		\end{gather} 
		where $L_q(t)=\frac{\sqrt{1+4qt^2}-1}{2qt}$.
	\end{sled}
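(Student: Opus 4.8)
All four identities will be deduced from the recurrence $\chi_{-q}(\bM_{\mathbb{R}})*\mathbb{1}^{\mathrm{odd}}_q=\mathbb{1}$, equivalently $\chi_{-q}(\bM_{\mathbb{R}})=\mathbb{1}*\mu^{\mathrm{odd}}_q$, of the preceding theorem — where for the star and complete families we use the identifications $\bM_{\mathbb{R}}(\St_n)\cong\overline{\EuScript{L}}_{0,n}(\mathbb{R})$ and $\bM_{\mathbb{R}}(\K_n)\cong\beM_{0,n+1}(\mathbb{R})$ of Example~\ref{ex::comp_wonderful} — together with the composition rules for $F_{\Path},F_{\Cyc},F_{\K},F_{\St}$ from Proposition~\ref{prop::one_parameter_comp}. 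Concretely, the plan is: (i) write down the one-parameter generating functions of $\mathbb{1}$, of $\mathbb{1}^{\mathrm{odd}}_q$, and of $\mu^{\mathrm{odd}}_q$ along each of the four families; (ii) substitute the recurrence into the relevant composition rule and solve for $F_{\bullet}(\bM_{\mathbb{R}})$.

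For (i): the functions of $\mathbb{1}$ are already given in the Example following Proposition~\ref{prop::one_parameter_comp}, namely $F_{\Path}(\mathbb{1})=\tfrac{t}{1-t}$, $F_{\Cyc}(\mathbb{1})=-\log(1-t)$, $F_{\K}(\mathbb{1})=e^{t}-1$, $F_{\St}(\mathbb{1})=e^{t}$. Since $\mathbb{1}^{\mathrm{odd}}_q(\Gr)=\sqrt{q}^{\,|V_{\Gr}|-1}$ for $|V_{\Gr}|$ odd and $0$ otherwise, and $\Path_n,\Cyc_n,\K_n$ have $n$ vertices while $\St_n$ has $n+1$, one is left with summing a geometric, logarithmic or exponential series over one parity class; this yields $F_{\Path}(\mathbb{1}^{\mathrm{odd}}_q)=\tfrac{t}{1-qt^{2}}$, $F_{\Cyc}(\mathbb{1}^{\mathrm{odd}}_q)=\tfrac{1}{2\sqrt{q}}\log\tfrac{1+\sqrt{q}\,t}{1-\sqrt{q}\,t}$, $F_{\K}(\mathbb{1}^{\mathrm{odd}}_q)=\tfrac{1}{\sqrt{q}}\sinh(\sqrt{q}\,t)$, and — the shift $n\mapsto n+1$ flipping the parity — $F_{\St}(\mathbb{1}^{\mathrm{odd}}_q)=\cosh(\sqrt{q}\,t)$. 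The corresponding functions of $\mu^{\mathrm{odd}}_q$ then come from $\mathbb{1}^{\mathrm{odd}}_q*\mu^{\mathrm{odd}}_q=\epsilon$: applying $F_{\Path}$ shows that $F_{\Path}(\mu^{\mathrm{odd}}_q)$ is the compositional inverse of $\tfrac{t}{1-qt^{2}}$, i.e. exactly $L_q(t)=\tfrac{\sqrt{1+4qt^{2}}-1}{2qt}$; likewise $F_{\St}(\mu^{\mathrm{odd}}_q)=1/\cosh(\sqrt{q}\,t)$ and $F_{\K}(\mu^{\mathrm{odd}}_q)=\tfrac{1}{\sqrt{q}}\mathrm{arcsinh}(\sqrt{q}\,t)$.

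For (ii), proceeding family by family. For paths, $F_{\Path}(\bM_{\mathbb{R}})=F_{\Path}(\mathbb{1})\bigl(F_{\Path}(\mu^{\mathrm{odd}}_q)(t)\bigr)=\tfrac{L_q(t)}{1-L_q(t)}$, which collapses to the stated closed form. For stars one invokes the multiplicativity $F_{\St}(f*g)=F_{\St}(f)\,F_{\St}(g)$ (legitimate because $\mathbb{1}^{\mathrm{odd}}_q$ is connected), so $F_{\St}(\bM_{\mathbb{R}})=F_{\St}(\mathbb{1})/F_{\St}(\mathbb{1}^{\mathrm{odd}}_q)=e^{t}/\cosh(\sqrt{q}\,t)$. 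For complete graphs, $F_{\K}(\bM_{\mathbb{R}})\bigl(F_{\K}(\mathbb{1}^{\mathrm{odd}}_q)(t)\bigr)=F_{\K}(\mathbb{1})(t)=e^{t}-1$; inverting $u=\tfrac{1}{\sqrt{q}}\sinh(\sqrt{q}\,t)$ via $\mathrm{arcsinh}(x)=\ln\!\bigl(x+\sqrt{x^{2}+1}\bigr)$ gives $e^{t}=\bigl(\sqrt{q}\,u+\sqrt{qu^{2}+1}\bigr)^{1/\sqrt{q}}$, hence the asserted formula (the leading $1$ being the usual normalization). Finally, for cycles, apply Proposition~\ref{prop::one_parameter_comp}(iii) to $\chi_{-q}(\bM_{\mathbb{R}})*\mathbb{1}^{\mathrm{odd}}_q=\mathbb{1}$ with $f=\chi_{-q}(\bM_{\mathbb{R}})$, $g=\mathbb{1}^{\mathrm{odd}}_q$, use $\chi_{-q}(\bM_{\mathbb{R}})(\Path_1)=1$, and then precompose with $L_q$ (so that $F_{\Path}(\mathbb{1}^{\mathrm{odd}}_q)(L_q(t))=t$) to isolate $F_{\Cyc}(\bM_{\mathbb{R}})(t)=t-\log(1-L_q(t))-\tfrac{1}{2\sqrt{q}}\log\tfrac{1+\sqrt{q}L_q(t)}{1-\sqrt{q}L_q(t)}$.

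Every step here is elementary; the only two points that genuinely need care are (a) getting the vertex counts — hence the parity class summed over — right in the four series for $\mathbb{1}^{\mathrm{odd}}_q$, in particular the $n\mapsto n+1$ shift for $\St_n$ which turns $\sinh$ into $\cosh$; and (b) the cycle case, where one must carry along the two correction terms in the composition rule of Proposition~\ref{prop::one_parameter_comp}(iii) and correctly precompose with $L_q$ — this is the only step that is not a one-line manipulation. It is also worth cross-checking the constant-term conventions (the leading $1$ in $F_{\K}$, and the interpretation of $\Cyc_{1},\Cyc_{2}$) against the Example following Proposition~\ref{prop::one_parameter_comp} so that the series agree on the nose; for the complete-graph case the answer can additionally be sanity-checked by specializing $p_{1}=p_{2}=\cdots=0$ in the formula of Theorem~\ref{thm::hilbert_real_modular}.
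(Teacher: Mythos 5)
Your proposal is correct and follows exactly the route the paper intends (the corollary is stated without proof, but your derivation mirrors the paper's worked complex-case example): compute $F_{\bullet}(\mathbb{1}^{\mathrm{odd}}_q)$ along each family, invert via $\mathbb{1}^{\mathrm{odd}}_q*\mu^{\mathrm{odd}}_q=\epsilon$, and feed the recurrence $\chi_{-q}(\bM_{\mathbb{R}})=\mathbb{1}*\mu^{\mathrm{odd}}_q$ into the composition rules of Proposition~\ref{prop::one_parameter_comp}. All four closed forms check out, including the parity shift giving $\cosh$ for stars and the two correction terms in the cycle rule.
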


	\bibliographystyle{alpha}
	\bibliography{biblio.bib}

\begin{thebibliography}{MSvAX18}

\bibitem[BV73]{boardman1968homotopy}
John~Michael Boardman and Rainer~M Vogt.
\newblock {\em Homotopy-everything H-spaces}, volume 347 of {\em Lecture Notes
  in Mathematics}.
\newblock Springer-Verlag, 1973.

\bibitem[Cor22]{coron2022matroids}
Basile Coron.
\newblock Matroids, {Feynman} categories, and {Koszul} duality.
\newblock {\em arXiv preprint arXiv:2211.12370}, 2022.

\bibitem[Cor23]{coron2023supersolvability}
Basile Coron.
\newblock Supersolvability of built lattices and {Koszulness} of generalized
  {Chow} rings.
\newblock {\em arXiv preprint arXiv:2302.13072}, 2023.

\bibitem[DCP95]{de1995wonderful}
Corrado De~Concini and Claudio Procesi.
\newblock Wonderful models of subspace arrangements.
\newblock {\em Selecta Mathematica}, 1:459--494, 1995.

\bibitem[Del71]{deligne1971theorie}
Pierre Deligne.
\newblock Th{\'e}orie de {Hodge}: {II}.
\newblock {\em Publications Math{\'e}matiques de l'IH{\'E}S}, 40:5--57, 1971.

\bibitem[Dev99]{devadoss1999tessellations}
Satyan~Linus Devadoss.
\newblock {\em Tessellations of moduli spaces and the mosaic operad}.
\newblock The Johns Hopkins University, 1999.

\bibitem[DK10]{dotsenko2010grobner}
Vladimir Dotsenko and Anton Khoroshkin.
\newblock Gr{\"o}bner bases for operads.
\newblock {\em Duke Mathematical Journal}, 153(2):363--396, 2010.

\bibitem[DK13]{dotsenko2013quillen}
Vladimir Dotsenko and Anton Khoroshkin.
\newblock Quillen homology for operads via gr{\"o}bner bases.
\newblock {\em Documenta Mathematica}, 18:707--747, 2013.

\bibitem[DKL24]{dotsenko2024reconnectads}
Vladimir Dotsenko, Adam Keilthy, and Denis Lyskov.
\newblock Reconnectads.
\newblock {\em Algebraic Combinatorics}, 7(3):801--842, 2024.

\bibitem[Dot22]{dotsenko2022homotopy}
Vladimir Dotsenko.
\newblock Homotopy invariants for via {Koszul} duality.
\newblock {\em Inventiones mathematicae}, pages 1--30, 2022.

\bibitem[DSV19]{dotsenko2019toric}
Vladimir Dotsenko, Sergey Shadrin, and Bruno Vallette.
\newblock Toric varieties of {Loday's} associahedra and noncommutative
  cohomological field theories.
\newblock {\em Journal of topology}, 12(2):463--535, 2019.

\bibitem[EHKR10]{etingof2010cohomology}
Pavel Etingof, Andr{\'e} Henriques, Joel Kamnitzer, and Eric~M Rains.
\newblock The cohomology ring of the real locus of the moduli space of stable
  curves of genus 0 with marked points.
\newblock {\em Annals of mathematics}, pages 731--777, 2010.

\bibitem[Erg17]{aras2017}
Aras Ergus.
\newblock Operads, {Duality} and the {Gravity} {Operad}.
\newblock
  \url{https://aergus.net/academic/documents/theses/aergus-msc-thesis.pdf},
  2017.

\bibitem[Foi16]{foissy2016chromatic}
Lo{\"\i}c Foissy.
\newblock Chromatic polynomials and bialgebras of graphs.
\newblock {\em arXiv preprint arXiv:1611.04303}, 2016.

\bibitem[Get94]{getzler1994two}
Ezra Getzler.
\newblock Two-dimensional topological gravity and equivariant cohomology.
\newblock {\em Communications in mathematical physics}, 163:473--489, 1994.

\bibitem[Get95]{getzler1995operads}
Ezra Getzler.
\newblock Operads and {Moduli} {Spaces} of {Genus} 0 {Riemann} {Surfaces}.
\newblock {\em The Moduli Space of Curves}, pages 199--230, 1995.

\bibitem[GK94]{ginzburg1994koszul}
Victor Ginzburg and Mikhail Kapranov.
\newblock Koszul duality for operads.
\newblock {\em Duke mathematical journal}, 76(1):203--272, 1994.

\bibitem[KK22]{kharitonov2022grobner}
Vladislav Kharitonov and Anton Khoroshkin.
\newblock Gr{\"o}bner bases for coloured operads.
\newblock {\em Annali di Matematica Pura ed Applicata (1923-)},
  201(1):203--241, 2022.

\bibitem[KW19]{khoroshkin2019real}
Anton Khoroshkin and Thomas Willwacher.
\newblock Real moduli space of stable rational curves revisted.
\newblock {\em arXiv preprint arXiv:1905.04499}, 2019.

\bibitem[LM00]{losev2000new}
Andrey Losev and Yuri Manin.
\newblock New moduli spaces of pointed curves and pencils of flat connections.
\newblock {\em Michigan Mathematical Journal}, 48(1):443--472, 2000.

\bibitem[LM04]{losev2004extended}
A~Losev and Yu~Manin.
\newblock Extended modular operad.
\newblock {\em Frobenius Manifolds: Quantum Cohomology and Singularities},
  pages 181--211, 2004.

\bibitem[LV12]{loday2012algebraic}
Jean-Louis Loday and Bruno Vallette.
\newblock {\em Algebraic operad}.
\newblock Springer, 2012.

\bibitem[LV14]{lambrechts2014formality}
Pascal Lambrechts and Ismar Voli{\'c}.
\newblock Formality of the little $n$-disks operad.
\newblock {\em Memoirs of the American Mathematical Society}, 230(1079), 2014.

\bibitem[Lys24a]{lyskov2023contractads}
Denis Lyskov.
\newblock A generalization of operads based on subgraph contractions.
\newblock {\em International Mathematics Research Notices}, page rnae096, 2024.

\bibitem[Lys24b]{lyskov2024Hamilton}
Denis Lyskov.
\newblock Operadic structure on {Hamiltonian} paths and cycles.
\newblock {\em arXiv preprint arXiv:2406.06931}, 2024.

\bibitem[Mar96]{markl1996distributive}
Martin Markl.
\newblock Distributive laws and {Koszulness}.
\newblock {\em Annales scientifiques de l'\'Ecole Normale Sup\'erieure},
  29(6):685--696, 1996.

\bibitem[May72]{may1972geometry}
J.~Peter May.
\newblock {\em The Geometry of Iterated Loop Spaces}, volume 271 of {\em
  Lecture Notes in Mathematics}.
\newblock Springer-Verlag, 1972.

\bibitem[MSS02]{markl2002operads}
Martin Markl, Steve Shnider, and Jim Stasheff.
\newblock Operads in algebra, topology and physics.
\newblock {\em Mathematical surveys and monographs}, 96, 2002.

\bibitem[MSvAX18]{moon2018birational}
Han-Bom Moon, Charles Summers, James von Albade, and Ranze Xie.
\newblock Birational contractions of $\overline{M}_{0,n}$ and combinatorics of
  extremal assignments.
\newblock {\em Journal of Algebraic Combinatorics}, 47:51--90, 2018.

\bibitem[OT13]{orlik2013arrangements}
Peter Orlik and Hiroaki Terao.
\newblock {\em Arrangements of hyperplanes}, volume 300.
\newblock Springer Science \& Business Media, 2013.

\bibitem[PP21]{pagaria2021hodge}
Roberto Pagaria and Gian~Marco Pezzoli.
\newblock Hodge theory for polymatroids.
\newblock {\em arXiv preprint arXiv:2105.04214}, 2021.

\bibitem[Rai10]{rains2010homology}
Eric~M Rains.
\newblock The homology of real subspace arrangements.
\newblock {\em Journal of Topology}, 3(4):786--818, 2010.

\bibitem[Sch94]{schmitt1994incidence}
William~R Schmitt.
\newblock Incidence {Hopf} algebras.
\newblock {\em Journal of Pure and Applied Algebra}, 96(3):299--330, 1994.

\bibitem[Smy09]{smyth2009towards}
David~Ishii Smyth.
\newblock Towards a classification of modular compactifications of the moduli
  space of curves.
\newblock {\em arXiv preprint arXiv:0902.3690}, 2009.

\bibitem[Sta63]{stasheff1963homotopy}
James~Dillon Stasheff.
\newblock Homotopy associativity of {H}-spaces. ii.
\newblock {\em Transactions of the American Mathematical Society},
  108(2):293--312, 1963.

\bibitem[Sta95]{stanley1995symmetric}
Richard~P Stanley.
\newblock A symmetric function generalization of the chromatic polynomial of a
  graph.
\newblock {\em Advances in Mathematics}, 111(1):166--194, 1995.

\bibitem[WK17]{ward2017feynman}
Benjamin Ward and Ralph Kaufmann.
\newblock Feynman categories.
\newblock {\em Ast{\'e}risque}, 387, 2017.

\bibitem[Yuz01]{yuzvinsky2001orlik}
SA~Yuzvinsky.
\newblock Orlik-{Solomon} algebras in algebra and topology.
\newblock {\em Russian Mathematical Surveys}, 56(2):293, 2001.

\bibitem[Yuz02]{yuzvinsky2002small}
Sergey Yuzvinsky.
\newblock Small rational model of subspace complement.
\newblock {\em Transactions of the American Mathematical Society},
  354(5):1921--1945, 2002.

\end{thebibliography}

\end{document}